\newcolumntype{L}{>{\arraybackslash}X}
\theoremstyle{plain}
\newtheorem{theorem}{Theorem}[section]
\theoremstyle{remark}
\newtheorem{remark}[theorem]{Remark}
\newtheorem{example}[theorem]{Example}
\theoremstyle{plain}
\newtheorem{corollary}[theorem]{Corollary}
\newtheorem{lemma}[theorem]{Lemma}
\newtheorem{proposition}[theorem]{Proposition}
\newtheorem{assumption}[theorem]{Assumption}
\numberwithin{equation}{section}
\newcommand{\C}{\mathbb{C}}
\newcommand{\R}{\mathbb{R}}
\newcommand{\N}{\mathbb{N}}
\newcommand{\Dom}{\mathcal{O}}
\renewcommand{\div}{\text{div}}
\newcommand{\E}{\mathbb{E}}
\renewcommand{\P}{\mathbb{P}}
\renewcommand{\O}{\Omega}
\newcommand{\Do}{\mathsf{D}}
\newcommand{\Dd}{\dot{\mathsf{D}}}
\newcommand{\Ran}{\mathsf{R}}
\newcommand{\om}{\omega}
\newcommand{\calL}{\mathscr{L}}
\newcommand{\Z}{\mathbb{Z}}
\newcommand{\Rsec}{\mathcal{R}}
\newcommand{\angH}{\om_{H^{\infty}}}
\newcommand{\angR}{\om_{\mathcal{R}}}
\newcommand{\x}{\mathbf{x}}
\newcommand{\A}{\mathcal{A}}
\newcommand{\B}{\mathcal{B}}
\def\id{\text{\usefont{U}{bbold}{m}{n}1}}
\newcommand{\M}{\mathcal{M}}
\newcommand{\e}{\mathsf{e}}
\newcommand{\res}{\mathsf{r}}
\newcommand{\wh}{\widehat}
\newcommand{\non}{\mathcal{N}}
\newcommand{\embed}{\hookrightarrow}
\renewcommand{\ker}{\mathsf{N}}
\newcommand{\weak}{\mathsf{w}}
\newcommand{\KS}{\mathsf{KS}}
\newcommand{\EL}{\mathsf{EL}}
\newcommand{\EB}{\mathsf{EB}}
\newcommand{\AS}{\mathsf{AS}}
\newcommand{\SD}{\mathsf{SD}}
\renewcommand{\l}{\langle}
\renewcommand{\r}{\rangle}
\newcommand{\diag}{\mathcal{D}}
\newcommand{\strong}{\mathsf{s}}
\newcommand{\dd}{{\rm d}}
\renewcommand{\emptyset}{\varnothing}
\newcommand{\thesp}{\gamma}
\DeclareMathOperator{\tr}{tr}
\providecommand{\norm}[1]{\lVert#1\rVert} 
\newcommand{\Ssec}{\mathcal{S}}
\author{Antonio Agresti}
\address{Department of Mathematics,
	RPTU Kaiserslautern-Landau, Paul-Ehrlich-Stra{\ss}e 31,
	67663 Kaiserslautern, Germany}
\email{antonio.agresti92@gmail.com}
\curraddr{Institute of Science and Technology Austria (ISTA), Am Campus 1, 3400 Klosterneuburg, Austria}
\author{Amru Hussein}
\address{Department of Mathematics,
	RPTU Kaiserslautern-Landau, Paul-Ehrlich-Stra{\ss}e 31,
	67663 Kaiserslautern, Germany}
\email{hussein@mathematik.uni-kl.de}
\subjclass[2010]{Primary 47A55; Secondary 47D06, 47A60, 35M13, 35Q35} 
\keywords{Block operator matrices, off-diagonal perturbations, perturbation theory, parabolic systems, maximal $L^p$-regularity, bounded $H^{\infty}$-calculus}
\thanks{The first author has  been supported partially by the Nachwuchsring – Network for the promotion of young scientists – at TU Kaiserslautern. Both authors have been  supported by MathApp – Mathematics Applied to Real-World Problems - part of the Research Initiative of the Federal State of Rhineland-Palatinate, Germany}
\begin{document}
\title[$H^{\infty}$-calculus  for block operator matrices]{Maximal $L^p$-regularity and $H^{\infty}$-calculus   for block operator matrices and applications}

\begin{abstract}
Many coupled evolution equations can be described via
$2\times2$-block operator matrices of the form
$\A=\begin{bmatrix} A & B \\ C & D \end{bmatrix}$
in a product space $X=X_1\times X_2$ with possibly unbounded entries. 
Here, the case of diagonally dominant block operator matrices is considered, that is, the case where the full operator $\A$ can be seen as a relatively bounded perturbation of its diagonal part with $\Do(\A)=\Do(A)\times \Do(D)$ though with possibly large relative bound.
For such operators the properties of sectoriality, $\Rsec$-sectoriality and the boundedness of the $H^\infty$-calculus are studied, and for these properties perturbation results for possibly large but structured perturbations are derived. 
Thereby, the time dependent parabolic problem associated with $\A$ can be analyzed in maximal $L^p_t$-regularity spaces, and this is applied to a wide range of problems such as different theories for liquid crystals, an artificial Stokes system, strongly damped wave and plate equations, and a Keller-Segel model.
\end{abstract}

\maketitle
\addtocontents{toc}{\protect\setcounter{tocdepth}{1}}
\tableofcontents

\section{Introduction}
In this article the abstract Cauchy problem 
\begin{equation}\label{eq:CP_intro}
\left\{
\begin{aligned}
\partial_t x(t) +\A\, x(t) &=f(t), \qquad t>0, \\ 
 x(0)&=x_0,
\end{aligned}
\right.
\end{equation}
is studied for a block operator matrix
\begin{align*}
\A=\begin{bmatrix} A & B \\ C & D \end{bmatrix}
\end{align*}
in the product space $X=X_1\times X_2$, where
$X_1,X_2$ are Banach spaces, and 
\begin{align*}
A&\colon \Do(A)\subseteq X_1\to X_1, \quad \text{ and }\quad 
D\colon \Do(D)\subseteq X_2\to X_2,	\\
B&\colon \Do(B)\subseteq X_2\to X_1, \quad \text{ and }\quad 
C\colon \Do(C)\subseteq X_1\to X_2
\end{align*}
are possibly unbounded linear operators with domains $\Do(A)$, $\Do(D)$, $\Do(B)$, and $\Do(C)$, respectively. 
Such block operator matrices arise in a wide range of coupled evolution equations including mixed-order systems. In Section~\ref{sec:applications}  liquid crystal
 models, an artificial Stokes system, strongly damped wave and plate equations, and a Keller-Segel model are discussed as applications of the general theory presented here.
The recurrent theme and main question of this article is to ask under which conditions operator theoretical properties of the diagonal operators can be transferred to the full operator. Thus, the starting point is to assume that the uncoupled problem associated with the diagonal operator is well-understood   and to interpret the coupled equations as off-diagonal perturbation, that is, 
\begin{align}\label{eq:ABCD}
\A= \diag + \B \quad \hbox{with} \quad \diag=\begin{bmatrix} A & 0 \\ 0 & D \end{bmatrix} \quad \hbox{and} \quad \B= \begin{bmatrix} 0 & B \\ C & 0 \end{bmatrix}.
\end{align}
This viewpoint is captured by the assumption of the \emph{diagonal dominance} of $\A$ which requires that $\Do(\A)=\Do(\diag)$ and that $\B$ is relatively bounded with respect to $\diag$ though with possibly large relative bound, see Section~\ref{sec:blockoperator} below for the precise setting. 
For such diagonally dominant block operator matrices $\A$ the properties of sectoriality, $\Rsec$-sectoriality, and the boundedness of the $H^\infty$-calculus are studied assuming that the respective properties hold for the diagonal operator $\diag$. 

These operator theoretical properties are closely related to the solution theory of the time-dependent problem \eqref{eq:CP_intro}. Namely, $\Rsec$-sectoriality implies in UMD-spaces $X$  maximal $L^p_t$-regularity for $p\in (1,\infty)$, that is, there exists a constant $C>0$ such that if in \eqref{eq:CP_intro} the right hand side satisfies $f\in L^p(0,\infty;X)$ and $x_0=0$, then there is a unique  solution  $x$ to \eqref{eq:CP_intro} satisfying the maximal regularity estimate
\begin{align*}
\norm{\partial_t x}_{L^p(0,\infty;X)} + \norm{\A\, x}_{L^p(0,\infty;X)} \leq C \norm{f}_{L^p(0,\infty;X)},
\end{align*}
see e.g. \cite{We, DHP, pruss2016moving, KuWe} for overviews on this subject. 
The corresponding notion of stochastic maximal regularity is not equivalent to the deterministic notion, but it is implied for instance by the boundedness of the  $H^\infty$-calculus, see e.g. \cite{AV19, MaximalLpregularity} and the references given therein for details on this theory. Moreover, sectoriality, $\Rsec$-sectoriality, and the boundedness of the $H^\infty$-calculus give information on the fractional powers of $\A$. Also, 
the domains and ranges of the fractional powers of $\A$
induce scales of extrapolation spaces and thereon consistent families of operators, compare e.g. \cite{Haase:2}.
Amongst others, this is helpful in the analysis of many quasi- or semi-linear problems, compare e.g. \cite{ AV19_QSEE_1, AV19_QSEE_2, NVW11eq, PruSim04, PrussWeight1, PrussWeight2, pruss2016moving} just to give a small sample of the literature in this direction.

The classical perturbation theorems for  sectoriality, $\Rsec$-sectoriality, and the boundedness of the $H^\infty$-calculus deal with smallness conditions often expressed in form of small relative bounds, relative compactness or lower order perturbations, compare e.g. \cite{DDHPV, DHP, KuWePert, AHS1994}.  The aim here is to complement these classical results for general additive perturbations by perturbation theorems for off-diagonal perturbations in the diagonally dominant case. In this situation it turns out that the smallness or lower order perturbation conditions can be imposed on objects describing the coupling rather than on the full additive perturbation. Moreover, assuming that off-diagonal perturbations preserve a certain structure by perturbing ``in the right direction'', one  can omit even any kind of smallness conditions, see Section~\ref{sec:sec_Rsec} below.  Also, many assumptions imposed on the general setting can be weakened for the case of off-diagonal perturbations. For instance, for
the classical perturbation results for sectoriality and $\mathcal{R}$-sectoriality
on relative bounded perturbations with small relative bound, there is no straightforward counterpart for the bounded
 $H^\infty$-calculus.
 The available perturbation results for the $H^\infty$-calculus require further assumptions in addition to the smallness conditions.  
  Here we have been able to show a result of this type although only for the case of some off-diagonal perturbations behaving well in some extrapolation scales induced by $\diag$, compare Section~\ref{sec:Hinfty} below.

The approach developed here is based in spirit on a combination of the theory by Kalton, Kunstmann and Weis relating $\Rsec$-sectoriality and the boundedness of the $H^\infty$-calculus, see \cite{KKW, KW13_errata}, with concepts for diagonally dominant block operator matrices pioneered by Nagel in \cite{Nag89} for $C_0$-semigroups.   This synthesis opens a new perspective on coupled systems in $L^p$-spaces and is illustrated in the subsequent  example and also by a number of applications in Section~\ref{sec:applications}.

\subsection{An example}\label{subsec:example}
As illustration one can consider the following model case  inspired by the linearization of the
Beris-Edwards model for liquid crystals.
For $p\in (1,\infty)$ let 
$X= H^{-1,p}(\R^d)\times L^{p}(\R^d)^d$ and
\begin{equation*}
\A
\stackrel{{\rm def}}{=}
\begin{bmatrix}
\id-\Delta & \div(\id- \Delta) \\
\nabla & \id-\Delta
\end{bmatrix} \ \
\text{ with } \ \
\Do(\A)= H^{1,p}(\R^d)\times H^{2,p}(\R^d)^d.
\end{equation*}
The actual linearization of  the
Beris-Edwards model for liquid crystals -- where the first component describes the fluid-like behavior and the second component the orientation of the crystal-like rods -- has a similar mixed order  structure. 
However, this simplified Beris-Edwards-type model  allows for more direct computations, and therefore it seems better suited as illustrative example, see  Subsection~\ref{ss:Beris_Edwards} and \ref{subsec:BerisEdwardsType} for a  discussion of both models.   Here, one directly observes that $\A$ is diagonally dominant, but classical perturbation results are not applicable since the off-diagonal part is neither small nor of lower order. However, this case can be treated within the theory presented here by studying the coupling of the diagonal and the off-diagonal part as will be explained in the subsequent Subsection~\ref{subsec:rsec_intro}.

Special types of diagonally block operator matrices which appear in many applications are of the form
\begin{align*}
\A= \begin{bmatrix}
A & B  \\ 0 & D
\end{bmatrix} \quad \hbox{or} \quad \A=\begin{bmatrix}
0 & B  \\ C & D
\end{bmatrix}.
\end{align*}       
In these situations results on $\Rsec$-sectoriality and boundedness of the $H^\infty$-calculus are obtained in Subsection~\ref{s:special_cases_dominant}. Concrete examples such as the simplified Erickson-Leslie model for liquid crystals, the classical Keller-Segel system, the artificial Stokes system and second order problems with strong damping are discussed in Subsections~\ref{subsec:simplified_LCD} -- \ref{subsec:2ndOrder}.

\subsection{Sectoriality and $\Rsec$-sectoriality for  
	block operator matrices}\label{subsec:rsec_intro}
A key tool in our analysis of sectoriality and $\Rsec$-sectoriality is the factorization for diagonally dominant operators 
\begin{equation*}%
\lambda-\A= \M(\lambda)(\lambda-\diag) \quad \hbox{with} \quad 
\M(\lambda)=\begin{bmatrix}
\id & -B(\lambda-D)^{-1}  \\
-C(\lambda-A)^{-1} & \id 
\end{bmatrix},
\end{equation*}
where the inverse  of $\M(\lambda)$ can be described by the inverse of one of the operators
\begin{align*}
M_1(\lambda) &
\stackrel{{\rm def}}{=}
\id - B (\lambda-D)^{-1} C(\lambda- A)^{-1}  \quad \hbox{for } \quad \lambda \in \rho(A)\cap \rho(D),\\
M_2(\lambda) &
\stackrel{{\rm def}}{=}
\id - C (\lambda-A)^{-1} B(\lambda- D)^{-1} \quad \hbox{for } \quad \lambda \in \rho(A)\cap \rho(D),
\end{align*}
which encode the coupling of the off-diagonal and the diagonal part. 
This factorization has been studied already in the work \cite{Nag89} by Nagel in the context of $C_0$-semigroups, and it is discussed in Section~\ref{sec:blockoperator}.
Thus sectoriality and $\Rsec$-sectoriality can be traced back to the corresponding estimates on one of the two operator families $M_1(\cdot)$ and $M_2(\cdot)$ -- without smallness conditions -- assuming that the diagonal operator has these properties, compare Theorem~\ref{t:rsec_necessary_sufficient_condition_II} and the other statements in Section~\ref{sec:sec_Rsec}. 
For the case of the example discussed above in Subsection~\ref{subsec:example} one has for instance  
\begin{align*}
M_{1}(\lambda)
&=
\id - \div(\id- \Delta) (\lambda-\id+\Delta)^{-1} \nabla (\lambda-\id +\Delta)^{-1}\\
&=\id-\Delta (\id-\Delta) (\lambda-\id+\Delta)^{-2}  \quad \hbox{for } \quad \lambda \in \C\setminus[0,\infty)
\end{align*}
which can be analyzed explicitly.

Moreover, by this factorization one can derive perturbation results  which do not impose  the classical smallness conditions on $\B$ as compared to $\diag$, see for instance \cite{KuWePert} for perturbation results with such  smallness conditions. Instead -- taking into account the block structure -- the smallness is assumed on the coupling expressed by one of the  operators $M_j(\lambda)$, $j\in\{1,2\}$, see Corollaries~\ref{t:rsec_necessary_sufficient_condition} and \ref{cor:pert_block_operators}, and Proposition~\ref{prop:pert_block_operators}.
Here, as in the classical case, behind the smallness assumption lurks the Neumann series.

\subsection{Bounded $H^\infty$-calculus for block operator matrices}
The boundedness of the $H^\infty$-calculus  implies $\Rsec$-sectoriality on Banach space $X$ with the relatively weak property $(\Delta)$ (or triangular contraction property),  which holds in particular for UMD spaces, cf. \cite{KWcalc} or \cite[Theorem 10.3.4(2)]{Analysis2},   and for general $X$ almost $\Rsec$-sectoriality is implied, see  \cite[Proposition 3.2]{KKW}, but the converse is in general not true. This has been shown for instance by the example constructed by McIntosh and Yagi in \cite{MY_1990} where block operator matrices make a \emph{cameo} since the example relies on block triangular operator matrices, the spectrum of which is determined by the diagonal part, however its norm is heavily influenced by the off-diagonal part, cf. also \cite{Fackler_Counterexamples} for another counterexample. The question how close $\Rsec$-sectorial operators are to having a bounded $H^\infty$-calculus has been addressed in detail for many situations  by Kunstmann, Kalton and Weis in  \cite{KKW, KW13_errata}. Here, we translate their results to the case of block operator matrices, and we obtain two types of results. First, adding conditions  on the orders and on the relations  of the blocks $A,B,C,D$ encoded in terms of fractional powers, one can show that $\Rsec$-sectoriality implies boundedness of the $H^\infty$-calculus. This is discussed in Section~\ref{sec:Hinfty}, where also results for small couplings are presented. 
Second, considering interpolation scales and consistent families of operators thereon, results for certain points in the scale carry over to the full scale or at least to a part of it, compare Section~\ref{sec:extrapolation}. In particular the case of Hilbert spaces is considerably easier to access, and under certain conditions this carries over to scales of Banach spaces. In fact the interpolation results for the $H^\infty$-calculus on different scales from \cite{KKW} play a key role in the proofs in both Section~\ref{sec:Hinfty} and \ref{sec:extrapolation}.
Note that in general $\Rsec$-sectoriality or boundedness of the $H^\infty$-calculus does \emph{not} extrapolate as shown by Fackler in \cite{F14_extrapolation_MR}, however, for the situations considered here it does.

In the literature there is a large body of works on perturbation theory for the boundedness of the $H^\infty$-calculus. Many of these treat  additive perturbations $\A=\A_0+\B$, compare for instance \cite{Haase:2, KuWe, DDHPV}.  
A special case is the situation where the perturbative term admits a factorization $\B=TS$ for some operators $T$ and $S$ acting on extrapolation scales induced by $\A_0$, see \cite{HaKu2006, Amansag_etal2022}. For the particular situation of block operator matrices \eqref{eq:ABCD}, it seems that there is no application of this
except when the blocks $B$ and $C$ already satisfy such factorizations.

\subsection{Literature on block operator matrices and coupled parabolic systems}
There is a great interest and an extensive literature related to block operator matrices and their applications in the context of spectral and semigroup theory, see e.g. the monographs by Tretter   \cite{Tretter} and Jeribi \cite{Jeribi}. This subject is also related to the study of mixed order systems, see e.g. the book by Denk and Kaip \cite{DenkKaip} and the references therein, the elliptic case has been discussed e.g. also in the classical article by Douglis, Agmon and Nirenberg \cite{ADN_1964}.
It seems however, to the best of our knowledge, that so far, there has been no study on $\Rsec$-sectoriality and the boundedness of the $H^\infty$-calculus for a general class of diagonally dominant block operator matrices. 

The theory presented here has an overlap with the study of mixed order systems as for instance the example given above in Subsection~\ref{subsec:example} can also be treated as a parabolic mixed order system.
The maximal regularity results for mixed order systems as discussed in \cite{DenkKaip} rely on Fourier multiplier techniques. 
The approach presented here is however more operator theoretical in spirit. This is of particular advantage  when considering scales of spaces and operators thereon including weak settings, and such situations appear in many applications, see e.g. \cite{CriticalQuasilinear}  for the scalar case.  
Also, the approach presented here evades in some situations laborious localization procedures. 
In comparison, the Newton polygon method presented in \cite{DenkKaip}  allows one to treat not only non-homogeneous orders in space but even in space and time, while the approach given here is restricted to problems of first  order in time.

Also, there is an extensive literature on maximal $L^p_t$-regularity for particular block operators of mixed order
arising from parabolic plate and wave equations with damping.  
In these areas maximal $L^p_t$-regularity and the  boundedness of the $H^\infty$-calculus are usually proven via multiplier and localization methods rather  than by general operator theoretical considerations, compare e.g. \cite{Racke_2009, Shib_2019, Sum_2018, Shib_2017, Nau_2014, Hum_2019} and the references therein which is only a small selection of the literature in this direction. These results cover a wide range of dampings and couplings while the diagonally dominant case discussed here occurs only for certain strong dampings.
Note that $\Rsec$-sectoriality is discussed also for other very particular
block operators as for instance in \cite{Barta_2008}.

In the context of semigroup theory,
one of the starting points for the systematic study of diagonally dominant block operator matrices and evolution equations  is the work by
Nagel \cite{Nag89}. This has been extended by Engel and Nagel in \cite{Nag97, Engel_1995},  respectively, and subsequent works.  Early works in this direction are \cite{Zab_1978, Zab_1980, Nag_1985, Eng_1986} with focus on $C_0$-semigroups generated by block operator matrices. 
Along these lines, the case of Hilbert spaces and sesquilinear forms with a block structure have been investigated in  \cite{Mug_2008} and
\cite{Liu_2020}, the question of 
m-sectorial block operator matrices is discussed in \cite{Arl_2002},
second order Cauchy problems for block triangular generators are studied in \cite{Mug_2006}, and special classes of block operator matrices are treated in \cite{Adler_2018}, see also the references in these works.

From the viewpoint of spectral theory different questions have been addressed, see
\cite{Tretter} for an overview, and  works in this direction deal with the essential spectrum  \cite{Bat2005, Batkai_2012, Char_2014, Ibro_2016, Ibro_2017, Ibro_2017_2, Ibro_2013},
adjoints of block operator matrices \cite{Moe_2008}, closedness and self-adjointness \cite{Shk_2016},  invertibility of block operator matrices \cite{Hua_2019}, the 
quadratic numerical range \cite{Lan_2001}, or the operator Ricatti equation \cite{Kost_2005}, where the given references are just samples and far from being complete.
In most cases $2\times 2$-block operator matrices are discussed where  $2\times 2$-matrices serve as an inspiration. For many applications this is sufficient, and some larger block operator matrices can be treated iteratively by different $2\times 2$-block decompositions. 
However, also larger block structures such as 
$3\times 3$-  or general $n\times n$-block operator matrices have been studied as well, see \cite{Ben_2010} and also \cite[Section 1.11]{Tretter}, respectively. 
Spectral problems for block operator matrices and corresponding sesquilinear forms are discussed   in many works, just to mention a few see   \cite{Kost2013_1, Kost2013_2, Kost_2005, Kost_2007, Kost_2004, Seel_2016, Schmitz_2015, Seel_2016_2} and also the references therein.

We start by recapitulating some basic notions and facts in Section~\ref{sec:prelim}, and in the subsequent Section~\ref{sec:blockoperator} the setting for diagonally dominant block operator matrices is made precise. The main results are contained in Sections~\ref{sec:sec_Rsec} -- \ref{sec:extrapolation} and applications are discussed in Section~\ref{sec:applications}.

\section{Sectoriality, $\Rsec$-sectoriality and the bounded $H^\infty$-calculus}\label{sec:prelim}
\subsection{Notation}\label{subsec:notation}
The solution theory of the abstract Cauchy problem 
\begin{equation}\label{eq:CauchyProb}
\left\{
\begin{aligned}
\partial_t x(t)  +T\, x(t) &=f(t), \qquad t>0, \\ 
 x(0)&=x_0,
\end{aligned}
\right .
\end{equation}
for a linear unbounded operator $T$ in a Banach space $X$ over $\C$ is closely related to the  location of the spectrum of $T$ and resolvent estimates in a sector of the complex plane and its complement, 
\begin{align*}
\Sigma_{\psi}\stackrel{{\rm def}}{=}\{z\in \C\setminus \{0\}\colon |\arg z|<\psi\}, \quad \text{ and }\quad
\complement\overline{\Sigma_{\psi}}\stackrel{{\rm def}}{=}\C\setminus \overline{\Sigma_{\psi}} \quad \hbox{for } \psi \in (0,\pi),
\end{align*}
respectively, where we follow the notation in \cite{Analysis2}. We will keep this notation throughout this paper and denote by $\sigma(T)$ and $\rho(T)$ the spectrum and the resolvent set of $T$, respectively, where we always consider spaces over $\C$. If real spaces are needed, these can be obtained for the relevant examples by restriction.  
By $\Do(T)$, $\Ran(T)$, and $\ker(T)$ we denote the domain, the range, and the kernel of an operator $T$, respectively. We denote by $\id$ the identity map on $X$ and also write to make it short  $\lambda-T$ instead of $\lambda\id-T$ for $\lambda\in \C$.

In examples and applications we denote
as usual for a domain $\Dom\subseteq \R^d$ by $L^p(\Dom)$ for $p\in [1,\infty]$ the Lebesgue spaces.
For $s>0$ and $p\in (1,\infty)$ we denote by $H^{s,p}(\R^d)$ the Bessel potential space and set
\begin{align*}
	H^{s,p}(\Dom)&\stackrel{{\rm def}}{=}\big\{f\in \mathcal{D}'(\Dom)\colon F|_{\Dom} =f\text{ for some }F\in H^{s,p}(\R^d)\big\}, \\
	H^{s,p}_0(\Dom)&\stackrel{{\rm def}}{=} \overline{C^{\infty}_0(\Dom)}^{H^{s,p}(\Dom)} \quad \text{ and }\quad
	H^{-s,p}(\Dom)\stackrel{{\rm def}}{=}(H_0^{s,p'}(\Dom))^* \quad \hbox{where } \tfrac{1}{p}+\tfrac{1}{p'}=1,
\end{align*}
and where $X^\ast$ denotes the dual space of a Banach space $X$.
For $s\in \N$ we also use the notation $W^{s,p}=H^{s,p}$,  $W_0^{s,p}=H_0^{s,p}$,  and $W^{1,\infty}(\Dom)$ denotes the space of Lipschitz-continuous functions on $\Dom$. In the case $p=2$ we simply write $H^{s}=H^{s,2}$ for $s\in \R$. 
If not indicated otherwise, all functions in these spaces are complex valued. For vector valued function spaces 
we denote for a Banach space $X$ by $L^p(\Dom;X)$ the usual Bochner spaces and by $H^{s,p}(\Dom;X)$ the corresponding Bessel potential spaces.
We identify $L^p(\Dom)^m=L^p(\Dom;\C^m)$ and $H^{s,p}(\Dom)^m=H^{s,p}(\Dom;\C^m)$ for $m\in \N$, and correspondingly for other functions spaces.

The real and imaginary part of a complex number $\lambda \in \C$ are denoted by $\Re \lambda$ and $\Im \lambda$, respectively.  By $a\lesssim b$ for $a,b\in \R$ we mean that there is a constant $C>0$ independent of $a,b$ such that $a\leq C b$, and by $\eqsim$ we mean that $\lesssim$ and $\gtrsim$ hold.
Finally, we set $a\vee b=\max\{a,b\}$ and $a\wedge b=\min\{a,b\}$.

\subsection{Sectorial operators}\label{sec:preliminaries}
\label{ss:sectorial_operator}
We say that the operator 
$$T\colon\Do(T)\subseteq X\to X$$ with range $\Ran(T)\subseteq X$ is a \emph{sectorial operator}  on the Banach space $X$ provided there exists $\omega \in (0,\pi)$ such that 
\begin{align}\label{eq:sec}
\sigma(T)\subseteq \overline{\Sigma_{\omega}}, \quad \overline{\Do(T)}=\overline{\Ran(T)}=X, \quad \hbox{and} \quad \sup_{\lambda\in \complement\overline{\Sigma_{\omega}}} \|\lambda(\lambda-T)^{-1}\|_{\calL(X)}<\infty,
\end{align}
and the infimum over all such $\omega\in (0,\pi)$ is called the \emph{angle of sectoriality} $\omega(T)$ of $T$. By \cite[Proposition 10.1.7(3)]{Analysis2}, such $T$ is injective. The operator $T$ is called \emph{pseudo-sectorial} if the assumption $\overline{\Do(T)}=\overline{\Ran(T)}=X$ in  \eqref{eq:sec} is dropped, cf. e.g. \cite[Definition 3.1.1]{pruss2016moving}.
If $T$ is sectorial of angle smaller than $\pi/2$, then $-T$ is the generator of an analytic semigroup.

\subsection{Fractional powers and scales of sectorial operators}
\label{ss:fractional_powers}
For a sectorial operator $T$ on a Banach space $X$ fractional powers $T^\gamma$ with domain $\Do(T^\gamma)\subseteq X$ for $\gamma\in \R$ can be defined, see e.g. \cite[Section 2.2]{DHP} and \cite[Chapter 3]{Haase:2} for a basic construction and also \cite{Kom_I, Kom_II, Kom_III, Kom_IV, Kom_V, Kom_VI} for a detailed study of fractional powers.
One can show that $T^{\gamma}$ is also injective on $X$, compare e.g. \cite[Theorem 15.15]{KuWe}. Then, the corresponding homogeneous space is defined by
\begin{equation}
\label{eq:def_homogeneous_scale}
\Dd(T^{\gamma})\stackrel{{\rm def}}{=}\big(\Do(T^{\gamma}),\|T^{\gamma}\cdot\|_X\big)^{\sim}\quad \hbox{for } \gamma\in \R,
\end{equation}
where $\sim$ denotes the completion. 
In \cite{KKW} these spaces are denoted by $\dot{X}_{\gamma,T}=\dot{X}_{\gamma}=\Dd(T^{\gamma})$. 
One can check that (see \cite[Lemma 6.3.2 a)]{Haase:2})
\begin{equation}
\label{eq:fractional_powers_positive_intersection_X}
\Do(T^{\gamma})=\Dd(T^{\gamma})\cap X \quad \hbox{for } \gamma\geq 0,
\end{equation}
and in particular if $0\in \rho(T)$, then $\Do(T^{\gamma})=\Dd(T^{\gamma})$ for $\gamma\geq 0$.

Following \cite[Section 2]{KKW} or \cite[Subsection 6.3]{Haase:2}, let us recall that $T$ uniquely induces an operator on $\Dd(T^{\gamma})$ for all $\gamma\in \R$. Indeed, it is easy to check that $T^{\gamma}\colon\Do(T^{\gamma})\to \Ran(T^{\gamma})$ extends to an isomorphism $\widetilde{T^{\gamma}}\colon\Dd(T^{\gamma})\to X$ the inverse of which  $(\widetilde{T^{\gamma}})^{-1}$ is an extension of $T^{-\gamma}\colon\Ran(T^{\gamma})\to \Do(T^{\gamma})$. Note that in general $(\widetilde{T^{\gamma}})^{-1}\neq (\widetilde{T^{-\gamma}})$ since these operators might act on different spaces. 
For any $\gamma\in \R$ we define the operator $\dot{T}_{\gamma}$ on $\Dd(T^{\gamma})$ by
\begin{equation*}
\begin{tikzcd}
X\supseteq \Do(T) \arrow{r}{T} 
& X \arrow{d}{\widetilde{(T^{\gamma}})^{-1}} \\
\Dd(T^{\gamma})\supseteq \Do(\dot{T}_{\gamma}) \arrow{u}{\widetilde{T^{\gamma}}}\arrow{r}{\dot{T}_{\gamma}}  & \Dd(T^{\gamma})
\end{tikzcd}
\end{equation*}
that is
\begin{align}\label{eq:definition_Tdot}
\dot{T}_{\gamma}\stackrel{{\rm def}}{=}(\widetilde{T^{\gamma}})^{-1}T \, \widetilde{T^{\gamma}}, \quad \text{ with domain }\quad
\Do(\dot{T}_{\gamma})=\Dd({T}^{\gamma+1})\cap \Dd(T^{\gamma}).
\end{align}
In particular, $\dot{T}_{\gamma}$ is similar to $T$ and therefore it has the same spectral properties as $T$. For future convenience, let us note that for all $\gamma\in \R$ and $\lambda\in \rho(T)=\rho(\dot{T}_{\gamma})$, one has $(\lambda-\dot{T}_{\gamma})^{-1}=(\widetilde{T^{\gamma}})^{-1}(\lambda-T )^{-1} \widetilde{T^{\gamma}} $ and therefore
\begin{equation}
\label{eq:consistency_T_T_gamma}
(\lambda-\dot{T}_{\gamma})^{-1}|_{\Dd(T^{\gamma})\cap X}=
(\lambda-T )^{-1}|_{\Dd(T^{\gamma})\cap X}.
\end{equation}
Extrapolation scales play an important role in the proofs in Sections~\ref{sec:Hinfty} and \ref{sec:extrapolation}, and they have been used also in perturbation theory presented in \cite{HaKu2006, Amansag_etal2022, KKW, Kun_2015}.


\subsection{$\Rsec$-boundedness and maximal $L^p$-regularity}
\label{ss:rsec_max_reg}
A family $\mathscr{J}\subseteq \calL(E,F)$
is \emph{Rademacher}- or \emph{$\Rsec$-bounded} with \emph{$\Rsec$-bound} $\Rsec(\mathscr{J})<\infty$ if for any sequence $(\varepsilon_n)$ of Rademacher variables, i.e., $\{+1,-1\}$-valued independent random variables on a probability space $(\O,\mathscr{A},\P)$  with mean zero, one has for all $T_1,\ldots, T_m\subseteq \mathscr{J}$ and $x_1,\ldots, x_m\in E$ that
\begin{align*}
\E\Big\|\sum_{n=1}^m \varepsilon_n T_n x_n\Big\|_F^2 \leq 
\Rsec(\mathscr{J})^2\E\Big\|\sum_{n=1}^m \varepsilon_n x_n\Big\|_E^2,
\end{align*}
where the expectation in $(\O,\mathscr{A},\P)$ is denoted by $\E$, cf.\ e.g.\ \cite[Chapter 8]{Analysis2}.
In case $\mathscr{J}= \{J_{\lambda}\,:\, \lambda \in \Lambda\}$ for some index set $\Lambda$ is $\Rsec$-bounded, we write $\Rsec(J_{\lambda}\,:\, \lambda \in \Lambda)<\infty$ instead of 
$\Rsec(\{J_{\lambda}\,:\, \lambda \in \Lambda\})<\infty$.

 A sectorial  operator $T$ in $X$ is called \emph{$\Rsec$-sectorial} if for some $\sigma \in (\omega(T), \pi)$ the family
\begin{align*}
\{\lambda (\lambda-T)^{-1}\colon \lambda\in \complement\overline{\Sigma_{\sigma}}\}
\end{align*}
is $\Rsec$-bounded.
The \emph{angle of $\Rsec$-sectoriality} of $T$ is
\begin{align*}
\omega_{\Rsec}(T)\stackrel{{\rm def}}{=}\inf\Big\{\sigma \in (\omega(T), \pi)\colon \{\lambda (\lambda-T)^{-1}\colon \lambda\in \complement\overline{\Sigma_{\sigma}}\} \hbox{ is $\Rsec$-bounded} \Big\},
\end{align*}
compare e.g. \cite[Definition 10.3.1]{Analysis2}.

An operator $T$ in a Banach space $X$ 
has \emph{maximal $L^p$-regularity} for $p\in (1,\infty)$ on $[0,\tau)$ with $\tau\in (0,\infty]$ if $T$  is closed and densely defined
and for $f\in L^p(0,\tau;X)$, $u_0=0$ the solution to the abstract Cauchy problem~\eqref{eq:CauchyProb} is differentiable almost everywhere and there exists a constant $C_p>0$ such that for all such $f$
\begin{align*}
\norm{\partial_t u}_{L^p(0,\tau;X)} + \norm{T u}_{L^p(0,\tau;X)} \leq C_p \norm{f}_{L^p(0,\tau;X)},
\end{align*}
cf. e.g. \cite[Section 1.3]{KuWe}. Note that if $T$ has maximal $L^p$-regularity, then $-T$ generates an analytic semigroup, see \cite{Coulhon1986}.
In a UMD space $X$, a closed densely defined operator $T$ has  maximal $L^p$-regularity for $p\in (1,\infty)$ if and only if 
it is $\Rsec$-sectorial of angle smaller than $\pi/2$, compare  \cite[Theorem 4.2]{We}.

\subsection{Bounded $H^{\infty}$-calculus} 
For any $\psi\in (0,2\pi)$, we denote by $H^{\infty}_0(\Sigma_{\psi})$ the set of all holomorphic functions $f:\Sigma_{\psi}\to \C$ such that $|f(z)|\lesssim |z|^{\varepsilon}/(1+|z|)^{-2\varepsilon}$ for all $z\in \Sigma_{\psi}$ and some $\varepsilon>0$. For a sectorial operator $T$ and all $f\in H^{\infty}_0(\Sigma_{\psi})$ where $\psi>\om(T)$ the 
Dunford integral
\begin{equation}
\label{eq:dunford_H_calculus_definition}
f(T)\stackrel{{\rm def}}{=}
\frac{1}{2\pi i}\int_{\partial\Sigma_{\sigma}} f(\lambda)(\lambda-T)^{-1} \,\dd\lambda,
\end{equation}
is absolutely convergence and independent of $\sigma\in (\om(T),\psi)$. We say that $T$ has a bounded $H^\infty(\Sigma_\psi)$-calculus for $\psi \in (\omega(T), \pi)$ if there exists $C>0$ such that 
\begin{align*}
\norm{f(T)}_{\calL(X)} \leq C \|f\|_{H^{\infty}(\Sigma_{\psi})} \quad \hbox{for all } f\in H^\infty_0(\Sigma_\psi),\end{align*}
where $\|f\|_{H^{\infty}(\Sigma_{\psi})}\stackrel{{\rm def}}{=}\sup_{z\in\Sigma_{\psi}}|f(z)|$, 
and the \emph{$H^\infty$-angle} of $T$ is
\begin{align*}
\omega_{H^\infty}(T)\stackrel{{\rm def}}{=}
\inf\{\psi \in (\omega(T), \pi)\colon T \hbox{ has a bounded $H^\infty(\Sigma_\psi)$-calculus} \},
\end{align*}
compare e.g. \cite[Definition 10.2.10]{Analysis2}. Finally, we say that $T$ has a bounded $H^{\infty}$-calculus provided $T$ has a bounded $H^{\infty}(\Sigma_{\sigma})$-calculus for some $\sigma\in (0,\pi)$.

For an UMD space $X$, one has the inclusions
$$H^\infty(X) \subseteq \mathcal{SMR}(X) \subseteq \Ssec(X) \ \ \text{ and } \ \ 
H^\infty(X) \subseteq \Rsec(X) \subseteq \Ssec(X),
$$
where $H^\infty(X)$, $\mathcal{SMR}(X)$, $\Rsec(X)$, and $\Ssec(X)$ stand for the classes of operators in a UMD space $X$ having a bounded $H^\infty$-calculus, 
 admitting stochastic maximal $L^p$-regularity,
being $\Rsec$-sectorial and sectorial, respectively, compare e.g \cite[Equation (2.15) and Theorem 4.5]{DHP}, \cite{AV19,MaximalLpregularity} and \cite[Section 6]{ALV23}.

By holomorphy, one can check that if $T$ has a bounded $H^{\infty}$-calculus, then the same holds for $\mu+T$ where $\mu>0$. The following is a partial converse of the latter observation which follows from \cite[Corollary 5.5.5]{Haase:2}. 

\begin{proposition}[$H^{\infty}$-calculus for shifted operators]
\label{prop:shift_H_calculus}
Let $T$ be a linear operator. Assume that $\mu_0+T$ has a bounded $H^{\infty}$-calculus for some $\mu_0> 0$. Suppose that $\rho(T)\supseteq \{0\}\cup \{z\in \C\,:\,|\arg z|\geq \sigma\}$  for some $\sigma>\angH(\mu_0+T)$.
 Then
$T$ has a bounded $H^{\infty}$-calculus of angle $\leq \sigma$.
\end{proposition}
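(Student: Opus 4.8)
\emph{Proof proposal.} The plan is to put $S:=\mu_0+T$ and reduce everything to properties of $S$. First note that $\rho(T)=\rho(S)-\mu_0\supseteq\{0\}\cup\complement\overline{\Sigma_\sigma}$, so $0\in\rho(T)$, $0,\mu_0\in\rho(S)$, $\sigma(T)\subseteq\Sigma_\sigma$ and, by the spectral mapping for the shift, $\sigma(S)=\mu_0+\sigma(T)\subseteq\mu_0+\Sigma_\sigma$; in particular $\sigma(T)$ and $\sigma(S)$ are bounded away from $0$, and $\omega(S)\le\angH(S)<\sigma$. Next I would check that $T$ is sectorial with $\omega(T)\le\sigma$: the ranges and domains are dense since $\Do(T)=\Do(S)$ (which is dense as $S$ is sectorial) and $\Ran(T)=X$ (as $0\in\rho(T)$); for the resolvent bound, fix $\sigma'\in(\sigma,\pi)$ and write $(\lambda-T)^{-1}=(\lambda+\mu_0-S)^{-1}$. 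On $\{|\lambda|\le R_0\}\cap\complement\overline{\Sigma_{\sigma'}}$ the family $\lambda(\lambda-T)^{-1}$ extends continuously by $0$ at $\lambda=0$ and lies on a compact subset of $\rho(T)$, hence is bounded; for $|\lambda|>R_0$ an elementary argument on arguments shows $\lambda+\mu_0\in\complement\overline{\Sigma_{\sigma''}}$ with $\sigma'':=\tfrac{\sigma+\sigma'}2>\omega(S)$ once $R_0$ is large, and sectoriality of $S$ then gives $\|(\lambda+\mu_0-S)^{-1}\|\lesssim|\lambda+\mu_0|^{-1}\lesssim|\lambda|^{-1}$, so $\|\lambda(\lambda-T)^{-1}\|\le C$ uniformly. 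Letting $\sigma'\downarrow\sigma$ yields $\omega(T)\le\sigma$.

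For the bounded $H^{\infty}$-calculus, fix $\psi_1\in(\sigma,\pi)$ and $\nu\in(\sigma,\psi_1)$; then $\nu>\omega(S)$ and $\psi_1>\angH(S)$. By the usual approximation/convergence lemma for sectorial operators it suffices to show $\|f(T)\|\lesssim\|f\|_{H^{\infty}(\Sigma_{\psi_1})}$ for $f\in H^{\infty}_0(\Sigma_{\psi_1})$. Representing $f(T)$ by the Dunford integral \eqref{eq:dunford_H_calculus_definition} over $\partial\Sigma_\nu$ and substituting $\mu=\lambda+\mu_0$ gives $f(T)=\tfrac1{2\pi i}\int_{\mu_0+\partial\Sigma_\nu}\widetilde f(\mu)(\mu-S)^{-1}\,d\mu$ with $\widetilde f(\mu):=f(\mu-\mu_0)$, where the integrand is holomorphic and decays like $|\mu|^{-1-\varepsilon}$ on the open set $(\mu_0+\Sigma_{\psi_1})\cap\rho(S)$. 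Since $\sigma(S)\subseteq\mu_0+\Sigma_\sigma\subseteq\mu_0+\Sigma_\nu$ and $0,\mu_0\in\rho(S)$, I would deform the contour, within $(\mu_0+\Sigma_{\psi_1})\cap\rho(S)$, to $\Gamma^{\mathrm{far}}\cup\Gamma^{\mathrm{near}}$, where $\Gamma^{\mathrm{far}}=\{se^{\pm i\nu}:s\ge R\}=\partial\Sigma_\nu\cap\{|\mu|\ge R\}$ and $\Gamma^{\mathrm{near}}$ is a fixed bounded curve in $(\mu_0+\Sigma_{\psi_1})\cap\rho(S)$ joining $Re^{i\nu}$ to $Re^{-i\nu}$ and encircling the bounded part of $\sigma(S)$; here $R$ is chosen so large that $\{se^{\pm i\nu}:s\ge R\}\subseteq\mu_0+\Sigma_{\psi_1}$ and that on $\Gamma^{\mathrm{far}}$ one has $\mu-\mu_0\in\Sigma_{\psi_1}$.

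Now estimate the two pieces. The contribution of $\Gamma^{\mathrm{near}}$ is $\lesssim\|f\|_{H^{\infty}(\Sigma_{\psi_1})}$ because $\Gamma^{\mathrm{near}}$ is a \emph{fixed} compact subset of $\rho(S)\cap(\mu_0+\Sigma_{\psi_1})$ (so $\sup_{\Gamma^{\mathrm{near}}}\|(\mu-S)^{-1}\|<\infty$) and $\sup_{\Gamma^{\mathrm{near}}}|\widetilde f|\le\|f\|_{H^{\infty}(\Sigma_{\psi_1})}$. On $\Gamma^{\mathrm{far}}$ I would split $\widetilde f(\mu)=f(\mu)+h(\mu)$ with $h(\mu):=f(\mu-\mu_0)-f(\mu)$; a Cauchy estimate for $f'$ on $\Sigma_{\psi_1}$ gives $|h(\mu)|\lesssim\|f\|_{H^{\infty}(\Sigma_{\psi_1})}\,|\mu|^{-1}$ for $\mu\in\Gamma^{\mathrm{far}}$, so together with $\|(\mu-S)^{-1}\|\lesssim|\mu|^{-1}$ (valid since $\nu>\omega(S)$) the $h$-part is bounded by $\int_R^\infty\|f\|_{\infty}s^{-2}\,ds\lesssim\|f\|_{H^{\infty}(\Sigma_{\psi_1})}$. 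The remaining integral $\tfrac1{2\pi i}\int_{\Gamma^{\mathrm{far}}}f(\mu)(\mu-S)^{-1}\,d\mu$ equals $f(S)-\tfrac1{2\pi i}\int_{\{se^{\pm i\nu}:0\le s<R\}}f(\mu)(\mu-S)^{-1}\,d\mu$; the second term is $\lesssim\|f\|_{\infty}$ since $\{se^{\pm i\nu}:0\le s<R\}$ is a fixed compact subset of $\rho(S)$ (using $0\in\rho(S)$), and $\|f(S)\|\lesssim\|f\|_{H^{\infty}(\Sigma_{\psi_1})}$ by the boundedness of the $H^{\infty}(\Sigma_{\psi_1})$-calculus of $S$ (which holds because $\psi_1>\sigma>\angH(S)$). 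Summing the three bounds gives $\|f(T)\|\lesssim\|f\|_{H^{\infty}(\Sigma_{\psi_1})}$ with a constant independent of $f$, and letting $\psi_1\downarrow\sigma$ yields $\angH(T)\le\sigma$.

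The step I expect to be the main obstacle is the bookkeeping in the third paragraph: justifying that the Dunford contour of $S$ — which lives in a sector \emph{at the origin} — can be deformed to the mixed contour $\Gamma^{\mathrm{far}}\cup\Gamma^{\mathrm{near}}$ while remaining in the domain of holomorphy $(\mu_0+\Sigma_{\psi_1})\cap\rho(S)$ of $\widetilde f(\mu)(\mu-S)^{-1}$ (near $\mu_0$ the contour must be bent away from the segment $(0,\mu_0)$, where $\widetilde f$ is undefined), and handling the interface at $|\mu|=R$. The two facts that make this work — $\sigma(S)\subseteq\mu_0+\Sigma_\sigma$ and $0\in\rho(S)$ — are exactly the content of the spectral hypothesis, while the boundedness of the $H^{\infty}$-calculus of $S$ (rather than mere $\Rsec$-sectoriality) is used only through the estimate $\|f(S)\|\lesssim\|f\|_{H^{\infty}(\Sigma_{\psi_1})}$ for the ``far'' piece.
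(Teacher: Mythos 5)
Your sectoriality step matches the paper's argument. The $H^\infty$-estimate, however, takes a genuinely different route: you shift variables $\mu=\lambda+\mu_0$ and decompose $\widetilde f(\mu)=f(\mu-\mu_0)$ as $f(\mu)+h(\mu)$, controlling $h$ by a Cauchy estimate, which forces a deformation of the shifted Dunford contour $\mu_0+\partial\Sigma_\nu$ back toward the origin. You rightly flag the homotopy through $(\mu_0+\Sigma_{\psi_1})\cap\rho(S)$ --- past the vertex $\mu_0\notin\mu_0+\Sigma_{\psi_1}$ and away from the slit $(0,\mu_0)$ on which $\widetilde f$ is undefined --- as the sticking point; it does need to be spelled out, e.g.\ via an explicit indentation around $\mu_0$ inside a disk contained in $\rho(S)$. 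The paper sidesteps contour deformation entirely: it keeps one contour $\partial\Sigma_\phi$ and applies the first resolvent identity $(\lambda-T)^{-1}-(\lambda-\mu_0-T)^{-1}=-\mu_0(\lambda-T)^{-1}(\lambda-\mu_0-T)^{-1}$, so the difference of Dunford integrals for $f(T)$ and $f(\mu_0+T)$ collapses to the single remainder $R(f)=\int_{\partial\Sigma_\phi} f(\lambda)(\lambda-T)^{-1}(\lambda-\mu_0-T)^{-1}\,d\lambda$, which is then estimated by cutting the contour at $|\lambda|=\varepsilon$: near $0$ by joint invertibility (both $0$ and $-\mu_0$ lie in $\rho(T)$), far away by the automatic $|\lambda|^{-2}$ decay of the product of sectorial resolvents. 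Both proofs spend the same structural facts and both invoke the bounded $H^\infty$-calculus of $\mu_0+T$ exactly once, through $\|f(\mu_0+T)\|\lesssim\|f\|_{H^\infty(\Sigma_\psi)}$; the resolvent-identity shortcut buys one controllable remainder in place of your several pieces and, crucially, no contour deformation, which is what makes the paper's argument markedly shorter and easier to make watertight.
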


\begin{proof}
Let us begin by proving that $T$ is sectorial of angle $\omega (T)\leq  \sigma$. It is enough to show that $T$ is sectorial of angle $\omega(T)\leq \psi$ where $\psi>\sigma$ is arbitrary. The sectoriality of $\mu_0+T$ implies that $\rho(T)\supseteq -\mu_0+\complement\overline{\Sigma}_{\phi}$ for all $\phi\in [\psi,\angH(\mu_0+T))$.
 Fix 
 such $\phi$
 and note that $\complement\overline{\Sigma}_{\psi}\setminus (-\mu_0+\complement\overline{\Sigma}_{\phi})$ is compact and it is contained in  $\{z\in \C\,:\,|\arg z|\geq \sigma\}$. Therefore $\rho(T)\supseteq \complement\overline{\Sigma}_{\psi}$ and the estimate
 $$
 \sup_{\lambda\in \complement\overline{\Sigma}_{\psi}} \|\lambda (\lambda-T)^{-1}\|_{\calL(X)}<\infty
 $$
 follows from the one of $\mu_0+T$ using \cite[Lemma 10.2.4]{Analysis2}.
 Now, applying \cite[Corollary 5.5.5]{Haase:2} with $A=T+\mu_0$ and $B=-\mu_0$, the statement follows. 
\end{proof}

\section{Block operator matrices}\label{sec:blockoperator}

\subsection{Diagonal dominance} The standing  assumption throughout this note is

\begin{assumption}[Diagonal dominance]	\label{ass:standing} Let $X_1,X_2$ be Banach spaces, and 
\begin{align*}
A&\colon \Do(A)\subseteq X_1\to X_1, \quad \text{ and }\quad 
D\colon \Do(D)\subseteq X_2\to X_2,	\\
B&\colon \Do(B)\subseteq X_2\to X_1, \quad \text{ and }\quad 
C\colon \Do(C)\subseteq X_1\to X_2
\end{align*}
be linear operators with domains $\Do(A)$, $\Do(D)$, $\Do(B)$, and $\Do(C)$, respectively, where
	\begin{enumerate}[{\rm(1)}]
		\item $A\colon\Do(A)\subseteq X_1\to X_1$ and $D\colon\Do(D)\subseteq X_2\to X_2$ are closed linear operators with dense domains;
		\item 
		$\Do(D) \subseteq \Do(B)$, $\Do(A) \subseteq \Do(C)$, and there exist $c_A,c_D,L\geq 0$ such that 
		\begin{align*}
		\|C x\|_{X_2}&\leq c_A \|Ax\|_{X_1}+ L \|x\|_{X_1} \quad \hbox{for all } x\in\Do(A),\\
		\|B y\|_{X_1}&\leq c_D \|D y\|_{X_2}+ L \|y\|_{X_2}\quad \hbox{for all } y\in\Do(D).
		\end{align*}
	\end{enumerate}
\end{assumption}  
With this assumption one sets
\begin{align*}
X\stackrel{{\rm def}}{=}X_1\times X_2 \quad \text{ and }\quad \Do(\A)\stackrel{{\rm def}}{=} \Do(A)\times \Do(D),
\end{align*}
compare \cite[Equation (2.2.3)]{Tretter}, and  
\begin{equation*}
\A:\Do(\A)\subseteq X\to X  \quad\hbox{with} \quad	\A
\begin{bmatrix}
x \\
y 
\end{bmatrix}
\stackrel{{\rm def}}{=}
\begin{bmatrix}
A & B  \\
C & D 
\end{bmatrix}
\begin{bmatrix}
x\\
y 
\end{bmatrix}, \quad \text{for all }\begin{bmatrix}
x\\
y 
\end{bmatrix}\in \Do(\A).
\end{equation*}
The operator $\A$ is called \emph{diagonally dominant} if Assumption~\ref{ass:standing} holds.  Here, compared to \cite[Definition 2.2.1]{Tretter}, closedness of $A$ and $D$ is assumed for simplicity instead of closability, see also \cite[Assumption 2.2]{Nag89} for a similar definition including closedness of $\A$. 
For $\A$ diagonally dominant we also write $\A = \diag + \B$, where
\begin{align}\label{eq:off_diagional_matrix}
\diag	\stackrel{{\rm def}}{=}	\begin{bmatrix}
A & 0  \\
0 & D 
\end{bmatrix} \quad\hbox{and}\quad \B	\stackrel{{\rm def}}{=}	\begin{bmatrix}
0 & B  \\
C & 0 
\end{bmatrix} \quad  \hbox{ with }\quad  \Do(\diag)= \Do(\B)=\Do(\A).
\end{align}
A diagonally dominant $\A$ is then obtained from the diagonal part $\diag$ by the relatively bounded perturbation $\B$. In particular its domain is already determined by the diagonal part $\diag$ which determines the ``degree of unboundedness'' of $\A$ as formulated in \cite[Assumption 2.2 f.]{Nag89}.

\begin{remark}[Boundedness of $B(\lambda-D)^{-1}$ and $C(\lambda-A)^{-1}$]\label{rem:ass_standing}
	By Assumption~\ref{ass:standing} 
	\begin{align*}
	B(\lambda-D)^{-1}\colon X_2\rightarrow X_1 \quad \hbox{if}\quad \lambda\in \rho(D)\neq \emptyset
	\end{align*}
%
	is bounded, 
	since 	for all  $y\in X_2$
	\begin{align*}
	\norm{B(\lambda-D)^{-1}y}_{X_1} &\leq c_D\norm{D(\lambda-D)^{-1}y}_{X_2} + L \norm{(\lambda-D)^{-1}y}_{X_2} \\
	&\leq c_D(1+\norm{ \lambda(\lambda-D)^{-1}}_{\calL(X_2)}\norm{y}_{X_2} + L \norm{(\lambda-D)^{-1}}_{\calL(X_2)} \norm{y}_{X_2},
	\end{align*}
	and analogously, if  $\lambda\in\rho(A)\neq \emptyset$, then $C(\lambda-A)^{-1}\colon X_1\rightarrow X_2$,
	is bounded. 
	If $D$ is injective with dense range, and if Assumption~\ref{ass:standing} holds with $L=0$, then 
	\begin{align*}
	\norm{B D^{-1}y}_{X_2} \leq c_D\norm{y}_{X_2} \quad \hbox {for all } y\in \Ran(D),
	\end{align*}
	and since $\Ran(D)\subseteq X_2$ is dense, $B D^{-1}$ has a unique continuous extension in $\calL(X_2,X_1)$, and similarly for $C A^{-1}$ in $\calL(X_1,X_2)$ if $A$ is injective with dense range.
\end{remark}

\subsection{A factorization of diagonally dominant operators}\label{subsec:M}
For $\A$  diagonally dominant and $\lambda\in \rho(A)\cap \rho(D)=\rho(\diag)\neq \emptyset$,
one sets
\begin{equation}
\label{eq:def_M_A_0}
\M(\lambda)
\stackrel{{\rm def}}{=} (\lambda-\A)(\lambda-\diag)^{-1}.
\end{equation}
The operators defined by
\begin{align*}
S_1(\lambda)&
\stackrel{{\rm def}}{=} (\lambda-A) - B(\lambda-D)^{-1}C \quad \hbox{for } \quad \lambda \in \rho(D),\\
S_2(\lambda)&
\stackrel{{\rm def}}{=} (\lambda-D) - C(\lambda-A)^{-1}B \quad \hbox{for } \quad \lambda \in \rho(A),
\end{align*}
are called \emph{Schur-complements} of $\A$, see e.g. \cite[Definition 2.2.12]{Tretter}, and they serve in the theory of block operator matrices as a substitute to the determinant of $2\times2$-matrices.
Factoring out $(\lambda-A)$ and $(\lambda-D)$ respectively, one obtains the pair of operators
\begin{align}\label{eq:def_M_1_M_2}
\begin{split}
M_1(\lambda) &
\stackrel{{\rm def}}{=}
\id - B (\lambda-D)^{-1} C(\lambda- A)^{-1}  \quad \hbox{for } \quad \lambda \in \rho(A)\cap \rho(D),\\
M_2(\lambda) &
\stackrel{{\rm def}}{=}
\id - C (\lambda-A)^{-1} B(\lambda- D)^{-1} \quad \hbox{for } \quad \lambda \in \rho(A)\cap \rho(D).
\end{split}
\end{align}
These  are the main building blocks in the following factorization, where $M_1(\lambda)$ and $M_2(\lambda)$ encode the interaction of the different blocks of $\A$. These operators have been introduced in the context of the spectral theory of block operator matrices, see\cite[Lemma 2.1]{Nag89} for the case of bounded block operator matrices, \cite[Theorem 2.4]{Nag97}  for the unbounded diagonally dominant triangular case, and also e.g. \cite[Proposition 2.3.4 ff.]{Tretter} for the general diagonally dominant case.

\begin{proposition}[Factorization of diagonally dominant $\A$]
	\label{l:relation_resolvent_AD_calA}
	Let Assumption \ref{ass:standing} be satisfied, 
	and let $\lambda\in \rho(A)\cap \rho(D)\neq \emptyset$.  
	Then the following hold:
	\begin{enumerate}[{\rm(a)}]
		\item\label{eq:basic_identity_MA} 
		One has
		\begin{equation*}%
		\lambda-\A= \M(\lambda)(\lambda-\diag) \quad \hbox{and} \quad 
		\M(\lambda)=\begin{bmatrix}
		\id & -B(\lambda-D)^{-1}  \\
		-C(\lambda-A)^{-1} & \id 
		\end{bmatrix}.
		\end{equation*}
		\item\label{it:lambda_M} The operators $M_1(\lambda)$, $M_2(\lambda)$, and $\M(\lambda)$ are bounded on $X_1$, $X_2$, and $X$, respectively.
		\item\label{it:lambda_resolvent_c} The following are equivalent
		\begin{enumerate}[{\rm(1)}]
			\item\label{it:lambda_resolvent_calA} $\lambda\in \rho(\A)$;
			\item \label{it:M_invertible} $\M(\lambda)$ is boundedly invertible;
			\item \label{it:M_1_invertible} $
			M_1(\lambda)
			$ is boundedly invertible;
			\item \label{it:M_2_invertible} $
			M_2(\lambda)
			$ is boundedly invertible.
		\end{enumerate}
		\item	\label{eq:resolvent_formula_block_matrix_I} If one of the previous conditions in \eqref{it:lambda_resolvent_c} is satisfied, then 
		\begin{align*}
		(\lambda-\A)^{-1}
		= (\lambda-\diag)^{-1}\M(\lambda)^{-1}
		\end{align*}
		where $\M(\lambda)^{-1}$ has the block matrix representations
		\begin{align*}
		\M(\lambda)^{-1}&=\begin{bmatrix}
			M_1(\lambda)^{-1} & M_1(\lambda)^{-1}B(\lambda-D)^{-1}  \\
			M_2(\lambda)^{-1} C(\lambda-A)^{-1} & M_2(\lambda)^{-1}  
			\end{bmatrix}\\
		&=
		\begin{bmatrix}
		\id & B(\lambda-D)^{-1} M_2(\lambda)^{-1} \\
		0 & M_2(\lambda)^{-1}
		\end{bmatrix} 
		\begin{bmatrix}
		\id & 0  \\
		C(\lambda-A)^{-1} & \id 
		\end{bmatrix}
		\\
		&= 	\begin{bmatrix}
		M_1(\lambda)^{-1} & 0 \\
		C(\lambda-A)^{-1}M_1(\lambda)^{-1} & \id
		\end{bmatrix}
		\begin{bmatrix}
		\id & B(\lambda-D)^{-1}  \\
		0 & \id 
		\end{bmatrix}.
		\end{align*}
	\end{enumerate}
\end{proposition}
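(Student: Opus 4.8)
The plan is to verify all four parts by direct, but careful, formal computation with block operators, taking care of domains throughout. The central algebraic identity is part~\eqref{eq:basic_identity_MA}, and everything else follows from it together with the elementary $2\times 2$ matrix identities
\begin{align*}
\begin{bmatrix} \id & -B(\lambda-D)^{-1} \\ -C(\lambda-A)^{-1} & \id \end{bmatrix}
=
\begin{bmatrix} \id & -B(\lambda-D)^{-1} \\ 0 & \id \end{bmatrix}
\begin{bmatrix} \id & 0 \\ -C(\lambda-A)^{-1} & \id \end{bmatrix}
\begin{bmatrix} \id & 0 \\ 0 & M_2(\lambda) \end{bmatrix}
\end{align*}
and its transpose with $M_1(\lambda)$ in the lower-right corner. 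So I would organize the proof as: (a) the factorization; (b) boundedness; (c) the chain of equivalences; (d) the resolvent formula.

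\emph{Part (a).} For $\begin{bsmallmatrix} x \\ y \end{bsmallmatrix}\in\Do(\diag)=\Do(A)\times\Do(D)$ one has $(\lambda-\diag)^{-1}$ mapping $X$ onto $\Do(\diag)$, and I would simply compute $(\lambda-\A)(\lambda-\diag)^{-1}$ applied to an arbitrary $\begin{bsmallmatrix} u \\ v \end{bsmallmatrix}\in X$: with $x=(\lambda-A)^{-1}u$, $y=(\lambda-D)^{-1}v$, one gets $(\lambda-\A)\begin{bsmallmatrix} x \\ y \end{bsmallmatrix} = \begin{bsmallmatrix} (\lambda-A)x - By \\ -Cx + (\lambda-D)y \end{bsmallmatrix} = \begin{bsmallmatrix} u - B(\lambda-D)^{-1}v \\ -C(\lambda-A)^{-1}u + v \end{bsmallmatrix}$, which reads off the asserted block form of $\M(\lambda)$. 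Note that $By$ and $Cx$ make sense by Assumption~\ref{ass:standing}(2) since $y\in\Do(D)\subseteq\Do(B)$ and $x\in\Do(A)\subseteq\Do(C)$. Conversely multiplying on the right by $(\lambda-\diag)$ (defined on $\Do(\A)$) recovers $\lambda-\A$ on $\Do(\A)$, so the two operators agree as unbounded operators with the same domain.

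\emph{Part (b).} Boundedness of $B(\lambda-D)^{-1}\in\calL(X_2,X_1)$ and $C(\lambda-A)^{-1}\in\calL(X_1,X_2)$ is exactly Remark~\ref{rem:ass_standing}; hence $\M(\lambda)\in\calL(X)$, and $M_1(\lambda)=\id-B(\lambda-D)^{-1}\cdot C(\lambda-A)^{-1}\in\calL(X_1)$, $M_2(\lambda)=\id-C(\lambda-A)^{-1}\cdot B(\lambda-D)^{-1}\in\calL(X_2)$ as compositions of bounded operators.

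\emph{Parts (c) and (d).} From (a), $\lambda\in\rho(\A)$ iff $\lambda-\A$ is bijective from $\Do(\A)$ onto $X$ with bounded inverse; since $\lambda-\diag:\Do(\A)\to X$ is already a bijection with bounded inverse, this is equivalent to $\M(\lambda):X\to X$ being boundedly invertible, giving $\eqref{it:lambda_resolvent_calA}\Leftrightarrow\eqref{it:M_invertible}$ and the formula $(\lambda-\A)^{-1}=(\lambda-\diag)^{-1}\M(\lambda)^{-1}$. For $\eqref{it:M_invertible}\Leftrightarrow\eqref{it:M_2_invertible}$ I would use the factorization of $\M(\lambda)$ displayed above: the two outer triangular factors $\begin{bsmallmatrix} \id & -B(\lambda-D)^{-1} \\ 0 & \id \end{bsmallmatrix}$ and $\begin{bsmallmatrix} \id & 0 \\ -C(\lambda-A)^{-1} & \id \end{bsmallmatrix}$ are always boundedly invertible on $X$ (with obvious inverses obtained by flipping the sign of the off-diagonal entry), so $\M(\lambda)$ is boundedly invertible iff $\mathrm{diag}(\id,M_2(\lambda))$ is, iff $M_2(\lambda)$ is; the transposed factorization with $M_1(\lambda)$ in the $(1,1)$ slot yields $\eqref{it:M_invertible}\Leftrightarrow\eqref{it:M_1_invertible}$. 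Inverting each factor and multiplying out then produces the three displayed block representations of $\M(\lambda)^{-1}$ in part~\eqref{eq:resolvent_formula_block_matrix_I}; the first (symmetric) form comes from combining the two triangular factorizations, e.g. reading the $(1,1)$ entry from the $M_1$-factorization and the $(2,2)$ entry from the $M_2$-factorization, and checking the cross terms against the Schur-type identities $M_1(\lambda)^{-1}B(\lambda-D)^{-1}=B(\lambda-D)^{-1}M_2(\lambda)^{-1}$ and $M_2(\lambda)^{-1}C(\lambda-A)^{-1}=C(\lambda-A)^{-1}M_1(\lambda)^{-1}$, which follow from $(\lambda-D)^{-1}C(\lambda-A)^{-1}\cdot M_1(\lambda)= M_2$-type intertwining, i.e. from $M_2(\lambda)\,C(\lambda-A)^{-1}=C(\lambda-A)^{-1}\,M_1(\lambda)$ applied after multiplying by the inverses.

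\emph{Main obstacle.} The genuine computational content is light; the one place to be careful is the bookkeeping of domains and the order of composition in part~(a) — making sure $\M(\lambda)(\lambda-\diag)$ and $\lambda-\A$ are compared as operators with the \emph{same} domain $\Do(\A)$, and that the various ``obvious'' cancellations like $C(\lambda-A)^{-1}(\lambda-A)x=Cx$ are legitimate on $\Do(A)$ (they are, since $(\lambda-A)^{-1}$ maps $X_1$ onto $\Do(A)$ and is a genuine two-sided inverse of $\lambda-A$ there). The other mild subtlety is verifying the intertwining identity $M_2(\lambda)\,C(\lambda-A)^{-1}=C(\lambda-A)^{-1}\,M_1(\lambda)$ at the level of bounded operators on $X_1\to X_2$: expanding both sides, this reduces to $C(\lambda-A)^{-1}B(\lambda-D)^{-1}\cdot C(\lambda-A)^{-1}$ appearing on each side, which is immediate. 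Everything else is the standard triangular-factorization argument for $2\times2$ block matrices, transcribed to unbounded entries.
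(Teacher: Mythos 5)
Your proposal takes the same route as the paper — direct computation for~\eqref{eq:basic_identity_MA}, Remark~\ref{rem:ass_standing} for~\eqref{it:lambda_M}, and triangular block factorizations of $\M(\lambda)$ for~\eqref{it:lambda_resolvent_c} and~\eqref{eq:resolvent_formula_block_matrix_I} — so the strategy is sound. However, the displayed three-factor factorization you start from is wrong. Multiplying out
\[
\begin{bmatrix} \id & -B(\lambda-D)^{-1} \\ 0 & \id \end{bmatrix}
\begin{bmatrix} \id & 0 \\ -C(\lambda-A)^{-1} & \id \end{bmatrix}
\begin{bmatrix} \id & 0 \\ 0 & M_2(\lambda) \end{bmatrix}
\]
gives $(1,1)$-entry $\id + B(\lambda-D)^{-1}C(\lambda-A)^{-1}$ (sign and structure wrong — this is not $M_1(\lambda)$, and certainly not $\id$) and $(1,2)$-entry $-B(\lambda-D)^{-1}M_2(\lambda)$; neither matches $\M(\lambda)$. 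In an LDU-type block factorization of $\M(\lambda)$, the order of the factors matters: the lower-triangular factor must be on the left and the diagonal factor $\mathrm{diag}(\id,M_2(\lambda))$ must sit \emph{between} the two triangular factors, i.e.
\[
\M(\lambda)=
\begin{bmatrix} \id & 0 \\ -C(\lambda-A)^{-1} & \id \end{bmatrix}
\begin{bmatrix} \id & 0 \\ 0 & M_2(\lambda) \end{bmatrix}
\begin{bmatrix} \id & -B(\lambda-D)^{-1} \\ 0 & \id \end{bmatrix},
\]
and the companion version places $M_1(\lambda)$ in the $(1,1)$-slot, not the lower-right as you wrote. The paper avoids this trap by working with the two-factor versions \eqref{eq:factorization_M}, absorbing the diagonal factor into the adjacent triangular one.

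The downstream reasoning — the outer unipotent triangular factors are always boundedly invertible, so $\M(\lambda)$ is boundedly invertible iff $M_j(\lambda)$ is; the intertwining identities $M_2(\lambda)C(\lambda-A)^{-1}=C(\lambda-A)^{-1}M_1(\lambda)$ and $M_1(\lambda)B(\lambda-D)^{-1}=B(\lambda-D)^{-1}M_2(\lambda)$, which you verify correctly — is fine and produces the three representations of $\M(\lambda)^{-1}$ once you plug in the corrected factorization. So this is a concrete algebraic slip rather than a gap in approach, but you should redo the opening display: as written, the identity you build the argument on is false.
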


\begin{proof}[Proof of Proposition~\ref{l:relation_resolvent_AD_calA}]
	The factorization and the block representation in  \eqref{eq:basic_identity_MA} can be verified in a straightforward way on $\Do(\A)=\Do(\diag)$ using the block representations of $\lambda-\A$ and $(\lambda-\diag)^{-1}$.

	To prove \eqref{it:lambda_M}, note that
	by Remark~\ref{rem:ass_standing} the operators $M_1(\lambda)$ and $M_2(\lambda)$ are bounded if $\A$ is diagonally dominant. Similarly, using the block matrix representation in \eqref{eq:basic_identity_MA} of $\M(\lambda)$ it follows that $\M(\lambda)$ is bounded.

	The statement of \eqref{it:lambda_resolvent_c} is essentially given in \cite[Corollary 2.3.5]{Tretter}, see also \cite[Lemma 2.1]{Nag89} for the case of bounded operators and \cite[Theorem 2.4]{Nag97} for block triangular operator matrices. For the sake of completeness the proof is given here. 
	The implication \eqref{it:lambda_resolvent_calA}$\Rightarrow$\eqref{it:M_invertible}
	follows 
	since for $\lambda\in \rho(\A)\cap \rho(\diag)$, \eqref{eq:basic_identity_MA} implies  that $\M(\lambda)$ is invertible and
	\begin{equation}
	\label{eq:inverse_Mcal_assuming_invertibility_calLA}
	\M(\lambda)^{-1}=(\lambda-\diag)(\lambda-\A)^{-1}.
	\end{equation}
	is bounded. 
	Note that by a row reduction one obtains the following factorizations
	\begin{align}\label{eq:factorization_M}
	\begin{split}
	\M(\lambda) &= 
	\begin{bmatrix}
	\id & 0  \\
	-C(\lambda-A)^{-1} & \id 
	\end{bmatrix}
	\begin{bmatrix}
	\id & -B(\lambda-D)^{-1}  \\
	0 & M_2(\lambda)
	\end{bmatrix} \\
	&= 
	\begin{bmatrix}
	\id & -B(\lambda-D)^{-1}  \\
	0 & \id 
	\end{bmatrix}
	\begin{bmatrix}
	M_1(\lambda) & 0 \\
	-C(\lambda-A)^{-1} & \id
	\end{bmatrix}.
	\end{split}
	\end{align}
	From   \eqref{eq:factorization_M} one sees also that \eqref{it:M_invertible}, 	\eqref{it:M_1_invertible} and 
	\eqref{it:M_2_invertible} are equivalent. From~\eqref{eq:basic_identity_MA} it follows that if one of the conditions
	\eqref{it:M_invertible}, 	\eqref{it:M_1_invertible} or 
	\eqref{it:M_2_invertible} holds for $\lambda\in \rho(\diag)$, then $\lambda-\A$ is boundedly invertible and hence $\lambda\in \rho(\A)$.
	
	Part~\eqref{eq:resolvent_formula_block_matrix_I} follows from \eqref{eq:basic_identity_MA} and the representation \eqref{eq:factorization_M}.	
\end{proof}

\subsection{Comparison to further factorizations}\label{subsec:Schur}
In \cite{Engel_1995} also the operators
\begin{align*}
\begin{split}
N_1(\lambda) &
\stackrel{{\rm def}}{=}
\id - (\lambda- A)^{-1} B (\lambda-D)^{-1} C  \quad \hbox{for } \quad \lambda \in \rho(A)\cap \rho(D),\\
N_2(\lambda) &
\stackrel{{\rm def}}{=}
\id - (\lambda- D)^{-1} C (\lambda-A)^{-1} B \quad \hbox{for } \quad \lambda \in \rho(A)\cap \rho(D).
\end{split}
\end{align*}
have been introduced by Engel which give rise to a factorization of the type 
\begin{align*}
\lambda-\A= (\lambda-\diag)\mathcal{N}(\lambda), \quad \hbox{where}\quad \mathcal{N}(\lambda)= (\lambda-\diag)^{-1}(\lambda-\A),
\end{align*}
and its inverse can be represented in terms of $N_1(\lambda)$ and $N_2(\lambda)$. Both factorizations are equivalent for the setting discussed here, since for $\lambda \in \rho(A)\cap \rho(D)$
	\begin{align*}
	M_1(\lambda)= (\lambda-A)N_1(\lambda)(\lambda-A)^{-1}	 \quad \hbox{and}\quad M_2(\lambda)= (\lambda-D)N_2(\lambda)(\lambda-D)^{-1}.
	\end{align*}
	A technical advantage of the factorization $\lambda-\A= \M(\lambda) (\lambda-\diag)$ used here is that $\M(\lambda)$ is a bounded operator in $X$.

Furthermore, the \emph{Frobenius-Schur factorization} is a classical 
factorization for block operator matrices. Under the general assumptions 
$\Do(A)\subseteq \Do(C)$, $\rho(A)\neq \emptyset$, and that, for some (and hence for all) $\lambda \in  \rho(A)$, the operator $(\lambda-A)^{-1}B$ is bounded on $\Do(B)$, one has -- assuming here for simplicity that $\A$ is closed -- that 
\begin{align*}
\lambda -\A = 
\begin{bmatrix}  \id & 0 \\ C(\lambda-A)^{-1}& \id  \end{bmatrix} 
\begin{bmatrix}  \lambda -A & 0 \\ 0 & S_2(\lambda) \end{bmatrix} 
\begin{bmatrix}  \id  &\overline{(\lambda-A)^{-1} B} \\ 0& \id  
\end{bmatrix},
\end{align*}
compare \cite[Theorem 2.2.14]{Tretter}. If $\A$ is diagonally dominant, then $\Do(S_2(\lambda))=\Do(D)$ for $\lambda \in \rho(A)$, cf. \cite[Rem. 2.2.13]{Tretter}.   Analogous statements hold for $S_1(\lambda)$. 	For $\A$ satisfying Assumption~\ref{ass:standing}, $\lambda \in \rho(\diag)$, and $S_j(\lambda)$ closed for $j\in \{1,2\}$,   one has that bounded invertibility of $M_j(\lambda)$ and $S_j(\lambda)$  are equivalent. On the one hand, if for instance $M_1(\lambda)$ is boundedly invertible, then $S_1(\lambda)^{-1}=(\lambda-A)^{-1}M_1(\lambda)^{-1}$ is bijective and bounded. On the other hand, if $S_1(\lambda)$ is boundedly invertible, then $M_1(\lambda)=S_1(\lambda)(\lambda-A)^{-1}$ is closed and bijective, and hence boundedly invertible. The analogous argument applies to the case $j=2$.

The factorization in Subsection~\ref{subsec:M} is valid only for diagonally dominant operators with non-empty resolvent set, whereas the Frobenius-Schur decomposition applies to a general class of  block operator matrices.
The Frobenius-Schur decomposition needs the assumption that
$(\lambda-A)^{-1}B$ is bounded on $\Do(B)$, and this is quite restrictive for the purpose of the analysis presented below, and it does not hold automatically for diagonally dominant operators as discussed in the following Example~\ref{ex:cond_onB}.
	The condition that $(\lambda-A)^{-1}B$ is bounded on $\Do(B)$ is related to the orders of $A,D$ and $B$. Heuristically, since $B(\lambda-D)^{-1}$ is bounded for $\A$ diagonally dominant the order of $B$ is at most the order of $D$, and similarly, that $(\lambda-A)^{-1}B$ is bounded implies  that the order of $B$ is at most the order of $A$. 
\begin{example}[The condition on $(\lambda-A)^{-1}B$]\label{ex:cond_onB}
In  $L^p(\R^d)\times L^p(\R^d)$ for $p\in (1,\infty)$ 
	\begin{align*}
	\A_\alpha= \begin{bmatrix}
	-\Delta & (-\Delta)^\alpha \\ 0 & (-\Delta)^\alpha
	\end{bmatrix} \quad  \hbox{ with } 
	\Do(\A_\alpha)=H^{2,p}(\R^d)\times H^{2\alpha,p}(\R^d), \quad \alpha >0
	\end{align*}
	is diagonally dominant for all $\alpha>0$, but for $\alpha >1$
	\begin{align*}
	(\lambda-A)^{-1}B = (\lambda+\Delta)^{-1}(-\Delta)^\alpha, \quad \lambda \in \rho(-\Delta)=\C\setminus [0,\infty)
	\end{align*}
	does not extend to a bounded operator on  $L^p(\R^d)$. Nevertheless in this situation the factorization given in Subsection~\ref{subsec:M} applies.
\end{example}

\section{Sectoriality and $\mathcal{R}$-sectoriality for block operator matrices}\label{sec:sec_Rsec}
In this section we present our main results concerning the sectoriality and $\Rsec$-sectoriality of block operator matrices. Further perturbation results for block operators with smallness conditions are given in Subsection~\ref{subsec:perturbative_Rsec}.
The proofs of Theorem~\ref{t:rsec_necessary_sufficient_condition_II},  Corollary~\ref{t:rsec_necessary_sufficient_condition}, and Proposition \ref{l:one_side_A_0_A_estimate}   will be given in Subsection \ref{ss:proofs_sectoriality} below.

\begin{theorem}[Characterization of sectoriality and $\Rsec$-sectoriality]
	\label{t:rsec_necessary_sufficient_condition_II}
	Suppose that Assumption \ref{ass:standing} with $L=0$ holds and that \begin{equation}
	\label{eq:equivalence_norm_A_0_A}
	\|\diag x\|_X\lesssim \|\A  x\|_{X}\ \ \ \text{ for all }x\in \Do(\diag)=\Do(\A).
	\end{equation}
	\begin{enumerate}[{\rm(a)}]
		\item\label{it:A_sectorial_a} If $A$ and $D$ are sectorial operators, then 
		for each $\psi\in  [\omega(A)\vee \omega(D),\pi)$ the following are equivalent:
		\begin{enumerate}[{\rm(1)}]
			\item\label{it:A_sectorial} $\A$ is sectorial of angle $\psi$;
			\item\label{it:schur_compleements_bounded} $\overline{\Ran(\A)}=X$, and for all $\phi>\psi$ and for one $j\in \{1,2\}$  %
			\begin{align*}
			M_j(\lambda)^{-1}\in \calL(X_j) \hbox{ for all } \lambda\in \complement\overline{\Sigma_{\phi}}, \hbox{ and } \sup \{\norm{M_j(\lambda)^{-1}}\colon \lambda\in   \complement\overline{\Sigma_{\phi}}\}<\infty.
			\end{align*}
		\end{enumerate}
		\item\label{it:A_R_sectorial_equivalence} If $A$ and $D$ are $\mathcal{R}$-sectorial operators, then 
		for each $\psi\in  [\angR(A)\vee \angR(D),\pi)$ the following are equivalent:
		\begin{enumerate}[{\rm(1)}]
			\item\label{it:A_R_sectorial} $\A$ is $\Rsec$-sectorial of angle $\psi$;
			\item\label{it:schur_compleements_R_bounded}
			$\overline{\Ran(\A)}=X$, and for all $\phi>\psi$ and for one $j\in \{1,2\}$  
			\begin{align*}
			M_j(\lambda)^{-1}\in \calL(X_j) \hbox{ for all } \lambda\in \complement\overline{\Sigma_{\phi}}, \hbox{ and } \Rsec(M_j(\lambda)^{-1}\colon \lambda\in   \complement\overline{\Sigma_{\phi}})<\infty.
			\end{align*}
		\end{enumerate}
		\item \label{it:X_reflexive} Finally, if $X$ is reflexive, then the condition $\overline{\Ran(\A)}=X$ in \eqref{it:schur_compleements_bounded} and \eqref{it:schur_compleements_R_bounded} can be removed.
	\end{enumerate}
	
\end{theorem}

\begin{remark}[Optimality of the angle]
	If $C=B=0$, then $\A=\diag$ and therefore (in general) the inequalities $\omega(\A)\geq \omega(A)\vee \omega(D)=\omega(\diag)$ and $\angR(\A)\geq \angR(A)\vee \angR(D)=\angR(\diag)$ cannot be  improved.
\end{remark}
\begin{remark}[Closedness of $\A$]
	The closedness of $\A$ does not follow from Assumption~\ref{ass:standing}. In fact, it can be characterized by the Schur complements, cf. \cite[Theorem 2.2.14]{Tretter}, and there seems to be no such characterization by the operators $M_1(\cdot)$ and $M_2(\cdot)$ besides that $\A$ is closed if one of the conditions in Proposition~\ref{l:relation_resolvent_AD_calA}~\eqref{it:lambda_resolvent_c} holds. 
	However, \eqref{eq:equivalence_norm_A_0_A} 
	together with Assumption~\ref{ass:standing} implies that 
	\begin{equation*}
	\|\A  x\|_{X} +\|x\|_X \eqsim \|\diag x\|_X + \|x\|_X \quad \text{for all }x\in \Do(\A)=\Do(\diag),
	\end{equation*}
	and hence
	together with the closedness of  $\diag$, the closedness of $\A$ follows. In particular Proposition~\ref{l:one_side_A_0_A_estimate} below implies already the closedness of $\A$. For further conditions ensuring the closedness of $\A$ see also \cite[Theorem 2.2.8]{Tretter}.
\end{remark}

\begin{corollary}[Characterization of sectoriality and $\Rsec$-sectoriality for invertible $\diag$]
	\label{t:rsec_necessary_sufficient_condition}
	Let Assumption \ref{ass:standing} be satisfied, and assume that $A$ and $D$ are boundedly invertible.
	\begin{enumerate}[{\rm(a)}]
		\item If $A$ and $D$ are sectorial operators,
		then for each $\psi\in [\omega(A)\vee \omega(D),\pi)$ the following are equivalent:
		\begin{enumerate}[{\rm(1)}]
			\item\label{it:A_sectorial_cor} $\A$ is an invertible sectorial operator of angle $\psi$;
			\item\label{it:schur_compleements_bounded_cor} For all $\phi>\psi$  and for one $j\in \{1,2\}$  
			\begin{align*}
			M_j(\lambda)^{-1}\in \calL(X_j) \hbox{ for all } \lambda\in \complement\overline{\Sigma_{\phi}}\cup\{0\}, \hbox{ and } \sup \{\norm{M_j(\lambda)^{-1}}\colon \lambda\in   \complement\overline{\Sigma_{\phi}}\}<\infty.
			\end{align*}
		\end{enumerate}
		\item If $A$ and $D$ are $\mathcal{R}$-sectorial operators,
		then for each $\psi\in [\angR(A)\vee \angR(D),\pi)$ the following are equivalent:
		\begin{enumerate}[{\rm(1)}]
			\item\label{it:A_R_sectorial_cor} $\A$ is an invertible $\Rsec$-sectorial operator of angle $\psi$;
			\item\label{it:schur_compleements_R_bounded_cor} 
			For all $\phi>\psi$  and for one $j\in \{1,2\}$
			\begin{align*}
			M_j(\lambda)^{-1}\in \calL(X_j) \hbox{ for all } \lambda\in \complement\overline{\Sigma_{\phi}}\cup\{0\}, \hbox{ and } 	\Rsec(M_j(\lambda)^{-1}\colon \lambda\in   \complement\overline{\Sigma_{\phi}})<\infty.
			\end{align*}
		\end{enumerate}
	\end{enumerate}
\end{corollary}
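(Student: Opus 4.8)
The plan is to deduce Corollary~\ref{t:rsec_necessary_sufficient_condition} directly from Theorem~\ref{t:rsec_necessary_sufficient_condition_II}. The only gap to bridge is that Theorem~\ref{t:rsec_necessary_sufficient_condition_II} presupposes Assumption~\ref{ass:standing} \emph{with $L=0$} together with the coercivity estimate \eqref{eq:equivalence_norm_A_0_A}, whereas here one only assumes Assumption~\ref{ass:standing} with a general $L$ and bounded invertibility of $A$ and $D$. Both extra hypotheses will turn out to come for free from the invertibility of $A$, $D$ (resp.\ of $\A$).

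First I would reduce to the case $L=0$. Since $A$ and $D$ are boundedly invertible, $0\in\rho(A)\cap\rho(D)=\rho(\diag)$, so $\|x\|_{X_1}\leq\|A^{-1}\|_{\calL(X_1)}\|Ax\|_{X_1}$ for $x\in\Do(A)$ and $\|y\|_{X_2}\leq\|D^{-1}\|_{\calL(X_2)}\|Dy\|_{X_2}$ for $y\in\Do(D)$. Substituting these bounds into the relative boundedness inequalities in Assumption~\ref{ass:standing}(2) absorbs the $L$-terms, so $\A$ satisfies Assumption~\ref{ass:standing} with $L=0$ and possibly larger constants $c_A,c_D$. Crucially, this modifies neither the operators $M_1(\lambda)$, $M_2(\lambda)$, $\M(\lambda)$, nor the domains $\Do(A),\Do(D),\Do(\A)$, nor the sectoriality or $\Rsec$-sectoriality angles of $A$ and $D$, so the statement of the corollary is unaffected by the replacement.

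Next I would record the link between invertibility of $\A$ and the hypotheses of Theorem~\ref{t:rsec_necessary_sufficient_condition_II}. Evaluating the factorization of Proposition~\ref{l:relation_resolvent_AD_calA}(a) at $\lambda=0\in\rho(\diag)$ gives $\A=\M(0)\diag$ on $\Do(\A)=\Do(\diag)$, while Proposition~\ref{l:relation_resolvent_AD_calA}(c) shows that the statements ``$0\in\rho(\A)$'', ``$\M(0)$ is boundedly invertible'', ``$M_1(0)$ is boundedly invertible'', ``$M_2(0)$ is boundedly invertible'' are all equivalent. Whenever they hold one obtains $\diag x=\M(0)^{-1}\A x$ for $x\in\Do(\A)$, hence $\|\diag x\|_X\leq\|\M(0)^{-1}\|_{\calL(X)}\|\A x\|_X$, which is precisely \eqref{eq:equivalence_norm_A_0_A}; and $0\in\rho(\A)$ trivially yields $\overline{\Ran(\A)}=X$. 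In other words, the two ``extra'' requirements of Theorem~\ref{t:rsec_necessary_sufficient_condition_II} hold exactly when $\A$ is invertible, and the invertibility of $\A$ is exactly what is encoded by adjoining the point $\lambda=0$ to the $M_j$-condition.

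Finally I would run Theorem~\ref{t:rsec_necessary_sufficient_condition_II} in both directions, treating (a) and (b) simultaneously. If $\A$ is an invertible sectorial (resp.\ $\Rsec$-sectorial) operator of angle $\psi$, then $0\in\rho(\A)$, so by the previous paragraph $M_1(0)$ and $M_2(0)$ are boundedly invertible and \eqref{eq:equivalence_norm_A_0_A} holds; Theorem~\ref{t:rsec_necessary_sufficient_condition_II} then produces some $j\in\{1,2\}$ for which $M_j(\lambda)^{-1}\in\calL(X_j)$ on $\complement\overline{\Sigma_\phi}$ with a uniform bound (resp.\ $\Rsec$-bound) for every $\phi>\psi$, and appending the already-known invertibility of $M_j(0)$ gives condition (2). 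Conversely, for the $j$ in condition (2), bounded invertibility of $M_j(0)$ forces $0\in\rho(\A)$, hence \eqref{eq:equivalence_norm_A_0_A} and $\overline{\Ran(\A)}=X$, and Theorem~\ref{t:rsec_necessary_sufficient_condition_II} upgrades the uniform bound (resp.\ $\Rsec$-bound) of $\{M_j(\lambda)^{-1}:\lambda\in\complement\overline{\Sigma_\phi}\}$ to sectoriality (resp.\ $\Rsec$-sectoriality) of $\A$ of angle $\psi$, which together with $0\in\rho(\A)$ is the assertion in (1). I do not expect a genuine obstacle here, since all the analytic weight already rests in Theorem~\ref{t:rsec_necessary_sufficient_condition_II}; the only points requiring care are the bookkeeping in the reduction to $L=0$ (verifying that the $M_j$ are untouched) and the clean matching of the ``$\cup\{0\}$'' in condition (2) with invertibility of $\A$ via Proposition~\ref{l:relation_resolvent_AD_calA}(c).
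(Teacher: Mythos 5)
Your proof is correct and follows the same route the paper takes: reduce to $L=0$ by absorbing the lower-order constant using $0\in\rho(A)\cap\rho(D)$, then apply Theorem~\ref{t:rsec_necessary_sufficient_condition_II} after observing that the hypothesis $0\in\rho(\A)$ (equivalently, $M_j(0)$ invertible, by Proposition~\ref{l:relation_resolvent_AD_calA}\eqref{it:lambda_resolvent_c}) yields both \eqref{eq:equivalence_norm_A_0_A} and $\overline{\Ran(\A)}=X$. The paper's one-line proof only gestures at why \eqref{eq:equivalence_norm_A_0_A} follows from $\Do(\A)=\Do(\diag)$ together with $0\in\rho(\A)$ (implicitly a closed-graph argument), whereas you derive it cleanly and more explicitly from the factorization $\diag=\M(0)^{-1}\A$, and you also spell out how the ``$\cup\{0\}$'' in condition~(2) corresponds precisely to the invertibility of $\A$; both are accurate refinements of what the paper leaves implicit.
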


Paraphrasing the above results one has that sectoriality and $\Rsec$-sectoriality for angle larger than $\omega(A)\vee \omega(D)$ and $\angR(A)\vee \angR(D)$, respectively, 
of a block operator matrix $\A$ is solely determined by one of the \emph{bounded} operators $M_j(\lambda)$ in $X_j$ for $j\in\{1,2\}$ defined in \eqref{eq:def_M_1_M_2}, and by \eqref{eq:factorization_M} one sees that if the condition holds for one of the operators $M_j(\lambda)$, $j\in\{1,2\}$, then  it also holds for the other.

In the study of long-time behavior of solutions to nonlinear partial differential equations, 
see e.g. \cite[Section 5.3]{pruss2016moving}, the assumption $0\in \rho(A)\cap \rho(D)$ in Corollary~\ref{t:rsec_necessary_sufficient_condition} is too restrictive, though it simplifies the formulation of the statement considerably,  and in fact it would exclude even cases such as the Laplace operator on $\R^d$. To avoid this limitation, we assumed in Theorem~\ref{t:rsec_necessary_sufficient_condition_II} instead condition \eqref{eq:equivalence_norm_A_0_A}.
Next, we give sufficient conditions for   \eqref{eq:equivalence_norm_A_0_A}  to hold. 

\begin{proposition}[Criteria for condition \eqref{eq:equivalence_norm_A_0_A}]
	\label{l:one_side_A_0_A_estimate}
	Let Assumption \ref{ass:standing} be satisfied with $L=0$, and assume that $\overline{\Ran(A)}=X_1$ and $\overline{\Ran(D)}=X_2$. Then set
	\begin{equation}
	\label{eq:def_GH_lemma}
	G\stackrel{{\rm def}}{=} \overline{C A^{-1}}\in \calL(X_1,X_2) \quad\text{and} \quad H\stackrel{{\rm def}}{=}\overline{B D^{-1}}\in \calL(X_2,X_1),
	\end{equation}
	and if one of the operators $$
	\id - H G \quad \hbox{and}\quad\id - GH$$
	is boundedly invertible, then \eqref{eq:equivalence_norm_A_0_A} holds.\ In particular, \eqref{eq:equivalence_norm_A_0_A} holds if for  $\varepsilon> 0$ 
	\begin{align*}
	\sup\{\norm{B(t+D)^{-1}C(t+A)^{-1}}\colon t\in (0,\varepsilon)\}<1,&\text{ or }\\
	 \sup\{\norm{C(t+A)^{-1}B(t+D)^{-1}}\colon t\in (0,\varepsilon)\}<1.&
	\end{align*}
\end{proposition}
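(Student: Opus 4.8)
The goal is to prove that under Assumption~\ref{ass:standing} with $L=0$ and $\overline{\Ran(A)}=X_1$, $\overline{\Ran(D)}=X_2$, the operators $G=\overline{CA^{-1}}$ and $H=\overline{BD^{-1}}$ are bounded, and that bounded invertibility of $\id-HG$ (or $\id-GH$) yields the coercivity estimate $\|\diag x\|_X\lesssim\|\A x\|_X$ on $\Do(\A)$. First I would dispatch the boundedness of $G$ and $H$: since $L=0$, Assumption~\ref{ass:standing}(2) gives $\|CA^{-1}y\|_{X_2}\le c_A\|y\|_{X_1}$ for $y\in\Ran(A)$ and $\|BD^{-1}z\|_{X_1}\le c_D\|z\|_{X_2}$ for $z\in\Ran(D)$; as $\overline{\Ran(A)}=X_1$ and $\overline{\Ran(D)}=X_2$, these densely-defined bounded operators extend uniquely to $G\in\calL(X_1,X_2)$, $H\in\calL(X_2,X_1)$. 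This is exactly the content of the last part of Remark~\ref{rem:ass_standing}, so I would just cite it.

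**The core estimate.**
For $x=(x_1,x_2)^{\mathsf T}\in\Do(\A)=\Do(A)\times\Do(D)$ write $\A x=(Ax_1+Bx_2,\ Cx_1+Dx_2)^{\mathsf T}$ and set $u=Ax_1\in X_1$, $v=Dx_2\in X_2$, so that $\|\diag x\|_X\eqsim\|u\|_{X_1}+\|v\|_{X_2}$ and $x_1=A^{-1}u$, $x_2=D^{-1}v$ (here $A,D$ are injective, being sectorial — and on $\Do(A),\Do(D)$ the inverse is the genuine pre-image since $L=0$ gives no need for invertibility of the full operator; more carefully, $A$ is injective by \cite[Prop.~10.1.7(3)]{Analysis2}, so $x_1$ is determined by $u$). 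Then, using $Cx_1=C A^{-1}u=Gu$ and $Bx_2=BD^{-1}v=Hv$ (valid since $x_1\in\Do(A)\subseteq\Do(C)$ and $x_2\in\Do(D)\subseteq\Do(B)$, and on these ranges $G,H$ agree with $CA^{-1},BD^{-1}$), one gets
\begin{align*}
\A x=\begin{bmatrix} u+Hv\\ Gu+v\end{bmatrix}
=\begin{bmatrix}\id & H\\ G & \id\end{bmatrix}\begin{bmatrix}u\\ v\end{bmatrix}.
\end{align*}
The $2\times2$ operator matrix $\begin{bmatrix}\id&H\\ G&\id\end{bmatrix}$ is bounded on $X$, and it is boundedly invertible precisely when one of its Schur complements $\id-HG$ or $\id-GH$ is boundedly invertible — this is the scalar analogue of the factorization in Proposition~\ref{l:relation_resolvent_AD_calA}\eqref{eq:factorization_M} applied to the bounded "block matrix'' $\bigl[\begin{smallmatrix}\id&-(-H)\\ -(-G)&\id\end{smallmatrix}\bigr]$, or can be checked directly via the row-reduction $\bigl[\begin{smallmatrix}\id&0\\-G&\id\end{smallmatrix}\bigr]\bigl[\begin{smallmatrix}\id&H\\ G&\id\end{smallmatrix}\bigr]=\bigl[\begin{smallmatrix}\id&H\\ 0&\id-GH\end{smallmatrix}\bigr]$. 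Hence, assuming $\id-HG$ or $\id-GH$ boundedly invertible, there is a constant $c>0$ with $\|u\|_{X_1}+\|v\|_{X_2}\lesssim\|\A x\|_X$, which is \eqref{eq:equivalence_norm_A_0_A}.

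**The last reduction and the anticipated obstacle.**
For the "in particular'' clause, I would show that the stated supremum conditions force $\id-HG$ (resp.\ $\id-GH$) to be invertible via a Neumann series. The point is that $B(t+D)^{-1}C(t+A)^{-1}\to HG$ (resp.\ the other order $\to GH$) in an appropriate sense as $t\downarrow0$; indeed $t(t+A)^{-1}\to\id$ strongly on $X_1$ by sectoriality together with $\overline{\Ran(A)}=X_1$ (and likewise for $D$), and one rewrites $B(t+D)^{-1}C(t+A)^{-1}=\bigl(B(t+D)^{-1}\bigr)\bigl(t(t+D)^{-1}\bigr)^{-1}\!\cdot(\ldots)$ — more cleanly, $B(t+D)^{-1}=H\cdot tD(t+D)^{-1}D^{-1}\cdots$; the cleanest route is to note $B(t+D)^{-1}C(t+A)^{-1}=H\,\bigl(D(t+D)^{-1}\bigr)\,G\,\bigl(A(t+A)^{-1}\bigr)$ on suitable dense sets, wait — that is not literally an identity of bounded operators unless one is careful, since $H=\overline{BD^{-1}}$. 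So the honest argument: $B(t+D)^{-1}=\overline{BD^{-1}}\cdot D(t+D)^{-1}=H\,D(t+D)^{-1}$ as bounded operators (since $BD^{-1}$ on $\Ran(D)$ composed with $D(t+D)^{-1}$ mapping into $\Ran(D)$ densely extends to $H\,D(t+D)^{-1}$), and similarly $C(t+A)^{-1}=G\,A(t+A)^{-1}$; therefore $B(t+D)^{-1}C(t+A)^{-1}=H\,D(t+D)^{-1}\,G\,A(t+A)^{-1}$, and letting $t\downarrow0$ with $D(t+D)^{-1}\to\id$, $A(t+A)^{-1}\to\id$ strongly (using density of ranges), one obtains that the $\calL(X_1)$-norm of $HG$ is $\le\liminf_{t\downarrow0}\|B(t+D)^{-1}C(t+A)^{-1}\|<1$ by lower semicontinuity of the operator norm under strong convergence; then $\id-HG=\sum_{n\ge0}(HG)^n$ converges. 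The analogous computation with the factors in the other order handles $\id-GH$. I expect the main obstacle to be precisely this last step: making rigorous that strong convergence $A(t+A)^{-1}\to\id$ on $\overline{\Ran(A)}=X_1$ transfers to a norm bound on the \emph{limit} $HG$ (strong limits only give $\|HG\|\le\liminf\|\cdot\|$, which is all we need, but one must phrase it as convergence of $HG u$ for each fixed $u$, not norm convergence of the operators), and in ensuring the algebraic identities relating $B(t+D)^{-1}$ to $H\,D(t+D)^{-1}$ are justified by the density/extension argument rather than asserted formally.
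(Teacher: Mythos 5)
Your proof is correct and follows essentially the same route as the paper: factor $\A=\overline{\M(0)}\,\diag$ with $\overline{\M(0)}=\bigl[\begin{smallmatrix}\id & H\\ G & \id\end{smallmatrix}\bigr]$, invert $\overline{\M(0)}$ via the Schur complement $\id-HG$ (or $\id-GH$) to get $\|\diag x\|_X\le\|\overline{\M(0)}^{-1}\|\,\|\A x\|_X$, and for the final clause rewrite $B(t+D)^{-1}C(t+A)^{-1}=H\,D(t+D)^{-1}\,G\,A(t+A)^{-1}$ and pass to the strong limit $t\downarrow0$ using $\overline{\Ran(A)}=X_1$, $\overline{\Ran(D)}=X_2$. (Your parenthetical on injectivity of $A,D$ is a detour you don't need: with $L=0$ the assignment $Ax_1\mapsto Cx_1$ is well-defined on $\Ran(A)$ regardless, since $\|C(x_1-x_1')\|_{X_2}\le c_A\|A(x_1-x_1')\|_{X_1}$, and Proposition~\ref{l:one_side_A_0_A_estimate} does not assume sectoriality of $A,D$.)
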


\begin{remark}[Density of $\Ran(\A)$]\label{rem:RanA_dense}
	By  \eqref{eq:def_GH_lemma} the operator $\M(0)\stackrel{{\rm def}}{=}\A\diag^{-1}$ extends to a bounded linear operator on $X$, and a  factorization analogous to the one in Proposition~\ref{l:relation_resolvent_AD_calA}~\eqref{eq:resolvent_formula_block_matrix_I}
	holds also for $\lambda=0$.
Then the condition that one of the operators $
	\id - H G$ and $ \id - GH$
	is boundedly invertible implies that $\overline{\M(0)}$ is boundedly invertible. Thus $\Ran (\A)=\Ran (\M(0)\diag)$ and $\overline{\Ran(\A)}=X$ in case $\overline{\Ran(\diag)}=X$.
\end{remark}


The next proposition can be seen as a variation of \cite[Theorem 4.4.4]{pruss2016moving} and \cite[Theorem 4.2]{We} where $\Rsec$-bounds on the relevant operators appear only on subsets 
\begin{align*}
\ell_{\theta}\stackrel{{\rm def}}{=}\{r e^{ i\theta}\colon r>0\}\cup \{r e^{- i\theta}\colon r>0\}\subseteq \complement\overline{\Sigma_{\psi}} \subseteq\C \quad  \hbox{for } \theta\in (0,\pi) \hbox{ and } \psi \in (0,\theta).
\end{align*}

\begin{proposition}
	\label{t:extrapolation_R_sectoriality}
	Let Assumption \ref{ass:standing} be satisfied and $A,D$ be $\Rsec$-sectorial. Fix $\theta\in (\angR(A)\vee \angR(D),\pi)$, and $a\in (1,\infty)$. 
	Assume that for one $j\in \{1,2\}$  and for each $\lambda\in \complement\overline{\Sigma_{\theta}}$, $M_j(\lambda)$ 
	is boundedly invertible, and  
	\begin{equation}
		\label{eq:uniform_bound_M}
		\sup_{\lambda\in  \complement\overline{\Sigma_{\theta}}}\|M_j(\lambda)^{-1}\|_{\calL(X_j)}<\infty.
	\end{equation}
	Then there exists $\xi\in (\angR(A)\vee \angR(D),\theta)$ for which the following hold.
	\begin{enumerate}[{\rm(1)}]
		\item\label{it:extrapolation_sectoriality_small_sector} $M_j(\lambda)$ is invertible for all $\lambda\in \complement\overline{\Sigma_{\xi}}$ and  $\sup_{\lambda\in  \complement\overline{\Sigma_{\xi}}}\|M_j(\lambda)^{-1}\|_{\calL(X_j)}<\infty$.
		\item\label{it:extrapolation_R_sectoriality_small_sector} $\Rsec(M_j(\lambda)^{-1}\colon\lambda\in \complement\overline{\Sigma_{\xi}})<\infty$ provided
		\begin{equation}
			\label{eq:sequence_R_bound}
			\sup_{\lambda\in \ell_{\theta}}\Rsec\big(M_j(a^{k} \lambda)^{-1}\colon k\in \Z\big)<\infty.
		\end{equation}
	\end{enumerate}
\end{proposition}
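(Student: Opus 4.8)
The plan is to view this as an extrapolation statement for the operator family $F(\lambda):=M_j(\lambda)^{-1}$, which here plays the role that the resolvent $\lambda(\lambda-\A)^{-1}$ plays in \cite[Theorem 4.4.4]{pruss2016moving} and \cite[Theorem 4.2]{We}: one recovers a uniform (resp.\ $\Rsec$-) bound on a sector from the corresponding bound along the boundary rays. I take $j=1$ throughout — the case $j=2$ being identical with $(A,X_1)$ and $(D,X_2)$ interchanged — and recall that by the factorization \eqref{eq:factorization_M} bounded invertibility of $M_1(\lambda)$ and of $M_2(\lambda)$ are equivalent for $\lambda\in\rho(\diag)$. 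The two inputs such an argument needs are analyticity of $F$ with a uniform logarithmic derivative bound, and a bound for $F$ restricted to the geometric lattices $\{a^k\lambda_0:k\in\Z\}$, uniformly in $\lambda_0\in\ell_{\theta}$ — in operator norm for (1), where it is immediate from \eqref{eq:uniform_bound_M}, and in the $\Rsec$-sense for (2), where it is precisely hypothesis \eqref{eq:sequence_R_bound}.

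\emph{Step 1 (analyticity and derivative bounds).} Since $A,D$ are $\Rsec$-sectorial, for any $\xi_1\in(\angR(A)\vee\angR(D),\theta)$ the families $\{(\lambda-A)^{-1}\}$, $\{(\lambda-D)^{-1}\}$, $\{\lambda(\lambda-A)^{-1}\}$, $\{\lambda(\lambda-D)^{-1}\}$ are analytic and bounded on $\complement\overline{\Sigma_{\xi_1}}$, hence by Remark~\ref{rem:ass_standing} so are $\{B(\lambda-D)^{-1}\}$ and $\{C(\lambda-A)^{-1}\}$, and therefore $M_1(\lambda)=\id-B(\lambda-D)^{-1}C(\lambda-A)^{-1}$ is analytic there, with $F'(\lambda)=-F(\lambda)M_1'(\lambda)F(\lambda)$ wherever $M_1(\lambda)$ is invertible. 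Differentiating the two resolvent factors gives
\[
\lambda M_1'(\lambda)=\bigl[B(\lambda-D)^{-1}\bigr]\bigl[\lambda(\lambda-D)^{-1}\bigr]\bigl[C(\lambda-A)^{-1}\bigr]+\bigl[B(\lambda-D)^{-1}\bigr]\bigl[C(\lambda-A)^{-1}\bigr]\bigl[\lambda(\lambda-A)^{-1}\bigr],
\]
so that $\sup_{\lambda\in\complement\overline{\Sigma_{\theta}}}\|\lambda M_1'(\lambda)\|<\infty$, whence by \eqref{eq:uniform_bound_M} also $\sup_{\lambda\in\complement\overline{\Sigma_{\theta}}}\|\lambda F'(\lambda)\|\leq(\sup\|F\|)^2\sup\|\lambda M_1'\|<\infty$; the same computation together with the $\Rsec$-sectoriality of $A,D$ shows that $\{\lambda M_1'(\lambda)\}$ is $\Rsec$-bounded on $\complement\overline{\Sigma_{\theta}}$.

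\emph{Step 2 (lattice-to-sector).} The annuli $\Gamma_k:=\{a^k\leq|\lambda|<a^{k+1}\}$ tile $\C\setminus\{0\}$ and $\lambda\mapsto a^{-k}\lambda$ carries $\Gamma_k$ to $\Gamma_0$. Picking $\xi\in(\angR(A)\vee\angR(D),\theta)$ close to $\theta$ and, in $\Gamma_0\cap\complement\overline{\Sigma_{\xi}}$, a finite net $\{\mu_1,\dots,\mu_N\}$ together with the points $\nu_i:=|\mu_i|e^{\pm i\theta}\in\ell_{\theta}$, the logarithmic derivative bound gives, for every $\lambda\in\Gamma_k\cap\complement\overline{\Sigma_{\xi}}$, some $i$ with $\|F(\lambda)-F(a^k\mu_i)\|$ arbitrarily small (by refining the net) and, along the short arc from $a^k\mu_i$ to $a^k\nu_i$, $\|F(a^k\mu_i)-F(a^k\nu_i)\|\lesssim\theta-\xi$. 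Since $F(\lambda)-F(\nu)=\int_\nu^\lambda\bigl(\zeta F'(\zeta)\bigr)\,\tfrac{d\zeta}{\zeta}$ belongs to $\bigl(\int|\zeta|^{-1}|d\zeta|\bigr)$ times the closed absolutely convex hull of $\{\zeta F'(\zeta)\}$ along the path, and the closed absolutely convex hull of an $\Rsec$-bounded set is $\Rsec$-bounded, both corrections are controlled — in operator norm for (1), in $\Rsec$ for (2) — by the small factor (net size, resp.\ $\theta-\xi$) times $\sup\|\lambda M_1'\|$ (resp.\ $\Rsec\{\lambda M_1'\}$) times a power of the quantity being estimated, so a standard Gronwall-/quadratic-type bootstrap along the arcs closes it, starting from the trivial bound $\sup_{\complement\overline{\Sigma_{\theta}}}\|F\|=\sup\|M_1^{-1}\|$ (resp.\ from the $\Rsec$-bound of the finite union of lattices $\{F(a^k\nu_i):k\in\Z,\,1\le i\le N\}$ supplied by \eqref{eq:sequence_R_bound}). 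This yields the uniform bound in (1) and $\Rsec(F(\lambda):\lambda\in\complement\overline{\Sigma_{\xi}})<\infty$ in (2); \emph{invertibility} of $M_1(\lambda)$ extends to $\complement\overline{\Sigma_{\xi}}$ by a Neumann series, since for $\mu\in\complement\overline{\Sigma_{\xi}}\setminus\complement\overline{\Sigma_{\theta}}$ and $\lambda\in\partial\Sigma_{\theta}$, $|\lambda|=|\mu|$, the arc estimate gives $\|M_1(\mu)-M_1(\lambda)\|\lesssim(\theta-\xi)\sup\|\lambda M_1'\|$, which is smaller than $\|M_1(\lambda)^{-1}\|^{-1}$ once $\theta-\xi$ is small.

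\emph{Main obstacle.} The genuinely delicate point is the uniform logarithmic derivative bound of Step~1 — equivalently, the uniform boundedness of $B(\lambda-D)^{-1}$ and $C(\lambda-A)^{-1}$ on \emph{all} of $\complement\overline{\Sigma_{\theta}}$: Remark~\ref{rem:ass_standing} yields only $\|B(\lambda-D)^{-1}\|\leq c_D(1+\|\lambda(\lambda-D)^{-1}\|)+L\|(\lambda-D)^{-1}\|$, which degenerates like $|\lambda|^{-1}$ as $\lambda\to0$ when $L>0$ and $\diag$ is not invertible, so near the origin the estimate on $\lambda M_1'(\lambda)$ must be replaced by one exploiting that the hypothesis controls $M_1(\lambda)^{-1}$ — and hence the product $B(\lambda-D)^{-1}M_2(\lambda)^{-1}C(\lambda-A)^{-1}=M_1(\lambda)^{-1}-\id$ — uniformly there; for $L=0$, the setting of Theorem~\ref{t:rsec_necessary_sufficient_condition_II}, this subtlety disappears. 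A secondary bookkeeping issue is to choose the net, the angle loss $\theta-\xi$ and the ratio $a$ compatibly so that a single $\xi$ works on all annuli at once, which is exactly what the scaling invariance $\lambda\mapsto a^{-k}\lambda$ of the construction provides.
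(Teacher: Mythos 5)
Your proposal takes a genuinely different route from the paper, and along the way it hits a gap in Part (2) that the paper avoids by citation.

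The paper's proof of (1) does not work with derivatives at all. It uses the first resolvent identity to derive the finite-difference estimate
$\|T_t-T_\lambda\|_{\calL(X_1)}\leq K\,\frac{|t-\lambda|}{t}$
for $t=\Re\lambda$, $K$ depending only on the sectoriality constants of $A,D$ and on Assumption~\ref{ass:standing} with $L=0$, and then runs a Neumann series around $(\id-T_t)^{-1}$ to push invertibility from $\partial\Sigma_{\theta}$ a fixed angular distance $\varepsilon=\tfrac12\arcsin(c_\theta/K)$ into the gap, uniformly in the ray. Your Step~1 replaces this with the logarithmic derivative bound $\sup_{\complement\overline{\Sigma_\theta}}\|\lambda M_1'(\lambda)\|<\infty$ and an arc integral; that is ultimately the same estimate in infinitesimal form (it relies on the same uniform boundedness of $B(\lambda-D)^{-1}$, $C(\lambda-A)^{-1}$, $\lambda(\lambda-A)^{-1}$, $\lambda(\lambda-D)^{-1}$), and your Neumann series argument at the end of Step~2 for invertibility on $\complement\overline{\Sigma_\xi}$ is essentially the paper's argument. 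So for (1) the difference is cosmetic. You are also right that the estimate silently needs $L=0$ (or $0\in\rho(\diag)$); the paper's proof explicitly invokes $L=0$ even though the statement does not.

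For (2), the paper reduces the claim to (1) and then quotes \cite[Proposition~8.5.8(2)]{Analysis2} for holomorphic $\Rsec$-bounded families: a bounded holomorphic family whose restriction to a fixed geometric lattice $\{a^k\lambda\}_{k\in\Z}$ is $\Rsec$-bounded, uniformly in the base point, is $\Rsec$-bounded on a slightly smaller sector. Your Step~2 attempts to reprove this abstract lemma in place via arcs, nets and closed absolutely convex hulls, and this is where the gap sits. To $\Rsec$-bound the correction terms you need $\Rsec$-boundedness of $\{\zeta F'(\zeta)\}$, but $\zeta F'(\zeta)=-F(\zeta)\,[\zeta M_1'(\zeta)]\,F(\zeta)$, so its $\Rsec$-bound scales like the \emph{square} of the $\Rsec$-bound of $\{F(\zeta)\}$ on the same arc — precisely the quantity you are estimating. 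The ``Gronwall/quadratic-type bootstrap'' you gesture at is the inequality $R\le C_1+C_2\delta R^2$, which for small $\delta$ only pins $R$ down to the lower branch \emph{if} one knows a priori that $R$ is finite and, moreover, that it sits on the small-root branch; neither is supplied by your sketch. Closing this requires either a careful finite-set approximation and connectedness argument, or a Poisson-kernel-type representation so that $F$ on the interior is a genuine convex average of boundary values — and that is exactly the nontrivial content of \cite[Proposition~8.5.8(2)]{Analysis2}. Unless you are prepared to reprove that result carefully, the cleaner route is the paper's: establish (1), then invoke the cited proposition.

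A minor point: your remark that the case $j=2$ follows ``with $(A,X_1)$ and $(D,X_2)$ interchanged'' is unnecessary — the hypothesis and the conclusion of the proposition are stated for the \emph{same} $j$, so no symmetry is required.
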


Note that if \eqref{eq:sequence_R_bound} holds for one $j\in \{1,2\}$, \eqref{eq:equivalence_norm_A_0_A} holds, and $\overline{\Ran(\A)}=X$, then Theorem \ref{t:rsec_necessary_sufficient_condition_II} and Proposition~\ref{t:extrapolation_R_sectoriality}
ensure that $\A$ is $\Rsec$-sectorial of angle $<\theta$. 
Here the condition $\overline{\Ran(\A)}=X$ is redundant if $X$ is reflexive, and if $0\in \rho(\A)$ then one can also remove the condition \eqref{eq:equivalence_norm_A_0_A}. 
	In applications one typically checks the  stronger condition
	\begin{align*}
		\Rsec(M_j(\lambda)^{-1}\colon\lambda\in \ell_{\theta})<\infty
	\end{align*}
	instead of  \eqref{eq:sequence_R_bound}.
	Thus to prove $\Rsec$-sectoriality of $\A$ one needs to show $\Rsec$-bounds on $\ell_{\theta}$ instead of the much larger set $ \complement\overline{\Sigma_{\theta}}$.
	In applications to parabolic problems, the choice $\theta=\frac{\pi}{2}$ is particularly handy. 

\begin{proof}[Proof of Proposition \ref{t:extrapolation_R_sectoriality}]
	Fix $j\in \{1,2\}$. Up to the choice of a smaller $\xi$, \eqref{it:extrapolation_R_sectoriality_small_sector} follows from \eqref{it:extrapolation_sectoriality_small_sector} and \cite[Proposition 8.5.8(2)]{Analysis2} applies (up to a rotation) to the holomorphic function $M_j^{-1}:\complement\overline{\Sigma_{\xi}}\to \calL(X_j)$. Therefore, it remains to prove \eqref{it:extrapolation_sectoriality_small_sector}.

	Let $\psi\in (\theta,\pi)$. Due to our assumptions and  \eqref{eq:uniform_bound_M}, it remains to show that  there exists $\varepsilon>0$ independent of $\psi$ such that $M_j(\lambda)$ is invertible for all 
	\begin{align*}
		\lambda\in L_{\psi}(\varepsilon), \quad \hbox{where } L_{\psi}(\varepsilon)\stackrel{{\rm def}}{=}
		\{z\in \C\setminus\{0\}: |\arg(z)-\psi|<\varepsilon\}.
	\end{align*}
	To prove the above claim, we employ a Neumann series argument. 
	To this end, let $\delta\in (0,\psi-\angR(A)\vee \angR(D))$. Note that the rotation map 
	\begin{align*}
		\Psi_{\delta}\colon L_{\psi}(\delta) \rightarrow \Sigma_{\delta}\cup (-\Sigma_{\delta}), \quad \lambda\mapsto e^{- i \psi}\lambda
	\end{align*}
	is a bi-holomorphism.
	Below we prove the claim for $L_{\psi}(\delta)$ replaced by $L_{\psi}^{\pm}(\delta)=\Psi_{\delta}^{-1}(\pm\Sigma_{\delta})$, the general case follows similarly.
	For notational convenience, we let $M_{1}(\lambda)=\id-T_{e^{-i\psi}\lambda}$ on $L^+_{\psi}(\delta)$ where   
	$$
	T_{\lambda}: \Sigma_{\delta}\to \Sigma_{\delta} \ \ \text{ with }\ \  
	T_{\lambda} \stackrel{{\rm def}}{=} B (\lambda e^{i\psi}-D)^{-1}C(\lambda e^{ i\psi}-A)^{-1}.
	$$ 
	Next we prove that $\id-T_{\lambda}$ is invertible for all $\lambda\in \Sigma_{2\varepsilon}$ where $\varepsilon>0$ is  independent of $\psi$. Fix $\lambda\in \Sigma_{2\varepsilon}$ and let $t=\Re \lambda$. Let $c_{\theta}$ be the supremum in \eqref{eq:uniform_bound_M}. If 
	\begin{equation}
		\label{eq:T_t_T_lambda_smallness}
		\|T_t-T_{\lambda}\|_{\calL(X_1)}<c_{\theta}^{-1},
	\end{equation}
	then we can write
	\begin{align*}
		(\id-T_{\lambda })^{-1}
		&=(\id-T_{t})^{-1}(\id+(\id-T_{t})^{-1}(T_{t}-T_{\lambda}))^{-1}\\
		&=(\id-T_{t})^{-1}\sum_{k \geq 0}(-1)^k(\id-T_{t})^{-k}(T_{t}-T_{\lambda})^k.
	\end{align*}
	To check \eqref{eq:T_t_T_lambda_smallness}, one can argue as follows. By the assumption in \eqref{eq:uniform_bound_M} one has
	 $\|(\id-T_{\lambda})^{-1}\|_{\calL(X_1)}\leq c_{\theta}$ and, for each $\lambda,\mu\in \complement\overline{\Sigma_{\psi}}$,
	\begin{align*}
		&B(\lambda-D)^{-1}C(\lambda-A)^{-1}
		-
		B(\mu-D)^{-1}C(\mu-A)^{-1}\\
		&  
		=B(\lambda-D)^{-1}C[(\lambda-A)^{-1}-(\mu-A)^{-1}]
		- 
		B[(\lambda-D)^{-1}-(\mu-D)^{-1}]C(\mu-A)^{-1}\\
		&  
		=\frac{\lambda-\mu}{\mu}\Big[ B(\lambda-D)^{-1}C(\lambda-A)^{-1}[\mu(\mu-A)^{-1}]
		\\
		&\qquad \qquad\qquad \qquad- 
		B(\lambda-D)^{-1}[\mu(\mu-D)^{-1}]C(\mu-A)^{-1}\Big],
	\end{align*}
	where we used the resolvent identity. Applying the previous with $\mu=t$ and using sectoriality of $A$ and $D$ as well as Assumption \ref{ass:standing} for $L=0$ one gets
	$
	\|T_{t}-T_{\lambda}\|_{\calL(X_1)}\leq K \frac{|t-\lambda|}{t}  
	$
	where $K$ depends only on the sectoriality constants of $A,D$ on $\complement\overline{\Sigma_{\theta}}$. Since $\frac{|t-\lambda|}{t}   \leq \tan (2\varepsilon)$, \eqref{eq:T_t_T_lambda_smallness} follows by choosing $\varepsilon \stackrel{{\rm def}}{=}\frac{1}{2}\arcsin (\frac{c_{\theta}}{K})$ which is independent of $\psi$. 
\end{proof}

\subsection{Perturbative type results for block operators}\label{subsec:perturbative_Rsec}
In this subsection, as a consequence of Theorem~\ref{t:rsec_necessary_sufficient_condition_II} and Corollary \ref{t:rsec_necessary_sufficient_condition}, employing perturbative arguments we show sectoriality and $\Rsec$-sectoriality of block operator matrices if the coupling is small, this holds in particular if $C$ is small enough, while $B$ can be \emph{large}.
Therefore, this goes beyond the
standard perturbation theory. 

\begin{proposition}[Sectoriality and $\Rsec$-sectoriality for small couplings]
	\label{prop:pert_block_operators}
	Let Assumption \ref{ass:standing} be satisfied with $L=0$. 
	\begin{enumerate}[{\rm(a)}]
		\item\label{it:A_sectorial_smallC} Let $A,D$ be sectorial operators, $\psi\in (\omega(A)\vee\omega(D),\pi)$,
		 and assume that 
		\begin{align*}
		\sup\{\norm{B (\lambda-D)^{-1}C(\lambda-A)^{-1}}\colon \lambda\in \C\setminus \Sigma_{\psi}\}&<1 \hbox{ or }\\
			\sup\{\norm{C(\lambda-A)^{-1}B (\lambda-D)^{-1}}\colon \lambda\in \C\setminus \Sigma_{\psi}\}&<1.
		\end{align*}
		Then $\A$ is sectorial on $X$ of angle $\omega(\A)\leq \psi$.
		\item\label{it:A_R_sectorial_smallC} Let $A,D$ be $\Rsec$-sectorial operator, $\psi\in (\angR(A)\vee\angR(D),\pi)$, and   
		\begin{align*}
		\Rsec(B (\lambda-D)^{-1}C(\lambda-A)^{-1}\colon \lambda\in \C\setminus \Sigma_{\psi})&<1 \hbox{ or }\\
		\Rsec(C(\lambda-A)^{-1}B (\lambda-D)^{-1}\colon \lambda\in \C\setminus \Sigma_{\psi})&<1
		\end{align*}
		Then $\A$ is $\Rsec$-sectorial on $X$ of angle $\angR(\A)\leq \psi$.
	\end{enumerate}
\end{proposition}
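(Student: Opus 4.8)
The plan is to derive both statements from Theorem~\ref{t:rsec_necessary_sufficient_condition_II}, the point being that the single smallness hypothesis already supplies everything that theorem requires: the one-sided estimate \eqref{eq:equivalence_norm_A_0_A}, the density $\overline{\Ran(\A)}=X$, and, for one $j\in\{1,2\}$ and every $\phi>\psi$, a uniform norm bound (resp.\ an $\Rsec$-bound) for the inverses $M_j(\lambda)^{-1}$ with $\lambda\in\complement\overline{\Sigma_{\phi}}$. By the factorization \eqref{eq:factorization_M} it is enough to treat the first alternative in each of \eqref{it:A_sectorial_smallC} and \eqref{it:A_R_sectorial_smallC}; the second follows by interchanging $A\leftrightarrow D$ and $B\leftrightarrow C$, which swaps $M_1(\cdot)$ with $M_2(\cdot)$.

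First I would record that, since $A$ and $D$ are in particular sectorial of angle less than $\psi$, one has $\complement\overline{\Sigma_{\psi}}\subseteq\rho(A)\cap\rho(D)$, so by Remark~\ref{rem:ass_standing} each $T_\lambda\stackrel{{\rm def}}{=}B(\lambda-D)^{-1}C(\lambda-A)^{-1}$ is a bounded operator on $X_1$. Set $q\stackrel{{\rm def}}{=}\sup\{\|T_\lambda\|_{\calL(X_1)}:\lambda\in\complement\overline{\Sigma_{\psi}}\}$ in case \eqref{it:A_sectorial_smallC}, and $q\stackrel{{\rm def}}{=}\Rsec(T_\lambda:\lambda\in\complement\overline{\Sigma_{\psi}})$ in case \eqref{it:A_R_sectorial_smallC}; then $q<1$ by hypothesis, and in the second case $q$ moreover dominates the corresponding operator norms. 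A Neumann series then gives that $M_1(\lambda)=\id-T_\lambda$ is boundedly invertible with $M_1(\lambda)^{-1}=\sum_{k\ge 0}T_\lambda^{k}$ and $\|M_1(\lambda)^{-1}\|_{\calL(X_1)}\le(1-q)^{-1}$ for all $\lambda\in\complement\overline{\Sigma_{\psi}}$. In case \eqref{it:A_R_sectorial_smallC}, submultiplicativity of $\Rsec$-bounds under composition gives $\Rsec(T_\lambda^{k}:\lambda)\le q^{k}$, and applying the triangle inequality in $L^2(\O;X_1)$ to the partial sums of the Neumann series yields $\Rsec(M_1(\lambda)^{-1}:\lambda\in\complement\overline{\Sigma_{\psi}})\le\sum_{k\ge0}q^{k}=(1-q)^{-1}$ (see \cite[Chapter~8]{Analysis2} for these properties of $\Rsec$-bounds). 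Since $\complement\overline{\Sigma_{\phi}}\subseteq\complement\overline{\Sigma_{\psi}}$ for every $\phi>\psi$, this is exactly the $M_1$-part of condition \eqref{it:schur_compleements_bounded}, resp.\ \eqref{it:schur_compleements_R_bounded}.

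Next I would verify the two remaining hypotheses of Theorem~\ref{t:rsec_necessary_sufficient_condition_II}. Assumption~\ref{ass:standing} holds with $L=0$, and $\overline{\Ran(A)}=X_1$, $\overline{\Ran(D)}=X_2$ by sectoriality, so Proposition~\ref{l:one_side_A_0_A_estimate} applies: evaluating at $\lambda=-t$ with $t>0$ (which lies in $\complement\overline{\Sigma_{\psi}}$ because $\psi<\pi$) shows $\sup\{\|B(t+D)^{-1}C(t+A)^{-1}\|_{\calL(X_1)}:t\in(0,\varepsilon)\}\le q<1$, whence \eqref{eq:equivalence_norm_A_0_A} holds and, as in the proof of that proposition, $\id-HG$ (with $H,G$ as in \eqref{eq:def_GH_lemma}) is boundedly invertible. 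By Remark~\ref{rem:RanA_dense} the latter forces $\overline{\M(0)}$ to be boundedly invertible, and since $\overline{\Ran(\diag)}=X$ it follows that $\overline{\Ran(\A)}=X$. Now Theorem~\ref{t:rsec_necessary_sufficient_condition_II}(a), resp.\ its part~\eqref{it:A_R_sectorial_equivalence}, applied with $j=1$ and the given $\psi\in(\omega(A)\vee\omega(D),\pi)$, resp.\ $\psi\in(\angR(A)\vee\angR(D),\pi)$, yields that $\A$ is sectorial, resp.\ $\Rsec$-sectorial, of angle $\psi$, i.e.\ $\omega(\A)\le\psi$, resp.\ $\angR(\A)\le\psi$.

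The only step requiring real care is the $\Rsec$-version: one must make sure the Neumann-series expansion of $M_1(\lambda)^{-1}$ is compatible with $\Rsec$-boundedness (behaviour under products, sums and strong limits of the partial sums) and notice that an $\Rsec$-smallness hypothesis automatically provides the operator-norm smallness needed to invoke Proposition~\ref{l:one_side_A_0_A_estimate} and Remark~\ref{rem:RanA_dense}. The remainder is bookkeeping — keeping track of which sector each estimate is stated on, and observing that the constant $(1-q)^{-1}$ does not depend on the particular $\phi>\psi$, which is what promotes the conclusion to the sharp angle bound $\le\psi$ rather than merely $<\pi$.
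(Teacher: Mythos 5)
Your proposal is correct and follows essentially the same route as the paper's proof: you verify condition~\eqref{eq:equivalence_norm_A_0_A} via Proposition~\ref{l:one_side_A_0_A_estimate}, obtain density of $\Ran(\A)$ via Remark~\ref{rem:RanA_dense}, use a Neumann series to invert $M_1(\lambda)$ and bound (or $\Rsec$-bound) the inverse uniformly, and then invoke Theorem~\ref{t:rsec_necessary_sufficient_condition_II}. The extra details you supply — the factorization~\eqref{eq:factorization_M} to dispatch the second alternative, the evaluation at $\lambda=-t$ to check the hypothesis of Proposition~\ref{l:one_side_A_0_A_estimate}, and the observation that the $\Rsec$-smallness dominates operator-norm smallness — are all accurate and simply make explicit what the paper leaves to the reader.
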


\begin{proof}
	To prove the claim in \eqref{it:A_R_sectorial_smallC} we check the condition in Theorem~\ref{t:rsec_necessary_sufficient_condition_II}~\eqref{it:schur_compleements_R_bounded}.   
	First note that by Proposition~\ref{l:one_side_A_0_A_estimate} condition~\eqref{eq:equivalence_norm_A_0_A} holds and by  Remark~\ref{rem:RanA_dense} $\Ran(\A)\subseteq X$ is dense.
	
	We provide the required estimate for $M_1(\cdot)$, the one for $M_2(\cdot)$ being similar. 
	Having
	\begin{equation}
	\label{eq:R_boundedness_max_reg_less_one}
	\Rsec(B (\lambda-D)^{-1}C(\lambda-A)^{-1}\colon \lambda\in \C\setminus \Sigma_{\psi})<1,
	\end{equation}
	by a Neumann series argument one obtains that
	$$
	M_1(\lambda)^{-1}=\sum_{n\geq 0} [B (\lambda-D)^{-1}C(\lambda-A)^{-1}]^n 
	\quad \text{ for all }\lambda\in \C\setminus \Sigma_{\psi},
	$$
	where the series converges absolutely in $\calL(X_1)$. The previous expression implies
	$$
	\Rsec(M_1(\lambda)^{-1}\colon\lambda\in \C\setminus \Sigma_{\psi})\leq
	\sum_{n\geq 0}\Big[ \Rsec(B (\lambda-D)^{-1}C(\lambda-A)^{-1}\colon \lambda\in \C\setminus \Sigma_{\psi})\Big]^n <\infty,
	$$
	where in the last inequality we have used \eqref{eq:R_boundedness_max_reg_less_one}. The claim in \eqref{it:A_sectorial_smallC} follows analogously replacing $\Rsec$-bounds by norm-bounds.
\end{proof}
As usual, the condition $L=0$ in Proposition \ref{prop:pert_block_operators} can be removed up to a shift.  
For a sectorial operator $T$ on a Banach space $X$ we set
\begin{equation*}
\non_{\psi}^{\Ssec}(T)\stackrel{{\rm def}}{=}\sup\{\norm{T (\lambda-T)^{-1}}\colon \lambda\in  \complement\overline{\Sigma_{\psi}}\} \quad \text{for}\quad\psi>\omega(T),
\end{equation*}
and for an $\Rsec$-sectorial operator $T$
\begin{equation}
\label{eq:def_non_R_sec}
\non_{\psi}^\Rsec(T)\stackrel{{\rm def}}{=}\Rsec\big(T (\lambda-T)^{-1}\colon \lambda\in  \complement\overline{\Sigma_{\psi}}\,\big)
\quad \text{for}\quad\psi>\angR(T).
\end{equation}

\begin{corollary}[Sectoriality and $\Rsec$-sectoriality for small $C$]
	\label{cor:pert_block_operators}
	Let Assumption \ref{ass:standing} be satisfied, and let $c_A$ and $c_D$ be the relative bounds in Assumption~\ref{ass:standing}.
	\begin{enumerate}[{\rm(a)}]
		\item\label{cor:pert_block_operators_a} If $A$ and $D$ are sectorial, 
		then for any $\psi\in (\omega(A)\vee\omega(D),\pi)$ there are
		$$\varepsilon_0(c_D ,\non^{\Ssec}_{\psi}(A),\non^{\Ssec}_{\psi}(D))>0\ \  \text{ and } \ \ 
		\nu_0(c_D ,\non_{\psi}^{\Ssec}(A),\non^{\Ssec}_{\psi}(D),L)>0$$ 
		such that if $\nu> \nu_0$ and $c_A<\varepsilon_0$, then 
		$\nu + \A$ is sectorial on $X$ of angle $\leq  \psi$. 
		\item\label{cor:pert_block_operators_b} If $A$ and $D$ are $\Rsec$-sectorial, 
		then for any $\psi\in (\angR(A)\vee\angR(D),\pi)$ there are
		$$\varepsilon_0(c_D ,\non^{\Rsec}_{\psi}(A),\non^{\Rsec}_{\psi}(D))>0\ \  \text{ and } \ \ 
		\nu_0(c_D ,\non^{\Rsec}_{\psi}(A),\non^{\Rsec}_{\psi}(D),L)>0$$ 
		such that if $\nu> \nu_0$ and $c_A<\varepsilon_0$, then $\nu + \A$ is $\Rsec$-sectorial on $X$ of angle $\leq  \psi$.
	\end{enumerate}	
In particular, if $C\in \calL(\Do(A^{\gamma}),X_2)$ for some  $\gamma\in (0,1)$, 
  then there are $\varepsilon_0>0$ and $\nu_0>0$ such that 
in the situations of \eqref{cor:pert_block_operators_a} and \eqref{cor:pert_block_operators_b} the conditions on $c_A$ are  satisfied. 			
\end{corollary}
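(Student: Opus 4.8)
The plan is to deduce both statements from Corollary~\ref{t:rsec_necessary_sufficient_condition}, applied not to $\A$ but to the shifted block operator matrix
\[
\nu+\A=\begin{bmatrix}\nu+A & B\\ C & \nu+D\end{bmatrix},
\]
whose diagonal part $\nu+\diag$ now has \emph{boundedly invertible} entries. Indeed, since $A$ is sectorial, $\sigma(A)\subseteq\overline{\Sigma_{\omega(A)}}$, so $-\nu\in\rho(A)$ and $0\in\rho(\nu+A)$; moreover $\nu+A$ is again sectorial with $\omega(\nu+A)\le\omega(A)$ (and $\Rsec$-sectorial with $\angR(\nu+A)\le\angR(A)$), because $\overline{\Sigma_{\omega}}$ is stable under adding nonnegative reals — hence $\lambda-\nu\in\complement\overline{\Sigma_{\omega}}$ whenever $\lambda\in\complement\overline{\Sigma_{\omega}}$, with $|\lambda-\nu|\ge\nu\sin\omega$ — and, for the $\Rsec$-statement, $\lambda(\lambda-\nu-A)^{-1}=\tfrac{\lambda}{\lambda-\nu}\,(\lambda-\nu)(\lambda-\nu-A)^{-1}$ defines an $\Rsec$-bounded family since uniformly bounded families of scalar multiples of $\id$ are $\Rsec$-bounded. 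The same applies to $D$, and Assumption~\ref{ass:standing} continues to hold for $\nu+\A$ with the \emph{same} relative bounds $c_A,c_D$ (only the constant $L$ is replaced by $L+c_A\nu$, which is immaterial for Corollary~\ref{t:rsec_necessary_sufficient_condition}). Hence it suffices to verify, for $j=1$, condition~\eqref{it:schur_compleements_bounded_cor} (respectively~\eqref{it:schur_compleements_R_bounded_cor}) of Corollary~\ref{t:rsec_necessary_sufficient_condition} for the family $M_1^{\nu}(\lambda):=\id-B(\lambda-\nu-D)^{-1}C(\lambda-\nu-A)^{-1}$.

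By a Neumann series argument it is enough to prove
\[
\sup_{\lambda\in\complement\overline{\Sigma_{\psi}}\cup\{0\}}\big\|B(\lambda-\nu-D)^{-1}C(\lambda-\nu-A)^{-1}\big\|_{\calL(X_1)}<1
\]
in case~\eqref{cor:pert_block_operators_a} (and the analogous inequality with $\Rsec(\,\cdot\,)$ in place of the supremum in case~\eqref{cor:pert_block_operators_b}): then $M_1^{\nu}(\lambda)^{-1}=\sum_{k\ge0}[B(\lambda-\nu-D)^{-1}C(\lambda-\nu-A)^{-1}]^{k}$ converges with a bound uniform in $\lambda$, also at $\lambda=0$, and since $\complement\overline{\Sigma_{\phi}}\subseteq\complement\overline{\Sigma_{\psi}}$ for $\phi>\psi$ this is exactly the required condition. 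Introducing the translated parameter set $\Lambda_{\nu}:=\{\lambda-\nu:\lambda\in\complement\overline{\Sigma_{\psi}}\cup\{0\}\}$, which satisfies $\Lambda_{\nu}\subseteq\complement\overline{\Sigma_{\psi}}$ and $|\mu|\ge\nu\sin\psi$ for all $\mu\in\Lambda_{\nu}$, I would use $A(\mu-A)^{-1}=\mu(\mu-A)^{-1}-\id$, Assumption~\ref{ass:standing}, and $\non_{\psi}^{\Ssec}(A)$ to get, for $\mu\in\Lambda_{\nu}$,
\[
\|C(\mu-A)^{-1}\|\le c_A\,\non_{\psi}^{\Ssec}(A)+\frac{L\,(1+\non_{\psi}^{\Ssec}(A))}{\nu\sin\psi},
\]
and symmetrically $\|B(\mu-D)^{-1}\|\le c_D\,\non_{\psi}^{\Ssec}(D)+\tfrac{L(1+\non_{\psi}^{\Ssec}(D))}{\nu\sin\psi}$. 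The product of these two bounds is of the form $c_A c_D\,\non_{\psi}^{\Ssec}(A)\non_{\psi}^{\Ssec}(D)+\tfrac{1}{\nu}\,K$ once $c_A\le1$, where $K$ depends only on $c_D,\non_{\psi}^{\Ssec}(A),\non_{\psi}^{\Ssec}(D),L,\psi$. One therefore first picks $\varepsilon_0=\varepsilon_0(c_D,\non_{\psi}^{\Ssec}(A),\non_{\psi}^{\Ssec}(D))>0$ so small that $c_A<\varepsilon_0$ forces the leading term to be $<\tfrac12$, and then $\nu_0=\nu_0(c_D,\non_{\psi}^{\Ssec}(A),\non_{\psi}^{\Ssec}(D),L)>0$ so large that for $\nu>\nu_0$ the term $\tfrac{1}{\nu}K$ is $<\tfrac12$; this gives the strict inequality, and Corollary~\ref{t:rsec_necessary_sufficient_condition} yields that $\nu+\A$ is an invertible sectorial operator of angle $\le\psi$.

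For part~\eqref{cor:pert_block_operators_b} the only additional point is to upgrade the pointwise relative bound $\|Cx\|\le c_A\|Ax\|+L\|x\|$ to an $\Rsec$-bound for $\{C(\mu-A)^{-1}:\mu\in\Lambda_{\nu}\}$. Since $C$ and $A$ are linear, for Bernoulli variables $(\varepsilon_n)$, points $\mu_n\in\Lambda_{\nu}$ and $x_n\in X_1$ one has $\sum_n\varepsilon_n C(\mu_n-A)^{-1}x_n=C\big(\sum_n\varepsilon_n(\mu_n-A)^{-1}x_n\big)$ and $A\big(\sum_n\varepsilon_n(\mu_n-A)^{-1}x_n\big)=\sum_n\varepsilon_n A(\mu_n-A)^{-1}x_n$, so applying the relative bound inside the expectation together with $\|a+b\|^2\le2\|a\|^2+2\|b\|^2$ gives
\[
\Rsec\big(C(\mu-A)^{-1}:\mu\in\Lambda_{\nu}\big)\le\sqrt2\Big(c_A\,\non_{\psi}^{\Rsec}(A)+L\,\Rsec\big((\mu-A)^{-1}:\mu\in\Lambda_{\nu}\big)\Big),
\]
and the last $\Rsec$-bound is $\le K'(1+\non_{\psi}^{\Rsec}(A))/(\nu\sin\psi)$ via $(\mu-A)^{-1}=\mu^{-1}(\id+A(\mu-A)^{-1})$ and again $\Rsec$-boundedness of scalar families. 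With these replacements — and absorbing the harmless extra constants into $\varepsilon_0$ and $\nu_0$ — the previous paragraph applies verbatim via Corollary~\ref{t:rsec_necessary_sufficient_condition}, giving that $\nu+\A$ is an invertible $\Rsec$-sectorial operator of angle $\le\psi$. Finally, if $C\in\calL(\Do(A^{\gamma}),X_1)$ for some $\gamma\in(0,1)$, then $\|Cx\|_{X_1}\lesssim\|x\|_{X_1}+\|A^{\gamma}x\|_{X_1}$, and the moment inequality $\|A^{\gamma}x\|\le C_{\gamma}\|Ax\|^{\gamma}\|x\|^{1-\gamma}$ for sectorial operators (see e.g.~\cite{Haase:2,DHP}) combined with Young's inequality shows that for every $\delta>0$ there is $L_{\delta}\ge0$ with $\|Cx\|\le\delta\|Ax\|+L_{\delta}\|x\|$ on $\Do(A)$; choosing $\delta<\varepsilon_0$ (and $L:=\max\{L,L_{\delta}\}$) realizes Assumption~\ref{ass:standing} with $c_A<\varepsilon_0$, which is the situation of~\eqref{cor:pert_block_operators_a}, respectively~\eqref{cor:pert_block_operators_b}.

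The main obstacle is to arrange the estimates so that the \emph{leading} term of the product, $c_A c_D\,\non_{\psi}^{\Ssec}(A)\non_{\psi}^{\Ssec}(D)$, is killed by the smallness of $c_A$ \emph{alone}, with $c_D$ and the sectoriality constants of $A,D$ only fixing the threshold $\varepsilon_0$ and the size of the shift $\nu_0$; this asymmetric treatment of $B$ (allowed large) and $C$ (required small) is precisely what goes beyond the classical perturbation theory. The only place where the structure of $\Rsec$-boundedness is genuinely needed is the transfer of the pointwise relative bound to an $\Rsec$-estimate via the linearity of $C$ and $A$; the remainder is a Neumann series together with the elementary geometry of the shifted sector.
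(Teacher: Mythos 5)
Your proof is correct and follows essentially the same route as the paper's: shift by $\nu$ so that the diagonal becomes boundedly invertible, estimate the coupling $B(\mu-D)^{-1}C(\mu-A)^{-1}$ on the translated parameter set to make it small, run a Neumann series, and conclude via the characterization of (R-)sectoriality. The only organizational difference is that you invoke Corollary~\ref{t:rsec_necessary_sufficient_condition} directly and reproduce the Neumann series argument inside the proof, whereas the paper outsources exactly this step to Proposition~\ref{prop:pert_block_operators} (applied to $\nu+A,\nu+D$ after deriving the effective relative bounds), so the two arguments coincide once that proposition is unfolded.
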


\begin{remark}\label{rem:small_cA}
		Let us stress that $\varepsilon_0$ does not depend on $L>0$ which will become clear from \eqref{eq:var_nu_0} below. The conditions in  Proposition~\ref{prop:pert_block_operators}~\eqref{it:A_sectorial_smallC} and \eqref{it:A_R_sectorial_smallC} hold provided that
	\begin{equation}
	\label{eq:perturbation_L_0_simplified_condition}
	c_A <\frac{1}{c_D \non^\Ssec_{\psi}(A)\non^\Ssec_{\psi}(D)} \quad \hbox{and} \quad
	c_A <\frac{1}{c_D \non^\Rsec_{\psi}(A)\non_{\psi}^\Rsec(D)},
	\end{equation}
	respectively, that is if $L=0$, then $\nu=0$, and the above estimates give lower bounds on $\varepsilon_0$.	
	 However, the more general assumptions in Proposition~\ref{prop:pert_block_operators} as compared to Corollary~\ref{cor:pert_block_operators} are useful as well. For instance consider a  diagonally dominant block operator $\A$ on $X=X_1\times X_2$ with $X_2=X_{2}^1\times X_{2}^2$, $B_{12}\colon \Do(D_{22}) \subseteq X_{2}^2\rightarrow X_1$ and $C_{21}\colon \Do(A) \subseteq X_{1}\rightarrow X_2^1$ of the form
	\begin{align*}
	\A =
	\begin{bmatrix}
	\begin{array}{c|cc}
	A & 0 & B_{12} \\
	\hline
	C_{21} & D_{11} & 0 \\
	0 & 0 & D_{22} \\
	\end{array}
	\end{bmatrix}  = \begin{bmatrix}
	A & B \\ C & D
	\end{bmatrix}. 
	\end{align*}
	Then $c_A$ can be larger than $\varepsilon_0$, but for $\lambda \in \rho(\diag)$
	\begin{align*}
	B (\lambda-D)^{-1}C(\lambda-A)^{-1} &= 
	\begin{bmatrix}
	0 & B_{12}
	\end{bmatrix}
	\begin{bmatrix}
	(\lambda-D_{11})^{-1} & 0 \\
	0 & (\lambda-D_{22})^{-1}
	\end{bmatrix}
	\begin{bmatrix}
	C_{21} \\ 0 
	\end{bmatrix} (\lambda-A_{11})^{-1}\\
	&=0.
	\end{align*} 
\end{remark}

\begin{proof}[Proof of Corollary \ref{cor:pert_block_operators}]
	The idea is to apply Proposition \ref{prop:pert_block_operators} by checking the condition \eqref{eq:perturbation_L_0_simplified_condition} with $A$ and $D$ replaced by $\nu+A$ and $\nu+D$. 
	Let us begin by noticing that, for all $\nu>0$, 
	\begin{equation}
	\label{eq:monotonicity_N_non}
	\non_{\psi}^{\ast}(\nu+ A)\leq \non^{\ast}_{\psi}(A)\quad \text{ and }\quad
	\non_{\psi}^{\ast}(\nu+ D)\leq \non_{\psi}^{\ast}(D), \quad \ast\in \{\Ssec,\Rsec\},
	\end{equation}
	and by Assumption~\ref{ass:standing} for all $y\in X_2$
	\begin{equation*}
	\begin{aligned}
	\|B (\nu+D)^{-1} y\|_{X_1}
	\leq C_B \|D(\nu+D)^{-1} y\|_{X_2} + L\|(\nu+D)^{-1}y\|_{X_1}.
	\end{aligned}
	\end{equation*}
	In particular, for all $\nu>0$ and $x\in \Do(D)$, there is a $y\in X_2$ with $x=(\nu+D)^{-1} y$ and hence
	\begin{equation}
	\label{eq:estimate_B_relatively_small}
	\|B x\|_{X_1}\leq \Big(C_B \non^{\ast}_{\psi}(D) + (1+\non^{\ast}_{\psi}(D)) \frac{L}{\nu}\Big)\|(\nu+D)x\|_{X_2}.
	\end{equation}
	Next, set $$\nu_0'\stackrel{{\rm def}}{=}\frac{L}{C_B}\Big(1+\frac{1}{\non_\psi^\ast(D)}\Big),$$ and note that the constant on the right hand side of \eqref{eq:estimate_B_relatively_small} is less than  $2C_B \non_{\psi}^\ast(D)$ provided $\nu\geq \nu_0'$. Analogously, for all $\nu>0$ and $x\in \Do(A)$, 
	$$
	\|C  x\|_{X_2}
	\leq \Big(\varepsilon \non^\ast_{\psi}(A) + (1+\non^\ast_{\psi}(A)) \frac{L}{\nu}\Big)\|(\nu+A)x\|_{X_1}.
	$$
	By \eqref{eq:monotonicity_N_non}, \eqref{eq:estimate_B_relatively_small} and Proposition  \ref{prop:pert_block_operators}, the claim follows provided $\nu_0\geq \nu_0'$ and $\varepsilon_0>0$ satisfy  
	$$
	\varepsilon_0\non_{\psi}^\ast(A) +   (1+\non_{\psi}^\ast(A))\frac{L}{\nu_0} \leq 
	\frac{1}{2 C_B \non_{\psi}^\ast(D) \non_{\psi}^\ast(A)\non_{\psi}^\ast(D)}\stackrel{{\rm def}}{=} R ,
	$$
	and a possible choice is given by
	\begin{equation}
	\label{eq:var_nu_0}
	\varepsilon_0 = \frac{R}{\non_{\psi}^\ast(A)}, \ \ \text{ and }\ \ 
	\nu_0=\nu_0'\vee \Big(\frac{L}{R}(1+\non_{\psi}^\ast(A))\Big).
	\end{equation}
	
	It remains to prove the last statement.  Thus, we assume that  $C\in \calL(\Do(A^{\gamma}),X_2)$ for some $\gamma\in (0,1)$. Under this assumption by the Young and moment inequality (see e.g.\ \cite[Theorem 3.3.5]{pruss2016moving}) for any $\varepsilon>0$ there is an $C_{\varepsilon,\gamma}>0$ such that 
	$$
	\|Cx\|_{X_2}\leq \varepsilon \|A x\|_{X_1}+ C_{\varepsilon,\gamma} \|x\|_{X_1} \quad\text{for all }x\in \Do(A),
	$$
	and in particular there exists $\varepsilon_0>0$ such that $c_D<\varepsilon_0$.
\end{proof}

\subsection{Proofs of Theorem \ref{t:rsec_necessary_sufficient_condition_II}, Corollary \ref{t:rsec_necessary_sufficient_condition}, and Proposition \ref{l:one_side_A_0_A_estimate}}
\label{ss:proofs_sectoriality}

\begin{proof}[Proof of Theorem \ref{t:rsec_necessary_sufficient_condition_II}]
	For part \eqref{it:A_R_sectorial_equivalence}, let $\psi\in (\angR(A)\vee \angR(D),\pi)$ be fixed.

	\eqref{it:A_R_sectorial}$\Rightarrow$\eqref{it:schur_compleements_R_bounded}: Fix $\phi>\psi$. Since $\phi>\angR(A)\vee\angR(D)$, we have $\rho(A)\cap \rho(D)\supseteq \complement\overline{\Sigma_{\phi}}$. By Proposition~\ref{l:relation_resolvent_AD_calA} and our assumptions, we have that the representations in Proposition \ref{l:relation_resolvent_AD_calA}\eqref{eq:resolvent_formula_block_matrix_I} hold for all $\lambda\in \complement\overline{\Sigma_{\phi}}$.
	We claim that 
	\begin{equation}
	\label{eq:R_sectoriality_Minverse}
	\Rsec(\M(\lambda)^{-1}\colon\lambda\in  \complement\overline{\Sigma_{\phi}})<\infty 
	\end{equation}
	where $\M(\lambda)$ is defined in \eqref{eq:def_M_A_0}.
	Note that \eqref{eq:R_sectoriality_Minverse} is in fact stronger than \eqref{it:schur_compleements_R_bounded}.
	
	Next we prove \eqref{eq:R_sectoriality_Minverse}. Note that by \eqref{eq:def_M_A_0}
	\begin{equation}
	\label{eq:Mone_identity}
	\M(\lambda)^{-1}=(\lambda-\diag)(\lambda-\A)^{-1}  \ \text{ for all }\lambda\in \complement\overline{\Sigma_{\phi}}.
	\end{equation}
	Recall that $\diag$ is injective since $A,D$ (and thus $\diag$) are sectorial operators. By \eqref{eq:equivalence_norm_A_0_A} and $\Do(\diag)=\Do(\A)$, it follows that $\A$ is injective as well.
	In particular, $\A^{-1}:\Ran(A)\to \Do(A)$ is well-defined and $\|\diag\A^{-1} x\|_{X}\lesssim \|x\|_{X}$ for all $x\in \Ran(\A)$. By $\overline{\Ran(A)}=X$ we infer
	$$
	\overline{\diag\A^{-1}}\stackrel{{\rm def}}{=} K \in \calL(X)  \ \ \text{ and } \ \ \diag=K \A\text{ on }\Do(\A).
	$$
	Combining the latter with \eqref{eq:Mone_identity},
	$$
	\M(\lambda)^{-1}= \lambda(\lambda-\A)^{-1} - K \A (\lambda-\A)^{-1} \ \text{ for all }\lambda\in \complement\overline{\Sigma_{\phi}}.
	$$
	By \eqref{it:A_R_sectorial} and the previous identity, we get \eqref{eq:R_sectoriality_Minverse}, and by Proposition \ref{l:relation_resolvent_AD_calA}~\eqref{eq:resolvent_formula_block_matrix_I} the claim follows, where one uses that for $\lambda_0\in \rho(A) \cap \rho(D)$
	\begin{align*}
	\Rsec (C(\lambda-A)^{-1}\colon \lambda\in \complement\overline{\Sigma_{\phi}}) &\leq \norm{C(\lambda_0-A)^{-1}} \Rsec \big((\lambda_0-A)(\lambda-A)^{-1}\colon \lambda\in \complement\overline{\Sigma_{\phi}}\,\big)<\infty, \\
	\Rsec (B(\lambda-D)^{-1}\colon \lambda\in \complement\overline{\Sigma_{\phi}})&\leq \norm{B(\lambda_0-D)^{-1}} \Rsec \big((\lambda_0-D)(\lambda-D)^{-1}\colon \lambda\in \complement\overline{\Sigma_{\phi}}\,\big)<\infty.
	\end{align*}

	\eqref{it:schur_compleements_R_bounded}$\Rightarrow$
	\eqref{it:A_R_sectorial}: Let $\phi>\psi$. 
	Note that $\rho(\diag)\supseteq \complement\overline{\Sigma_{\phi}}$ and one has the representations in Proposition~\ref{l:relation_resolvent_AD_calA}~\eqref{eq:resolvent_formula_block_matrix_I}. Due to the $\Rsec$-sectoriality of $A,D$ and the choice of $\phi$, it remains to prove that $\Rsec(\M(\lambda)^{-1}\colon\lambda\in  \complement\overline{\Sigma_{\phi}})<\infty 
	$. By \eqref{it:schur_compleements_R_bounded}, the latter holds provided
	$$
	\Rsec(B(\lambda-D)^{-1}\colon\lambda\in  \complement\overline{\Sigma_{\phi}})<\infty  \ \ \text{ and } \ \ 
	\Rsec(C(\lambda-A)^{-1}\colon\lambda\in  \complement\overline{\Sigma_{\phi}})<\infty .
	$$
	Again, these bounds follow from the $\Rsec$-sectoriality of $A$ and $D$, the choice of $\phi$ and Assumption \ref{ass:standing} with $L=0$.
	
	Part~\eqref{it:A_sectorial_a} follows analogously, replacing $\Rsec$-bounds by norm-bounds.
	
	Proof of part \eqref{it:X_reflexive}: If $X$ is reflexive, then $X=\mathsf{N}(T)\oplus \overline{\mathsf{R}(T)}$, for any (pseudo-) sectorial operator $T$ (see \cite[Proposition 10.1.9]{Analysis2}). Reasoning as in the implication 
	\eqref{it:A_R_sectorial}$\Rightarrow$\eqref{it:schur_compleements_R_bounded}, \eqref{eq:equivalence_norm_A_0_A} yields $\mathsf{N}(\A)=\{0\}$ and therefore  $\overline{\mathsf{R}(\A)}=X$.
\end{proof}

\begin{proof}[Proof of Corollary \ref{t:rsec_necessary_sufficient_condition}]
	The claim follows by Theorem \ref{t:rsec_necessary_sufficient_condition_II}, noticing that if $0\in \rho(\A)$ then \eqref{eq:equivalence_norm_A_0_A} follows from $\Do(\A)=\Do(\diag)$ and that Assumption \ref{ass:standing} with $L=0$ holds since $0\in \rho(A)\cap \rho(D)$. 
\end{proof}


\begin{proof}[Proof of Proposition \ref{l:one_side_A_0_A_estimate}]
This proof 
resembles the one of Proposition \ref{l:relation_resolvent_AD_calA}.	The operators $H$ and $G$ are well-defined and bounded by Remark~\ref{rem:ass_standing}.
	By \eqref{eq:def_GH_lemma}, we have
	\begin{equation}
	\label{eq:A_0_A_G_relation_lemma_proof}
	\A =\overline{\M(0)} \diag,
	\ \ \text{ where }\ \ 
	\overline{\M(0)}=
	\begin{bmatrix}
	\id & H\\
	G & \id
	\end{bmatrix}
	\end{equation}
as in Remark~\ref{rem:RanA_dense}.
	To fix the idea, we assume that $\id-HG$ is invertible. 
	Reasoning as in the proof of Proposition \ref{l:relation_resolvent_AD_calA}, one can check that $\overline{\M(0)}$ is invertible with inverse given by
	$$
	\overline{\M(0)}^{\,-1}=\begin{bmatrix}
	(\id - HG )^{-1} &0 \\
	- G(\id-HG)^{-1} & \id
	\end{bmatrix}
	\begin{bmatrix}
	\id &- H\\
	0 & \id
	\end{bmatrix}
	\in \calL(X).
	$$
	Thus \eqref{eq:A_0_A_G_relation_lemma_proof} gives $\diag=\overline{\M(0)}^{\,-1} \A$ and therefore $\|\diag x\|_{X}\leq \| \overline{\M(0)}^{\,-1}\|_{\calL(X)} \|\A x\|_{X}$ for $x\in \Do(\diag)$, as desired.
	
	It remains to prove the last assertion, where it suffices to show in the first case that $\sup\{\norm{B(t+D)^{-1}C(t+A)^{-1}}\colon t\in (0,\varepsilon)\}<1$ implies $\| H G \|_{\calL(X_1)}< 1$. 
	This follows since for all $x\in X$  $$\lim_{t\downarrow 0} B(t+D)^{-1}C(t+A)^{-1} x = \lim_{t \downarrow 0} H D(t+D)^{-1}G A(t+A)^{-1} x =HG x$$ by \cite[Proposition 10.1.7 (2)]{Analysis2}, and 
the other case follows similarly.
\end{proof}

\section{$H^{\infty}$-calculus for block operator matrices}\label{sec:Hinfty}
In this section 
we give some sufficient condition to check the boundedness of the $H^{\infty}$-calculus for block operator matrices $\A$. 
These results 
will be formulated using 

\begin{assumption}
	\label{ass:fractions_pm}
	Let Assumption \ref{ass:standing} be satisfied. Suppose that $A$ and $D$ are sectorial operators.
	\begin{description}
		\item[{\rm$(+)$}\label{ass:fractions_+}] We say that Assumption \ref{ass:fractions_pm}\nameref{ass:fractions_+} holds if  there exists $\delta\in (0,1)$ such that
		\begin{equation*}
		\begin{aligned}
		C(\Do(A^{1+\delta}))&\subseteq \Do(D^{\delta}) \ \ \text{ and } \ \ &
		\|D^{\delta}C x\|_{X_2}\lesssim \| A^{1+\delta} x\|_{X_1}& \quad \hbox{for all } x\in \Do(A^{1+\delta}),
		\\
		B(\Do(D^{1+\delta}))&\subseteq \Do(A^{\delta}) \ \ \text{ and } \ \ &
		\|A^{\delta}B y\|_{X_1}\lesssim \| D^{1+\delta} y\|_{X_2} & \quad \hbox{for all } y\in \Do(D^{1+\delta}).
		\end{aligned}
		\end{equation*}
		\item[{\rm$(-)$}\label{ass:fractions_-}] We say that Assumption \ref{ass:fractions_pm}\nameref{ass:fractions_-} holds if there exists $\delta\in (0,1)$ such that
		\begin{equation*}
		\begin{aligned}
		\ \ \ \  \qquad
		\Ran(C)&\subseteq \Ran(D^{\delta}) \ \ \text{ and } \ \ &
		\|D^{-\delta}C x\|_{X_2}\lesssim \| A^{1-\delta} x\|_{X_1}&\quad \hbox{for all } x\in \Do(A), 
		\\
		\ \  \ \ \qquad
		\Ran(B)&\subseteq \Ran(A^{\delta}) \ \ \text{ and } \ \ &
		\|A^{-\delta}B y\|_{X_1}\lesssim \| D^{1-\delta} y\|_{X_2}& \quad \hbox{for all } y\in \Do(D).
		\end{aligned}
		\end{equation*}
	\end{description}
\end{assumption}
\begin{remark}
	If $0\in \rho(A)$, then $\Ran(A^{\delta})=X_1$ for $\delta>0$. Thus the condition $\Ran(B)\subseteq \Ran(A^{\delta}) $ in Assumption \ref{ass:fractions_pm}\nameref{ass:fractions_-} becomes redundant in this case. A similar consideration holds for $\Ran(C)\subseteq \Ran(D^{\delta})$ if $0\in \rho(D)$.
\end{remark}
Having in mind applications to differential operators, Assumption~\ref{ass:standing} implies a relation between the orders of $C$ and $A$, and the orders of $B$ and $D$. Assumptions~\ref{ass:fractions_pm}$(\pm)$ now impose additional relations between the orders of $C$, $D$ and $A$, and the orders of $B$, $A$ and $D$. This is illustrated by the following
\begin{example}
	Consider
	in  $L^p(\R^d)\times L^p(\R^d)$ for $p\in (1,\infty)$ the operator 
	\begin{align*}
	\A_\alpha= \begin{bmatrix}
	-\Delta+\id & -(-\Delta+\id)^\alpha \\ -\Delta+\id &\ \  (-\Delta+\id)^\alpha
	\end{bmatrix} \quad  \hbox{ with } 
	\Do(\A_\alpha)=H^{2,p}(\R^d)\times H^{2\alpha,p}(\R^d), \quad \alpha >0.
	\end{align*}
	It is easy to see that  $\A_\alpha$	is diagonally dominant and satisfies Assumption~\ref{ass:standing} for all $\alpha>0$. However,  in the case \nameref{ass:fractions_+}, using that $\Do(A^\gamma)=H^{2\gamma,p}(\R^d)$ and $\Do(D^\gamma)=H^{2\alpha\gamma,p}(\R^d)$ for $\gamma\geq 0$,
	\begin{align*}
	C(\Do(A^{1+\delta}))&=H^{2\delta,p}(\R^d) \subseteq \Do(D^{\delta}) = H^{2\alpha\delta,p}(\R^d)  &\hbox{only if } \alpha\geq  1,  \\
	B(\Do(D^{1+\delta}))&=H^{2\alpha\delta,p}(\R^d) \subseteq \Do(A^{\delta}) = H^{2\delta,p}(\R^d)   &\hbox{only if } \alpha\leq  1,
	\end{align*}
	and hence the inclusions Assumption~\ref{ass:fractions_pm}\nameref{ass:fractions_+} holds only for $\alpha=1$. 
	The same argument also proves that Assumption~\ref{ass:fractions_pm}\nameref{ass:fractions_-} holds only for $\alpha=1$. Hence,  Assumption~\ref{ass:fractions_pm} requires that $A$ and $D$ have the same orders. 
\end{example}

We begin by providing a sufficient condition for the boundedness of the $H^{\infty}$-calculus of $\A$. For the notion of the \emph{type} of a space we refer to \cite[Chapter 7]{Analysis2} recapped here in the Appendix~\ref{sec:appendix}. In particular 
$L^p$-spaces with $p\in (1,\infty)$ and their closed subspaces have non-trivial type. Note that spaces of non-trivial type are exactly the $K$-convex spaces by Pisier's theorem, see e.g. \cite[Theorem 7.4.23]{Analysis2} and also \cite[Section 7.4]{Analysis2} for the definition and properties of $K$-convex spaces.

\begin{theorem}[Boundedness of the $H^\infty$-calculus for $\Rsec$-sectorial $\A$]
	\label{t:calculus_bounded_no_smallness_condition}
Suppose that $X_1$ and $X_2$ are reflexive Banach spaces with non-trivial type. 	Let Assumption \ref{ass:standing} with $L=0$, estimate \eqref{eq:equivalence_norm_A_0_A}, and Assumption \ref{ass:fractions_pm}~\nameref{ass:fractions_+}and \nameref{ass:fractions_-} be satisfied. Then the following implication holds:

	If $A$ and $D$ have a bounded $H^{\infty}$-calculus of angle $\angH(A)$ and $\angH(D)$, respectively, and $\A$ is $\Rsec$-sectorial on $X$ with angle $\angR(\A)$,
	then $\A$ has a bounded $H^{\infty}$-calculus of angle $\angH(\A)=\angR(\A)$.
\end{theorem}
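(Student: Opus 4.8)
The plan is to transfer the bounded $H^\infty$-calculus of the block-\emph{diagonal} operator $\diag$ to $\A$ by working on a small complex interpolation scale around $X$ built from the fractional powers of $\diag$, and then to invoke the interpolation/extrapolation theory for the $H^\infty$-calculus of Kalton--Kunstmann--Weis; the $\Rsec$-sectoriality of $\A$ and the non-trivial type of $X_1,X_2$ are exactly what make the transfer work. As preparation, note that $\diag$ is block-diagonal, so $f(\diag)=\mathrm{diag}(f(A),f(D))$ for $f\in H^\infty_0$ and $\diag$ has a bounded $H^\infty$-calculus with $\angH(\diag)=\angH(A)\vee\angH(D)$; moreover $\diag^\gamma=\mathrm{diag}(A^\gamma,D^\gamma)$, the homogeneous scale is $\dot X_{\gamma,\diag}=\Dd(A^\gamma)\times\Dd(D^\gamma)$, each induced operator $\dot\diag_\gamma$ has a bounded $H^\infty$-calculus on $\dot X_{\gamma,\diag}$ of the same angle (consistency of the calculus along the scale together with \eqref{eq:consistency_T_T_gamma}), and since $\diag$ has bounded imaginary powers the spaces $(\dot X_{\gamma,\diag})_{\gamma}$ form a complex interpolation scale with $\dot X_{0,\diag}=X$. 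Finally, by \eqref{eq:equivalence_norm_A_0_A} and $L=0$ the graph norms of $\A$ and $\diag$ are equivalent, so $\A$ is closed with $\Do(\A)=\Do(\diag)$; replacing $A,D,\A$ by $\nu+A,\nu+D,\nu+\A$ for $\nu>0$ large preserves Assumptions~\ref{ass:standing} and \ref{ass:fractions_pm}$(\pm)$, condition~\eqref{eq:equivalence_norm_A_0_A}, and the $\Rsec$-sectoriality of $\A$ (only fractional-power norms change, by equivalent ones), so I may assume $0\in\rho(\diag)$, and at the end Proposition~\ref{prop:shift_H_calculus} transfers the calculus back from $\nu+\A$ to $\A$ using $\sigma(\A)\subseteq\overline{\Sigma_{\angR(\A)}}$.

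Next I would realize $\A$ on the shifted scales. Fix $\delta\in(0,1)$ valid for both Assumption~\ref{ass:fractions_pm}$(+)$ and $(-)$. These two assumptions say precisely that the off-diagonal part $\B$ maps $\Do(\dot\diag_{\delta})$, resp.\ $\Do(\dot\diag_{-\delta})$, boundedly into $\dot X_{\delta,\diag}$, resp.\ $\dot X_{-\delta,\diag}$ (the components being the stated estimates $\|D^{\pm\delta}Cx\|_{X_2}\lesssim\|A^{1\pm\delta}x\|_{X_1}$ and $\|A^{\pm\delta}By\|_{X_1}\lesssim\|D^{1\pm\delta}y\|_{X_2}$). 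Hence $\A$ induces closed operators $\A_{\pm\delta}$ on $\dot X_{\pm\delta,\diag}$, each a $\dot\diag_{\pm\delta}$-bounded perturbation, $\{\A_{-\delta},\A,\A_{\delta}\}$ is a consistent family on the scale, and $\A=\A_0$ is $\Rsec$-sectorial on the interior point $X$. Applying the factorization of Proposition~\ref{l:relation_resolvent_AD_calA} scale by scale — the operator $M_1(\lambda)$ attached to $\A_{\gamma}$ is the conjugate $\mathrm{diag}(A^{\gamma},D^{\gamma})\,M_1(\lambda)\,\mathrm{diag}(A^{-\gamma},D^{-\gamma})$ of the one attached to $\A$, and Assumptions~\ref{ass:fractions_pm}$(\pm)$ let one distribute the fractional powers $A^{\pm\gamma},D^{\pm\gamma}$ through $B$ and $C$ so that $M_1(\lambda)^{-1}$ stays bounded, and in fact $\Rsec$-bounded, on $\Dd(A^{\gamma})$ uniformly for $\lambda\in\complement\overline{\Sigma_\phi}$ — one obtains via Theorem~\ref{t:rsec_necessary_sufficient_condition_II} that $\A_{\gamma}$ is $\Rsec$-sectorial on $\dot X_{\gamma,\diag}$ for every $|\gamma|\le\delta$, of angle $\le\angR(\A)$.

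The hard part is the last step. Even on $\dot X_{\pm\delta,\diag}$ the coupling $\B$ is of \emph{critical}, order-one, strength relative to $\dot\diag_{\pm\delta}$, so the classical perturbation theorem for the $H^\infty$-calculus — which requires strictly lower-order perturbations — does not apply, and bounded $H^\infty$-calculus of $\A$ on $\dot X_{\pm\delta,\diag}$ cannot simply be inherited from that of $\diag$. Here the Kalton--Kunstmann--Weis machinery takes over: the interpolation/extrapolation results of \cite{KKW, KW13_errata} combine the $\Rsec$-sectoriality of the consistent family $\{\A_\gamma\}_{|\gamma|\le\delta}$ on the complex interpolation scale $(\dot X_{\gamma,\diag})$ with the bounded $H^\infty$-calculus of the diagonal part $\diag$ along that scale and the non-trivial type (i.e.\ $K$-convexity) of $X_1,X_2$ to produce a bounded $H^\infty$-calculus of $\A$ at the interior point $X=\dot X_{0,\diag}$; since one always has $\angR(\A)\le\angH(\A)$ and the scale argument does not enlarge the angle beyond $\angR(\A)$, this gives $\angH(\A)=\angR(\A)$. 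Verifying the hypotheses of this KKW step — checking that the order-one off-diagonal coupling nonetheless respects the scale in the precise manner those theorems demand, which is exactly the content of Assumptions~\ref{ass:fractions_pm}$(\pm)$ — is the technical core of the argument; once it is done, undoing the shift with Proposition~\ref{prop:shift_H_calculus} completes the proof.
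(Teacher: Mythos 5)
Your overall strategy is the one the paper actually uses: the paper reduces Theorem~\ref{t:calculus_bounded_no_smallness_condition} to the abstract transference result Theorem~\ref{t:transference_appendix} applied with $T=\diag$ and $S=\A$, and that result is precisely the KKW interpolation machinery on the complex scale $\Dd(\diag^{\gamma})$ you describe. You have also identified the correct translation of Assumptions~\ref{ass:fractions_pm}$(\pm)$ into the mapping properties $\|\diag^{\pm\delta}\A x\|_X\lesssim\|\diag^{1\pm\delta}x\|_X$ that Theorem~\ref{t:transference_appendix} needs. So far so good.

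The step where the argument breaks, however, is exactly the one you label "scale by scale": you assert that "Assumptions~\ref{ass:fractions_pm}$(\pm)$ let one distribute the fractional powers $A^{\pm\gamma},D^{\pm\gamma}$ through $B$ and $C$ so that $M_1(\lambda)^{-1}$ stays bounded, and in fact $\Rsec$-bounded, on $\Dd(A^{\gamma})$ for every $|\gamma|\le\delta$." What Assumptions~\ref{ass:fractions_pm}$(\pm)$ together with interpolation actually give is that $\mathcal{T}(\lambda)=B(\lambda-D)^{-1}C(\lambda-A)^{-1}$ is \emph{bounded} on $\Dd(A^\gamma)$ uniformly in $\lambda$ — i.e.\ $M_1(\lambda)=\id-\mathcal{T}(\lambda)$ is bounded along the whole scale. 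That is a far cry from knowing $M_1(\lambda)^{-1}$ is bounded, let alone $\Rsec$-bounded, away from $\gamma=0$: conjugating by $A^{\gamma}$ does not preserve bounded invertibility or $\Rsec$-boundedness for free, and there is no smallness condition that would let you close the Neumann series on $\Dd(A^\gamma)$. Transporting invertibility from the interior point $\gamma=0$ to nearby $\gamma$ requires a Sneiberg-type argument, and even then one only obtains $\Rsec$-sectoriality of $\A_{-\alpha_0}$ for some possibly small $\alpha_0>0$, not for all $|\gamma|\le\delta$. Moreover, to obtain an $\Rsec$-bound rather than merely pointwise invertibility, one must run Sneiberg not on the operators $M_1(\lambda)$ themselves but on an auxiliary sequence space that encodes the $\Rsec$-bound as an operator-norm bound — this is exactly what the paper does in Lemma~\ref{lemma:B_minusalpha} with the spaces $\mathcal{X}_\theta,\mathcal{Y}_\theta$. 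Once $S_{-\alpha_0}$ is known to be $\Rsec$-sectorial, the $K$-convexity and the KKW identification of the Rademacher and complex interpolation scales for $\diag$ yield the bounded $H^{\infty}$-calculus of $\A$ at the interior point $\gamma=0$. In short: the core Sneiberg/sequence-space lemma that makes the transference go through is precisely the "technical core" you acknowledge but do not supply, and the shortcut ("distribute fractional powers") does not replace it.

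Two smaller remarks. First, the reduction to $0\in\rho(\diag)$ via a shift and Proposition~\ref{prop:shift_H_calculus} is not needed: Theorem~\ref{t:transference_appendix} is stated for general sectorial $T$, and the homogeneous spaces $\Dd(T^{\gamma})$ handle the non-invertible case directly. Second, your claim that $\A_\gamma$ is $\Rsec$-sectorial of angle $\le\angR(\A)$ on the \emph{whole} interval $|\gamma|\le\delta$ is stronger than either needed or true in general; only a (possibly small) neighborhood of $\gamma=0$ is obtained, and that suffices.
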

\begin{proof}
This follows directly from the more general transference result  Theorem~\ref{t:transference_appendix} -- given in the appendix -- applied with $T=\diag$ and $S=\A$, where the assumption \eqref{eq:mapping_property_AB} translates into the Assumptions~\ref{ass:fractions_pm}$(\pm)$.
\end{proof}

Having only one of the two Assumptions \ref{ass:fractions_pm}~\nameref{ass:fractions_+} and~\nameref{ass:fractions_-}, one can still prove the boundedness of the $H^\infty$-calculus, where 
as in Proposition \ref{prop:pert_block_operators}, we only require the coupling to be small. The proofs of Theorem \ref{t:calculus_pert_block_operators} and Theorem ~\ref{t:calculus_pert_block_operators_notype} will be given in Subsection \ref{ss:proof_pert_H_calculus} below.  

\begin{theorem}[Boundedness of the $H^\infty$-calculus  for small couplings]
	\label{t:calculus_pert_block_operators} 
	Let Assumption \ref{ass:standing} be satisfied with $L=0$, and assume that  Assumption \ref{ass:fractions_pm}~\nameref{ass:fractions_+} or Assumption \ref{ass:fractions_pm}~\nameref{ass:fractions_-} holds. 
	Let $A $ and $D$ have a bounded $H^{\infty}$-calculus and fix $\psi\in (\angH(A)\vee\angH(D),\pi)$. Assume that  $X_1$ or $X_2$ has non-trivial type and
	\begin{align}
	\label{eq:assumption_epsilon_small_calculus}
	\begin{split}
	c_1^{\Rsec}\stackrel{{\rm def}}{=}\Rsec(B (\lambda-D)^{-1}C(\lambda-A)^{-1}\colon \lambda\in \complement \Sigma_{\psi})&<1/K_{X_1} \hbox { or} \\
	c_2^{\Rsec}\stackrel{{\rm def}}{=}\Rsec(C(\lambda-A)^{-1}B (\lambda-D)^{-1}\colon \lambda\in \complement  \Sigma_{\psi})&<1/K_{X_2},
	\end{split}
	\end{align}
receptively, 
where $K_{X_j}$ are the $K$-convexity constants of $X_j$ for $j\in \{1,2\}$, 
	then $\A$ has a bounded $H^{\infty}$-calculus on $X$ of angle $\angH(\A)< \psi$.
\end{theorem}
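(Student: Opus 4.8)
The plan is to combine the factorization of Proposition~\ref{l:relation_resolvent_AD_calA} with the transference philosophy underlying Theorem~\ref{t:calculus_bounded_no_smallness_condition}, using the smallness hypothesis precisely to absorb the term which, in the absence of the \emph{second} fractional-power assumption, is not under control. Throughout I assume, without loss of generality, that $X_1$ has non-trivial type, that the first alternative in \eqref{eq:assumption_epsilon_small_calculus} holds, and that Assumption~\ref{ass:fractions_pm}~\nameref{ass:fractions_+} is in force; the three remaining combinations follow by interchanging the roles of $A,D$ (equivalently $X_1,X_2$) and/or of the exponents $1+\delta$ and $1-\delta$. First, since $c_1^{\Rsec}<1/K_{X_1}\le 1$, Proposition~\ref{prop:pert_block_operators}~\eqref{it:A_R_sectorial_smallC} already yields that $\A$ is $\Rsec$-sectorial of angle $\le\psi$ and, as in its proof, that \eqref{eq:equivalence_norm_A_0_A} holds, that $\overline{\Ran(\A)}=X$, and that for every $\phi\in(\psi,\pi)$ one has $\complement\overline{\Sigma_{\phi}}\subseteq\rho(\A)$ together with the Neumann expansion $M_1(\lambda)^{-1}=\sum_{n\ge 0}[B(\lambda-D)^{-1}C(\lambda-A)^{-1}]^{n}$, the series converging in $\calL(X_1)$ uniformly on $\complement\overline{\Sigma_{\phi}}$ and defining an $\Rsec$-bounded holomorphic family there. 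Fix such a $\phi$ and some $\sigma\in(\psi,\phi)$. For $f\in H^{\infty}_0(\Sigma_{\phi})$, Proposition~\ref{l:relation_resolvent_AD_calA}~\eqref{eq:resolvent_formula_block_matrix_I} gives
\[
f(\A)=\frac{1}{2\pi i}\int_{\partial\Sigma_{\sigma}}f(\lambda)\,(\lambda-\diag)^{-1}\M(\lambda)^{-1}\,d\lambda=f(\diag)+R(f),
\]
where
\[
R(f)\stackrel{{\rm def}}{=}\frac{1}{2\pi i}\int_{\partial\Sigma_{\sigma}}f(\lambda)\,(\lambda-\diag)^{-1}\big(\M(\lambda)^{-1}-\id\big)\,d\lambda .
\]
Since $f(\diag)$ is block-diagonal with entries $f(A)$ and $f(D)$, and $A,D$ have bounded $H^{\infty}$-calculus, $\|f(\diag)\|_{\calL(X)}\lesssim\|f\|_{H^{\infty}(\Sigma_{\phi})}$, so everything reduces to $\|R(f)\|_{\calL(X)}\lesssim\|f\|_{H^{\infty}(\Sigma_{\phi})}$ uniformly in $f\in H^{\infty}_0(\Sigma_{\phi})$.

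The core is a \emph{one-sided} transference estimate, carried out as in the proof of Theorem~\ref{t:calculus_bounded_no_smallness_condition}. Using the block representations of Proposition~\ref{l:relation_resolvent_AD_calA}~\eqref{eq:resolvent_formula_block_matrix_I}, the identities \eqref{eq:factorization_M}, and the Neumann series for $M_1(\lambda)^{-1}$ and $M_2(\lambda)^{-1}$, the operator $R(f)$ unfolds into a sum, indexed by the Neumann degree, of integrals of the schematic form $\frac{1}{2\pi i}\int_{\partial\Sigma_{\sigma}}f(\lambda)\,U(\lambda)\,T(\lambda)\,V(\lambda)\,d\lambda$, where $T(\lambda)$ is one of $B(\lambda-D)^{-1}$, $C(\lambda-A)^{-1}$ and $U(\lambda),V(\lambda)$ are products of resolvents of $A$ and $D$, of the uniformly bounded and $\Rsec$-bounded families $M_1(\lambda)^{-1},M_2(\lambda)^{-1}$, and of extra copies of the couplings $B(\lambda-D)^{-1}C(\lambda-A)^{-1}$, $C(\lambda-A)^{-1}B(\lambda-D)^{-1}$. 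Reading Assumption~\ref{ass:fractions_pm}~\nameref{ass:fractions_+} in the operator form used for Theorem~\ref{t:calculus_bounded_no_smallness_condition}, namely that $\diag^{\delta}\B\diag^{-1-\delta}$ extends to a bounded operator on $X$, one redistributes fractional powers $\diag^{\pm\delta}$ onto the two resolvents flanking $T(\lambda)$, so that each integrand acquires the form $\lambda\mapsto g(\diag)\,S\,h(\diag)$ with $S\in\calL(X)$ and $g,h$ holomorphic with enough decay at $0$ and $\infty$ to bring the integral within the scope of the Kalton--Kunstmann--Weis operator-valued $H^{\infty}$-machinery of \cite{KKW} --- applicable here because $A$ and $D$ have bounded $H^{\infty}$-calculus and $X_1$ has non-trivial type, hence is $K$-convex. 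Each such integral is then bounded by $C\|f\|_{H^{\infty}(\Sigma_{\phi})}$, and the decisive point is the accounting of $C$: an integrand carrying $k$ additional coupling factors picks up a factor $\le\big(K_{X_1}\,\Rsec(B(\lambda-D)^{-1}C(\lambda-A)^{-1}:\lambda\in\partial\Sigma_{\sigma})\big)^{k}\le(K_{X_1}c_1^{\Rsec})^{k}$, the $K$-convexity constant being exactly the loss in turning an $\Rsec$-bound into a norm bound on a Dunford integral. As $K_{X_1}c_1^{\Rsec}<1$, the sum over the Neumann degree converges geometrically, giving $\|R(f)\|_{\calL(X)}\lesssim\|f\|_{H^{\infty}(\Sigma_{\phi})}$.

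Consequently $\A$ has a bounded $H^{\infty}(\Sigma_{\phi})$-calculus for every $\phi\in(\psi,\pi)$, and the angle bound $\angH(\A)<\psi$ follows as in the proof of Theorem~\ref{t:calculus_bounded_no_smallness_condition}; the three remaining combinations of the hypotheses are symmetric. The main obstacle will be exactly this \emph{one-sidedness}: with only Assumption~\ref{ass:fractions_pm}~\nameref{ass:fractions_+} at hand, one controls the decay of the relevant integrands at only one end of the integration contour, so the contribution at the other end --- which would be supplied by Assumption~\ref{ass:fractions_pm}~\nameref{ass:fractions_-} in the setting of Theorem~\ref{t:calculus_bounded_no_smallness_condition} --- cannot be dealt with by fractional-power estimates alone and must instead be absorbed through the coupling. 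Making this absorption rigorous with the sharp threshold, i.e. propagating the $K$-convexity constant correctly through the square-function estimates so that $c_j^{\Rsec}<1/K_{X_j}$ is enough, is the technical heart of the argument.
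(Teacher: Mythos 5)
Your proposal contains a genuine gap. You declare ``without loss of generality'' that Assumption~\ref{ass:fractions_pm}~\nameref{ass:fractions_+} is in force, asserting that the remaining three combinations are symmetric under swapping $(A,D)$ and/or the exponents $1\pm\delta$. Swapping $(A,D)$ is indeed a symmetry, but exchanging $1+\delta$ with $1-\delta$ is not: the two alternatives describe genuinely different mapping properties of $B$ and $C$ and cannot be reduced to each other by a formal relabelling. The paper proves the case of Assumption~\nameref{ass:fractions_-} directly, and then handles \nameref{ass:fractions_+} by an entirely separate shift argument to the scale $\Dd(\diag^{\delta})$, where \nameref{ass:fractions_+} for $\A$ on $X$ becomes \nameref{ass:fractions_-} for the shifted block operator $\wh{\A}_{\delta}$ on $\Dd(\diag^{\delta})$; your proposal contains no trace of this step.

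The central step you do outline --- redistributing fractional powers $\diag^{\pm\delta}$ onto the two resolvents flanking each $B(\lambda-D)^{-1}$ or $C(\lambda-A)^{-1}$ --- is exactly what fails if only \nameref{ass:fractions_+} holds. Under \nameref{ass:fractions_+} one writes $B(\lambda-D)^{-1}=A^{-\delta}\,(A^{\delta}BD^{-1-\delta})\,D^{1+\delta}(\lambda-D)^{-1}$, and the factor $D^{1+\delta}(\lambda-D)^{-1}$ does \emph{not} belong to the $h_{\rho}$-family with $\rho\in(0,1)$ used in Lemma~\ref{l:estimate_f_functional_calculus}: it grows like $|\lambda|^{\delta}$ on the integration contour, so the individual Dunford integral in each Neumann term is not controlled. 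The geometric factor $(K_{X_1}c_1^{\Rsec})^{k}$ supplied by the coupling governs the \emph{sum over $k$}, but does nothing to salvage a divergent integral at fixed $k$. Your closing paragraph actually concedes this (``one controls the decay of the relevant integrands at only one end\dots and must instead be absorbed through the coupling\dots Making this absorption rigorous\dots is the technical heart of the argument''), but that absorption is not a viable mechanism: the missing idea is the shift in scale, not a sharper tracking of constants. Finally, your description of where $K_{X_1}$ enters is inaccurate: in the paper it appears through complex interpolation of $\Rsec$-bounded families (used to produce a small $\eta$ with $c_{\mathcal{T}_\eta}^{\Rsec}<1$), not as ``the loss in turning an $\Rsec$-bound into a norm bound on a Dunford integral'', and it enters once, not once per coupling factor.
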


The geometric conditions on $X_1,X_2$ can be avoided  assuming a particular smallness condition on $C$ which allows one to interpolate pairs of operators rather than $\Rsec$-bounded sets.
Recall that $\non^{\Rsec}_{\psi}$ is defined in \eqref{eq:def_non_R_sec}.

\begin{theorem}[Boundedness of the $H^\infty$-calculus  in space of trivial type and small $C$]
	\label{t:calculus_pert_block_operators_notype}
	Let Assumption \ref{ass:standing} be satisfied with $L=0$.
	Assume that $A $ and $D$ have a bounded $H^{\infty}$-calculus and that  Assumption \ref{ass:fractions_pm}\nameref{ass:fractions_-} or Assumption \ref{ass:fractions_pm}\nameref{ass:fractions_+} holds. 
	Fix $\psi\in (\angH(A)\vee\angH(D),\pi)$. If for $c_A$ and $c_D$, the relative bounds in Assumption~\ref{ass:standing}, one has
	\begin{equation}
		\label{eq:assumption_epsilon_small_calculus_notype}
		c_A <\frac{1}{c_D \non^{\Rsec}_{\psi}(A)\non^{\Rsec}_{\psi}(D)},
	\end{equation}
	then $\A$ has a bounded $H^{\infty}$-calculus on $X$ of angle $\angH(\A)< \psi$.
\end{theorem}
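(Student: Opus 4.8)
The plan is to proceed in two stages: first promote the hypotheses to a situation where $\A$ is known to be $\Rsec$-sectorial, and then upgrade $\Rsec$-sectoriality to a bounded $H^{\infty}$-calculus by rerunning the transference argument underlying Theorem~\ref{t:calculus_bounded_no_smallness_condition}, with the smallness of $c_A$ playing the role that non-trivial type played in Theorem~\ref{t:calculus_pert_block_operators}.

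\emph{Step 1 ($\Rsec$-sectoriality of $\A$).} Since $L=0$, Remark~\ref{rem:ass_standing} gives $G:=\overline{CA^{-1}}\in\calL(X_1,X_2)$, $H:=\overline{BD^{-1}}\in\calL(X_2,X_1)$ with $\|G\|\le c_A$, $\|H\|\le c_D$, and the factorizations $C(\lambda-A)^{-1}=G\,A(\lambda-A)^{-1}$, $B(\lambda-D)^{-1}=H\,D(\lambda-D)^{-1}$. By the product rule for $\Rsec$-bounds, for $\lambda\in\complement\overline{\Sigma_{\psi}}$,
\[
\Rsec\big(B(\lambda-D)^{-1}C(\lambda-A)^{-1}\big)\le\|H\|\,\|G\|\,\non^{\Rsec}_{\psi}(D)\,\non^{\Rsec}_{\psi}(A)\le c_A c_D\,\non^{\Rsec}_{\psi}(A)\non^{\Rsec}_{\psi}(D)<1
\]
by \eqref{eq:assumption_epsilon_small_calculus_notype}; since $\non^{\Rsec}_{\psi}\ge1$ this also forces $c_A c_D<1$, so $\id-HG$ is boundedly invertible by a Neumann series. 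Hence, as in the proof of Proposition~\ref{prop:pert_block_operators}, Proposition~\ref{l:one_side_A_0_A_estimate} and Remark~\ref{rem:RanA_dense} supply \eqref{eq:equivalence_norm_A_0_A} and $\overline{\Ran(\A)}=X$, and Proposition~\ref{prop:pert_block_operators}\eqref{it:A_R_sectorial_smallC} yields that $\A$ is $\Rsec$-sectorial of angle $\angR(\A)\le\psi$. Moreover $\diag$ inherits a bounded $H^{\infty}$-calculus (block-diagonally) of angle $\angH(A)\vee\angH(D)<\psi$.

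\emph{Step 2 (one-sided transference).} Assume Assumption~\ref{ass:fractions_pm}\nameref{ass:fractions_-} (the case \nameref{ass:fractions_+} is symmetric, with the scale spaces $\Dd(\diag^{-\delta})$ and $\Dd(\diag^{\delta})$ interchanged). That assumption is exactly the statement that $C=D^{\delta}\widetilde C A^{1-\delta}$, $B=A^{\delta}\widetilde B D^{1-\delta}$ with $\widetilde C\in\calL(X_1,X_2)$, $\widetilde B\in\calL(X_2,X_1)$, equivalently that $\B=\A-\diag$ obeys $\Ran(\B|_{\Do(\diag^{1-\delta})})\subseteq\Do(\diag^{-\delta})$ and $\|\diag^{-\delta}\B x\|_X\lesssim\|\diag^{1-\delta}x\|_X$ — the "$\alpha=-\delta$" half of the hypotheses of the transference Theorem~\ref{t:transference_appendix} applied with $T=\diag$, $S=\A$. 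Since the "$\alpha=+\delta$" half is unavailable, Theorem~\ref{t:transference_appendix} does not apply off the shelf, so one reopens its proof. Writing $X=[\Dd(\diag^{-\delta}),\Dd(\diag^{\delta})]_{1/2}$ (equivalent norms, since the bounded $H^{\infty}$-calculus of $\diag$ gives bounded imaginary powers), the bound $\|f(\A)\|_{\calL(X)}\lesssim\|f\|_{H^{\infty}}$ for $f\in H^{\infty}_0$ reduces to uniform bounds for $f(\A)$ on $\Dd(\diag^{-\delta})$ and on $\Dd(\diag^{\delta})$. On the first space the available scale estimate together with the calculus of $\diag$ gives the bound unconditionally; on the second, the same computation produces only an a priori estimate of the shape $\|f(\A)\|_{\calL(X)}\le C\|f\|_{H^{\infty}}+q\,\|f(\A)\|_{\calL(X)}$, where $q$ is, after the fractional factorization, controlled by $\|G\|\,\|H\|$ times the $\Rsec$- and $H^{\infty}$-constants of $A,D$ on $\complement\overline{\Sigma_{\psi}}$, hence $q\le c_A c_D\,\non^{\Rsec}_{\psi}(A)\non^{\Rsec}_{\psi}(D)<1$ by \eqref{eq:assumption_epsilon_small_calculus_notype}; this closes the estimate. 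Because $c_A$ enters only through the single bounded operator $G=\overline{CA^{-1}}$ — not through a $\lambda$-indexed $\Rsec$-bounded family — no $K$-convexity of $X_1,X_2$ is required, which is the whole point of the statement. Tracking angles: the calculi of $A,D$ and the smallness in \eqref{eq:assumption_epsilon_small_calculus_notype} all refer to sectors strictly inside $\Sigma_{\psi}$, so the resulting estimate is uniform for $f\in H^{\infty}_0(\Sigma_{\psi'})$ for $\psi'$ in a range reaching below $\psi$, giving $\angH(\A)<\psi$.

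\emph{Expected main obstacle.} The delicate point is precisely the one-sidedness of Assumption~\ref{ass:fractions_pm}: one must reopen the transference proof and verify that the error term generated on the "missing" scale side is controlled \emph{exactly} by $c_A c_D\,\non^{\Rsec}_{\psi}(A)\non^{\Rsec}_{\psi}(D)$, so that \eqref{eq:assumption_epsilon_small_calculus_notype} is sharp enough to make the fixed-point step contract; producing the factor $\|\overline{CA^{-1}}\|\le c_A$ in place of a cruder $\lambda$-dependent coupling bound is where the argument genuinely diverges from Theorem~\ref{t:calculus_pert_block_operators}. A secondary nuisance is the fractional-power bookkeeping when $0\notin\rho(\diag)$, which one handles by working throughout on the homogeneous scale $\Dd(\diag^{\gamma})$ and invoking the consistency relation \eqref{eq:consistency_T_T_gamma}.
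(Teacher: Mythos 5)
Your Step~1 is correct and coincides with the start of the paper's argument: the operator-norm hypothesis \eqref{eq:assumption_epsilon_small_calculus_notype}, together with $\|G\|\le c_A$, $\|H\|\le c_D$ and the product rule for $\Rsec$-bounds, gives $\Rsec\big(B(\lambda-D)^{-1}C(\lambda-A)^{-1}\big)<1$, so Proposition~\ref{prop:pert_block_operators} yields $\Rsec$-sectoriality of $\A$ of angle $<\psi$, and Proposition~\ref{l:one_side_A_0_A_estimate} and Remark~\ref{rem:RanA_dense} give \eqref{eq:equivalence_norm_A_0_A} and dense range. That part is fine.

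Step~2 has a genuine gap. You propose to run a modified version of the transference Theorem~\ref{t:transference_appendix}, but that theorem \emph{requires} $X$ to have non-trivial type (it invokes $K$-convexity of the $\Dd(T^\theta)$-scale to pass through the Rademacher interpolation functor and \cite[Corollary 7.8]{KKW}). Saying you will ``reopen its proof'' does not by itself eliminate this hypothesis; you would have to exhibit which specific step no longer needs it. The bootstrap inequality $\|f(\A)\|_{\calL(X)}\le C\|f\|_{H^\infty}+q\|f(\A)\|_{\calL(X)}$ that is supposed to close the argument is asserted but never derived --- no computation is given that produces the second term with constant $q$ equal to $c_Ac_D\non^\Rsec_\psi(A)\non^\Rsec_\psi(D)$. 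Moreover, the interpolation reduction ``$\|f(\A)\|_{\calL(X)}\lesssim\|f\|_{H^\infty}$ reduces to uniform bounds on $\Dd(\diag^{-\delta})$ and $\Dd(\diag^{\delta})$'' is running backwards: to interpolate you would first need $f(\A)$ to be (consistently) defined and bounded on both scale spaces, and under the one-sided Assumption~\ref{ass:fractions_pm}$(-)$ alone there is no extension of $\B$, and hence of $\A$, to the $+\delta$ end of the scale, so that side of the interpolation pair is not available to you.

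The actual route the paper takes is conceptually different and worth internalizing. It does not pass through Theorem~\ref{t:transference_appendix} at all; it modifies Steps 2--3 of the proof of Theorem~\ref{t:calculus_pert_block_operators}. In that earlier proof, $K$-convexity is needed precisely because one interpolates $\lambda$-indexed $\Rsec$-bounded families $\{\mathcal T_\eta(\lambda)\}$. Here one instead interpolates the \emph{single operators} $C_\eta\in\calL(\Dd(A^{1-\eta}),\Dd(D^{-\eta}))$ and $B_\eta\in\calL(\Dd(D^{1-\eta}),\Dd(A^{-\eta}))$ between $\eta=0$ and $\eta=\delta$, so that $\|C_\eta\|\le c_A^{1-\eta/\delta}\|C_\delta\|^{\eta/\delta}$ and likewise for $B_\eta$; for $\eta$ small the product $c_A'c_D'$ of the resulting bounds for $H=\overline{D^{-\eta}CA^{\eta-1}}$, $G=\overline{A^{-\eta}BD^{\eta-1}}$ stays below $1/(\non^\Rsec_\psi(A)\non^\Rsec_\psi(D))$. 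This is plain complex interpolation of a single operator between Banach spaces and needs no $K$-convexity. One then writes $B(\lambda-D)^{-1}C(\lambda-A)^{-1}=A^\eta G D(\lambda-D)^{-1}HA(\lambda-A)^{-1}A^{-\eta}$, expands the Neumann series for $(\lambda-A)^{-1}M_1(\lambda)^{-1}$ as $(\lambda-A)^{-1}+A^\eta(\lambda-A)^{-1}\mathcal S(\lambda)A^{1-\eta}(\lambda-A)^{-1}$ with $\mathcal S(\lambda)=\big(\sum_{n\ge 0}[GD(\lambda-D)^{-1}HA(\lambda-A)^{-1}]^n\big)[GD(\lambda-D)^{-1}H]$, observes that $\Rsec\big(\mathcal S(\lambda):\lambda\in\C\setminus\Sigma_\psi\big)<\infty$ because $\Rsec$-bounded sets compose with the fixed bounded operators $G,H$ without any geometric restriction on the spaces, and finally invokes Lemma~\ref{l:estimate_f_functional_calculus}, which only needs the $H^\infty$-calculus of $A,D$ and the $\Rsec$-bound on the middle family. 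You never invoke Lemma~\ref{l:estimate_f_functional_calculus} in your outline, and that lemma is precisely the step that converts the resolvent-level factorization into the desired $H^\infty$-estimate. I would suggest rewriting Step~2 along these lines: interpolate the operators rather than the $\lambda$-families, carry that smallness into the Neumann series, and finish with Lemma~\ref{l:estimate_f_functional_calculus}. The $(+)$ case then reduces to the $(-)$ case by the shift-on-scales argument already spelled out for Theorem~\ref{t:calculus_pert_block_operators}, with the $\Rsec$-interpolation step replaced by the operator-interpolation step as above.
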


Again, arguing as in the proof of Corollary \ref{cor:pert_block_operators}, we may allow lower order terms at the expense of a shifting. Note that  Corollary~\ref{cor:calculus_pert_block_operators_lower_order} goes beyond the well-known lower order perturbation theorems for the $H^\infty$-calculus, cf. e.g. \cite[Theorem~2.4]{AHS1994}. 
The results presented in this section admit even perturbations of the same order, and even if $C$ is of lower order $B$ can be of the same order as $A$ and $D$. General perturbations of the same order under additional assumptions on mapping properties in domains of fractional powers of the unperturbed operator are discussed in \cite[Theorem 3.2]{DDHPV}  and \cite[Section 5]{KKW}.

\begin{corollary}[Boundedness of the $H^\infty$-calculus of $\A$ for small $C$]
	\label{cor:calculus_pert_block_operators_lower_order}
	Let Assumption \ref{ass:standing} be satisfied, and 
	assume that  Assumption \ref{ass:fractions_pm}~\nameref{ass:fractions_+} or Assumption \ref{ass:fractions_pm}~\nameref{ass:fractions_-} holds. If $A $ and $D$ have a bounded $H^{\infty}$-calculus, then for any $\psi\in (\angH(A)\vee\angH(D),\pi)$ there exist
	$$
	\varepsilon_0=\varepsilon_0(c_D ,\non_{\psi}(A),\non_{\psi}(D))>0\ \  \text{ and } \ \ 
	\nu_0=\nu_0(c_D ,\non_{\psi}(A),\non_{\psi}(D),L)>0
	$$
	such that if $c_A<\varepsilon_0$ and $\nu>\nu_0$,
	then
	$\nu+\A$ has a bounded $H^{\infty}$-calculus on $X$ of angle $\angH(\A)\leq \psi$. 
	In particular, if $C\in \calL(\Do(A^{\gamma}),X_2)$ for some  $\gamma\in (0,1)$, 
	then there exists $\varepsilon_0>0$ and $\nu_0>0$ such that 
	 the conditions on $c_A$ are  satisfied.
\end{corollary}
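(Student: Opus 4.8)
The plan is to deduce the statement from Theorem~\ref{t:calculus_pert_block_operators_notype}, applied not to $\A$ but to the shifted operator $\nu+\A$, following the same scheme as in the proof of Corollary~\ref{cor:pert_block_operators}. For $\nu>0$ the operator $\nu+\A$ is again diagonally dominant, with diagonal part $\nu+\diag$ (whose blocks are $\nu+A$ and $\nu+D$) and the same off-diagonal part $\B$; moreover $\nu+A$ and $\nu+D$ are invertible sectorial operators since $-\nu\in\rho(A)\cap\rho(D)$, and by the shift-invariance of the bounded $H^{\infty}$-calculus recalled before Proposition~\ref{prop:shift_H_calculus} they still have a bounded $H^{\infty}$-calculus of angle at most $\angH(A)$ and $\angH(D)$, so the fixed $\psi$ remains admissible for $\nu+\A$.

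Next I would check the remaining hypotheses of Theorem~\ref{t:calculus_pert_block_operators_notype} for $\nu+\A$. For Assumption~\ref{ass:standing} with $L=0$: writing $x=(\nu+A)^{-1}y$ for $x\in\Do(A)$ and using that $\|A(\nu+A)^{-1}\|_{\calL(X_1)}$ and $\nu\|(\nu+A)^{-1}\|_{\calL(X_1)}$ are $\lesssim 1+\non^{\Rsec}_{\psi}(A)$, the estimate $\|Cx\|_{X_2}\le c_A\|Ax\|_{X_1}+L\|x\|_{X_1}$ turns into $\|Cx\|_{X_2}\le c_A^{(\nu)}\|(\nu+A)x\|_{X_1}$ with $c_A^{(\nu)}\lesssim (c_A+\tfrac{L}{\nu})(1+\non^{\Rsec}_{\psi}(A))$, and symmetrically $\|By\|_{X_1}\le c_D^{(\nu)}\|(\nu+D)y\|_{X_2}$ with $c_D^{(\nu)}\lesssim(c_D+\tfrac{L}{\nu})(1+\non^{\Rsec}_{\psi}(D))$; this is precisely the computation behind \eqref{eq:estimate_B_relatively_small}. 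For Assumption~\ref{ass:fractions_pm}$(\pm)$ with $\nu+A,\nu+D$ in place of $A,D$: here one uses that $\Do((\nu+A)^{s})=\Do(A^{s})$ and $\Do((\nu+D)^{s})=\Do(D^{s})$ for $s\ge0$, that $A^{s}(\nu+A)^{-s}$, $(\nu+D)^{-\delta}D^{\delta}$ and $(\nu+D)^{\delta}-D^{\delta}$ extend to bounded operators, and that $\|x\|_{X_1}\lesssim\|(\nu+A)^{1\pm\delta}x\|_{X_1}$ by invertibility; feeding these into the estimates of Assumption~\ref{ass:fractions_pm}$(\pm)$ for $A,D$ and absorbing the extra lower-order terms by the Young and moment inequalities yields the corresponding estimates for $\nu+A,\nu+D$.

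It then remains to arrange the smallness condition \eqref{eq:assumption_epsilon_small_calculus_notype} of Theorem~\ref{t:calculus_pert_block_operators_notype} for $\nu+A,\nu+D$. By the monotonicity $\non^{\Rsec}_{\psi}(\nu+A)\le\non^{\Rsec}_{\psi}(A)$ and $\non^{\Rsec}_{\psi}(\nu+D)\le\non^{\Rsec}_{\psi}(D)$ from \eqref{eq:monotonicity_N_non}, it suffices to have $c_A^{(\nu)}c_D^{(\nu)}\non^{\Rsec}_{\psi}(A)\non^{\Rsec}_{\psi}(D)<1$. Inserting the bounds on $c_A^{(\nu)},c_D^{(\nu)}$ from the previous step, one first chooses $\varepsilon_0$ depending only on $c_D,\non^{\Rsec}_{\psi}(A),\non^{\Rsec}_{\psi}(D)$ small enough and then $\nu_0$ depending in addition on $L$ large enough, so that the inequality holds whenever $c_A<\varepsilon_0$ and $\nu>\nu_0$; this is the exact analogue of the choice \eqref{eq:var_nu_0}. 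Theorem~\ref{t:calculus_pert_block_operators_notype} then gives a bounded $H^{\infty}$-calculus for $\nu+\A$ of angle $<\psi$, hence $\le\psi$. Finally, if $C\in\calL(\Do(A^{\gamma}),X_1)$ with $\gamma\in(0,1)$, the Young and moment inequalities give $\|Cx\|_{X_1}\le\varepsilon\|Ax\|_{X_1}+C_{\varepsilon,\gamma}\|x\|_{X_1}$ for every $\varepsilon>0$ and all $x\in\Do(A)$, so $c_A$ may be taken below $\varepsilon_0$, exactly as at the end of the proof of Corollary~\ref{cor:pert_block_operators}.

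The genuinely new point, and the step I expect to be the main obstacle, is the stability of Assumption~\ref{ass:fractions_pm}$(\pm)$ under the shift $A\mapsto\nu+A$, $D\mapsto\nu+D$ discussed in the second paragraph; granting that, the argument is a routine re-run of the proof of Corollary~\ref{cor:pert_block_operators} with Proposition~\ref{prop:pert_block_operators} replaced by Theorem~\ref{t:calculus_pert_block_operators_notype}.
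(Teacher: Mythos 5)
Your proof follows exactly the route the paper intends: the Corollary is stated in the paper with only the remark that one argues as in Corollary~\ref{cor:pert_block_operators}, shifting by $\nu$ and replacing Proposition~\ref{prop:pert_block_operators} by Theorem~\ref{t:calculus_pert_block_operators_notype}, which is precisely your plan. You have in fact gone somewhat further than the paper by spelling out the one non-obvious point it leaves implicit, namely that Assumption~\ref{ass:fractions_pm}$(\pm)$ is stable under the shift $A\mapsto\nu+A$, $D\mapsto\nu+D$; your sketch of this step (using $\Do((\nu+A)^{s})=\Do(A^{s})$ and the uniform boundedness of $A^{s}(\nu+A)^{-s}$ in $\nu$) is the right argument, even if the parenthetical mention of $(\nu+D)^{\delta}-D^{\delta}$ and the appeal to Young's inequality there are unnecessary.
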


\begin{remark}[Assumptions~\ref{ass:fractions_pm}$(\pm)$ are sufficient but not necessary]\label{rem:McIntosh_Yagi}
On the one hand, the	Assumptions~\ref{ass:fractions_pm}$(\pm)$ cannot be avoided in general even for the triangular case with $C=0$. A counterexample for the triangular case is constructed by McIntosh and Yagi in the proof of \cite[Theorem 3]{MY_1990}.
On the other hand, there are block operator matrices with bounded $H^\infty$-calculus which violate both Assumptions~\ref{ass:fractions_pm}$(\pm)$.
	Considering for instance a diagonally dominant $\A$ with $A=\id$, then this  already implies that $C$ is bounded. Now, in Assumptions~\ref{ass:fractions_pm}\nameref{ass:fractions_+} 
	the inclusion $C(\Do(A^{1+\delta}))\subseteq \Do(D^{\delta})$ would be violated for $C$  surjective and $D$ unbounded.
	In \nameref{ass:fractions_-} the estimate $\|A^{-\delta}B y\|_{X_2}\lesssim \| D^{1-\delta} y\|_{X_2}$ would fail if $B$ is of the order of $D$. So, considering for instance
	in  $L^p(\R^d)\times L^p(\R^d)$ for $p\in (1,\infty)$ the operator 
	\begin{align*}
	\A_{\mu}= \begin{bmatrix}
	\mu+\id & \Delta \\ \id & \mu-\Delta
	\end{bmatrix} \quad  \hbox{ with } 
	\Do(\A_{\mu})=L^p(\R^d)\times H^{2,p}(\R^d), \quad \mu>0,
	\end{align*}
	then this is diagonally dominant, but it violates both Assumptions~\ref{ass:fractions_pm}$(\pm)$. Nevertheless, it is shown in Corollary~\ref{prop:Azero} below, that $\A_{\mu}$ has a bounded $H^\infty$-calculus for $\mu>0$ sufficiently large.
\end{remark}

\subsection{Fractional powers}
One of the advantages of the $H^{\infty}$-calculus is that it implies the boundedness of imaginary powers (BIP) and therefore allows for a complete description of  $\Do(\A^{\theta})$ for $\theta\in (0,1)$, cf. e.g. \cite[Sections 2.3 and 2.4]{DHP}. The description of the fractional powers of negative orders is more delicate (although sometimes useful in applications to nonlinear (stochastic) partial differential equations, see e.g.\ \cite{AV19_QSEE_1}). For further results in this direction we  refer to Proposition \ref{prop:fractions_Hilbert}  and Theorem \ref{t:extrapolation_H} below.

\begin{proposition}[Fractional powers of $\A$ and $\diag$]
	\label{prop:fractional_powers_positive}
	Let Assumption \ref{ass:standing} be satisfied. 
	Suppose that $\A$ and $\diag$ have a bounded $H^{\infty}$-calculus and that \eqref{eq:equivalence_norm_A_0_A} holds.
	Then for all $\theta\in (0,1)$
	$$
	\Do(\A^{\theta})=\Do(\diag^{\theta}) \ \text{ and }\ 
	\|\diag^{\theta}x\|_{X}\eqsim \|\A^{\theta} x\|_X
	\ \text{ for all }x\in \Do(\diag^{\theta}).
	$$ 
	In particular $
	\Dd(\A^{\theta})=\Dd(\diag^{\theta})$ for all $\theta\in (0,1)$.
\end{proposition}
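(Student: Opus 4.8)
The plan is to use the standard fact that a bounded $H^{\infty}$-calculus implies the boundedness of imaginary powers, which in turn gives a description of the domains of fractional powers via complex interpolation, combined with the norm-equivalence \eqref{eq:equivalence_norm_A_0_A}. First I would record that, under Assumption~\ref{ass:standing} together with \eqref{eq:equivalence_norm_A_0_A}, one has the two-sided estimate $\|\A x\|_X + \|x\|_X \eqsim \|\diag x\|_X + \|x\|_X$ for all $x\in\Do(\A)=\Do(\diag)$; this is already observed in the remark following Proposition~\ref{l:one_side_A_0_A_estimate} (the reverse inequality $\|\A x\|_X \lesssim \|\diag x\|_X$ is immediate from the relative boundedness in Assumption~\ref{ass:standing}). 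Hence $\Do(\A)=\Do(\diag)$ \emph{with equivalent graph norms}.

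Next, since both $\A$ and $\diag$ have a bounded $H^{\infty}$-calculus, by \cite[Sections 2.3 and 2.4]{DHP} (or \cite[Corollary 10.4.10]{Analysis2}) they have bounded imaginary powers, and for any sectorial operator $T$ with bounded imaginary powers one has the complex interpolation identity
\begin{equation*}
\Do(T^{\theta}) = [X,\Do(T)]_{\theta} \quad \text{with equivalent norms,}\quad \theta\in(0,1),
\end{equation*}
where on the right one uses the graph norm on $\Do(T)$. Applying this to $T=\A$ and $T=\diag$ and using that $[X,\Do(\A)]_{\theta}=[X,\Do(\diag)]_{\theta}$ with equivalent norms — which follows from Step~1, since complex interpolation only sees the Banach space $\Do(\cdot)$ up to isomorphism — yields $\Do(\A^{\theta})=\Do(\diag^{\theta})$ with equivalent norms. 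To upgrade the resulting equivalence of graph-type norms $\|\A^{\theta}x\|_X+\|x\|_X \eqsim \|\diag^{\theta}x\|_X+\|x\|_X$ to the homogeneous equivalence $\|\A^{\theta}x\|_X \eqsim \|\diag^{\theta}x\|_X$, I would invoke injectivity and a scaling/dilation argument: from \eqref{eq:equivalence_norm_A_0_A} one checks that $\nu+\A$ and $\nu+\diag$ satisfy the same hypotheses for every $\nu>0$ with uniform constants (the relative bound in Assumption~\ref{ass:standing} and the constant in \eqref{eq:equivalence_norm_A_0_A} are unaffected by a common positive shift, cf.\ the proof of Corollary~\ref{cor:pert_block_operators}), so $\|(\nu+\diag)^{\theta}x\|_X \eqsim \|(\nu+\A)^{\theta}x\|_X$ with $\nu$-independent constants; letting $\nu\downarrow 0$ and using the convergence $(\nu+T)^{\theta}x\to T^{\theta}x$ on $\Do(T^{\theta})$ (see e.g.\ \cite[Proposition 3.1.9]{Pruess_book} or the extended calculus) gives the homogeneous estimate. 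Finally, the statement $\Dd(\A^{\theta})=\Dd(\diag^{\theta})$ follows by taking completions in \eqref{eq:def_homogeneous_scale}, since the two homogeneous norms $\|\A^{\theta}\cdot\|_X$ and $\|\diag^{\theta}\cdot\|_X$ are equivalent on the common domain $\Do(\A^{\theta})=\Do(\diag^{\theta})$, which is dense in each completion.

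The main obstacle I expect is the passage from the \emph{inhomogeneous} domain identity with equivalent graph norms (which interpolation delivers essentially for free) to the \emph{homogeneous} norm-equivalence $\|\A^{\theta}x\|_X\eqsim\|\diag^{\theta}x\|_X$ without assuming $0\in\rho(\A)$; if one only needed the case of invertible operators this would be immediate, but in general one must argue through the shifted operators and a limiting procedure, being careful that all interpolation and BIP constants stay uniform in the shift $\nu$. An alternative to this limiting argument — possibly cleaner — is to work directly with the extended $H^{\infty}$-calculus: the function $z\mapsto z^{\theta}$ is regularizable for both $\A$ and $\diag$, and one can transfer boundedness of $\diag^{\theta}(\A^{\theta})^{-1}$ and its inverse using the factorization $\A=\overline{\M(0)}\,\diag$ from Remark~\ref{rem:RanA_dense} together with the $H^{\infty}$-calculus of $\diag$; but this requires knowing that $\overline{\M(0)}$ interacts well with fractional powers, which is not obviously simpler than the shift argument, so I would present the interpolation-plus-shift route as the primary proof.
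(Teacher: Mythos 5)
Your first steps (graph-norm equivalence of $\Do(\A)$ and $\Do(\diag)$, BIP from the bounded $H^{\infty}$-calculus, and the inhomogeneous interpolation identity $\Do(T^{\theta})=[X,\Do(T)]_{\theta}$) are fine and do give $\Do(\A^{\theta})=\Do(\diag^{\theta})$ with equivalent \emph{graph} norms. The gap is in the ``upgrade'' to the homogeneous estimate. You assert that $\|(\nu+\diag)^{\theta}x\|_X \eqsim \|(\nu+\A)^{\theta}x\|_X$ holds with $\nu$-independent constants ``because the hypotheses hold with uniform constants,'' but this is precisely what has to be proved. If you run your inhomogeneous-interpolation argument for $\nu+\A$ and $\nu+\diag$ you only obtain the \emph{graph}-norm equivalence $\|(\nu+\A)^{\theta}x\|+\|x\| \eqsim \|(\nu+\diag)^{\theta}x\|+\|x\|$ uniformly in $\nu$. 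For fixed $\nu>0$ this does imply the homogeneous equivalence (since $0\in\rho(\nu+\cdot)$), but the constant you pick up is $\|(\nu+T)^{-\theta}\|_{\calL(X)}$, which blows up as $\nu\downarrow 0$. So the passage from graph norm to homogeneous norm is exactly where the $\nu$-dependence enters, and letting $\nu\downarrow 0$ only recovers the graph-norm equivalence you already had.

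The way to avoid this — and what the paper actually does by citing \cite[Proposition 2.2]{KKW} — is to interpolate the \emph{homogeneous} scale directly: $\Dd(T^{\theta})=[X,\Dd(T)]_{\theta}$, where $\Dd(T)$ carries $\|T\cdot\|_X$. From Assumption~\ref{ass:standing} and \eqref{eq:equivalence_norm_A_0_A} one gets $\|\A x\|_X\eqsim\|\diag x\|_X$ on $\Do(\A)=\Do(\diag)$ (this is the two-sided homogeneous equivalence, which the graph-norm version underestimates), hence $\Dd(\A)=\Dd(\diag)$ as Banach spaces. Interpolating against $X$ on both sides immediately gives $\Dd(\A^{\theta})=\Dd(\diag^{\theta})$ with the stated norm equivalence, and intersecting with $X$ via \eqref{eq:fractional_powers_positive_intersection_X} recovers $\Do(\A^{\theta})=\Do(\diag^{\theta})$. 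This makes the shift-and-limit procedure unnecessary: once you work in the homogeneous scale from the start, there is nothing to ``upgrade.'' Your alternative via $\overline{\M(0)}$ is in the right spirit (it produces the homogeneous equivalence $\|\A x\|\eqsim\|\diag x\|$), but the cleaner formulation is to feed it into the homogeneous interpolation rather than into an approximation argument.
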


\begin{proof}
	Since $\A$ has a bounded $H^{\infty}$-calculus and $\Do(\diag)=\Do(\A)$, we have $\Do(\A^{\theta})=\Do(\diag^{\theta})$ and $\Dd(\A^{\theta})=\Dd(\diag^{\theta})$ for all $\theta\in [0,1]$, compare \cite[Proposition 2.2]{KKW}. Here, in the second identification we have also used that \eqref{eq:equivalence_norm_A_0_A} holds. The remaining estimate follows from these identifications and the definition of these spaces in \eqref{eq:def_homogeneous_scale}.
\end{proof}

\subsection{Proofs of Theorem \ref{t:calculus_pert_block_operators} and Theorem~\ref{t:calculus_pert_block_operators_notype}
} 
\label{ss:proof_pert_H_calculus}
%
We begin by proving the following key lemma.
\begin{lemma}
	\label{l:estimate_f_functional_calculus}
	Assume that operators $S$ and $T$ have a bounded $H^{\infty}$-calculus on Banach spaces $E$ and $F$, respectively. Fix $\sigma> \angH(S)\vee\angH(T)$ and 
	let $\mathcal{J} :\C\setminus \Sigma_{\sigma}\to \mathscr{J}$ be a strongly continuous map where $\mathscr{J}\subseteq \calL(E,F)$ is assumed to be an $\Rsec$-bounded set. Then, for each $f\in H^{\infty}_0(\Sigma_{\sigma})$,
	$$
	\Big\|\int_{\Gamma} f(\lambda) T^{\delta} (\lambda-T)^{-1} \mathcal{J}(\lambda) S^{1-\delta}(\lambda-S)^{-1}\, \dd\lambda \Big\|_{\calL(E,F)}\lesssim \Rsec(\mathscr{J}) \|f\|_{H^{\infty}(\Sigma_{\sigma})},
	$$
	where $\delta\in (0,1)$ and $\Gamma=\partial\Sigma_{\theta}$ with $\theta\in (\angH(S)\vee\angH(T),\sigma)$.
\end{lemma}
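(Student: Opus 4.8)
The plan is to strip the contour integral down to a one-dimensional integral of Littlewood--Paley type over $(0,\infty)$, and then to play the bounded $H^{\infty}$-calculus of $T$ and $S$ against the $\Rsec$-boundedness of $\mathscr{J}$ by a randomisation argument; in particular no geometric property of $E$ or $F$ will be needed. First I would fix $\theta'\in(\angH(S)\vee\angH(T),\theta)$ and decompose $\Gamma=\partial\Sigma_{\theta}$ into its two rays $\{re^{\pm i\theta}:r>0\}$. On the ray $\lambda=re^{i\theta}$ one has the elementary identities
\[
T^{\delta}(\lambda-T)^{-1}=r^{\delta-1}\,g(r^{-1}T),\qquad S^{1-\delta}(\lambda-S)^{-1}=r^{-\delta}\,h(r^{-1}S),
\]
with $g(z)=z^{\delta}/(e^{i\theta}-z)$ and $h(z)=z^{1-\delta}/(e^{i\theta}-z)$; since the pole $e^{i\theta}$ lies outside $\overline{\Sigma_{\theta'}}$ and $\delta,1-\delta\in(0,1)$, both $g,h\in H^{\infty}_0(\Sigma_{\theta'})$. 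As $d\lambda=e^{i\theta}\,dr$ and $r^{\delta-1}r^{-\delta}=r^{-1}$, the ray contribution equals, up to a unimodular constant, $\int_0^{\infty}\widetilde{f}(r)\,g(r^{-1}T)\,N(r)\,h(r^{-1}S)\,\tfrac{dr}{r}$, where $\widetilde{f}(r)=f(re^{i\theta})$ (so $|\widetilde{f}|\le\|f\|_{H^{\infty}(\Sigma_{\sigma})}$) and $N(r)=\mathcal{J}(re^{i\theta})\in\mathscr{J}$ (so $\{N(r):r>0\}$ is strongly continuous and $\Rsec$-bounded with bound $\le\Rsec(\mathscr{J})$). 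Because $\|g(r^{-1}T)\|,\|h(r^{-1}S)\|\lesssim1$ and $\widetilde{f}$ inherits from $f\in H^{\infty}_0$ the decay $|\widetilde{f}(r)|\lesssim\min(r^{\varepsilon},r^{-\varepsilon})$, this integral converges absolutely in $\calL(E,F)$; that decay, however, may not be used in the final bound.

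Next I would periodise and randomise. From the identity $\int_0^{\infty}F(r)\tfrac{dr}{r}=\int_1^2\bigl(\sum_{n\in\Z}F(2^{n}s)\bigr)\tfrac{ds}{s}$ (Fubini, legitimate by the absolute convergence just noted) it suffices to bound, uniformly in $s\in[1,2)$, the operator $K_s:=\sum_{n\in\Z}a_n\,\phi_s(2^{n}T)\,N_n\,\psi_s(2^{n}S)\in\calL(E,F)$, where after reindexing $|a_n|\le\|f\|_{H^{\infty}(\Sigma_{\sigma})}$, $N_n\in\mathscr{J}$, and $\phi_s,\psi_s$ are dilates of $g,h$ whose $H^{\infty}(\Sigma_{\theta'})$-norms are independent of $s$ and which satisfy the crucial uniform bound $\sup_{z\in\Sigma_{\theta'}}\sum_{n\in\Z}|\phi_s(2^{n}z)|\le C_g<\infty$ (a convergent geometric-type series), and likewise for $\psi_s$. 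Let $(\varepsilon_n)$ be i.i.d.\ Rademacher variables, $x\in E$, and $e_n:=\psi_s(2^{n}S)x$. Using $\E[\varepsilon_k\varepsilon_l]=\delta_{kl}$,
\[
K_sx=\sum_n a_n\phi_s(2^{n}T)N_ne_n=\E\Bigl[\bigl(\textstyle\sum_k\varepsilon_k a_k\phi_s(2^{k}T)\bigr)\bigl(\textstyle\sum_l\varepsilon_l N_l e_l\bigr)\Bigr].
\]
For each realisation of $(\varepsilon_k)$ the first sum equals $\Psi(T)$ with $\Psi(z)=\sum_k\varepsilon_k a_k\phi_s(2^{k}z)$ and $\|\Psi\|_{H^{\infty}(\Sigma_{\theta'})}\le(\sup_k|a_k|)\,C_g\le C_g\|f\|_{H^{\infty}(\Sigma_{\sigma})}$, so the bounded $H^{\infty}(\Sigma_{\theta'})$-calculus of $T$ (recall $\theta'>\angH(T)$) gives $\|\sum_k\varepsilon_k a_k\phi_s(2^{k}T)\|_{\calL(F)}\lesssim\|f\|_{H^{\infty}(\Sigma_{\sigma})}$ deterministically; similarly $\|\sum_l\varepsilon_l e_l\|_E=\|\bigl(\sum_l\varepsilon_l\psi_s(2^{l}\,\cdot\,)\bigr)(S)x\|_E\lesssim\|x\|_E$ deterministically, by the bounded $H^{\infty}$-calculus of $S$. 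Combining these with Jensen's inequality and the defining inequality $\E\|\sum_l\varepsilon_l N_le_l\|_F^2\le\Rsec(\mathscr{J})^2\,\E\|\sum_l\varepsilon_l e_l\|_E^2$ yields $\|K_sx\|_F\lesssim\Rsec(\mathscr{J})\,\|f\|_{H^{\infty}(\Sigma_{\sigma})}\,\|x\|_E$ uniformly in $s$; integrating over $s\in[1,2)$ and summing the two rays of $\Gamma$ finishes the proof.

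The delicate points are convergence-theoretic: the absolute convergence of the reduced integral, the Fubini interchange, and above all the fact that the randomised series $\sum_k\varepsilon_k a_k\phi_s(2^{k}\,\cdot\,)$ and $\sum_l\varepsilon_l\psi_s(2^{l}\,\cdot\,)$ have limits lying in $H^{\infty}(\Sigma_{\theta'})\setminus H^{\infty}_0(\Sigma_{\theta'})$, so that $\Psi(T)$ and its $S$-analogue must be interpreted through the extended functional calculus and the convergence lemma for the bounded $H^{\infty}$-calculus (see e.g.\ \cite[Section~10.2]{Analysis2}). The conceptual core, however, is the observation that the object to be estimated after periodisation is the \emph{randomised} operator sum, whose $H^{\infty}$-norm is governed by $\sup_n|a_n|$ rather than by $\sum_n|a_n|$, precisely because $\sum_n|\phi_s(2^{n}z)|$ is bounded uniformly in $z$; this is the mechanism that converts the $\Rsec$-bound of the middle family $\mathscr{J}$ into the asserted norm estimate, and it is the only place where $\Rsec$-boundedness (as opposed to mere uniform boundedness of $\mathscr{J}$) enters.
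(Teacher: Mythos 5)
Your proposal is correct and follows the same strategy as the paper's proof: split $\Gamma$ into its two rays, rescale to pull out $g(r^{-1}T)$, $h(r^{-1}S)$, periodise $\int_0^\infty\tfrac{dr}{r}$ over a dyadic lattice, randomise with a Rademacher sequence via $\E[\varepsilon_k\varepsilon_l]=\delta_{kl}$, bound the randomised resolvent sums uniformly through the pointwise $\ell^1$-estimate $\sup_z\sum_n|\phi_s(2^nz)|<\infty$ together with the bounded $H^\infty$-calculus (this is exactly what the paper's cited unconditionality estimate \eqref{eq:unconditionality_H_calculus} encapsulates), and invoke the $\Rsec$-bound of $\mathscr{J}$ for the middle factor. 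The one cosmetic difference is that you estimate the operator--vector product $K_sx$ directly, whereas the paper dualizes against $y^*\in F^*$ and splits by Cauchy--Schwarz in $L^2(\O)$, which is why the adjoint $S^*$ appears there and not in your version; both variants are standard randomisation, and the truncation to finite dyadic sums that you flag at the end is indeed needed (and is made explicit in the paper as a $\lim_{N\uparrow\infty}$).
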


	The proof of Lemma \ref{l:estimate_f_functional_calculus} is based on standard randomization techniques, see e.g.\ \cite[Theorem 4.5.6]{pruss2016moving}. For the reader's convenience, we provide some details.
	
\begin{proof}
	Set $\Gamma^{\pm}\stackrel{{\rm def}}{=} \{r e^{\pm i\theta}: r>0\}$, then $\Gamma=\Gamma^+\cup \Gamma^-$, and we prove the claimed estimate for $\Gamma$ replaced by $\Gamma^+$, the one for $\Gamma^-$ follows similarly. For notational convenience, we set $\mathcal{J}_f(\lambda)\stackrel{{\rm def}}{=}f(\lambda)\mathcal{J}(\lambda)$. Note that
	\begin{equation}
	\label{eq:representation_formula_functional_calculus_f}
	\begin{aligned}
	&\int_{\Gamma^+}  f(\lambda) T^{\delta}(\lambda-T)^{-1} \mathcal{J}(\lambda )S^{1-\delta}(\lambda-S)^{-1}\, \dd\lambda\\
	&=\lim_{N\uparrow \infty} \sum_{j=-N}^{N-1}\int_{2^{j}}^{2^{j+1}} 
	T^{\delta}(r e^{i\theta}-T)^{-1} \mathcal{J}_f(re^{i\theta} )S^{1-\delta}(r e^{i\theta}-S)^{-1}\, \dd r\\
	&=\lim_{N\uparrow \infty} \sum_{j=-N}^{N-1}\int_{2^{j}}^{2^{j+1}} 
	\Big(\frac{T}{r}\Big)^{\delta}\Big(e^{i\theta}-\frac{T}{r}\Big)^{-1} \mathcal{J}_f(e^{i\theta} r)\Big(\frac{S}{r}\Big)^{1-\delta}
	\Big( e^{i\theta}-\frac{S}{r}\Big)^{-1}\, \frac{\dd r}{r}\\
	&=\lim_{N\uparrow \infty} \int_{1}^{2} \sum_{j=-N}^{N-1}
	\Big(\frac{T}{t 2^j}\Big)^{\delta}\Big(e^{i\theta}-\frac{T}{t2^j }\Big)^{-1} \mathcal{J}_f(e^{i\theta} t2^j)
	\Big(\frac{S}{t2^j }\Big)^{1-\delta}
	\Big( e^{i\theta}-\frac{S}{t2^j}\Big)^{-1}\, \frac{\dd t}{t}\\
	&=\lim_{N\uparrow \infty} \int_{1}^{2} \sum_{j=-N}^{N-1}h_{\delta}(2^j t T) \mathcal{J}_f(e^{i\theta} 2^j t) h_{1-\delta}(2^j t S)\,\frac{\dd t}{t},
	\end{aligned}
	\end{equation}
	where $h_{\rho}(z)=z^{\rho}(e^{i\theta}-z)^{-1}$. 
	We estimate the operators appearing in the above integral by standard randomization techniques. To this end, let us 
	note that, by the contraction principle (see e.g.\ 
	 \cite[Theorem 6.1.13 (ii)]{Analysis2}), 
	\begin{equation}
	\label{eq:R_bounded_range_J_f}
	\Rsec(\mathcal{J}_{f}(r e^{i\theta}):r>0)\leq \tfrac{\pi}{2}  \|f\|_{H^{\infty}(\Sigma_{\sigma})}\Rsec(\mathscr{J}).
	\end{equation} 
	Since each $S$ and $T$ has  a bounded $H^{\infty}$-calculus and  $h_{\rho}(z)=z^{\rho}(e^{i\theta}-z)^{-1}$ defines a function in $H^{\infty}_0(\Sigma_{\sigma'})$ for all $\rho\in (0,1)$ and $\sigma'\in (\angH(S)\vee\angH(T),\theta)$, by  \cite[Proposition 10.2.20 and Lemma 10.3.8(1)]{Analysis2} there exists a constant $C$ such that for all finite collections of scalars $(\alpha_k)_{k\in \Z},(\beta_k)_{k\in \Z}$ such that $|\alpha_k|,|\beta_k|\leq 1$,
	\begin{equation}
	\label{eq:unconditionality_H_calculus} 
		\sup_{t>0}\Big(\Big\|\sum_{j=-N}^{N-1} \beta_k h_{\delta}(2^jt T^* )\Big\|_{\calL(F^*)}
		+
		\Big\|\sum_{j=-N}^{N-1} \alpha_k h_{1-\delta}(2^jt S )\Big\|_{\calL(E)}\Big)
	\leq C.
	\end{equation}
	Here $C$ is a constant independent of  $t$ and $N$ (see also below Definition 10.2.12 and Proposition H.2.3 in \cite{Analysis2}).
	Let $(\varepsilon_j)_{j\in \Z}$ be a Rademacher sequence, see Subsection \ref{ss:rsec_max_reg}. Then, for any $x\in E$ and $y^*\in F^*$ we have
	\begin{align*}
	&\Big|\sum_{j=-N}^{N-1}
	\langle y^*, h_{\delta}(2^j t T) \mathcal{J}_f(e^{i\theta} 2^j t) h_{1-\delta}(2^j t S) x \rangle\Big|\\
	=&\Big|\sum_{j=-N}^{N-1}\langle h_{\delta}(2^j t T^*)  y^*, \mathcal{J}_f(e^{i\theta} 2^j t) h_{1-\delta}(2^j t S)x \rangle\Big|\\
	\stackrel{(i)}{=}&
	\Big| \E\Big\langle \sum_{j=-N}^{N-1} \varepsilon_j h_{\delta}(2^j t T^*)  y^*, \sum_{n=-N}^{N-1} \varepsilon_n\mathcal{J}_f(e^{i\theta} 2^n t) h_{1-\delta}(2^n t S)x \Big\rangle\Big|\\
	\leq& \Big\|\sum_{j=-N}^{N-1}\varepsilon_j h_{\delta}(2^j t T^*)  y^*\Big\|_{L^2(\O;F^*)}
	\Big\|\sum_{j=-N}^{N-1}\varepsilon_j \mathcal{J}_f(e^{i\theta} 2^j t) h_{1-\delta}(2^j t S)x \Big\|_{L^2(\O;E)}\\ 
	\stackrel{(ii)}{\leq}&\tfrac{\pi}{2} C^2 \|f\|_{H^{\infty}(\Sigma_{\sigma})} \Rsec(\mathscr{J})  \|y^*\|_{F^*} \|x\|_{E},
	\end{align*}
	where in $(i)$ we used $\E[\varepsilon_j \varepsilon_n]=\delta_{j,n}$, here $\delta_{j,n}$ denotes Kronecker's delta, and in $(ii)$ the Equations~\eqref{eq:R_bounded_range_J_f} and \eqref{eq:unconditionality_H_calculus}. Hence,
	\begin{align*}	\Big\|
\sum_{j=-N}^{N-1}	h_{\delta}(2^j t T) \mathcal{J}_f(e^{i\theta} 2^j t) h_{1-\delta}(2^j t S)
	\Big\|_{\calL(E,F)}\leq 
	\tfrac{\pi}{2} C^2 \|f\|_{H^{\infty}(\Sigma_{\sigma})} \Rsec(\mathscr{J}).
\end{align*}
Combining the latter with \eqref{eq:representation_formula_functional_calculus_f}, one gets the desired estimate.
\end{proof}


\begin{proof}[Proof of Theorem \ref{t:calculus_pert_block_operators} in case that Assumption \ref{ass:fractions_pm}~\nameref{ass:fractions_-} holds]
	Let us begin by collecting some useful facts. By Proposition \ref{prop:pert_block_operators}, $\A$ is $\Rsec$-sectorial of angle $<\psi$. Fix $\phi\in (\angH(\A),\psi)$. Then, by Theorem \ref{t:rsec_necessary_sufficient_condition_II},  for all $|\arg\lambda|>\angR(\A)$ the resolvent $(\lambda-\A)^{-1}$  is given by the factorization in    Proposition~\ref{l:relation_resolvent_AD_calA} \eqref{eq:resolvent_formula_block_matrix_I} with $M_1(\lambda)$ and $M_2(\lambda)$  as in \eqref{eq:def_M_1_M_2}. 
	
	Next we look at the functional calculus. 
	Fix $f\in H^{\infty}_0(\Sigma_{\phi})$. Consider w.l.o.g. the case where $X_1$ has non-trivial type and $c_1^{\Rsec}
	<1/K_{X_1}$, then one has to estimate  
	\begin{align}
\label{eq:estimate_1_f_small_H_calculus}
I_{11}&\stackrel{{\rm def}}{=}\Big\| 
\int_{\Gamma}  f(\lambda) (\lambda-A)^{-1} M_1(\lambda)^{-1} \dd\lambda\Big\|_{\calL(X_1)},\\
\label{eq:estimate_2_f_small_H_calculus}
I_{21}&\stackrel{{\rm def}}{=}\Big\| 
\int_{\Gamma}  f(\lambda)  (\lambda-D)^{-1} C(\lambda-A)^{-1}M_1(\lambda)^{-1} \dd\lambda\Big\|_{\calL(X_1,X_2)}, \\
\label{eq:estimate_3_f_small_H_calculus}
I_{12}&\stackrel{{\rm def}}{=}\Big\| 
\int_{\Gamma}  f(\lambda)  (\lambda-A)^{-1} M_1(\lambda)^{-1}B(\lambda-D)^{-1} \dd\lambda\Big\|_{\calL(X_2,X_1)}, \\
\label{eq:estimate_4_f_small_H_calculus}
I_{22}&\stackrel{{\rm def}}{=}\Big\| 
\int_{\Gamma}  f(\lambda) (\lambda-D)^{-1}\left[ \id +C(\lambda-A)^{-1} M_1(\lambda)^{-1}B(\lambda-D)^{-1}\right] \dd\lambda\Big\|_{\calL(X_2)}
\end{align}
	by $\lesssim\|f\|_{H^{\infty}(\Sigma_{\psi})}$,
	where the implicit constants depend only on 
	 the $\Rsec$-bound $c_1^{\Rsec}$ in \eqref{eq:assumption_epsilon_small_calculus} and the constants of the $H^\infty$-calculus of $A,D$.  
We split the proof into three steps.

	\emph{Step 1:} Assumption \ref{ass:fractions_pm}\nameref{ass:fractions_-} implies that $C$ and $B$ extend uniquely to  bounded linear operators
 	\begin{align*}
C_{\delta}\in \calL(\Dd(A^{1-\delta}), \Dd(D^{-\delta})) \quad \hbox{and} \quad B_{\delta}\in \calL(\Dd(D^{1-\delta}),\Dd(A^{-\delta})).
 \end{align*}
	Reasoning as in the proof of Proposition \ref{prop:fractional_powers_positive}, by complex interpolation one has for all $\eta\in [0,\delta]$ that $C$ and $B$ induce uniquely the operators
\begin{equation}
\label{eq:mapping_B_1_dot_space}
C_{\eta}\in \calL(\Dd(A^{1-\eta}), \Dd(D^{-\eta})) \quad \hbox{and} \quad B_{\eta}\in \calL(\Dd(D^{1-\eta}),\Dd(A^{-\eta})),
\end{equation}  
	satisfying $C_{\eta} x=C x$ and $B_{\eta} y=B y$ for all $(x,y)\in \Do(A)\times\Do(D)$. 
	Hence, using \cite[Theorem 15.14 b)]{KuWe}, we have for all $\eta\in [0,\delta]$, 
	  $x\in \Do(A)\cap \Ran(A)$ and $y\in \Do(D)\cap \Ran(D)$,
			\begin{align*}
	\|D^{-\eta}C A^{\eta-1} x\|_{X_2}\lesssim \|x\|_{X_1} \quad \hbox{and} \quad \|A^{-\eta}B D^{\eta-1} y\|_{X_1}\lesssim \|y\|_{X_2}.
	\end{align*}
		Since 
	$\Do(A)\cap \Ran(A)\embed X_1$ and $\Do(D)\cap \Ran(D)\embed X_2$ are dense (see e.g.\ \cite[Theorem 3.1.2(iv)]{pruss2016moving}),
	this ensures that $D^{-\eta}CA^{\eta-1}$ and $A^{-\eta}BD^{\eta-1}$ admit a unique extension to bounded operators, namely
	\begin{equation}
	\label{eq:def_G_H_perturbation_H_calculus}
	G_{\eta}\stackrel{{\rm def}}{=}\overline{A^{-\eta}BD^{\eta-1}}\in \calL(X_2,X_1) \quad\text{and}\quad
	H_{\eta}\stackrel{{\rm def}}{=}\overline{D^{-\eta}CA^{\eta-1}}\in \calL(X_1,X_2).
	\end{equation}

\emph{Step 2:}
	The estimate~\eqref{eq:assumption_epsilon_small_calculus}  
	ensures that 
	$$
	M_1(\lambda)^{-1}=\sum_{n\geq 0} [B(\lambda-D)^{-1}C(\lambda-A)^{-1}]^n
	$$ 
	convergences absolutely  in $\calL(X_1)$, because $K_{X_1}\geq 1$ for any $K$-convex space, and that $\{M_1(\lambda)^{-1}\colon \lambda \in \C\setminus\Sigma_{\psi}\}$ is $\Rsec$-bounded. 
	Next we rewrite the series conveniently. Note that, for $\eta \in [0,\delta]$ and on $\Ran(A^{\eta})$,
	\begin{align*}
	\mathcal{T}(\lambda)&\stackrel{{\rm def}}{=}B(\lambda-D)^{-1}C(\lambda-A)^{-1}\\
	&= A^{\eta} (A^{-\eta} B D^{\eta-1}) D^{1-\eta} (\lambda-D)^{-1} D^{\eta}(D^{-\eta}C A^{\eta-1})A^{1-\eta} (\lambda-A)^{-1} \\
	&=A^{\eta} G_{\eta} D(\lambda-D)^{-1} H_{\eta}A (\lambda-A)^{-1} A^{-\eta} \\
	&=A^{\eta}\mathcal{S}_\eta(\lambda)A^{-\eta},
	\end{align*}
	where $\mathcal{S}_\eta(\lambda)\stackrel{{\rm def}}{=}G_{\eta} D(\lambda-D)^{-1} H_{\eta}A (\lambda-A)^{-1}\in \calL(X_1)$.
	Hence $\mathcal{T}(\lambda)$ extends to an operator $\mathcal{T}_\eta(\lambda)\in \calL(\Dd(A^{-\eta}))$ and since $\{\mathcal{S}_\eta(\lambda)\colon \lambda \in \C\setminus\Sigma_{\psi}\}$ is $\Rsec$-bounded in $\calL(X_1)$, also
	\begin{align*}
	\{\mathcal{T}_\eta(\lambda)\colon \lambda \in \C\setminus\Sigma_{\psi}\} \subseteq \calL(\Dd(A^{-\eta}))  \hbox{ satisfies }  c_{\mathcal{T}_\eta}^{\Rsec}\stackrel{{\rm def}}{=}\Rsec \big(\mathcal{T}_\eta(\lambda)\colon \lambda \in \C\setminus\Sigma_{\psi}\big)<\infty.
	\end{align*}
	  By \cite[Proposition 8.4.4]{Analysis2} $\Rsec$-boundedness interpolates assuming $K$-convexity. Here, by assumption, $X_1$ is $K$-convex and since $\Dd(A^{-\eta})$ is isomorphic to $X_1$ it is also $K$-convex.
	 Hence by complex interpolation for $\eta=\theta\delta$ and $\theta \in (0,1)$
		\begin{align*}
 c_{\mathcal{T}_{\theta\delta}}^{\Rsec} \leq
 K_{\Dd(A^{-\eta})}^{1-\theta}K_{X_1}^{1-\theta}
  (c_{\mathcal{T}_0}^{\Rsec})^{\theta} (c_{\mathcal{T}_\delta}^{\Rsec})^{1-\theta},
	\end{align*}
where $K_{\Dd(A^{-\eta})}$ and $K_{X_1}$ are the $K$-convexity constants of the spaces $\Dd(A^{-\eta})$ and $X_1$, respectively.
	Since  $c_{\mathcal{T}_0}^{\Rsec}=c_1^{\Rsec}<1/K_{X_1}$, there exists an $\eta\in (0,\delta]$ with $c_{\mathcal{T}_{\eta}}^{\Rsec}<1$. In particular for this $\eta$ the series
	\begin{align*}
	\mathcal{U}(\lambda)\stackrel{{\rm def}}{=}\sum_{n\geq 0} \mathcal{S}_\eta(\lambda)^n = A^{-\eta}\Big(\sum_{n\geq 0} \mathcal{T}_\eta(\lambda)^n\Big)A^{\eta}
	\end{align*}
	converges absolutely in $\calL(X_1)$ and  $\{\mathcal{U}(\lambda)\colon \lambda \in \C\setminus\Sigma_{\psi}\}$ is $\Rsec$-bounded.

\emph{Step 3:}	Finally we can estimate \eqref{eq:estimate_1_f_small_H_calculus}--\eqref{eq:estimate_4_f_small_H_calculus}.
		Using the above argument, one can check that
		\begin{align*}
			(\lambda-A)^{-1}M_1(\lambda)^{-1} = 
		(\lambda-A)^{-1} +A^{\eta}(\lambda-A)^{-1}\mathcal{J}_{11}(\lambda)A^{1-\eta}(\lambda-A)^{-1},
		\end{align*}
		where $\{\mathcal{J}_{11}(\lambda)\colon \lambda \in \C\setminus\Sigma_{\psi}\}$ is $\Rsec$-bounded and
		\begin{align*}
	 \mathcal{J}_{11}(\lambda)\stackrel{{\rm def}}{=} \Big(\sum_{n\geq 2}\mathcal{S}_\eta(\lambda)^n\Big)\Big(G_\eta D(\lambda-D)^{-1}H_\eta\Big).
		\end{align*}
			Thus, \eqref{eq:estimate_1_f_small_H_calculus} follows 
			by Lemma \ref{l:estimate_f_functional_calculus} applied with $S=T=A$ and the assumption on $A$.
			
To show \eqref{eq:estimate_2_f_small_H_calculus},	we write similar to the above 
	\begin{align*}
	(\lambda-D)^{-1} C(\lambda-A)^{-1}M_1(\lambda)^{-1}
	&= (\lambda-D)^{-1}D^{\eta} \mathcal{J}_{21}(\lambda)A^{1-\eta}(\lambda-A)^{-1},
	\end{align*}
	where $\{\mathcal{J}_{21}(\lambda)\colon \lambda \in \C\setminus\Sigma_{\psi}\}$ is $\Rsec$-bounded for
	$\mathcal{J}_{21}(\lambda)\stackrel{{\rm def}}{=} H_{\eta} + H_{\eta}A(\lambda-A)^{-1}\mathcal{J}_{11}(\lambda)$,
	so that Lemma \ref{l:estimate_f_functional_calculus} applies with $T=D$ and $S=A$. 
	
For \eqref{eq:estimate_3_f_small_H_calculus}
we rewrite 
\begin{align*}
(\lambda-A)^{-1} M_1(\lambda)^{-1}B(\lambda-D)^{-1} 
&=(\lambda-A)^{-1}A^{\eta}\mathcal{J}_{12}(\lambda)D^{1-\eta}(\lambda-D)^{-1} 
\end{align*}
where $\{\mathcal{J}_{12}(\lambda)\colon \lambda \in \C\setminus\Sigma_{\psi}\}$ is $\Rsec$-bounded with
$\mathcal{J}_{12}(\lambda)\stackrel{{\rm def}}{=} G_{\eta} + \mathcal{J}_{11}(\lambda)A(\lambda-A)^{-1}G_{\eta}$.

In \eqref{eq:estimate_4_f_small_H_calculus}
the first addend can be estimated by the assumption on $D$ and for the second we write using the previous computation 
\begin{multline*}
(\lambda-D)^{-1}C(\lambda-A)^{-1} M_1(\lambda)^{-1}B(\lambda-D)^{-1}
=(\lambda-D)^{-1}D^{\eta}\mathcal{J}_{22}(\lambda)D^{1-\eta}(\lambda-D)^{-1},
\end{multline*}	
	where $\mathcal{J}_{22}(\lambda)\stackrel{{\rm def}}{=}H_\eta A(\lambda-A)^{-1}\mathcal{J}_{12}(\lambda)$
	and $\{\mathcal{J}_{22}(\lambda)\colon \lambda \in \C\setminus\Sigma_{\psi}\}$ is $\Rsec$-bounded.
\end{proof}

\begin{proof}[Proof of Theorem \ref{t:calculus_pert_block_operators} for the case of Assumption \ref{ass:fractions_pm}\nameref{ass:fractions_+}] Following \cite[Corollary 6.5]{KKW}, we prove the claim by employing a shift argument on a scale of spaces and Theorem \ref{t:calculus_pert_block_operators} for the already proven case with Assumption \ref{ass:fractions_pm}\nameref{ass:fractions_-}. 	

By Proposition \ref{prop:pert_block_operators}, $\A$ is $\Rsec$-sectorial with angle $\angR(\A)<\psi$.
Below we use the notation introduced in Subsection \ref{ss:sectorial_operator}. In particular $\dot{A}_{\delta}$ and $\dot{D}_{\delta}$ are sectorial operators with bounded $H^{\infty}$-calculus on $\Dd(A^{\delta})$ and $\Dd(D^{\delta})$, respectively, which follows by similarity from the Definition in~\eqref{eq:definition_Tdot}.
Moreover, Assumption \ref{ass:fractions_pm}\nameref{ass:fractions_+} ensures that $B$ and $C$ 
uniquely induce  operators
	\begin{align*}
	B_{\delta}&\colon \Dd(D^{1+\delta})\to \Dd(A^{\delta}), \qquad \hbox{with} \qquad  B_{\delta}|_{\Dd(D^{1+\delta})\cap \Do(D) }= B, \hbox{ and }\\
	C_{\delta}&\colon \Dd(A^{1+\delta})\to \Dd(D^{\delta}), \qquad \hbox{with} \qquad  C_{\delta}|_{\Dd(A^{1+\delta})\cap \Do(A) }= C.
	\end{align*}
	By density of $\Do(A)\subseteq \Dd(A)$ and $\Do(D)\subseteq \Dd(D)$, Assumption~\ref{ass:standing} implies that also
	\begin{align*}
	\norm{B_0y} \leq c_D \norm{ y}_{\Dd(D)} \hbox{ for all } y\in \Dd(D), \hbox{ and }  \norm{C_0x} \leq c_A \norm{ x}_{\Dd(A)} \hbox{ for all } x\in \Dd(A)
	\end{align*}
	hold. By interpolation for $\eta \in (0,\delta)$
	\begin{align*}
	\norm{B_\eta} \leq c_D^{\eta/\delta} \norm{B_{\delta}}^{1-\eta/\delta} \quad \hbox{and} \quad \norm{C_\eta} \leq c_A^{\eta/\delta} \norm{C_{\delta}}^{1-\eta/\delta}.
	\end{align*}
	So
we may assume, at the expense of choosing $\delta\in (0,1)$ small enough, 
	\begin{equation}
	\label{eq:smallness_shifted_scale}
	\|C_{\delta} x\|_{ \Dd(D^{\delta})}\leq c_A' \| \dot{A}_{\delta} x\|_{\Dd(A^{\delta})},
	\ \ \text{ and } \ \ 
	\|B_{\delta} y\|_{ \Dd(A^{\delta})}\leq c_D' \| \dot{D}_{\delta} y\|_{\Dd(D^{\delta})}, 
	\end{equation}
	for all 
	$x\in \Dd(A^{1+\delta})$ and $y\in \Dd(D^{1+\delta})$,
	and some $c_D'\geq c_D$ and $c_A'\geq c_A$, respectively.

Consider then the block operator matrix
\begin{align*}
\wh{\A}_{\delta}
\stackrel{{\rm def}}{=}
\begin{bmatrix}
\dot{A}_{\delta} & B_{\delta}\\
C_{\delta} & \dot{D}_{\delta}
\end{bmatrix}\colon
\Dd(\diag^{1+\delta})\cap \Dd(\diag^{\delta})
\subseteq \Dd(\diag^{\delta})\to \Dd(\diag^{\delta}), \ \ 
 \Dd(\diag^{\delta})=\Dd(A^{\delta})\times \Dd(D^{\delta}).
\end{align*}
 By \eqref{eq:smallness_shifted_scale} $\wh{\A}_{\delta}$ satisfies Assumption~\ref{ass:standing} with $L=0$ for $X_1=\Dd(A^{\delta})$ and $X_2=\Dd(D^{\delta})$.
Note that \textit{a priori} $\wh{\A}_{\delta}$ is  \emph{not} equal to the extrapolated operator $\dot{\A}_{\delta}$, since the state space for $\wh{\A}_{\delta}$ is $\Dd(\diag^{\delta})$ which may differ from the state space of $\dot{\A}_{\delta}$, that is $\Dd(\A^{\delta})$. 
The relative bounds in  Assumption \ref{ass:standing} for $\A$ imply for $B_{\delta}$ and $C_{\delta}$ that
\begin{equation}
\label{eq:ass_standing_shifted_scale}
\begin{aligned}
\|(\dot{D}_{\delta})^{-\delta} C_{\delta}x \|_{\Dd(D^{\delta})}
&\leq c_A
\|(\dot{A}_{\delta})^{1-\delta} x\|_{\Dd(A^{\delta})}, \quad x\in \Do(A^{1+\delta}), \\
\|(\dot{A}_{\delta})^{-\delta} B_{\delta}y \|_{\Dd(A^{\delta})}
&\leq c_D
\|(\dot{D}_{\delta})^{1-\delta} y\|_{\Dd(D^{\delta})}, \quad y\in \Do(D^{1+\delta}),
\end{aligned}
\end{equation}
which is the estimate in  Assumption~\ref{ass:fractions_pm}\nameref{ass:fractions_-} for $\wh{\A}_{\delta}$, and the range conditions hold by construction. Consider w.l.o.g.\ the case where $c_1^{\Rsec}<1/K_{X_1}$. Then 
\begin{align}
c_{1,\delta}^{\Rsec}\stackrel{{\rm def}}{=}\Rsec(B_{\delta} (\lambda-\dot{D}_\delta)^{-1}C_\delta(\lambda-\dot{A}_\delta)^{-1}\colon \lambda\in \C\setminus \Sigma_{\psi})<\infty
\end{align}
and by repeating the interpolation argument used in Step 2 in the proof of the case of Assumption~\ref{ass:fractions_pm}\nameref{ass:fractions_-} above, at the expense of choosing $\delta\in (0,1)$ small enough, 
we may assume that $c_{1,\delta}^{\Rsec}<1/K_{X_1}$. Hence Theorem \ref{t:calculus_pert_block_operators} for the case of Assumption~\ref{ass:fractions_pm}\nameref{ass:fractions_-}, which has been proven already, ensures that $\wh{\A}_{\delta}$ has a bounded $H^{\infty}$-calculus of angle $<\psi$.

%
	Next, let us point out the 
	relation between $\wh{\A}_{\delta}$ and $\A$. 
	To this end, let us recall that by \eqref{eq:assumption_epsilon_small_calculus}, the fact that $K_{X_1}\geq 1$ and the last claim in Lemma \ref{l:one_side_A_0_A_estimate},
	\begin{equation}
	\label{eq:domains_equality_proof_plus}
	\Do(\A)=\Do(\diag)\quad \text{ and } \quad \Dd(\A)=\Dd(\diag).
	\end{equation}
	By construction we have $\wh{\A}_{\delta}|_{\Do(\diag^{1+\delta})}=\A|_{\Do(\diag^{1+\delta})}$, and  by the   Proposition~\ref{l:relation_resolvent_AD_calA} 
\begin{multline*}
(\lambda-\wh{\A}_{\delta})^{-1}	= \\
	\begin{bmatrix}
(\lambda-\dot{A}_\delta)^{-1} & 0  \\
0 & (\lambda-\dot{D}_\delta)^{-1} 
\end{bmatrix} 	\begin{bmatrix}
	\wh{M}_{1,\delta}(\lambda)^{-1} & 0 \\
	C_\delta(\lambda-\dot{A}_\delta)^{-1}	\wh{M}_{1,\delta}(\lambda)^{-1} & \id
	\end{bmatrix}
	\begin{bmatrix}
	\id & B_\delta(\lambda-\dot{D}_\delta)^{-1}  \\
	0 & \id 
	\end{bmatrix},
	\end{multline*}
	where  $\wh{M}_{1,\delta}(\lambda)= \id - B_{\delta} (\lambda-\dot{D}_\delta)^{-1}C_\delta(\lambda-\dot{A}_\delta)^{-1}$ 
	and 
since $c_{1,\delta}^{\Rsec}<1/K_{X_1}$ one has $$\wh{M}_{1,\delta}(\lambda)^{-1}=\sum_{n\geq 0} [B_{\delta} (\lambda-\dot{D}_\delta)^{-1}C_\delta(\lambda-\dot{A}_\delta)^{-1}]^n.$$ Using the mapping properties of $\dot{A}_\delta, \dot{D}_\delta, B_\delta, C_\delta$, it follows that $(\lambda-\wh{\A}_{\delta})^{-1}$ restricts to an operator on $\Do(\diag^{\delta})=\Dd(\diag^{\delta})\cap X$ and $(\lambda-\wh{\A}_{\delta})^{-1}\Do(\diag^{\delta})\subseteq \Dd(\diag^{1+\delta}) \cap \Do(\diag^{\delta})= \Do(\diag^{1+\delta})$ (cf.\ \eqref{eq:fractional_powers_positive_intersection_X}).
Hence, for all $x\in \Do(\diag^{\delta})$,
	\begin{align*}
	(\lambda-\A)^{-1}x 
	&= (\lambda-\A)^{-1} (\lambda-\wh{\A}_{\delta})(\lambda-\wh{\A}_{\delta})^{-1} x\\
	&=(\lambda-\A)^{-1} (\lambda-\A)(\lambda-\wh{\A}_{\delta})^{-1} x
	=(\lambda-\wh{\A}_{\delta})^{-1} x.
	\end{align*}	
	Since $\Do(\A)\stackrel{\eqref{eq:domains_equality_proof_plus}}{=} \Do(\diag)\embed \Do(\diag^{\delta})$, the previous display yields
	\begin{equation}
	\label{eq:equality_resolvent_A_wh_A_1}
	(\lambda-\wh{\A}_{\delta})^{-1}|_{\Do(\A)} =(\lambda-\A)^{-1}|_{\Do(\A)}
	\stackrel{\eqref{eq:consistency_T_T_gamma}}{=} 
	(\lambda-\dot{\A}_{1})^{-1}|_{\Do(\A)}\text{ for all }\lambda\in \complement\overline{\Sigma_{\psi}}.
	\end{equation}
	%
	Next, we prove that 
\begin{equation}
\label{eq:domain_wh_A}
	\Dd(\wh{\A}_{\delta})= \Dd(\diag^{1+\delta}), \ \ \text{ and } \ \ 
	\|\wh{\A}_{\delta} x\|_{\Dd(\diag^{\delta})}\eqsim \| x\|_{\Dd(\diag^{1+\delta})} 
	\end{equation}
	 for all $x\in \Dd(\diag^{1+\delta})\cap \Dd(\diag^{\delta})$.
	By Assumption \ref{ass:fractions_pm}\nameref{ass:fractions_+} we have
	$$
	\|\diag^{\delta} \A x\|_{X}\lesssim \|\diag^{1+\delta} x\|_{X} \quad \text{for all}\quad x\in \Do(\diag^{1+\delta}).
	$$
	Thus $\Dd(\diag^{1+\delta})\embed \Dd(\wh{\A}_{\delta})$. The reverse inclusion follows from the last claim in Lemma \ref{l:one_side_A_0_A_estimate} applied with $\A$ replaced by $\wh{\A}_{\delta}$ and $X=\Dd(\diag^{\delta})$ using that $c^{\Rsec}_{1,\delta}<1$ by assumption. 
	
	Note that $(\wh{\A}_{\delta})^{\dot{}}_{1-\delta}$ has a bounded $H^{\infty}$-calculus on 
	$\Dd((\wh{\A}_{\delta})^{1-\delta})$ since it is similar to $\wh{\A}_{\delta}$. 
	By \cite[Proposition 2.2]{KKW} and \eqref{eq:domain_wh_A},
	\begin{equation}
	\label{eq:equivalence_power_whA_1_delta_diag}
	\Dd((\wh{\A}_{\delta})^{1-\delta})
	=
	[\Dd(\diag^{\delta}),\Dd(\diag^{1+\delta})]_{1-\delta}=\Dd(\diag)
	\stackrel{\eqref{eq:domains_equality_proof_plus}}{=}\Dd(\A).
	\end{equation} 
	Since 
	 $\Do(\A)\supseteq \Do(\diag^{1+\delta})\stackrel{d}{\embed} \Dd((\wh{\A}_{\delta})^{1-\delta})$, we also have 
	$$
	(\lambda-(\wh{\A}_{\delta})^{\dot{}}_{1-\delta})^{-1}|_{\Do(\A)}
	\stackrel{\eqref{eq:consistency_T_T_gamma}}{=}(\lambda-\wh{\A}_{\delta})^{-1}|_{\Do(\A)}.
	$$
	By $\Do(\A)\stackrel{d}{\embed}\Dd(\A)$, \eqref{eq:equality_resolvent_A_wh_A_1} and \eqref{eq:equivalence_power_whA_1_delta_diag}, we infer that $\dot{\A}_1$ has a bounded $H^{\infty}$-calculus. Therefore $\A$ has a bounded $H^{\infty}$-calculus on $X$ by similarity with $\dot{\A}_1$. 
\end{proof}

\begin{proof}[Proof of Theorem \ref{t:calculus_pert_block_operators_notype}]
	The proof is almost analogous to the proof of Theorem~\ref{t:calculus_pert_block_operators}. The necessary modifications are in Step 2 and 3 of the part of the proof with Assumption \ref{ass:fractions_pm}\nameref{ass:fractions_-}. There, one has that 
	$C_{\delta}\in \calL(\Dd(A^{1-\delta}), \Dd(D^{-\delta}))$ and $B_{\delta}\in \calL(\Dd(D^{1-\delta}),\Dd(A^{-\delta}))$,
and then by interpolation $C$ and $B$ extend for $\eta\in [0,\delta]$  to operators
	\begin{align*}
	C_{\eta}\in \calL(\Dd(A^{1-\eta}), \Dd(D^{-\eta})) \quad \hbox{with} \quad \norm{C_{\eta}} \leq  c_A^{1-\eta/\delta}\norm{C_{\delta}}^{\eta/\delta}, \\
	B_{\eta}\in \calL(\Dd(D^{1-\eta}),\Dd(A^{-\eta})) \quad \hbox{with} \quad \norm{B_{\eta}} \leq  c_D^{1-\eta/\delta}\norm{B_{\delta}}^{\eta/\delta}.
	\end{align*}  
	By assumption one has for $\eta$ 
	by choosing $\delta\in (0,1)$ small enough 
	\begin{equation}\label{eq:assumption_epsilon_small_calculus_notype2}
	\|D^{-\eta}C_{\eta} x\|_{X_2}\leq c_A' \| A^{1-\eta}x\|_{X_{1}},
	\ \ \text{ and } \ \ 
	\|A^{-\eta}By\|_{X_1}\leq c_D' \| D^{1-\eta} y\|_{X_2}, 
	\end{equation}
	for all 
	$x\in \Do(A)$ and $y\in \Do(D)$,
	and some $c_D'\geq c_D$ and $c_A'\geq c_A$, respectively, with
	\begin{equation*}
	c_A' <\frac{1}{c_D' \non^{\Rsec}_{\psi}(A)\non^{\Rsec}_{\psi}(D)}.
	\end{equation*}
	Since $\Do(A)\cap \Ran(A)\embed X_1$ and $\Do(D)\cap \Ran(D)\embed X_2$ are dense, one has from the above that $D^{-\eta}CA^{\eta-1}$ and $A^{-\eta}BD^{\eta-1}$ admit unique extensions 
	\begin{equation*}
	G\stackrel{{\rm def}}{=}\overline{A^{-\eta}BD^{\eta-1}}\in \calL(X_2,X_1)\ \ \text{ and } 
	H\stackrel{{\rm def}}{=}\overline{D^{-\eta}CA^{\eta-1}}\in \calL(X_1,X_2).
	\end{equation*} 
	where  by \eqref{eq:assumption_epsilon_small_calculus_notype2}
	\begin{equation}
	\label{eq:smallness_eta_1_eta_2}
	\|G\|_{\calL(X_2,X_1)}\leq c_D' \quad \text{ and }\quad \|H\|_{\calL(X_1,X_2)}\leq c_A'.
	\end{equation}
	By assumption one has absolute convergence in $\calL(X_1)$ of
	$$
	M_1(\lambda)^{-1}=\sum_{n\geq 0} [B(\lambda-D)^{-1}C(\lambda-A)^{-1}]^n.
	$$ 
	 Next,  we rewrite the series conveniently using that 
	\begin{align*}
	B(\lambda-D)^{-1}C(\lambda-A)^{-1}
	=A^{\eta} G D(\lambda-D)^{-1} HA^{\eta-1} (\lambda-A)^{-1},
	\end{align*}
	and iterating the above argument, one can check that, for any $n\geq 1$,
	\begin{align*}
	&(\lambda-A)^{-1}[B(\lambda-D)^{-1}C(\lambda-A)^{-1}]^n\\
	&=
	A^{\eta}(\lambda-A)^{-1} [G D(\lambda-D)^{-1} H A(\lambda-A)^{-1}]^{n-1} 
	[G D(\lambda-D)^{-1} H] A^{1-\eta}(\lambda-A)^{-1}.
	\end{align*}
	Therefore
	\begin{equation}
	\label{eq:identity_A_M_1_smallness_H_infty_calculus}
	(\lambda-A)^{-1}M_1(\lambda)^{-1}=(\lambda-A)^{-1} +A^{\eta}(\lambda-A)^{-1}\mathcal{S}(\lambda)
	A^{1-\eta}(\lambda-A)^{-1},
	\end{equation}
	where
	$$
	\mathcal{S}(\lambda)\stackrel{{\rm def}}{=}
	\Big(\sum_{n\geq 0} [G D(\lambda-D)^{-1} H A(\lambda-A)^{-1}]^{n}\Big)  [G D(\lambda-D)^{-1} H].
	$$
	Reasoning as in the proof of Proposition \ref{prop:pert_block_operators}, one can check that \eqref{eq:smallness_eta_1_eta_2} implies that the above series expansion is absolutely convergent in $\calL(X_1)$ and
	\begin{equation}
	\label{eq:R_boundedness_S}
	\Rsec(\mathcal{S}(\lambda) : \lambda\in \C\setminus \Sigma_{\psi})<\infty.
	\end{equation}
	Thus, \eqref{eq:estimate_1_f_small_H_calculus} follows from \eqref{eq:identity_A_M_1_smallness_H_infty_calculus}, \eqref{eq:R_boundedness_S} and Lemma \ref{l:estimate_f_functional_calculus} applied with $S=T=A$. The rest of the proof is as in the one of Theorem~\ref{t:calculus_pert_block_operators}. The modifications above interpolate operators rather than $\Rsec$-bounded sets, and thereby the $K$-convexity assumption has been avoided. 
\end{proof}	

\subsection{$H^{\infty}$-calculus on Hilbert spaces}
\label{ss:H_calculus_Hilbert_space}
In this subsection we investigate the boundedness of the $H^{\infty}$-calculus assuming that $X$ is a Hilbert space, 
which we emphasize by
writing $H,H_1$ and $H_2$ instead of $X,X_1 $ and $X_2$, where the respective scalar products are denoted by $(\cdot|\cdot)_{H}$, $(\cdot|\cdot)_{H_1}$, and $(\cdot|\cdot)_{H_2}$. 

It is known that if $-T$ generates a $C_0$--semigroup of contractions on a  Hilbert space, then $T$ has a bounded $H^{\infty}$-calculus. 
The latter result is optimal if $-T$ generates an analytic semigroup. Indeed, a sectorial operator $T$ of angle smaller than $\pi/2$ on a  Hilbert space has a bounded $H^{\infty}$-calculus
if and only if $-T$ generates a contraction semigroup w.r.t.\ an equivalent Hilbertian norm (see e.g.\ \cite[Theorem 10.4.22]{Analysis2}).
In light of the Lumer-Phillips Theorem, compare e.g. \cite[Corollary G.4.5]{Analysis2}, we get the following criteria.

\begin{proposition}[Generation of $C_0$--semigroups of contractions]
	\label{prop:hilbert_space_positivity}
	Let Assumption \ref{ass:standing} be satisfied. Suppose that $-A$ and $-D$ generate $C_0$--semigroup of contractions.
	Let  $-1\in \rho(\A)$. Suppose that there exists $\gamma\in (0,\infty)$ such that
	\begin{equation}
	\label{eq:positivity_H_B_C}
	\gamma\,\Re(B h_2|  h_1)_{H_1}+ \Re(C h_1| h_2)_{H_2}\geq -\gamma\,\Re(A h_1| h_1)_{H_1}-\Re(D h_2| h_2)_{H_2} 
	\end{equation}
	 for all $h=(h_1,h_2)\in \Do(\A)$.
	Then $-\A$ generates a $C_0$--semigroup of contractions  on $H$. 
	In particular, $\A$ has a bounded $H^{\infty}$-calculus of angle $\angH(\A)=\om(\A)\leq\frac{\pi}{2}$.
\end{proposition}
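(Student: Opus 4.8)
The plan is to verify the hypotheses of the Lumer--Phillips theorem for $-\A$ with respect to a weighted, equivalent Hilbertian inner product on $H$, and then to read off the boundedness of the $H^{\infty}$-calculus from the Hilbert space characterization recalled just above the statement.

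First I would equip $H=H_1\times H_2$ with the inner product $(h\,|\,g)_{\gamma}:=\gamma\,(h_1\,|\,g_1)_{H_1}+(h_2\,|\,g_2)_{H_2}$ for $h=(h_1,h_2),g=(g_1,g_2)\in H$, whose associated norm $\|\cdot\|_{\gamma}$ is equivalent to the original norm of $H$. For $h=(h_1,h_2)\in\Do(\A)=\Do(A)\times\Do(D)$ a direct computation gives
\[
\Re(\A h\,|\,h)_{\gamma}=\gamma\,\Re(Ah_1\,|\,h_1)_{H_1}+\Re(Dh_2\,|\,h_2)_{H_2}+\gamma\,\Re(Bh_2\,|\,h_1)_{H_1}+\Re(Ch_1\,|\,h_2)_{H_2},
\]
so that \eqref{eq:positivity_H_B_C} is exactly the assertion that this quantity is $\geq 0$; hence $-\A$ is dissipative for $(\cdot\,|\,\cdot)_{\gamma}$. (The accretivity of $A$ and $D$, which follows from the Lumer--Phillips theorem applied to the contraction semigroups they generate, is consistent with and motivates the shape of \eqref{eq:positivity_H_B_C}, but is not separately invoked here.) Since $\Do(\A)$ is dense in $H$ by Assumption~\ref{ass:standing}\,(1) and, because $-1\in\rho(\A)$, the operator $1-(-\A)=1+\A$ is boundedly invertible and in particular has range $H$, the Lumer--Phillips theorem (e.g.\ \cite[Corollary G.4.5]{Analysis2}, with $\lambda_0=1$) applies and yields that $-\A$ generates a $C_0$-semigroup of contractions on $(H,\|\cdot\|_{\gamma})$; equivalently, $-\A$ generates a $C_0$-contraction semigroup on $H$ with respect to the equivalent Hilbertian norm $\|\cdot\|_{\gamma}$.

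For the $H^{\infty}$-calculus I would argue as follows. Equipped with $(\cdot\,|\,\cdot)_{\gamma}$, the operator $\A$ is $m$-accretive, hence pseudo-sectorial of angle $\le\pi/2$; using that $H$ is reflexive — and that $\A$ is injective, which holds e.g.\ whenever $0\in\rho(\A)$ and in the situations arising in the applications — one gets $\overline{\Ran(\A)}=H$ via \cite[Proposition 10.1.9]{Analysis2}, so $\A$ is sectorial of angle $\om(\A)\le\pi/2$. By the Hilbert space result recalled before the statement, see \cite[Theorem 10.4.22]{Analysis2}, a generator $-\A$ of a $C_0$-contraction semigroup on a Hilbert space has a bounded $H^{\infty}$-calculus; keeping track of the construction (von Neumann's inequality for the Cayley transform contraction $(\A-1)(1+\A)^{-1}$ on $(H,\|\cdot\|_{\gamma})$, transported to a sector $\Sigma_{\psi}$ with $\psi>\pi/2$) shows $\angH(\A)\le\pi/2$, which combined with the general bound $\om(\A)\le\angH(\A)$ gives $\angH(\A)=\om(\A)\le\pi/2$.

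The conceptual content is entirely in the one-line reformulation of \eqref{eq:positivity_H_B_C} as dissipativity in the $\|\cdot\|_{\gamma}$-norm; the rest is bookkeeping, with the only real care needed being that dissipativity and contractivity are measured in the weighted norm rather than the original one. The main external input — the step one cannot avoid citing — is the Hilbert space theorem identifying generators of contraction semigroups with operators admitting a bounded $H^{\infty}$-calculus, together with the optimal angle $\pi/2$; I also expect the minor gap of ensuring $\A$ is genuinely sectorial (not merely pseudo-sectorial) to require a small additional remark.
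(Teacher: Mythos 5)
Your proof is correct and takes essentially the same route as the paper: introduce the weighted inner product $(h\,|\,k)_\gamma = \gamma(h_1\,|\,k_1)_{H_1} + (h_2\,|\,k_2)_{H_2}$, observe that \eqref{eq:positivity_H_B_C} is precisely accretivity of $\A$ in $(\cdot\,|\,\cdot)_\gamma$, invoke Lumer--Phillips using $-1\in\rho(\A)$ for the range condition, and then cite the Hilbert-space characterization of the bounded $H^\infty$-calculus via contraction semigroups (the paper uses \cite[Theorem 10.2.24]{Analysis2}, you use the equivalent \cite[Theorem 10.4.22]{Analysis2}). Your remark that one should really check $\A$ is sectorial rather than merely pseudo-sectorial (i.e.\ that $\A$ is injective with dense range) is a fair observation on a point the paper's terse proof leaves implicit, but it does not reflect a divergence of method.
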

\begin{proof}
Proposition \ref{prop:hilbert_space_positivity} follows from the  Lumer-Phillips Theorem because \eqref{eq:positivity_H_B_C} implies 
$$
\Re(\A h| h)_{H, \gamma}\geq 0 \quad \text{ for all }h\in H
$$ 
where  
$
(h|k)_ {H, \gamma} \stackrel{{\rm def}}{=} \gamma  (h_1|k_1)_{H_1}+(h_2|k_2)_{H_2} 
$
for $h=(h_1,h_2)$, $k=(k_1,k_2)\in H$. The statement on the bounded $H^\infty$-calculus follows by \cite[Theorem 10.2.24]{Analysis2}.
\end{proof}

\begin{remark}\label{eq:cancellation_H_B_C}
	If $A$ and $D$ are dissipative operators, then \eqref{eq:positivity_H_B_C} holds provided
		\begin{equation*}
		\gamma\,\Re(B y|  x)_{H_1}+ \Re(C x| y)_{H_2}=0 \ \ \text{ for all }(x,y)\in \Do(\A).
		\end{equation*}	
\end{remark}
The block operator matrix $\A$ is \emph{$\mathcal{J}$-symmetric} if $\mathcal{J} \A$ is symmetric, 
cf. \cite[Section 2.6]{Tretter}, where
\begin{align*}
\mathcal{J} \stackrel{{\rm def}}{=} \begin{bmatrix}
\id & 0 \\ 0 & -\id
\end{bmatrix}.
\end{align*}
 For $\mathcal{J}$-symmetric  operators $\A$ one has  $C\subseteq -B^\ast$, compare \cite[Proposition 2.6.1]{Tretter}. Combining this with Remark~\ref{eq:cancellation_H_B_C} and Proposition~\ref{prop:hilbert_space_positivity} one gets the following
\begin{corollary}[Bounded $H^\infty$-calculus for $\mathcal{J}$-symmetric operators]\label{cor:Jsymmetry}
	If $\A$ satisfies Assumption~\ref{ass:standing}, $A=A^\ast$, $D=D^\ast$, and it is $\mathcal{J}$-symmetric, then $\A$ has a bounded $H^\infty$-calculus with $\angH(\A)=\om(\A)\leq\frac{\pi}{2}$.
\end{corollary}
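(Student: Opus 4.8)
The plan is to read off the result from Proposition~\ref{prop:hilbert_space_positivity} together with Remark~\ref{eq:cancellation_H_B_C}, taking $\gamma=1$. The key structural input is the translation of $\mathcal{J}$-symmetry: by \cite[Proposition 2.6.1]{Tretter} (recalled just above the statement), $C\subseteq -B^{\ast}$, i.e. $\Do(C)\subseteq\Do(B^{\ast})$ and $Cx=-B^{\ast}x$ for $x\in\Do(C)$. Since Assumption~\ref{ass:standing} gives $\Do(A)\subseteq\Do(C)$ and $\Do(D)\subseteq\Do(B)$, for every $h=(x,y)\in\Do(\A)=\Do(A)\times\Do(D)$ one has $x\in\Do(B^{\ast})$ and $y\in\Do(B)$, so that
\[
\Re(By\,|\,x)_{H_1}+\Re(Cx\,|\,y)_{H_2}=\Re(By\,|\,x)_{H_1}-\Re(B^{\ast}x\,|\,y)_{H_2}=0,
\]
using $\Re(B^{\ast}x\,|\,y)_{H_2}=\Re\overline{(By\,|\,x)_{H_1}}=\Re(By\,|\,x)_{H_1}$ by the definition of $B^{\ast}$. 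Thus the cancellation hypothesis of Remark~\ref{eq:cancellation_H_B_C} holds with $\gamma=1$; note that self-adjointness of $A,D$ plays no role here.

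Next I would check the remaining hypotheses. Since $A$ and $D$ are self-adjoint, $-A$ and $-D$ generate $C_0$-semigroups of contractions (equivalently $A,D\ge 0$) and $A,D$ are accretive, so Remark~\ref{eq:cancellation_H_B_C} applies and produces \eqref{eq:positivity_H_B_C} with $\gamma=1$. It then remains to verify $-1\in\rho(\A)$. By Proposition~\ref{l:relation_resolvent_AD_calA}~\eqref{it:lambda_resolvent_c} (valid since $-1\in\rho(A)\cap\rho(D)$) this is equivalent to the bounded invertibility of $M_1(-1)$; using $C\subseteq -B^{\ast}$ and $(-1-T)^{-1}=-(1+T)^{-1}$ one computes
\[
M_1(-1)=\id+B(1+D)^{-1}B^{\ast}(1+A)^{-1}=\bigl[(1+A)+B(1+D)^{-1}B^{\ast}\bigr](1+A)^{-1}.
\]
Since $(1+A)^{-1}$ is a bijection of $H_1$ onto $\Do(A)$, the question reduces (by the open mapping theorem) to the bijectivity of the symmetric operator $\Xi:=(1+A)+B(1+D)^{-1}B^{\ast}$ on $\Do(A)$, which satisfies $(\Xi x\,|\,x)_{H_1}=\|(1+A)^{1/2}x\|^2+\|(1+D)^{-1/2}B^{\ast}x\|^2\ge\|x\|^2$. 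One then argues that $\Xi$ is boundedly invertible—equivalently self-adjoint—which gives $-1\in\rho(\A)$. With all hypotheses of Proposition~\ref{prop:hilbert_space_positivity} in place, it yields that $-\A$ generates a $C_0$-contraction semigroup on $H$ and hence, as stated therein, that $\A$ has a bounded $H^{\infty}$-calculus with $\angH(\A)=\om(\A)\le\tfrac{\pi}{2}$, which is the assertion.

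The step requiring genuine care is the last one, $-1\in\rho(\A)$, i.e. the bounded invertibility of $\Xi=(1+A)+B(1+D)^{-1}B^{\ast}$. Being bounded below by $\id$, $\Xi$ is injective with closed range, so the real issue is surjectivity, equivalently self-adjointness; but in the diagonally dominant setting the relative bound $c_A$—hence the $(1+A)$-bound of the symmetric perturbation $B(1+D)^{-1}B^{\ast}$—may be $\ge 1$, so the Kato--Rellich theorem does not apply off the shelf. The natural way around this is to realize $\Xi$ as the self-adjoint operator associated with the closed nonnegative form $x\mapsto\|(1+A)^{1/2}x\|^2+\|(1+D)^{-1/2}B^{\ast}x\|^2$ (a KLMN-type argument, the perturbing form $x\mapsto\|(1+D)^{-1/2}B^{\ast}x\|^2\ge 0$), after checking that this form operator coincides with the operator sum on $\Do(A)$; alternatively one may verify surjectivity of $1+\A$ directly by solving the associated $2\times 2$ system through the (negative definite) Schur complement, or invoke the $\mathcal{J}$-self-adjointness criteria for diagonally dominant block operator matrices from \cite[Section 2.6]{Tretter}. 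The remaining ingredients—the inner-product identity, the factorization of Proposition~\ref{l:relation_resolvent_AD_calA}, and the invocation of Proposition~\ref{prop:hilbert_space_positivity}—are routine.
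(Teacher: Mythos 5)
Your route is the paper's route: verify the cancellation $\Re(By|x)_{H_1}+\Re(Cx|y)_{H_2}=0$ from $C\subseteq -B^{\ast}$, so Remark~\ref{eq:cancellation_H_B_C} applies with $\gamma=1$, and then invoke Proposition~\ref{prop:hilbert_space_positivity}. The cancellation computation is correct, and your observation that self-adjointness of $A,D$ is not used there is accurate. (A small inaccuracy: you write ``equivalently $A,D\ge 0$'' as though this follows from $A=A^{\ast}$, $D=D^{\ast}$; it does not — non-negativity is a separate, tacit assumption of the corollary, needed for $-A,-D$ to generate contraction semigroups. The corollary as written implicitly assumes it, and you use it, but it is not equivalent to self-adjointness.)

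Where you go beyond the paper is in scrutinizing the hypothesis $-1\in\rho(\A)$ of Proposition~\ref{prop:hilbert_space_positivity}, which the paper's one-line derivation of the corollary does not mention. Your reduction via Proposition~\ref{l:relation_resolvent_AD_calA} to bounded invertibility of $\Xi=(1+A)+B(1+D)^{-1}B^{\ast}$ is the right thing to look at, and your diagnosis is correct: $\Xi\ge\id$ gives injectivity and closed range, but surjectivity is the genuine issue, and Kato--Rellich is not available since the relative bound may be $\ge 1$. However, you do not actually complete this step. The KLMN/form approach you sketch produces a self-adjoint operator associated with the form, but the identification of that form operator with $\Xi$ on $\Do(A)$ — precisely the surjectivity you are after — is the content, and you leave it as a pointer. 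Likewise, ``invoke the $\mathcal{J}$-self-adjointness criteria from Tretter'' is not an argument without stating which criterion applies under Assumption~\ref{ass:standing} alone. So the proposal, while it follows the paper's structure and correctly flags the one non-trivial hypothesis, has a genuine gap at exactly the place you yourself single out: you have reduced $-1\in\rho(\A)$ to the self-adjointness of $\Xi$ but not established it. Since the paper does not address this point at all, your attention to it is well placed, but to count as a proof you would need to close it — e.g.\ by noting that $B(1+D)^{-1}B^{\ast}$ is a bounded, non-negative, symmetric operator on $H_1$ (bounded by Remark~\ref{rem:ass_standing} since $\Do(A)\subseteq\Do(C)=\Do(B^{\ast})$ and $B(1+D)^{-1}\in\calL(H_2,H_1)$), so that $\Xi$ is a bounded non-negative perturbation of the self-adjoint $1+A$ and hence self-adjoint with $\Xi\ge\id$, giving $0\in\rho(\Xi)$ directly. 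With that observation the argument closes and matches the intended (unwritten) proof.
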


The following proposition describes negative fractional powers of $\A$ in terms of the ones of $\diag$. Below in Section~\ref{sec:extrapolation}, it will also be used to extrapolate the $H^{\infty}$-calculus in an $L^q$--setting.

\begin{proposition}[Negative fractional powers of $\A$ and $\diag$]
	\label{prop:fractions_Hilbert}
	Let Assumption \ref{ass:standing} be satisfied.
	Suppose that \eqref{eq:equivalence_norm_A_0_A} holds.
	Assume that $-\diag$ and $-\A$ generate  $C_0$--semigroups of contraction. Then
	$$
	\Ran(\A^{\beta})=
	\Ran(\diag^{\beta}) \ \text{ and }\ 
	\|\A^{-\beta} h\|_{H}\eqsim 
	\|\diag^{-\beta} h\|_{H} \text{ for all }h\in \Ran(\diag^{\beta})
	$$
	for all $\beta\in [0,\frac{1}{2})$. In particular $\Dd(\diag^{\gamma})=\Dd(\A^{\gamma})$ for all $\gamma\in (-\frac{1}{2},0]$.
\end{proposition}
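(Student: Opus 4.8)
The plan is to deduce the statement for negative fractional powers from the already established statement for positive fractional powers (Proposition~\ref{prop:fractional_powers_positive}) by Hilbert space duality, the only additional ingredient being Kato's classical theorem on fractional powers of $m$-accretive operators; the exponent threshold $\tfrac12$ in the proposition is precisely the one in Kato's theorem. First, since $-\A$ and $-\diag$ generate $C_0$-semigroups of contractions on $H$, they are $m$-accretive, and by the Hilbert space result recalled at the beginning of this subsection (see e.g.\ \cite[Theorem 10.2.24]{Analysis2}) $\A$ and $\diag$ have a bounded $H^\infty$-calculus (in particular are sectorial) of angle $\le\pi/2$. On a Hilbert space the adjoint of a $C_0$-semigroup of contractions is again a $C_0$-semigroup of contractions, with generator the adjoint of the original generator; hence the same applies to $\A^*$ and $\diag^*$. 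Thus all of $\A,\diag,\A^*,\diag^*$ are $m$-accretive with a bounded $H^\infty$-calculus, and the homogeneous scales $\Dd(\A^{s})$, $\Dd(\diag^{s})$, $\Dd((\A^*)^{s})$, $\Dd((\diag^*)^{s})$, $s\in\R$, are all defined.

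Next, by Kato's theorem on fractional powers of $m$-accretive operators (see e.g.\ \cite{Kat80}), for every $m$-accretive $T$ on a Hilbert space one has $\Do(T^{\beta})=\Do((T^*)^{\beta})$ with $\|T^{\beta}\cdot\|_H\eqsim\|(T^*)^{\beta}\cdot\|_H$ on this common domain, for all $\beta\in[0,\tfrac12)$. On the other hand Assumption~\ref{ass:standing} gives $\Do(\A)=\Do(\diag)$, \eqref{eq:equivalence_norm_A_0_A} holds by hypothesis, and $\A,\diag$ have a bounded $H^\infty$-calculus; hence Proposition~\ref{prop:fractional_powers_positive} yields $\Do(\A^{\beta})=\Do(\diag^{\beta})$ with $\|\A^{\beta}\cdot\|_H\eqsim\|\diag^{\beta}\cdot\|_H$ for all $\beta\in[0,1)$ (the case $\beta=0$ being trivial). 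Combining these, for every $\beta\in[0,\tfrac12)$ the domains $\Do((\A^*)^{\beta})$ and $\Do((\diag^*)^{\beta})$ coincide, with the four norms $\|\A^{\beta}\cdot\|_H$, $\|(\A^*)^{\beta}\cdot\|_H$, $\|\diag^{\beta}\cdot\|_H$, $\|(\diag^*)^{\beta}\cdot\|_H$ mutually equivalent.

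Finally, fix $\beta\in[0,\tfrac12)$. For $h\in\Ran(\A^{\beta})=\Do(\A^{-\beta})$ and $g\in\Do((\A^*)^{\beta})$ one has $(h\,|\,g)_H=(\A^{-\beta}h\,|\,(\A^*)^{\beta}g)_H$, using $(\A^{\beta})^{*}=(\A^{*})^{\beta}$; taking the supremum over $g$ with $\|(\A^*)^{\beta}g\|_H\le1$ and using density of $\Ran((\A^*)^{\beta})$ in $H$, this identifies $\|\A^{-\beta}h\|_H$ with the norm of the functional $g\mapsto(h\,|\,g)_H$ on the normed space $(\Do((\A^*)^{\beta}),\|(\A^*)^{\beta}\cdot\|_H)$. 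By the previous step this normed space coincides, up to equivalence of norms, with $(\Do((\diag^*)^{\beta}),\|(\diag^*)^{\beta}\cdot\|_H)$. Moreover, since $\A^*$ (equivalently $\A$) has a bounded $H^\infty$-calculus, a vector $h\in H$ for which this functional is bounded necessarily lies in $\Ran(\A^{\beta})$ and not merely in the completion $\Dd(\A^{-\beta})$ — this is the identity $\Do(T^{-\beta})=\Ran(T^{\beta})=\Dd(T^{-\beta})\cap H$ together with the duality $\Dd(T^{-\beta})=\Dd((T^*)^{\beta})^{*}$ realised through the pairing extending $(\cdot\,|\,\cdot)_H$, cf.\ \cite[Section~2]{KKW}, \cite[Subsection~6.3]{Haase:2} — and the analogous statement holds for $\diag$. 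Therefore $\Ran(\A^{\beta})=\Ran(\diag^{\beta})$ and $\|\A^{-\beta}h\|_H\eqsim\|\diag^{-\beta}h\|_H$ for all $h$ in this common range. The assertion $\Dd(\diag^{\gamma})=\Dd(\A^{\gamma})$ for $\gamma\in(-\tfrac12,0]$ then follows by passing to completions (and is trivial for $\gamma=0$).

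The delicate points are invoking Kato's fractional-power theorem in exactly the right form — this is where $\beta<\tfrac12$ enters, the endpoint $\beta=\tfrac12$ being genuinely excluded (Kato's square-root problem) — and the two homogeneous-space identifications used in the last step, namely the Hilbert space duality $\Dd(T^{-\beta})=\Dd((T^*)^{\beta})^{*}$ and $\Do(T^{-\beta})=\Ran(T^{\beta})=\Dd(T^{-\beta})\cap H$, which are needed to pass from the abstract completed scale back to the concrete statement about $\Ran(\A^{\beta})$ and the homogeneous norms.
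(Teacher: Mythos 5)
Your argument is correct and follows essentially the same path as the paper: Kato's $1961$ theorem on fractional powers of $m$-accretive operators (the correct reference is \cite{Kato}, not the book \cite{Kat80}) gives $\Do(T^{\beta})=\Do((T^{*})^{\beta})$ with equivalent norms for $\beta\in[0,\tfrac12)$, which combined with Proposition~\ref{prop:fractional_powers_positive} shows $\Do((\A^{*})^{\beta})=\Do((\diag^{*})^{\beta})$ with equivalent homogeneous norms, after which one transfers to negative powers by duality. The only difference is the last step: where the paper invokes \cite[Proposition 11]{Stokes} as a black box, you spell out the duality $\|\A^{-\beta}h\|_{H}=\sup\{|(h|g)_{H}|:g\in\Do((\A^{*})^{\beta}),\ \|(\A^{*})^{\beta}g\|_{H}\le1\}$ together with $\Dd(T^{-\beta})\cap H=\Ran(T^{\beta})$ and $((\A^{*})^{\beta})^{*}=\A^{\beta}$ — this is precisely the content of the cited proposition, so you have essentially re-proved it rather than changed the route.
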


\begin{proof}
	Let $\beta\in [0,\frac{1}{2})$. By \cite[Theorem 1.1]{Kato},  $\Do((\A^*)^{\beta})=\Do(\A^{\beta})$ and $\Do((\diag^*)^{\beta})=\Do(\diag^{\beta})$ with corresponding homogeneous estimates, namely
	$$
	\|\diag^{\beta} h\|_H \eqsim \|(\diag^*)^{\beta} h\|_H 
	\ \ \text{ and } \ \ 
	\|\A^{\beta} h\|_H \eqsim \|(\A^*)^{\beta} h\|_H
	$$
	for all $h\in H$.
	Combining the latter with Proposition \ref{prop:fractional_powers_positive}, one has $\Do((\A^*)^{\beta})=\Do((\diag^*)^{\beta})$ with a corresponding homogeneous estimate $\|\A^{\beta}h\|_{H}\eqsim \|(\diag^*)^{\beta}h \|_H$ for all $h\in \Do((\A^*)^{\beta})$. The claim now follows from \cite[Proposition 11]{Stokes}.
\end{proof}

\section{Extrapolation for consistent families of block operators}\label{sec:extrapolation}

In applications to (stochastic) partial differential equations many relevant  operators
can be studied not only in one setting but in a range of spaces. These
 typically depend on several parameters, e.g.\ integrability powers and/or Sobolev smoothness. Often there are certain special values of such parameters for which $\Rsec$-sectoriality and the boundedness of the $H^{\infty}$-calculus are  easier to investigate than for the general case. In this section we provide results which allow to extrapolate 
$\Rsec$-sectoriality and the boundedness of the $H^{\infty}$-calculus knowing the corresponding property for certain values of the parameters.
 
 \subsection{Assumptions and consistency}

 We begin by introducing the concept of a consistent family of operators. Let $S$ and $T$ be sectorial operators on $Y$ and $Z$, respectively. The operators $S,T$ are said to be \emph{consistent} if $(Y,Z)$ is an interpolation couple, i.e.\  $Y\embed V$ and $Z\embed V$ where  $V$ is a topological vector space, cf. e.g. \cite{Tr1}, and
$$
(\lambda-S)^{-1}|_{Y\cap Z}=  (\lambda -T)^{-1}|_{Y\cap Z}\qquad \text{ for all }\lambda<0.
$$
A family of sectorial operators $(T_{\theta})_{\theta\in I}$ on a family of Banach spaces $(Y_{\theta})_{\theta\in I}$, where $I\subseteq \R$ is an interval, is  \emph{consistent} if $T_{\theta}$ and $T_{\varphi}$ are consistent for all $\theta,\varphi\in I$.

\begin{remark}
\label{r:suff_condition_concistency}
In case that $Y\embed Z$, then $S$ and $T$ are consistent provided 
$
S=T|_{\Do(S)}.
$
This follows by noticing that by $D(S) = \{x \in  D(T)\cap Y \colon
Tx \in  Y\}$ one has $D(S)\cap D(T) = D(S)$ and  $(\lambda-T)^{-1}|_{Y}=(\lambda-S)^{-1}$.
\end{remark}

The following assumption is in force throughout this section.

\begin{assumption}
\label{ass:consistent_family}
Let $I=[\alpha,\beta]$ for $-\infty<\alpha<\beta<\infty$.
\begin{enumerate}[{\rm(1)}]
\item\label{it:space_complex_scale} 
Let $(X_{i,\theta})_{\theta\in I}$ for $i\in \{1,2\}$ be a family of Banach spaces such that $X_{i,\alpha}$ have non-trivial type for all $\theta\in I$ and $i\in \{1,2\}$. Moreover, for  $i\in \{1,2\}$ assume that  for the complex interpolation spaces
\begin{align*}
[X_{i,\theta},X_{i,\varphi}]_{\gamma}=X_{i,(1-\gamma)\theta+\gamma \varphi} \quad \hbox{for all } \varphi,\theta\in I \hbox{ and }	\gamma\in (0,1).
\end{align*}

\item\label{it:standing_theta} For each $\theta\in I$ the following hold:
\begin{itemize}
\item $A_{\theta}$ and $D_{\theta}$ are sectorial operators on $X_{1,\theta}$ and $X_{2,\theta}$, respectively;
\item $C_{\theta}\colon\Do(C_{\theta})\subseteq X_{1,\theta}\to X_{2,\theta}$ and $B_{\theta}:\Do(B_{\theta})\subseteq X_{2,\theta}\to X_{1,\theta}$ are linear operators with $\Do(A_{\theta})\subseteq \Do(C_{\theta})$ and $\Do(D_{\theta})\subseteq \Do(B_{\theta})$ and there exist $c_{D,\theta},c_{A,\theta},L_{\theta}\geq 0$ such that
\begin{align*}
\|C_{\theta} x\|_{X_{2,\theta}}&\leq c_{A,\theta} \| A_{\theta} x\|_{X_{1,\theta}} + L_{\theta} \|x\|_{X_{1,\theta}} \quad \hbox{for all } x\in \Do(A_{\theta}),\\
\|B_{\theta} y\|_{X_{1,\theta}}&\leq c_{D,\theta} \| D_{\theta} y\|_{X_{2,\theta}} + L_{\theta} \|y\|_{X_{2,\theta}} \quad \hbox{for all } y\in \Do(D_{\theta}).
\end{align*}
\end{itemize}
\item\label{it:consistency} $(A_{\theta})_{\theta\in I}$, $(D_{\theta})_{\theta\in I}$ are consistent families of operators.
\item\label{it:consistency_BC}
For all $\theta_1,\theta_2 \in I$,
 $x\in \Do(A_{\theta_1})\cap \Do(A_{\theta_2})$ and $y\in \Do(D_{\theta_1})\cap \Do(D_{\theta_2})$,
$$
B_{\theta_1} y =
B_{\theta_2} y, \qquad 
C_{\theta_1} x =
C_{\theta_2} x.
$$
\end{enumerate}
\end{assumption}
In this subsection, we extend our standard notation as follows: For each $\theta\in I$ set $X_{\theta}\stackrel{{\rm def}}{=}X_{1,\theta}\times X_{2,\theta}$, $\Do(\diag_{\theta})=\Do(\A_{\theta})=\Do(A_{\theta})\times \Do(D_{\theta})$,  
\begin{align*}
&\diag_{\theta}
\stackrel{{\rm def}}{=}
\begin{bmatrix}
A_{\theta} & 0\\
0 &D_{\theta}
\end{bmatrix}
\quad \text{ and } \quad
\A_{\theta}
\stackrel{{\rm def}}{=}
\begin{bmatrix}
A_{\theta} & B_{\theta}\\
C_{\theta} &D_{\theta} 
\end{bmatrix}.
\end{align*}
\begin{remark} Assumption \ref{ass:consistent_family}\eqref{it:space_complex_scale} says that the families $(X_{\theta})_{\theta\in I}$ and $(X_{i,\theta})_{\theta\in I}$, $i\in\{1,2\}$, are  complex interpolation scales,
	compare e.g. \cite[Section V.1]{Am}. Assumption \ref{ass:consistent_family}\eqref{it:standing_theta} implies that $A_{\theta},B_{\theta},C_{\theta}$ and $D_{\theta}$ satisfy Assumption \ref{ass:standing} for all $\theta\in I$, and Assumption \ref{ass:consistent_family}\eqref{it:consistency} ensures that $(\diag_{\theta})_{\theta\in I}$ is a consistent family of operators.
\end{remark}

 Next we provide a sufficient condition for the consistency of $(\A_{\theta})_{\theta\in I}$ in terms of the operators 
extending \eqref{eq:def_M_1_M_2}, where we set for all $\theta\in (0,1)$ and $\lambda\in \rho(A_\theta)\cap\rho(D_\theta)$ 
\begin{equation}
\label{eq:def_M_1_M_2_theta}
\begin{aligned}
M_{1,\theta}(\lambda) &
\stackrel{{\rm def}}{=}
\id - B_{\theta}(\lambda-D_{\theta})^{-1} C_{\theta}(\lambda- A_{\theta})^{-1} \in \calL(X_{1,\theta}),\\
M_{2,\theta}(\lambda) &
\stackrel{{\rm def}}{=}
\id -  C_{\theta}(\lambda- A_{\theta})^{-1}B_{\theta}(\lambda-D_{\theta})^{-1} \in \calL(X_{2,\theta}).
\end{aligned}
\end{equation}

\begin{lemma}[Consistency of $(\A_{\theta})_{\theta\in I}$]
\label{l:consistency_A_theta}
Let Assumptions \ref{ass:consistent_family} be satisfied. Assume that $\A_{\theta}$ is sectorial for all $\theta\in I$, and that
\begin{equation}
\label{eq:equivalence_norm_A_0_A_theta}
\|\diag_{\theta} x\|_{X_{\theta}}\lesssim_{\theta} \|\A_{\theta} x\|_{X_{\theta}}\quad \text{ for all }x\in \Do(\diag_{\theta})\text{ and } \theta\in I.
\end{equation}
Then, for all $\theta_1,\theta_2\in I$, $\lambda<0$ and $j\in \{1,2\}$,
\begin{equation}
\label{eq:consistency_M_j}
M_{j,\theta_1}(\lambda) |_{X_{j,\theta_1}\cap X_{j,\theta_2}}=
M_{j,\theta_2}(\lambda) |_{X_{j,\theta_1}\cap X_{j,\theta_2}}.
\end{equation}
Moreover the family $(\A_{\theta})_{\theta\in I}$ is consistent if one of the following holds:
\begin{enumerate}[{\rm(1)}]
\item\label{it:consistency_M_j} For all $\lambda<0$, $j\in \{1,2\}$ and $\theta_1,\theta_2\in I$,
$$
M_{j,\theta_1}(\lambda)^{-1} |_{X_{j,\theta_1}\cap X_{j,\theta_2}}=
M_{j,\theta_2}(\lambda)^{-1} |_{X_{j,\theta_1}\cap X_{j,\theta_2}}.
$$
\item\label{it:consistency_for_nested_family}
For one $j\in \{1,2\}$ and for all $\theta_1,\theta_2\in I$
$$
X_{\theta_1,j} \embed X_{\theta_2,j} 
\quad \text{ or }\quad 
X_{\theta_2,j} \embed X_{\theta_1,j} .
$$
\end{enumerate}
\end{lemma}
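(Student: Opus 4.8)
The plan is to reduce the consistency of $(\A_{\theta})_{\theta\in I}$ to the resolvent factorization of Proposition~\ref{l:relation_resolvent_AD_calA}, after first establishing \eqref{eq:consistency_M_j} by chasing a vector through the chain of operators that build $M_{1,\theta}(\lambda)$ and $M_{2,\theta}(\lambda)$. Throughout one fixes $\lambda<0$: since $A_{\theta}$ and $D_{\theta}$ are sectorial, $\lambda\in\rho(A_{\theta})\cap\rho(D_{\theta})$, so the operators in \eqref{eq:def_M_1_M_2_theta} are well defined; and since $\A_{\theta}$ is sectorial, $\lambda\in\rho(\A_{\theta})$, whence by Proposition~\ref{l:relation_resolvent_AD_calA}~\eqref{it:lambda_resolvent_c} the operators $M_{1,\theta}(\lambda)$ and $M_{2,\theta}(\lambda)$ are boundedly invertible and $(\lambda-\A_{\theta})^{-1}=(\lambda-\diag_{\theta})^{-1}\M_{\theta}(\lambda)^{-1}$.

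First I would prove \eqref{eq:consistency_M_j}. Fix $\theta_1,\theta_2\in I$ and take $j=1$ (the case $j=2$ is symmetric). Let $x\in X_{1,\theta_1}\cap X_{1,\theta_2}$ and set $x_1\stackrel{{\rm def}}{=}(\lambda-A_{\theta_1})^{-1}x$; by consistency of $(A_{\theta})_{\theta\in I}$ (Assumption~\ref{ass:consistent_family}\eqref{it:consistency}) this also equals $(\lambda-A_{\theta_2})^{-1}x$, hence $x_1\in\Do(A_{\theta_1})\cap\Do(A_{\theta_2})\subseteq\Do(C_{\theta_1})\cap\Do(C_{\theta_2})$. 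By Assumption~\ref{ass:consistent_family}\eqref{it:consistency_BC}, $y_1\stackrel{{\rm def}}{=}C_{\theta_1}x_1=C_{\theta_2}x_1$, and $y_1\in X_{2,\theta_1}\cap X_{2,\theta_2}$ since $C_{\theta_i}x_1\in X_{2,\theta_i}$. Iterating once more with $(\lambda-D_{\theta})^{-1}$ and then $B_{\theta}$, again using \eqref{it:consistency} and \eqref{it:consistency_BC}, yields $B_{\theta_1}(\lambda-D_{\theta_1})^{-1}y_1=B_{\theta_2}(\lambda-D_{\theta_2})^{-1}y_1$, which is precisely $M_{1,\theta_1}(\lambda)x=M_{1,\theta_2}(\lambda)x$. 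The same chase shows that $(\lambda-A_{\theta})^{-1}$, $(\lambda-D_{\theta})^{-1}$, $B_{\theta}(\lambda-D_{\theta})^{-1}$ and $C_{\theta}(\lambda-A_{\theta})^{-1}$ are all consistent on intersections and map the intersection of the relevant pair of spaces into the corresponding intersection; this is the bookkeeping I reuse below.

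For the conclusion under hypothesis~\eqref{it:consistency_M_j}, it suffices to show $(\lambda-\A_{\theta_1})^{-1}z=(\lambda-\A_{\theta_2})^{-1}z$ for every $z\in X_{\theta_1}\cap X_{\theta_2}=(X_{1,\theta_1}\cap X_{1,\theta_2})\times(X_{2,\theta_1}\cap X_{2,\theta_2})$. Since $(\lambda-\A_{\theta})^{-1}=(\lambda-\diag_{\theta})^{-1}\M_{\theta}(\lambda)^{-1}$ and $(\lambda-\diag_{\theta})^{-1}$, whose two diagonal entries are $(\lambda-A_{\theta})^{-1}$ and $(\lambda-D_{\theta})^{-1}$, is consistent on intersections by the previous paragraph, it is enough that $\M_{\theta_1}(\lambda)^{-1}$ and $\M_{\theta_2}(\lambda)^{-1}$ agree on $X_{\theta_1}\cap X_{\theta_2}$. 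For this I would use one of the last two block representations of $\M(\lambda)^{-1}$ in Proposition~\ref{l:relation_resolvent_AD_calA}~\eqref{eq:resolvent_formula_block_matrix_I}, in which every entry is a composition of $\id$, $B_{\theta}(\lambda-D_{\theta})^{-1}$, $C_{\theta}(\lambda-A_{\theta})^{-1}$ and $M_{j,\theta}(\lambda)^{-1}$ for a \emph{single} $j$; each factor is consistent on intersections (the first three by the chase above, the last by \eqref{it:consistency_M_j}) and carries intersections into intersections, so the composition, hence $(\lambda-\A_{\theta})^{-1}z$, does not depend on $\theta\in\{\theta_1,\theta_2\}$.

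Finally, under hypothesis~\eqref{it:consistency_for_nested_family} I would show that \eqref{it:consistency_M_j} holds for the distinguished $j$, say $j=1$, and then invoke the previous case. Fix $\theta_1,\theta_2$ with $X_{1,\theta_1}\embed X_{1,\theta_2}$ (the reverse inclusion, and $j=2$, are identical), so $X_{1,\theta_1}\cap X_{1,\theta_2}=X_{1,\theta_1}$. By the first step $M_{1,\theta_1}(\lambda)$ and $M_{1,\theta_2}(\lambda)$ agree on $X_{1,\theta_1}$, and both are boundedly invertible; so the purely algebraic argument behind Remark~\ref{r:suff_condition_concistency} applies: if $w\stackrel{{\rm def}}{=}M_{1,\theta_1}(\lambda)^{-1}u$ with $u\in X_{1,\theta_1}$, then $w\in X_{1,\theta_1}$, hence $w\in X_{1,\theta_2}$, and $M_{1,\theta_2}(\lambda)w=M_{1,\theta_1}(\lambda)w=u$, so $M_{1,\theta_1}(\lambda)^{-1}u=M_{1,\theta_2}(\lambda)^{-1}u$. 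Thus \eqref{it:consistency_M_j} holds for $j=1$ and the proof is complete. The only genuinely delicate point throughout is the bookkeeping that every intermediate vector produced along these operator chains stays in the intersection of the relevant spaces — this is exactly what lets the agreement of $M_{j,\theta_1}$ and $M_{j,\theta_2}$ propagate through the composition — while everything else is a direct application of Proposition~\ref{l:relation_resolvent_AD_calA} and the consistency assumptions of Assumption~\ref{ass:consistent_family}.
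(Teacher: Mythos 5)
Your proof is correct and follows essentially the same route as the paper's: you establish \eqref{eq:consistency_M_j} by propagating consistency through the operator chain using Assumption~\ref{ass:consistent_family}\eqref{it:consistency} and \eqref{it:consistency_BC}, derive consistency of the resolvents under hypothesis~\eqref{it:consistency_M_j} from the block factorization in Proposition~\ref{l:relation_resolvent_AD_calA}\eqref{eq:resolvent_formula_block_matrix_I}, and reduce hypothesis~\eqref{it:consistency_for_nested_family} to the former by the standard algebraic argument for inverses on nested spaces. The only cosmetic difference is that you make the bookkeeping (intersections being mapped into intersections, one $j$ sufficing via the appropriate one-sided factorization of $\M(\lambda)^{-1}$) explicit, which the paper leaves implicit.
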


Recall that $M_{j,\theta}(\lambda)$ is invertible for all $\lambda<0$ in case that $\A_{\theta}$ is sectorial by Corollary \ref{t:rsec_necessary_sufficient_condition}. In particular, the inverse in \eqref{it:consistency_M_j} is bounded. 
Condition \eqref{it:consistency_for_nested_family} can be easily checked in case that $X_{i,\theta}$ is an $L^p$-space on a finite measure space. Note that \eqref{it:consistency_for_nested_family} does \emph{not} follow from Remark \ref{r:suff_condition_concistency} since the claimed condition holds only for \emph{one} $j\in \{1,2\}$.

\begin{remark}[Optimality of \eqref{it:consistency_M_j} in Lemma \ref{l:consistency_A_theta}]
If $(\A_{\theta})_{\theta\in I}$ is consistent and  
$$
A_{\theta_1}|_{\Do(A_{\theta_1})\cap \Do(A_{\theta_2})} = A_{\theta_2}|_{\Do(A_{\theta_1})\cap \Do(A_{\theta_2})}, \quad
D_{\theta_1}|_{\Do(D_{\theta_1})\cap \Do(D_{\theta_2})} = D_{\theta_2}|_{\Do(D_{\theta_1})\cap \Do(D_{\theta_2})},
$$
then \eqref{it:consistency_M_j} in Lemma \ref{l:consistency_A_theta} holds. The claim follows from the identity
$
M_{j,\theta}(\lambda)^{-1}= \res_j(\lambda-\diag_{\theta})(\lambda-\A_{\theta})^{-1}\e_j 
$, compare Proposition~\ref{l:relation_resolvent_AD_calA}, where $\e_j\colon X_j\rightarrow X$ and $\res_j\colon X\rightarrow X_j$ are the extension and restriction operators,  respectively.
\end{remark}

\begin{remark}[Extrapolation of \eqref{eq:equivalence_norm_A_0_A_theta}]
\label{r:extrapolation_equivalence_norm_A_0_A}
The condition in \eqref{eq:equivalence_norm_A_0_A_theta} is the analogue of \eqref{eq:equivalence_norm_A_0_A}. If \eqref{eq:equivalence_norm_A_0_A_theta} holds for some $\theta=\theta^{\star}\in [\alpha,\beta]$ (e.g.\ if $0\in\rho(\A_{\theta^{\star}})$), then it also holds for all $\theta\in [\alpha,\beta]\cap (\theta^{\star}-\varepsilon,\theta^{\star}+\varepsilon)$ where $\varepsilon>0$ is small. This follows from Assumption \ref{ass:consistent_family}, Lemma \ref{l:one_side_A_0_A_estimate} and  Sneiberg's lemma, cf. \cite{S74}. For more details we refer to the proof of Theorem \ref{t:extrapolation_R} where a similar situation appears. 
\end{remark}

\begin{proof}[Proof of Lemma \ref{l:consistency_A_theta}]
We begin by proving \eqref{eq:consistency_M_j}, where it is enough to show that, for all $\lambda<0$,
\begin{align}
\label{eq:C_A_consistency}
C_{\theta_1}(\lambda-A_{\theta_1})^{-1}|_{X_{1,\theta_1}\cap X_{1,\theta_2}}
&=
C_{\theta_2}(\lambda-A_{\theta_2})^{-1}|_{X_{1,\theta_1}\cap X_{1,\theta_2}},\\
\label{eq:B_D_consistency}
B_{\theta_1}(\lambda-D_{\theta_1})^{-1}|_{X_{2,\theta_1}\cap X_{2,\theta_2}}
&=
B_{\theta_2}(\lambda-D_{\theta_2})^{-1}|_{X_{2,\theta_1}\cap X_{2,\theta_2}}.
\end{align}
 Note that, for all $\lambda<0$ $$(\lambda-A_{\theta_1})^{-1}x=(\lambda-A_{\theta_2})^{-1}x\in \Do(A_{\theta_1})\cap \Do(A_{\theta_2}) \quad \hbox{for } x\in X_{1,\theta_1}\cap X_{1,\theta_2},$$ 
by consistency of $(A_{\theta})_{\theta\in I}$. Therefore, by Assumption \ref{ass:consistent_family}\eqref{it:consistency_BC}, 
\begin{align*}
C_{\theta_1}(\lambda-A_{\theta_1})^{-1} x 
=C_{\theta_1}(\lambda-A_{\theta_2})^{-1} x
=C_{\theta_2}(\lambda-A_{\theta_2})^{-1} x \quad  \hbox{for } x\in X_{1,\theta_1}\cap X_{1,\theta_2}
\end{align*}
which proves \eqref{eq:C_A_consistency}, and \eqref{eq:B_D_consistency} follows similarly.

Statement \eqref{it:consistency_M_j} holds due to Proposition~\ref{l:relation_resolvent_AD_calA}\eqref{eq:resolvent_formula_block_matrix_I} for $\A=\A_{\theta}$, the claim follows from the assumption, the consistency of $(A_{\theta})_{\theta\in I}$, $(D_{\theta})_{\theta\in I}$, \eqref{eq:C_A_consistency}, and \eqref{eq:B_D_consistency}.

To prove \eqref{it:consistency_for_nested_family} 
we check the condition in \eqref{it:consistency_M_j}. Fix $\lambda<0$ and $j\in \{1,2\}$ and $\theta_1,\theta_2\in I$ 
 with $\theta_1<\theta_2$. For simplicity assume the first case, that is,  $X_{j,\theta_1}\cap X_{j,\theta_2}= X_{j,\theta_1}$, the second case follows similarly. Thus, by \eqref{eq:consistency_M_j},
we have $
M_{j,\theta_1}(\lambda)=
M_{j,\theta_2}(\lambda)|_{X_{j,\theta_1}}
$. Since $\A_{\theta_i}$ is sectorial, $M_{j,\theta_i}(\lambda)$ is invertible by Theorem \ref{t:rsec_necessary_sufficient_condition_II} and therefore
\begin{equation}
\label{eq:consistency_inverse_simple_case}
M_{j,\theta_2}(\lambda)^{-1}|_{X_{j,\theta_1}}=M_{j,\theta_1}(\lambda)^{-1}
.
\end{equation}
Again, using that $X_{j,\theta_1}\cap X_{j,\theta_2}= X_{j,\theta_1}$, the claim follows from \eqref{eq:consistency_inverse_simple_case} and \eqref{it:consistency_M_j}.
%
\end{proof}

\subsection{Extrapolation results} Here we list the main results of this section, the proof will be given in Subsection \ref{ss:proofs_extrapolation} below. We begin by analyzing $\Rsec$-sectoriality.

\begin{theorem}[Extrapolation of $\Rsec$-sectoriality]
\label{t:extrapolation_R}
Let Assumption \ref{ass:consistent_family} with $L_{\theta}=0$ be satisfied for all $\theta\in I$. Suppose that the following are satisfied for some $\psi\in (0,2\pi)$: 
\begin{enumerate}[{\rm(a)}]
\item $\diag_{\theta}$ is $\Rsec$-sectorial of angle $<\psi$ for all $\theta\in (\alpha,\beta)$; 
\item There exists a $\thesp\in (\alpha,\beta)$ such that $\A_{\thesp}$ is $\Rsec$-sectorial of angle $< \psi$;
\item\label{cond:t:extrapolation_R _c}$\overline{\Ran(\A_{\theta})}=X_{\theta}$ for all $\theta\in I$. 
\end{enumerate}
Then there exists $\varepsilon>0$ such that 
$$
\A_{\theta} \text{ is $\Rsec$-sectorial of angle $\leq \psi$ for all $|\theta-\thesp|<\varepsilon$}.
$$
Finally, if $X_{j,\alpha},X_{j,\beta}$ are reflexive for $j\in\{1,2\}$, then condition~\eqref{cond:t:extrapolation_R _c} 
can be omitted.
\end{theorem}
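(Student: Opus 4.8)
The plan is to combine the characterization of $\Rsec$-sectoriality from Theorem \ref{t:rsec_necessary_sufficient_condition_II} with Šneiberg's stability lemma for interpolation scales. First I would fix $j \in \{1,2\}$ as in the hypotheses and work with the family $M_{j,\theta}(\lambda)$ defined in \eqref{eq:def_M_1_M_2_theta}. By Lemma \ref{l:consistency_A_theta} (using that $\A_{\thesp}$ is sectorial at the base point, and extrapolating \eqref{eq:equivalence_norm_A_0_A_theta} near $\thesp$ as in Remark \ref{r:extrapolation_equivalence_norm_A_0_A}), the operators $M_{j,\theta}(\lambda)$ form a consistent family on the complex scale $(X_{j,\theta})_{\theta\in I}$. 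At the base point $\thesp$, applying Theorem \ref{t:rsec_necessary_sufficient_condition_II}\eqref{it:A_R_sectorial}$\Rightarrow$\eqref{it:schur_compleements_R_bounded} to $\A_{\thesp}$ gives $\Rsec(M_{j,\thesp}(\lambda)^{-1} : \lambda \in \complement\overline{\Sigma_\phi}) < \infty$ for $\phi$ slightly larger than the angle of $\A_{\thesp}$; in particular each $M_{j,\thesp}(\lambda)$ is boundedly invertible on $X_{j,\thesp}$.

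The key step is to upgrade invertibility of $M_{j,\thesp}(\lambda)$ to invertibility of $M_{j,\theta}(\lambda)$ for $\theta$ near $\thesp$, \emph{uniformly} in $\lambda$. For this I would apply Šneiberg's lemma: a bounded invertible operator on the interpolation space $X_{j,\thesp} = [X_{j,\alpha},X_{j,\beta}]_{\mu}$ (for the appropriate $\mu$) that is consistent across the scale remains invertible on $X_{j,\theta}$ for $|\theta-\thesp|$ small, with the size of the neighborhood controlled only by the norm of the operator and its inverse. Since $\sup_{\lambda \in \complement\overline{\Sigma_\phi}}\|M_{j,\thesp}(\lambda)\|$ and $\sup_{\lambda}\|M_{j,\thesp}(\lambda)^{-1}\|$ are both finite (the former from Assumption \ref{ass:consistent_family}\eqref{it:standing_theta} and sectoriality of $A_\thesp, D_\thesp$, the latter just established), the Šneiberg neighborhood can be chosen \emph{independent of $\lambda$}: there is $\varepsilon > 0$ such that for all $|\theta - \thesp| < \varepsilon$ and all $\lambda \in \complement\overline{\Sigma_\phi}$, $M_{j,\theta}(\lambda)$ is invertible on $X_{j,\theta}$ with $\sup_\lambda \|M_{j,\theta}(\lambda)^{-1}\|_{\calL(X_{j,\theta})}$ bounded. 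The $\Rsec$-bound, not just the norm bound, then follows by interpolating the $\Rsec$-bounded family at $\thesp$ against the norm-bounded families at the endpoints using $K$-convexity (the non-trivial type hypothesis in Assumption \ref{ass:consistent_family}\eqref{it:space_complex_scale} and \cite[Proposition 8.4.4]{Analysis2}), exactly as in the proof of Theorem \ref{t:calculus_pert_block_operators}; shrinking $\varepsilon$ if needed, this gives $\Rsec(M_{j,\theta}(\lambda)^{-1} : \lambda \in \complement\overline{\Sigma_\phi}) < \infty$.

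Having this, I would feed it back into Theorem \ref{t:rsec_necessary_sufficient_condition_II}\eqref{it:schur_compleements_R_bounded}$\Rightarrow$\eqref{it:A_R_sectorial}: for each such $\theta$, $A_\theta$ and $D_\theta$ are $\Rsec$-sectorial of angle $<\psi$ by hypothesis (a), condition \eqref{eq:equivalence_norm_A_0_A_theta} holds near $\thesp$, $\overline{\Ran(\A_\theta)} = X_\theta$ by hypothesis (c), and the $M_{j,\theta}$-bound was just proved — so $\A_\theta$ is $\Rsec$-sectorial of angle $\leq \psi$. For the final sentence, reflexivity of the endpoint spaces $X_{j,\alpha}, X_{j,\beta}$ implies reflexivity of each $X_{j,\theta}$ (hence of $X_\theta$), and then the argument in Theorem \ref{t:rsec_necessary_sufficient_condition_II}\eqref{it:X_reflexive} shows $\ker(\A_\theta) = \{0\}$ via \eqref{eq:equivalence_norm_A_0_A_theta}, forcing $\overline{\Ran(\A_\theta)} = X_\theta$, so condition (c) is redundant. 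The main obstacle I anticipate is making the Šneiberg neighborhood genuinely uniform in $\lambda$: one must check that the quantitative inputs to Šneiberg's lemma (operator norm and inverse norm at $\thesp$) are bounded over the whole sector complement $\complement\overline{\Sigma_\phi}$, and that the interpolation-scale constants $C_{X_{j,\theta}}$ appearing in Šneiberg's stability estimate do not degenerate as $\theta$ ranges over a neighborhood — both of which follow from the complex-scale hypothesis but require care to state cleanly.
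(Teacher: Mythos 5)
Your overall plan is on the right track --- reduce to the operators $M_{j,\theta}(\lambda)$ via Theorem \ref{t:rsec_necessary_sufficient_condition_II}, establish consistency across the scale via Lemma \ref{l:consistency_A_theta}, and invoke Šneiberg --- but there is a genuine gap in the final upgrade from norm bounds to $\Rsec$-bounds, and it is not a technicality.

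Applying Šneiberg's lemma to $M_{j,\theta}(\lambda)$ on $X_{j,\theta}$, pointwise in $\lambda$, yields (as you say) a $\theta$-neighborhood of $\thesp$ on which $M_{j,\theta}(\lambda)^{-1}$ exists with a \emph{uniform-in-$\lambda$ operator norm bound}. That gives sectoriality of $\A_\theta$, nothing more. You then propose to ``interpolate the $\Rsec$-bounded family at $\thesp$ against the norm-bounded families at the endpoints using $K$-convexity and [\cite{Analysis2}, Prop.\ 8.4.4], exactly as in the proof of Theorem \ref{t:calculus_pert_block_operators}.'' This step does not close the gap. Proposition 8.4.4 of \cite{Analysis2} interpolates a family that is $\Rsec$-bounded at \emph{both} ends of the interpolation couple; with only a uniform norm bound at $\thesp\pm\varepsilon$, there is no control on the $\varepsilon_N$-sum norms there. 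Indeed, a set of $N$ operators chosen from a merely norm-bounded family can act on the random-sum space $\varepsilon_N(Y)$ with norm growing like $N$; $\Rsec$-boundedness is exactly the statement that this does not happen, uniformly in $N$, and it cannot be conjured from a one-sided $\Rsec$-bound by complex interpolation. In the proof of Theorem \ref{t:calculus_pert_block_operators} that you cite, both endpoints carry $\Rsec$-bounds ($c^\Rsec_{\mathcal T_0}$ and $c^\Rsec_{\mathcal T_\delta}$), so the analogy does not transfer.

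The paper avoids this by applying Šneiberg on a different scale: it introduces the Rademacher-sum spaces $\varepsilon_N(X_{j,\theta})$ (cf.\ \eqref{eq:norm_varepsilon_N}) and, for arbitrary $N$ and $(\lambda_k)_{k=1}^N \subseteq \complement\overline{\Sigma_\psi}$, the block-diagonal operator $\mathcal T_j \x = (M_{j,\theta}(\lambda_k) x_k)_{k=1}^N$ acting on $\varepsilon_N(X_{j,\theta})$. By $K$-convexity, $(\varepsilon_N(X_{j,\theta}))_\theta$ is again a complex scale with constants \emph{independent of} $N$ (\cite{Analysis2}, Thm.\ 7.4.16). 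The $\Rsec$-sectoriality of $\A_\thesp$ is precisely the statement that $\mathcal T_j$ is invertible on $\varepsilon_N(X_{j,\thesp})$ with inverse norm bounded uniformly in $N$ and $(\lambda_k)$; Šneiberg's quantitative form (\cite{TV88}) then gives invertibility of $\mathcal T_j$ on $\varepsilon_N(X_{j,\theta})$ for $|\theta-\thesp|<\varepsilon$ with constants uniform in $N,(\lambda_k)$, which \emph{is} the $\Rsec$-bound $\Rsec(M_{j,\theta}(\lambda)^{-1}: \lambda\in\complement\overline{\Sigma_\psi})<\infty$. The norm bound of $\mathcal T_j$ at the endpoints $\theta\in\{\alpha,\beta\}$, needed as input for Šneiberg, comes from the $\Rsec$-sectoriality of the \emph{diagonal} operators $A_\theta, D_\theta$ together with $L_\theta=0$ (see \eqref{eq:bound_pm_extrapolation}), not from any inverse of $M_{j,\theta}$. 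Your diagnosis of the ``uniform-in-$\lambda$'' issue is partly right, but the deeper point is that uniformity must be built in at the level of $N$-tuples; that forces the Rademacher-space formulation. The consistency step, the reduction via Theorem \ref{t:rsec_necessary_sufficient_condition_II}, and the reflexivity remark are all fine.
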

\begin{remark}
Fackler showed in \cite[Corollary 6.4]{F14_extrapolation_MR} that in general $\Rsec$-sectoriality  does \emph{not} extrapolate. Therefore Theorem~\ref{t:extrapolation_R} 
is somewhat surprising,  and
it heavily relies on the block structure of the operator $\A_{\theta}$ and the assumptions on $\diag_{\theta}$.
\end{remark}
Next we turn our attention to the $H^{\infty}$-calculus. In contrast to Theorem \ref{t:extrapolation_R} the following result requires  conditions on the angle. Fortunately, this is always the case in applications 
with $\omega_{H^\infty}(A)\vee \omega_{H^{\infty}}(D)< \pi/2$ (see \cite[Theorem 10.4.22]{Analysis2}).

\begin{theorem}[Extrapolation of the $H^{\infty}$-calculus]
\label{t:extrapolation_H}
Let Assumption \ref{ass:consistent_family} be satisfied. Assume that $X_{i,\alpha}$ is a Hilbert space for $i\in \{1,2\}$. Suppose that $(\A_{\theta})_{\theta\in I}$ is a consistent family of sectorial operators and that 
\begin{equation}
\label{eq:estimate_A_A_0_theta_alpha_beta}
\|\diag_{\theta} x\|_{X_{\theta}}\lesssim \|\A_{\theta} x\|_{X_{\theta}}\quad \text{ for all }x\in \Do(\A_{\theta})\text{ and } \theta\in \{\alpha,\beta\}.
\end{equation}
Let the following be satisfied:
\begin{enumerate}[{\rm(a)}]
\item $\diag_{\alpha}$ and $\diag_{\beta}$ have a bounded $H^{\infty}$-calculus.
\item $-\A_{\alpha}$ generates a $C_0$-semigroup of contractions.
\item $\A_{\beta}$ is $\Rsec$-sectorial.
\end{enumerate}
Then for all $\theta\in [\alpha,\beta)$ the following hold.
\begin{enumerate}[{\rm(1)}]
\item\label{it:extrapolation_H_A_theta} $\A_{\theta}$ has a bounded $H^{\infty}$-calculus of angle 
$$
\omega_{H^\infty}(\A_{\theta})\leq \Big(\frac{\beta-\theta}{\beta-\alpha}\Big)\omega(\A_{\alpha})+\Big(\frac{\theta-\alpha}{\beta-\alpha}\Big)\angR(\A_{\beta}).
$$
\item\label{it:description_negative_fractional_powers} for all $\delta\in (0,\frac{1}{2}\frac{\beta-\theta}{\beta-\alpha})$, one has $\Ran(\A_{\theta}^{\delta})=
\Ran(\diag_{\theta}^{\delta})$ and
\begin{equation*}
\|\A_{\theta}^{-\delta} x\|_{X_{\theta}}\eqsim 
\|\diag_{\theta}^{-\delta} x\|_{X_{\theta}}\ \  	\text{ for all }x\in \Ran(\diag_{\theta}^{\delta}).
\end{equation*}
In particular $\Dd(\A_{\theta}^{-\delta})=
\Dd(\diag_{\theta}^{-\delta})$ for all $\delta\in (0,\frac{1}{2}\frac{\beta-\theta}{\beta-\alpha})$. 
\end{enumerate}
\end{theorem}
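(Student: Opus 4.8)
The plan is to run the Stein–Weiss / Sneiberg interpolation scheme for the $H^\infty$-calculus developed in \cite{KKW}, using the Hilbert space endpoint at $\theta=\alpha$ and the $\Rsec$-sectorial endpoint at $\theta=\beta$, and then to read off the description of the negative fractional powers from Proposition~\ref{prop:fractions_Hilbert} transported along the scale. First I would fix the geometry: by Assumption~\ref{ass:consistent_family}\eqref{it:space_complex_scale} the families $(X_{i,\theta})_{\theta\in I}$, $i\in\{1,2\}$, and hence $(X_\theta)_{\theta\in I}$ are complex interpolation scales, and since $X_{i,\alpha}$ is a Hilbert space while $X_{i,\beta}$ has non-trivial type, every intermediate space $X_{i,\theta}$ has non-trivial type (indeed is $K$-convex), which is what is needed to apply the interpolation theory for $\Rsec$-bounded families and the $H^\infty$-calculus from \cite[Section 6]{KKW} and \cite[Proposition 8.4.4]{Analysis2}. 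At the endpoint $\theta=\alpha$: since $-\A_\alpha$ generates a $C_0$-contraction semigroup on the Hilbert space $X_\alpha$, it has a bounded $H^\infty$-calculus of angle $\omega(\A_\alpha)\le\pi/2$ (cf.\ \cite[Theorem 10.2.24]{Analysis2}, as used in Proposition~\ref{prop:hilbert_space_positivity}). At the endpoint $\theta=\beta$: $\A_\beta$ is $\Rsec$-sectorial, $\diag_\beta$ has a bounded $H^\infty$-calculus, and \eqref{eq:estimate_A_A_0_theta_alpha_beta} holds there, so in order to invoke Theorem~\ref{t:calculus_bounded_no_smallness_condition} one has to verify Assumptions~\ref{ass:fractions_pm}$(\pm)$ for $\A_\beta$; this is where I expect the real work to sit (see below).

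The core argument is then the following interpolation step. Knowing that $\A_\alpha$ and $\A_\beta$ have a bounded $H^\infty$-calculus and that $(\A_\theta)_{\theta\in I}$ is a consistent family (so that the resolvents, and hence the operators $f(\A_\theta)$ for $f\in H^\infty_0$, are consistent in the sense of Subsection~\ref{subsec:M} and Lemma~\ref{l:consistency_A_theta}), the interpolation principle for the $H^\infty$-calculus along complex scales — \cite[Theorem 6.1 and Corollary 6.3]{KKW} — yields that $\A_\theta$ has a bounded $H^\infty$-calculus for every $\theta\in(\alpha,\beta)$, with the convex-combination bound
\begin{equation*}
\omega_{H^\infty}(\A_\theta)\le \Big(\tfrac{\beta-\theta}{\beta-\alpha}\Big)\omega_{H^\infty}(\A_\alpha)+\Big(\tfrac{\theta-\alpha}{\beta-\alpha}\Big)\omega_{H^\infty}(\A_\beta)
\le \Big(\tfrac{\beta-\theta}{\beta-\alpha}\Big)\omega(\A_\alpha)+\Big(\tfrac{\theta-\alpha}{\beta-\alpha}\Big)\angR(\A_\beta),
\end{equation*}
where in the last inequality I used $\omega_{H^\infty}(\A_\alpha)=\omega(\A_\alpha)$ (Hilbert endpoint) and $\omega_{H^\infty}(\A_\beta)=\angR(\A_\beta)$ (the angle identity furnished by Theorem~\ref{t:calculus_bounded_no_smallness_condition}). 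This proves \eqref{it:extrapolation_H_A_theta}. For the consistency hypothesis of Lemma~\ref{l:consistency_A_theta} I would note it is already assumed in the statement; \eqref{eq:estimate_A_A_0_theta_alpha_beta} at the two endpoints combined with Sneiberg's lemma (as in Remark~\ref{r:extrapolation_equivalence_norm_A_0_A}) propagates the comparison $\|\diag_\theta x\|\lesssim\|\A_\theta x\|$ to all $\theta$ in an interval, which will be needed for part~(2).

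For part~\eqref{it:description_negative_fractional_powers} the idea is to transfer the Hilbert-space statement of Proposition~\ref{prop:fractions_Hilbert} along the scale. At $\theta=\alpha$ both $-\diag_\alpha$ (which has a bounded $H^\infty$-calculus of angle $\le\pi/2$ on a Hilbert space, hence generates a contraction semigroup up to equivalent norm) and $-\A_\alpha$ generate $C_0$-contraction semigroups, and \eqref{eq:estimate_A_A_0_theta_alpha_beta} holds at $\alpha$, so Proposition~\ref{prop:fractions_Hilbert} gives $\Dd(\A_\alpha^{-\beta'})=\Dd(\diag_\alpha^{-\beta'})$ for $\beta'\in(-\tfrac12,0]$. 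On the other hand, by Proposition~\ref{prop:fractional_powers_positive} and the consistency of the scales, $\Dd(\A_\theta^{s})$ and $\Dd(\diag_\theta^{s})$ interpolate as complex scales in $\theta$ jointly with the spatial scale $X_\theta$; combining the equality at $\theta=\alpha$ with the positive-order equality $\Dd(\A_\theta^s)=\Dd(\diag_\theta^s)$, $s\in(0,1)$, valid at all $\theta$ (Proposition~\ref{prop:fractional_powers_positive}, once the $H^\infty$-calculus is known from part~(1)), and reparametrising, one obtains $\Dd(\A_\theta^{-\delta})=\Dd(\diag_\theta^{-\delta})$ with the homogeneous norm equivalence for $\delta<\tfrac12\cdot\tfrac{\beta-\theta}{\beta-\alpha}$, the admissible range shrinking linearly because the Hilbert (hence "negative-power-friendly") endpoint is only at $\alpha$ and the fraction $\tfrac12$ there scales like the distance $\tfrac{\beta-\theta}{\beta-\alpha}$ to $\beta$; the range statement $\Ran(\A_\theta^\delta)=\Ran(\diag_\theta^\delta)$ with $\|\A_\theta^{-\delta}x\|\eqsim\|\diag_\theta^{-\delta}x\|$ is the equivalent reformulation via \eqref{eq:fractional_powers_positive_intersection_X}.

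\textbf{Main obstacle.} The delicate point is not the interpolation machinery — that is essentially \cite{KKW} once the scale hypotheses are in place — but verifying that Theorem~\ref{t:calculus_bounded_no_smallness_condition} is actually applicable at $\theta=\beta$, i.e.\ that $\A_\beta$ satisfies both Assumptions~\ref{ass:fractions_pm}$(+)$ and $(-)$. In the generality stated these order-compatibility conditions on $B_\beta,C_\beta$ relative to $A_\beta,D_\beta$ are an extra input; I would expect the honest proof either to assume them (implicitly through the hypotheses, or through $0\in\rho(\A_\beta)$ making the range conditions vacuous and the estimates follow from $\Rsec$-sectoriality plus \eqref{eq:estimate_A_A_0_theta_alpha_beta}), or to bypass Theorem~\ref{t:calculus_bounded_no_smallness_condition} at $\beta$ and instead interpolate directly between the Hilbert-space bounded calculus at $\alpha$ and the $\Rsec$-sectoriality at $\beta$ using the refined result \cite[Theorem 6.1]{KKW} that upgrades $\Rsec$-sectoriality to a bounded calculus on an interpolation scale emanating from a Hilbert endpoint. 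The second route is cleaner and is, I believe, the intended one; the bookkeeping of angles and of the shrinking $\delta$-range is then the only remaining routine-but-careful part.
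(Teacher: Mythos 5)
Your proposal runs on the wrong axis: you try to interpolate the $H^\infty$-calculus of the family $(\A_\theta)$ between its two endpoints $\alpha$ and $\beta$, which forces you to produce a bounded $H^\infty$-calculus for $\A_\beta$ first. You correctly notice this is not a hypothesis and that invoking Theorem~\ref{t:calculus_bounded_no_smallness_condition} at $\beta$ would need Assumptions~\ref{ass:fractions_pm}$(\pm)$, which are not available — so your first route has a genuine gap. Your ``second route'' then pivots to the right idea (do not require a calculus at $\beta$, only $\Rsec$-sectoriality), but you attribute it to \cite[Theorem 6.1]{KKW} and the Rademacher interpolation functor, and you do not say what is actually being compared to what; in particular you still frame it as interpolating the $\A$-calculus between $\alpha$ and $\beta$.

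The paper's proof compares along a \emph{different} pair: it transfers the $H^\infty$-calculus of $\diag_\theta$ (which \emph{is} available at every $\theta$ by consistency and \cite[Proposition 4.9]{KKW}) to $\A_\theta$ using the comparison theorem \cite[Theorem 1]{Stokes}. That theorem asks for bounds on the mixed quantities $\varphi(s2^{j+\ell}\A_\theta)\psi(t2^j\diag_\theta)$ with $\varphi(z)=\psi(z)=z(1+z)^{-2}$: at $\theta=\alpha$ (Hilbert space) one gets decay $\lesssim 2^{-\gamma|\ell|}$ for any $\gamma<\tfrac12$ from Propositions~\ref{prop:fractional_powers_positive}, \ref{prop:fractions_Hilbert} and \cite[Proposition 3]{Stokes}; at $\theta=\beta$ one gets a uniform $\Rsec$-bound from $\Rsec$-sectoriality via \cite[Lemma 3.3]{KKW}. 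Interpolating (using consistency and $K$-convexity) gives $\Rsec$-bounds decaying like $2^{-\gamma(1-\theta)|\ell|}$, which is exactly the input for \cite[Theorem 1]{Stokes}. Two things fall out of this at once: the bounded $H^\infty$-calculus of $\A_\theta$ (your part~(1), with the angle bound coming from \cite[Corollary 3.9]{KKW}), and the negative-fractional-power comparison $\Ran(\A_\theta^\delta)=\Ran(\diag_\theta^\delta)$ with equivalent homogeneous norms for $\delta<\gamma(1-\theta)$, hence for $\delta<\tfrac12\cdot\tfrac{\beta-\theta}{\beta-\alpha}$ after letting $\gamma\uparrow\tfrac12$ (your part~(2)). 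Your treatment of part~(2) is the second concrete gap: you gesture at ``transporting Proposition~\ref{prop:fractions_Hilbert} along the scale'' and ``joint interpolation of $\Dd(\A_\theta^s)$ and $\Dd(\diag_\theta^s)$,'' but you never give the mechanism; in the paper the shrinking admissible range for $\delta$ is not an afterthought — it \emph{is} the decay rate in the interpolated bound, not something to be recovered later by bookkeeping.
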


A condition for a block operator matrix  to generate a $C_0$-semigroup of contractions has been discussed in Subsection \ref{ss:H_calculus_Hilbert_space}. 
Conditions for the consistency of the family $(\A_{\theta})_{\theta \in I }$ have been given in Lemma \ref{l:consistency_A_theta}. 
Note that Theorem \ref{t:extrapolation_H}\eqref{it:description_negative_fractional_powers} complements Propositions \ref{prop:fractional_powers_positive} and \ref{prop:fractions_Hilbert} since it also holds in a non-Hilbertian setting. 


\begin{corollary}[Extrapolation of $H^{\infty}$-calculus for small $\theta$]
\label{cor:extrapolation}
Let Assumption \ref{ass:consistent_family} be satisfied with $L_{\theta}=0$. 
 Suppose that $(\A_{\theta})_{\theta\in I}$ is a consistent family of sectorial operators and that \eqref{eq:estimate_A_A_0_theta_alpha_beta} holds. 
 Let the following be satisfied for some $\thesp\in I$ and $\psi\in (0,\frac{\pi}{2}]$: 
\begin{enumerate}[{\rm(a)}]
\item $\diag_{\theta}$ has a bounded $H^{\infty}$-calculus of angle $< \psi$ for all $\theta \in I$;
\item $X_{i,\gamma}$ is a Hilbert space for $i\in \{1,2\}$;
\item $-\A_{\thesp}$ generates a $C_0$-contraction semigroup; 
\item $\A_{\thesp}$ is sectorial of angle $<\psi$.
\end{enumerate}
Then there exists $\varepsilon>0$ such that, for all $|\theta-\thesp|< \varepsilon$, the operator $\A_{\theta}$  has a bounded $H^{\infty}$-calculus of angle $\angH(\A_{\theta})\leq \psi$. 
\end{corollary}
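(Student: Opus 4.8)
The plan is to obtain the statement by first extrapolating $\Rsec$-sectoriality around $\thesp$ with Theorem~\ref{t:extrapolation_R}, and then feeding the outcome into the $H^{\infty}$-extrapolation Theorem~\ref{t:extrapolation_H} on a short sub-interval of $I$ having $\thesp$ as one endpoint. The only place where the Hilbert hypothesis enters is at the point $\thesp$ itself, so one should read $\thesp$ as the point at which $X_{1,\thesp},X_{2,\thesp}$ are Hilbert (in the stated hypotheses this is $\thesp=\alpha$). As a first observation, since on a Hilbert space $\Rsec$-boundedness coincides with uniform boundedness, hypothesis (c) already gives that $\A_{\thesp}$ is $\Rsec$-sectorial on $X_{\thesp}=X_{1,\thesp}\times X_{2,\thesp}$ of angle $\angR(\A_{\thesp})=\om(\A_{\thesp})<\psi$.

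Next I would apply Theorem~\ref{t:extrapolation_R} with base point $\thesp$ (taking $\thesp$ interior; the boundary case is handled by a one-sided version of the same argument). Its hypothesis (a) holds because, by (a), each $\diag_{\theta}$ has a bounded $H^{\infty}$-calculus of angle $<\psi$ and hence is $\Rsec$-sectorial of angle $<\psi$; its hypothesis (b) is the previous paragraph; and its hypothesis (c), namely $\overline{\Ran(\A_{\theta})}=X_{\theta}$, is automatic since each $\A_{\theta}$ is sectorial, hence has dense range by definition. Theorem~\ref{t:extrapolation_R} then yields $\varepsilon_{1}>0$ such that $\A_{\theta}$ is $\Rsec$-sectorial of angle $\le\psi$ for every $\theta\in I$ with $|\theta-\thesp|<\varepsilon_{1}$.

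Finally I would fix $\theta\in I$ with $|\theta-\thesp|<\varepsilon_{1}$; the case $\theta=\thesp$ being already settled, assume $\theta\neq\thesp$, say $\thesp<\theta$ (if $\theta<\thesp$ one interchanges the two endpoints below), and choose $\theta'\in I$ with $\thesp<\theta<\theta'$ and $|\theta'-\thesp|<\varepsilon_{1}$. Restricting Assumption~\ref{ass:consistent_family} and \eqref{eq:estimate_A_A_0_theta_alpha_beta} to the sub-interval $J=[\thesp,\theta']$ --- the latter now being needed only at the endpoints $\thesp,\theta'$, which follows from Lemma~\ref{l:one_side_A_0_A_estimate} together with the Sneiberg lemma exactly as in Remark~\ref{r:extrapolation_equivalence_norm_A_0_A} --- all hypotheses of Theorem~\ref{t:extrapolation_H} on $J$ are met: $X_{i,\thesp}$ are Hilbert, $\diag_{\thesp}$ and $\diag_{\theta'}$ have bounded $H^{\infty}$-calculus by (a), $-\A_{\thesp}$ generates a contraction semigroup by (b), and $\A_{\theta'}$ is $\Rsec$-sectorial by the preceding step. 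Theorem~\ref{t:extrapolation_H}\eqref{it:extrapolation_H_A_theta} then gives that $\A_{\theta}$ has a bounded $H^{\infty}$-calculus with
\begin{equation*}
\angH(\A_{\theta})\le\Big(\tfrac{\theta'-\theta}{\theta'-\thesp}\Big)\om(\A_{\thesp})+\Big(\tfrac{\theta-\thesp}{\theta'-\thesp}\Big)\angR(\A_{\theta'})\le\max\{\om(\A_{\thesp}),\angR(\A_{\theta'})\}\le\psi,
\end{equation*}
and $\varepsilon=\varepsilon_{1}$ works.

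The main obstacle is not a computation but the structural bookkeeping around the Hilbert point. The argument only gets off the ground because $X_{1,\thesp},X_{2,\thesp}$ are Hilbert (this is precisely what forces the choice of $\thesp$, and without it one cannot convert (b)--(c) into $\Rsec$-sectoriality of the right angle), and in the last step one must make sure that the hypotheses of Theorem~\ref{t:extrapolation_H} --- in particular the norm equivalence \eqref{eq:estimate_A_A_0_theta_alpha_beta}, and with it the closedness of $\A_{\cdot}$ and the density of $\Ran(\A_{\cdot})$ --- genuinely survive the passage to the small sub-interval $J$. This Sneiberg-type stability is also the reason why the conclusion is only local, i.e.\ for $\theta$ in a neighbourhood of $\thesp$.
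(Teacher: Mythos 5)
Your proof is correct and follows essentially the same two-step route as the paper: first use Theorem~\ref{t:extrapolation_R} to extrapolate $\Rsec$-sectoriality from the Hilbert point $\thesp$, then feed that into Theorem~\ref{t:extrapolation_H} on a short sub-interval having $\thesp$ as an endpoint. The only cosmetic difference is that the paper applies Theorem~\ref{t:extrapolation_H} once to the fixed pair of intervals $[\gamma,\gamma+\tfrac{\varepsilon}{2}]$ and its mirror $[\gamma-\tfrac{\varepsilon}{2},\gamma]$ (reversed), whereas you pick a $\theta$-dependent interval $[\thesp,\theta']$; both deliver the same angle bound. You are in fact a bit more careful than the paper on two points that the paper's terse proof passes over: (i) you note explicitly that the statement is only usable when $X_{\thesp}$ is Hilbert (forcing $\thesp=\alpha$ under the stated hypotheses), which is what turns (b)–(c) into bounded $H^{\infty}$-calculus and $\Rsec$-sectoriality of $\A_{\thesp}$; and (ii) you explain how the estimate \eqref{eq:estimate_A_A_0_theta_alpha_beta} at the interior endpoint $\theta'$ is recovered by the Sneiberg argument of Remark~\ref{r:extrapolation_equivalence_norm_A_0_A}. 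A very minor slip: your closing claim ``$\varepsilon=\varepsilon_1$ works'' is slightly off — the final $\varepsilon$ must also be small enough that $\theta'$ stays in the range on which \eqref{eq:estimate_A_A_0_theta_alpha_beta} persists, so one should take the minimum of $\varepsilon_1$ and the radius coming from Remark~\ref{r:extrapolation_equivalence_norm_A_0_A}; this does not affect the conclusion.
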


\begin{proof}
By Theorem \ref{t:extrapolation_R}, $\A_{\theta}$ is $\Rsec$-sectorial for all $|\thesp-\theta|<\varepsilon$ for some $\varepsilon>0$. Set $\gamma_{\pm}\stackrel{{\rm def}}{=}\gamma\pm \frac{\varepsilon}{2}$. The claim follows by applying Theorem \ref{t:extrapolation_H} to the families $(\A_{\theta})_{\theta\in [\gamma,\gamma_{+})}$ and $(\A_{-\theta})_{\theta\in [-\gamma,-\gamma_{-})}$.
\end{proof}

\subsection{Proof of Theorems \ref{t:extrapolation_R} and \ref{t:extrapolation_H}}
\label{ss:proofs_extrapolation}

An important ingredient for our proofs here is Sneiberg's lemma, cf. \cite{S74} and  \cite[Theorem 2.3 and Theorem 3.6]{TV88}. It has been used already in the context of $L^p$-theory to extrapolate $\Rsec$-sectoriality, see \cite{Kunstmann2005}. 
\begin{proof}[Proof of Theorem \ref{t:extrapolation_R}]
Let us begin by noticing that, up to replacing $(\A_{\theta})_{\theta\in I}$ by $(\widehat{A}_{\eta})_{\eta\in [0,1]}$ with $\widehat{\A}_{\eta}= \A_{\alpha+\eta(\beta-\alpha)}$, we may assume $I=[0,1]$. Then, one has for $M_{j,\theta}$  defined in \eqref{eq:def_M_1_M_2_theta} that for all $\lambda\in \complement\overline{\Sigma_{\psi}}$, $\theta_1,\theta_2\in (0,1)$ and $j\in \{1,2\}$, 
\begin{equation}
\label{eq:M_j_equality_consistency}
M_{j,\theta_1}(\lambda) x = M_{j,\theta_2}(\lambda) x \quad \text{ for all }  \ x\in X_{j,\theta_1}\cap X_{j,\theta_2}.
\end{equation}
To see this, recall that, by Lemma \ref{l:consistency_A_theta}, \eqref{eq:M_j_equality_consistency} holds for all $\lambda<0$. Thus,  by the holomorphicity of the maps $\complement\overline{\Sigma_{\psi}}\ni \lambda\mapsto  M_{j,\theta_i}(\lambda) x\in X_{j,\theta_1}+X_{j,\theta_2}$ for all $x\in X_{j,\theta_1}\cap X_{j,\theta_2}$ and $i\in\{1,2\}$, \eqref{eq:M_j_equality_consistency} holds even for all  $\lambda\in \complement\overline{\Sigma_{\psi}}$.
 
To prove $\Rsec$-sectoriality by Theorem \ref{t:rsec_necessary_sufficient_condition_II} it is enough to show the existence of $\varepsilon\in (0,1)$
such that for $j\in \{1,2\}$
\begin{equation}
\label{eq:R_sectoriality_extrapolation}
\begin{aligned}
\text{for all $|\theta-\thesp|<\varepsilon$, }
\text{$M_{j,\theta}(\lambda)$ is invertible and }\Rsec\big(M_{j,\theta}(\lambda)^{-1}: \lambda\in \complement\overline{\Sigma_{\psi}}\big)<\infty.
\end{aligned}
\end{equation}

By assumption, the above statement holds for $\theta=\thesp$. For $\theta\neq \thesp$, we use Sneiberg's lemma, see \cite{S74}, and here we will employ its quantitative version given in \cite[Theorem 2.3 and Theorem 3.6]{TV88} (see also \cite[Subsection 1.3.5]{EgPhDthesis}). 
To this end, fix $N\geq 1$, and for a Banach space $Y$ and a Rademacher sequence $(\varepsilon_j)_{j\geq 1}$ we denote by $
\varepsilon_N(Y)$ the space $Y^N$ endowed with the norm
\begin{equation}
\label{eq:norm_varepsilon_N}
\|(x_j)_{j=1}^N\|_{\varepsilon_N(Y)}\stackrel{{\rm def}}{=}\E\Big\|\sum_{j=1}^N\varepsilon_j x_j\Big\|_{Y}.
\end{equation}
By \cite[Theorem 7.4.16]{Analysis2}, $\big(\varepsilon_N(X_{\theta})\big)_{\theta\in [0,1]}$ is a complex scale, i.e.
\begin{equation}
\label{eq:identification_complex_interpolation_varepsilon_spaces}
[\varepsilon_N(X_{\theta_1}),\varepsilon_N(X_{\theta_2})]_{\delta}
=\varepsilon_N(X_{\theta_1(1-\delta)+\delta\theta_2}) 
\text{ for all }\theta_1,\theta_2\in [0,1],  \delta\in (0,1).
\end{equation}
By $K$-convexity of $X_{0}$ and $ X_{1}$, and the fact that $X^{(\theta)}=[X_0,X_1]_{\theta}$,  
the constants in the above identification are independent of $N\geq 1$. 

Next fix $(\lambda_k)_{k=1}^N\subseteq \complement\overline{\Sigma_{\psi}}$ and $j\in\{1,2\}$. Let
$$
\mathcal{T}_j: \varepsilon_N(X_{j,0})+\varepsilon_N(X_{j,1})\to
\varepsilon_N(X_{j,0})+\varepsilon_N(X_{j,1})
$$
be given by \begin{equation}
\label{eq:M_j_mathcal_T_relation}
\mathcal{T}_j \x = \big( M_{j,0} (\lambda_k) x_{k,0}+ M_{j,1} (\lambda_k) x_{k,1} \big)_{k=1}^N 
\end{equation}
where $\x=(x_k)_{k=1}^N$, $x_k=x_{k,0}+x_{k,1}$ and $x_{k,0}\in X_{j,0}$, $x_{k,1}\in X_{j,1}$. By \eqref{eq:M_j_equality_consistency}, the right hand side in \eqref{eq:M_j_mathcal_T_relation} does not depend on the decomposition $x_k=x_{k,0}+x_{k,1}$ and therefore $\mathcal{T}_j$ is well defined.

Since $\mathcal{T}_j \x= \big( M_j^{(i)} (\lambda_k) x_k^{(i)} \big)_{k=1}^N$ for all $\x=(x_k)_{k=1}^N\in \varepsilon_N(X_j^{(i)})$ and $\diag$ is $\Rsec$-sectorial of angle $\leq \psi$,
\begin{equation}
\label{eq:bound_pm_extrapolation}
\|\mathcal{T}_j\|_{\calL(\varepsilon_N(X_{j,i}))} \leq 1+ c_A c_D \non_{\psi}^{\Rsec}(A_i)\non_{\psi}^{\Rsec}(D_i),
\quad \text{ for all }i\in \{0,1\},
\end{equation}
where we also used Assumption \ref{ass:consistent_family} with $L_{\theta}=0$.

By complex interpolation and \eqref{eq:identification_complex_interpolation_varepsilon_spaces}, there exists $C>0$ independent of $N\geq 1$, the choice of $(\lambda_k)_{k=1}^N\subseteq \complement\overline{\Sigma_{\psi}}$ and $\theta\in (0,1)$ such that
\begin{equation}
\label{eq:norm_independent_of_N_lambda}
\Big\|\mathcal{T}_j|_{\varepsilon_N(X_{j,\theta})}\Big\|_{ \calL(\varepsilon(X_{j,\theta})}\leq C.
\end{equation}
By \eqref{eq:M_j_equality_consistency} and the density of the embedding $X_{j,0}\cap X_{j,1}\embed [X_{j,0},X_{j,1}]_{\theta}$,
\begin{equation}
\label{eq:T_j_M_theta}
\mathcal{T}_j|_{\varepsilon(X_{j,\theta})} \x = \big( M_j^{(\theta)}(\lambda_k)x_k)_{k=1}^N \text{ for all }\x=(x_k)\in \varepsilon_N(X_{j,\theta})\text{ and }\theta\in (0,1).
\end{equation}
Combining \eqref{eq:T_j_M_theta}, Theorem \ref{t:rsec_necessary_sufficient_condition_II} and the $\Rsec$-sectoriality of $\A_{\thesp}$, one can check that $\mathcal{T}_j|_{\varepsilon(X_{j,\thesp})}$ is invertible and the norm of its inverse is independent of $N\geq 1$ and the choice of $(\lambda_k)_{k=1}^N\subseteq \complement\overline{\Sigma_{\psi}}$.  Thus, by Sneiberg's lemma and \eqref{eq:norm_independent_of_N_lambda}, there exist $\varepsilon \in (0,1)$, $C'>0$ independent  of $N\geq 1$ and the choice of $(\lambda_k)_{k=1}^N\subseteq \complement\overline{\Sigma_{\psi}}$ such that $\mathcal{T}_j|_{\varepsilon_N(X_{j,\theta})}$ is invertible for all $|\theta-\thesp|<\varepsilon$ and the norm of its inverse is $\leq C'$. 

By \eqref{eq:T_j_M_theta}, the arbitrariness of $N\geq 1$ and $(\lambda_k)_{k=1}^N$, one can check that $M_j(\lambda)$ is invertible for all $\lambda\in \complement\overline{\Sigma_{\psi}}$ and \eqref{eq:R_sectoriality_extrapolation} holds. 
\end{proof}


\begin{proof}[Proof of Theorem \ref{t:extrapolation_H}]
As in the proof of Theorem \ref{t:extrapolation_R}, we may assume $I=[0,1]$.

\eqref{it:extrapolation_H_A_theta}: Fix $\theta\in (0,1)$. It is enough to show that $\A_{\theta}$ has a bounded $H^{\infty}$-calculus. The bound on the angle follows from \cite[Corollary 3.9]{KKW} and the fact that $\angR(\A_{0})=\omega(\diag)$ since $X_0$ is a  Hilbert space.
The idea is to apply \cite[Theorem 1]{Stokes} to $\diag_{\theta}$ and $A_{\theta}$ extending the argument in \cite[Corollary 7]{Stokes}. Note that, by the consistency of $(\diag_{\theta})_{\theta\in I}$ and the fact that $\diag_k$ has a bounded $H^{\infty}$-calculus for $k\in \{0,1\}$, it follows that $\diag_{\theta}$ has a bounded $H^{\infty}$-calculus as well (cf.\ \cite[Proposition 4.9]{KKW}).

Let $\varphi(z)=\psi(z)=z(1+z)^{-2}$ and fix $\gamma\in (0,\frac{1}{2})$. 
By Propositions \ref{prop:fractional_powers_positive}, \ref{prop:fractions_Hilbert} and \cite[Proposition 3]{Stokes}, for all $\ell\in \Z$,
\begin{equation}
\label{eq:bound_1_R}
\begin{aligned}
\sup_{s,t\in [1,2]}\sup_{j\in \Z} 
\Big\|\varphi(s2^{j+\ell} \A_{0}) \psi(t 2^j \diag_0)\Big\|_{X_{0}} 
&\lesssim 2^{- \gamma |\ell| },\\
\sup_{s,t\in [1,2]}\sup_{j\in \Z} \Big\| \psi(t 2^j \diag_0) \varphi(s2^{j+\ell} \A_{0})
\Big\|_{X_{0}}
&\lesssim 2^{- \gamma |\ell| } ,
\end{aligned}
\end{equation}
where we also used that $X_0$ is a  Hilbert space. 

By \cite[Lemma 3.3]{KKW} and the $\Rsec$-sectoriality of $\diag_{1}$ and $\A_{1}$ we get
\begin{equation}
\begin{aligned}
\label{eq:bound_2_R}
\sup_{\ell\in \Z}\sup_{s,t\in [1,2]}
\Rsec\Big(\varphi(s2^{j+\ell} \A_{1}) \psi(t 2^j \diag_{1})\colon j\in \Z\Big)<\infty,\\
\sup_{\ell\in \Z}\sup_{s,t\in [1,2]}
\Rsec\Big( \psi(t 2^j \diag_1) \varphi(s2^{j+\ell} \A_{1})\colon j\in \Z\Big)<\infty.
\end{aligned}
\end{equation}
Reasoning as in the proof of Theorem \ref{t:extrapolation_R}, by consistency of $(\A_{\theta})_{\theta\in I}$ and the Dunford representation of $\phi(\xi A),\varphi(\eta B)$ for $\eta,\xi>0$ (see \eqref{eq:dunford_H_calculus_definition}), one can check that the operators $\varphi(s2^{j+\ell} \A_{k}) \psi(t 2^j \diag_{k})$ and $\varphi(s2^{j+\ell} \A_{k}) \psi(t 2^j \diag_{k})$ for $k\in \{0,1\}$ coincide on $X_{0}\cap X_1$, and therefore we can interpolate the bounds \eqref{eq:bound_1_R}-\eqref{eq:bound_2_R} obtaining
\begin{align}
\label{eq:bound_3_R_1}
\sup_{s,t\in [1,2]}\Rsec\Big(\varphi(s2^{j+\ell} \A_{\theta}) \psi(t 2^j \diag_{\theta})\colon j\in \Z\Big)
&\lesssim 2^{-\gamma(1-\theta)|\ell|},\\
\label{eq:bound_3_R_2}
\sup_{s,t\in [1,2]}\Rsec\Big(\psi(t 2^j \diag_{\theta})\varphi(s2^{j+\ell} \A_{\theta})\colon j\in \Z\Big)
&\lesssim  2^{-\gamma(1-\theta)|\ell|},
\end{align}
where we have used \cite[Proposition 8.4.4]{Analysis2} and the fact that $X_0,X_1$ have non-trivial type and therefore are $K$-convex due to Pisier's theorem (see e.g.\ \cite[Theorem 7.4.23]{Analysis2}). 
By $K$-convexity of $X_k$ for $k\in \{0,1\}$, \eqref{eq:bound_3_R_2} and \cite[Proposition 8.4.1]{Analysis2} we also get
\begin{equation}
\label{eq:bound_3_R_3}
\Rsec\Big(\big(\varphi(s2^{j+\ell} \A_{\theta})\big)^* \big(\psi(t 2^j \diag_{\theta})\big)^*\colon j\in \Z\Big)
\lesssim  2^{-\gamma(1-\theta)|\ell|}.
\end{equation}
Recall that $\diag_{\theta}$ has a bounded $H^{\infty}$-calculus. By \eqref{eq:bound_3_R_1} and \eqref{eq:bound_3_R_3}, $\A_{\theta}$ has a bounded $H^{\infty}$-calculus  due to \cite[Theorem 1]{Stokes}. 

\eqref{it:description_negative_fractional_powers}: \cite[Theorem 1]{Stokes} and \eqref{eq:bound_3_R_1}, \eqref{eq:bound_3_R_3} also yield, for all $\delta\in (0,\gamma(1-\theta))$,
$$
\Ran(\A_{\theta}^{\delta})=
\Ran(\diag_{\theta}^{\delta})\quad \text{ and }\quad 
\|\A_{\theta}^{-\delta}x\|_{X_{\theta}}\eqsim  \|\diag_{\theta}^{-\delta}x \|_{X_{\theta}}\quad \text{ for all }x\in \Ran(\diag_{\theta}^{\delta}).
$$
The conclusion follows by letting $\gamma\uparrow \frac{1}{2}$ and recalling that $\alpha=0,\beta=1$.
\end{proof}

\section{Applications}\label{sec:applications}  
The analysis of quasi- or semi-linear problems in maximal $L^p_t$-regularity spaces 
typically comes in two steps:
First a linearization is considered for which one has to prove maximal $L^p_t$-regularity. Then, as a second step, Lipschitz estimates on the non-linearities are needed in certain interpolation spaces to apply Banach's fixed point theorem, see e.g. \cite{ AV19_QSEE_1, AV19_QSEE_2, PruSim04, PrussWeight1, PrussWeight2, pruss2016moving}. In this section we focus on properties of the linearized problems which are relevant to characterize the relevant interpolation spaces, and which are also helpful for the stability analysis for the non-linear problem.  

We begin by deriving some consequences of our results for triangular matrices. These results are used in Subsections~\ref{subsec:simplified_LCD}-\ref{subsec:2ndOrder}.

\subsection{The block triangular case}
\label{s:special_cases_dominant}
In this subsection we consider a block triangular diagonally dominant operator matrix
 $\A$, this means that $C=0$,
then the statements in Sections \ref{sec:sec_Rsec} and \ref{sec:Hinfty} simplify considerably. Using classical results for bounded perturbations, one can include the case   
 $C\in \calL(X_1,X_2)$ as well.
\begin{corollary}\label{cor:tridiagonal}
	Let 	
	\begin{align*}
	\A
	=\begin{bmatrix}
	A & B  \\
	0 & D 
	\end{bmatrix} ,	 \ \  \hbox{with} \ \  B\colon \Do(D)\subseteq X_2\rightarrow X_1,\  \hbox{ and } \
	\norm{By}_{X_1}\lesssim\norm{D y}_{X_2} \ \  y\in \Do(D). 	
	\end{align*}
	\begin{enumerate}[{\rm(1)}]
		\item\label{it:tridiagonal_sectorial} If $A$ and $D$ are sectorial with angles $\omega(A)$ and  $\omega(D)$, respectively,  
		then $\A$ is sectorial with angle $\omega(\A)\leq \omega(A)\vee \omega(D)$.
		\item\label{it:tridiagonal_R_sectorial} If $A$ and $D$ are $\Rsec$-sectorial with angles $\angR(A)$ and $\angR(D)$, respectively,  
		then $\A$ is $\Rsec$-sectorial with $\Rsec$-sectoriality angle $\angR(\A)\leq \angR(A)\vee \angR(D)$.
		\item\label{it:tridiagonal_H_calculus} 
		If $A$ and $D$ have a bounded $H^{\infty}$-calculus of angle $\angH(A)$ and $\angH(D)$, respectively, 
		and there exist $\delta>0$, such that 
		\begin{align*}
	\Ran(B)&\subseteq \Ran(A^{\delta})  \text{ and }	\| A^{-\delta}B y\|_{X_1}\lesssim \|D^{1-\delta} y\|_{X_2} \quad \hbox{for all }x\in \Do(D), \quad \hbox{or} \\
		B(\Do(D^{1+\delta}))&\subseteq \Do(A^{\delta}) \text{ and }
	\|A^{\delta}B y\|_{X_1}\lesssim \| D^{1+\delta} y\|_{X_2} \quad \hbox{for all } y\in \Do(D^{1+\delta}).
		\end{align*}
		Then $\A$ has a bounded $H^{\infty}$-calculus on $X$ of angle $\leq \angH(A)\vee\angH(D)$.
	\end{enumerate}
\end{corollary}	
\begin{proof}
	In the situation of Corollary~\ref{cor:tridiagonal} Assumption~\ref{ass:standing} holds with $L=0$. 
	Statements \eqref{it:tridiagonal_sectorial} and \eqref{it:tridiagonal_R_sectorial} follow from Proposition~\ref{prop:pert_block_operators}, and statement \eqref{it:tridiagonal_H_calculus} follows from Theorem~\ref{t:calculus_pert_block_operators_notype}, where one has in each case $\varepsilon=0$.
\end{proof}


The diagonally dominant case with $A=0$ has also a particular structure, that is, 
\begin{align*}
\A
=\begin{bmatrix}
0 & B  \\
C & D 
\end{bmatrix},	
\end{align*}
 where by a bounded perturbation argument one can include also $A\in \calL(X_1)$. Applications for this case are the artificial Stokes system in Subsection~\ref{subsec:art_Stokes} and second order Cauchy problems with strong damping in Subsection~\ref{subsec:2ndOrder}.
 
\begin{corollary}\label{prop:Azero}
Let $A$ be bounded,  $D$ be a sectorial operator, and $\A$
	satisfy Assumption~\ref{ass:standing}. Then the following hold.
	\begin{enumerate}[{\rm(1)}]
		\item\label{it:Azero_sectorial} For all $\psi\in (\om(D),\pi)$, there exists $\nu\geq 0$ such that $\nu+\A$ is sectorial with angle $\om(\nu+\A)\leq \psi$.
		\item\label{it:Azero_Rsectorial_H_infty} If $D$ is $\Rsec$-sectorial (resp.\ has a bounded $H^{\infty}$-calculus), then \eqref{it:Azero_sectorial} holds with $\om(\nu+\A)$ replaced by $\angR(\nu+\A)$ (resp.\ $\angH(\nu+\A)$).
	\end{enumerate}
\end{corollary}

\begin{remark}\label{rem:Azero}
	The condition $A\in \calL(X_1)$ together with $\A$ diagonally dominant implies already that $C\colon \Do(A)=X_1\rightarrow X_2$ is bounded, while  $B$ can be unbounded. Note that, comparing Corollary~\ref{prop:Azero}\eqref{it:Azero_Rsectorial_H_infty} with $\om$ replaced by $\angH$ and the results in Section \ref{sec:Hinfty},  then one observes that for $\A'$ below  Assumptions~\ref{ass:fractions_pm}$(+)$ trivially holds while $\A$ can violate both Assumptions~\ref{ass:fractions_pm}$(\pm)$, compare Remark~\ref{rem:McIntosh_Yagi}.
\end{remark}

\begin{proof}
	By assumption $A\in \calL(X_1)$, and by Remark~\ref{rem:Azero} we infer $C\in \calL(X_1,X_2)$.
	Considering
		\begin{align*}
		\A'\stackrel{{\rm def}}{=}\begin{bmatrix}
				0 & B  \\
				0 & D 
			\end{bmatrix},	
		\end{align*}
by Remark~\ref{rem:Azero} one can apply Corollary~\ref{cor:tridiagonal}. The statement follows since $\A$ is a bounded perturbation of $\A'$. 
\end{proof}

\subsection{Simplified Ericksen--Leslie model for nematic liquid crystals}\label{subsec:simplified_LCD}
\label{ss:LCD}
The continuum theory of liquid crystals was developed by Ericksen~\cite{Ericksen} and Leslie~\cite{Leslie}. A simplified model has been introduced by Lin and Liu~\cite{Lin_Liu}, 
and here we consider the following simplified model normalizing all constants to one
\begin{align}\label{eq:LCD}
\left\{
\begin{aligned}
\partial_{t} u + (u\cdot \nabla) u - \Delta u + \nabla \pi &= -  \text{div}([\nabla d]^{\top} \nabla d) \quad &\text{in} \ \R_+\times \Dom,  \\
\partial_{t} d+(u \cdot \nabla) d &= \Delta d+ \vert \nabla d \vert^{2} d \quad &\text{in} \ \R_+\times \Dom, \\
\div\, u &= 0 \quad &\text{in} \ \R_+\times \Dom,\\
u&=0 \quad\hbox{and}\quad \partial_n d =0   &\text{in} \  \R_+\times \partial\Dom,
\end{aligned}\right.
\end{align}
with initial data $u(0)=u_0$ and $d(0)=d_0$. 
Here $u: \R_+ \times \Dom \rightarrow \mathbb{R}^{3} $ denotes the velocity field of the fluid, $\pi: \R_+ \times \Dom \rightarrow \mathbb{R} $ the pressure, and
$d: \R_+\times \Dom \rightarrow \mathbb{R}^{3} $ denotes the molecular orientation of the liquid crystal at the macroscopic level referred to as the director field. This physical interpretation of $d$ imposes the condition $\vert d \vert=1$ in $\R_+\times \Dom$. Recent developments and the literature on this subject are discussed by Hieber and Pr\"uss in the survey~\cite{HieberPruess2016}.

The simplified Ericksen-Leslie model \eqref{eq:LCD} has been investigated by Hieber, Nesensohn, Pr\"uss, and Schade in \cite{Hieberetal_2016}  as a quasi-linear evolution equation in maximal $L^q_t$-$L^p_x$-regularity spaces 
for a smooth bounded domain $\Dom\subseteq \R^3$, see also e.g. \cite{Choudhury2018} for a semilinear approach  where the term $\text{div}([\nabla d]^{\top} \nabla d)$ is estimated as non-linear right hand side in a negative Sobolev space. To define the relevant operator introduced in \cite{Hieberetal_2016} to linearize \eqref{eq:LCD} we need  some preparations and we assume that $\Dom\subseteq \R^3$ is a bounded $C^2$-domain. Let $p\in (1,\infty)$, and set 
$$L_{\sigma}^p(\Dom)\stackrel{{\rm def}}{=}
\{u\in L^p(\Dom)^3\colon\div\, u=0\text{ in }\mathcal{D}'(\Dom) \hbox{ and } n\cdot u\vert_{\partial\Dom}=0\},$$
where $n$ denotes the exterior normal vector field on $\partial\Dom$. Recall also that $n\cdot u\in \mathcal{D}'(\partial\Dom)$ as $\div\,u=0$, see e.g.\ \cite[Theorem III.2.2]{Galdi11}.
Then we denote by $\P_p : L^p(\Dom)^3 \to L_{\sigma}^p(\Dom)$ the Helmholtz projection,   
and by the $\P_p\Delta_p$ the Stokes operator, i.e.
\begin{align*}
\Do(\P_p\Delta_p)
&\stackrel{{\rm def}}{=}\{u\in H^{2,p}(\Dom)^3\cap L^p_{\sigma}(\Dom)^3\colon u=0\text{ on }\partial\Dom\},\\
\P_p\Delta_p
&\colon  \Do(\P_p\Delta_p) \subseteq 
L_{\sigma}^p(\Dom) \to L_{\sigma}^p(\Dom),\quad  u\mapsto \P_p (\Delta u),
\end{align*}
see e.g. \cite[Section 7.3]{pruss2016moving}. Also, we define the Neumann Laplacian $\Delta_{N,p}$ on $L^p(\Dom)^3$ as the operator $u\mapsto \Delta u$ with domain
\begin{align*}
\Do(\Delta_{N,p})
&\stackrel{{\rm def}}{=}\{u\in H^{2,p}(\Dom)^3\colon \partial_n u |_{\partial\Dom}=0\},
\end{align*} 
compare e.g. \cite[Section 7.4]{pruss2016moving}.	
Noticing that $ [\text{div}([\nabla d]^{\top} \nabla d) ]_i =  (\partial_i d_{\ell} )\Delta d_{\ell} +(\partial_k d_{\ell})( \partial_{i,k}^2 d_{\ell})$ (here, the summation over repeated indexes is employed), the linearization of \eqref{eq:LCD} for fixed $d$ 
is given by
\begin{equation}
\label{eq:LCD_operator}
\A_p^{\EL}(d)
=\begin{bmatrix}
-\P_p\Delta_p & \P_p\mathscr{B}(d) \\ 
 0 &-  \Delta_{N,p} 
\end{bmatrix}, \quad [\mathscr{B}(d) u ]_i= (\partial_i d_{\ell} )\Delta u_{\ell} +(\partial_k d_{\ell})( \partial_{i,k}^2 u_{\ell}),
\end{equation}
for $i\in \{1,2,3\}$ on
\begin{align*}
X=L^p_{\sigma}(\Dom)\times L^p(\Dom)^3 \quad \hbox{with domain} \quad \Do(\A_p^{\EL}(d))=\Do(\P_p\Delta_p) \times \Do(\Delta_{N,p}).
\end{align*}
The main result of this subsection reads as follows.

\begin{proposition}
\label{prop:LCD}
Let $\Dom\subseteq\R^3$ be a bounded $C^{2}$-domain. 
Let $p\in (1,\infty)$ and $\A_p^{\EL}(d)$  be as in \eqref{eq:LCD_operator}.
Then for all $\nu>0$ the following hold.
\begin{enumerate}[{\rm(1)}]
\item\label{it:LCD_R_sectorial} If $d\in W^{1,\infty}(\overline{\Dom})^3$, then $\nu+ \A_p^{\EL}(d)$ is $\Rsec$-sectorial of angle $0$.
\item\label{it:LCD_H_calculus} If $d\in C^{1,\alpha}(\Dom)^3$ for some $\alpha>0$, then $\nu+ \A_p^{\EL}(d)$  has a bounded $H^{\infty}$-calculus of angle $0$.
\end{enumerate}
The constants in \eqref{it:LCD_R_sectorial} (resp.\ \eqref{it:LCD_H_calculus}) depend on $d$ only through $\|d\|_{W^{1,\infty}}$ (resp.\
$\|d\|_{C^{1,\alpha}}$).
\end{proposition}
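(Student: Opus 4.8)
The plan is to verify that $\A_p^{\EL}(d)^I$, or rather a suitable conjugate of the tri-diagonal operator $\A_p^{\EL}(d)$, falls under the tri-diagonal framework of Corollary~\ref{cor:tridiagonal} after one uses Proposition~\ref{prop:from_injective_to_non_injective} to pass from the non-injective operator $-\Delta_{N,p}$ to its injective part. First I would record the three classical facts that make the diagonal blocks amenable: the Stokes operator $-\P_p\Delta_p$ on $L^p_\sigma(\Dom)$ and the Neumann Laplacian $-\Delta_{N,p}$ on $L^p(\Dom;\R^3)$ are $\Rsec$-sectorial of angle $0$ (after a shift, in the Neumann case) and admit a bounded $H^\infty$-calculus of angle $0$ on a bounded $C^2$-domain, cf.\ \cite[Sections 7.3--7.4]{pruss2016moving}; moreover $-\Delta_{N,p}$ is pseudo-sectorial with $\ker(\Delta_{N,p})$ the constants and $\overline{\Ran(\Delta_{N,p})}$ the mean-zero subspace, so that $L^p(\Dom;\R^3)=\ker(\Delta_{N,p})\oplus\overline{\Ran(\Delta_{N,p})}$ holds by reflexivity, giving Assumption~\ref{ass:not_injective}\eqref{it:not_injective_decomposition}. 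Since $C=0$ here the projection condition Assumption~\ref{ass:not_injective}\eqref{it:not_injective_projection_bounded} on the $C$-side is vacuous, and on the $B$-side $\prj_{\ker(\P_p\Delta_p)}B|_{\ker(\Delta_{N,p})}=0$ because $B=-\P_p\mathscr B(d)$ and $\mathscr B(d)$ annihilates constants (it only sees derivatives of $u$); so in fact $B|_{\ker(D)}=0$ and the ``$\nu=0$'' clause of Proposition~\ref{prop:from_injective_to_non_injective} will apply once the injective operator is handled.

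Next I would analyze $\A_p^{\EL}(d)^I$ as a tri-diagonal operator on $X_1\times X_2$ with $X_1=L^p_\sigma(\Dom)$, $X_2=\overline{\Ran(\Delta_{N,p})}$, diagonal $A=-\P_p\Delta_p|_{X_1}$ (already injective and invertible) and $D=-\Delta_{N,p}|_{X_2}$ (now injective; after a shift, sectorial of angle $0$ with bounded $H^\infty$-calculus), and off-diagonal block $B=-\P_p\mathscr B(d)|_{X_2}$. The crucial point is the mapping property of $B$: reading off \eqref{eq:LCD_operator}, for $u\in\Do(\Delta_{N,p})\subseteq H^{2,p}$ one has $|\mathscr B(d)u|\lesssim(|\nabla d|+|\nabla d|)\,|\nabla^2 u|\lesssim \|d\|_{W^{1,\infty}}|\nabla^2 u|$ pointwise, whence $\|By\|_{L^p}\lesssim\|d\|_{W^{1,\infty}}\|\nabla^2 y\|_{L^p}\lesssim\|d\|_{W^{1,\infty}}\|Dy\|_{L^p}$ (using elliptic regularity $\|y\|_{H^{2,p}}\eqsim\|\Delta_{N,p}y\|_{L^p}+\|y\|_{L^p}$ and, on $\overline{\Ran(\Delta_{N,p})}$ after shifting, the homogeneous estimate). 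This gives Assumption~\ref{ass:standing} with $L=0$ (after the shift), so part~\eqref{it:tridiagonal_R_sectorial} of Corollary~\ref{cor:tridiagonal} immediately yields that $\A_p^{\EL}(d)^I$ (shifted) is $\Rsec$-sectorial of angle $\leq\angR(A)\vee\angR(D)=0$, and the shift can be removed since $0\in\rho(A)$ and, reasoning as in Proposition~\ref{prop:A=0} via the Neumann series for $M_1(\lambda)$ with the bound on $B(\lambda-D)^{-1}C=0$... wait, here $C=0$ so $M_1(\lambda)=\id$ and invertibility is trivial for every $\lambda\in\rho(\diag)$; thus the unshifted $\A_p^{\EL}(d)^I$ is already $\Rsec$-sectorial of angle $0$ by Theorem~\ref{t:rsec_necessary_sufficient_condition_II} once \eqref{eq:equivalence_norm_A_0_A} is checked, and $\|\diag x\|\lesssim\|\A x\|$ follows from $M_1(0)$ being invertible as in Remark~\ref{rem:RanA_dense}. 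This proves~\eqref{it:LCD_R_sectorial}.

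For~\eqref{it:LCD_H_calculus} I would apply part~\eqref{it:tridiagonal_H_calculus} of Corollary~\ref{cor:tridiagonal}: I need, beyond the bounded $H^\infty$-calculus of $A$ and $D$, a fractional mapping condition on $B$, and here the natural choice is the ``$+$''-type condition $B(\Do(D^{1+\delta}))\subseteq\Do(A^\delta)$ with $\|A^\delta B y\|\lesssim\|D^{1+\delta}y\|$ for some small $\delta>0$. Since both $A$ and $D$ are (shifted) second-order elliptic operators with $\Do(A^{1+\delta})\embed H^{2+2\delta,p}$ and $\Do(A^\delta)\eqsim H^{2\delta,p}$ up to boundary conditions, and $\mathscr B(d)$ is a first-order-in-$d$, second-order-in-$u$ differential operator, the estimate $\|\mathscr B(d)u\|_{H^{2\delta,p}}\lesssim\|d\|_{C^{1,\alpha}}\|u\|_{H^{2+2\delta,p}}$ is the expected product/paraproduct bound valid for $2\delta<\alpha$: differentiating $\mathscr B(d)u=(\nabla d)\Delta u+(\nabla d)\nabla^2 u$ and estimating in $H^{2\delta,p}$, the coefficients $\nabla d\in C^{\alpha}$ multiply $H^{2\delta,p}$-functions boundedly when $2\delta<\alpha$, by standard multiplier/multiplication results for Bessel-potential spaces (e.g.\ \cite[Chapter 2]{RunstSickel}-type estimates, or interpolation between $L^p$ and $H^{1,p}$). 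Choosing $\delta\in(0,\alpha/2)$ small and then applying Corollary~\ref{cor:calculus_pert_block_operators_lower_order}/Corollary~\ref{cor:tridiagonal}\eqref{it:tridiagonal_H_calculus} gives that $\nu+\A_p^{\EL}(d)^I$ has a bounded $H^\infty$-calculus of angle $\leq\psi$ for every $\psi>\angH(A)\vee\angH(D)=0$; since $C=0$ makes $M_1\equiv\id$ the Neumann-series/shift removal works verbatim (or one invokes Proposition~\ref{prop:shift_H_calculus} using $0\in\rho(\A_p^{\EL}(d)^I)$ — which holds since $A$ is invertible and $D|_{\overline{\Ran(D)}}$ is invertible on the mean-zero subspace — to descend to angle $0$), yielding~\eqref{it:LCD_H_calculus}. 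Finally, I would invoke Proposition~\ref{prop:from_injective_to_non_injective} with $\nu=0$ (legitimate since $B|_{\ker(D)}=0$, $C|_{\ker(A)}=0$) to transport both conclusions from $\A_p^{\EL}(d)^I$ back to a statement about $\A_p^{\EL}(d)$ itself; but as the proposition is phrased in terms of $\A_p^{\EL}(d)^I$, steps above already suffice, and I would close by remarking that all constants enter only through $\|d\|_{W^{1,\infty}}$ (resp.\ $\|d\|_{C^{1,\alpha}}$) because that is how $d$ enters the relative bound of $B$ and the multiplier norm in the fractional estimate. The main obstacle I anticipate is the fractional mapping estimate $\|A^\delta B y\|_{X_1}\lesssim\|D^{1+\delta}y\|_{X_2}$ — i.e.\ pinning down the precise range of $\delta$ for which $\nabla d\in C^\alpha$ acts as a bounded multiplier on $H^{2\delta,p}$ and simultaneously respects the Stokes and Neumann boundary conditions defining $\Do(A^\delta)$ and $\Do(D^{1+\delta})$ — since for $2\delta<1/p$ no boundary condition is visible in $\Do(A^\delta)$ but the interplay with the Helmholtz projection $\P_p$ still needs care.
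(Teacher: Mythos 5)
Your proof of part \eqref{it:LCD_R_sectorial} matches the paper's: pass to the injective part via Proposition~\ref{prop:from_injective_to_non_injective} (noting $B|_{\ker D}=0$, $C=0$), then use the pointwise bound $|\mathscr B(d)u|\lesssim\|d\|_{W^{1,\infty}}|\nabla^2 u|$ together with invertibility of $\Delta_{N,p}$ on its range to get the relative bound with $L=0$, and invoke Corollary~\ref{cor:tridiagonal}\eqref{it:tridiagonal_R_sectorial}. That part is correct.

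For part \eqref{it:LCD_H_calculus}, however, you have chosen the \emph{wrong branch} of Corollary~\ref{cor:tridiagonal}\eqref{it:tridiagonal_H_calculus}, and this is a genuine gap under the stated hypotheses. You verify the $(+)$-type condition $B(\Do(D^{1+\delta}))\subseteq\Do(A^\delta)$ with $\|A^\delta By\|\lesssim\|D^{1+\delta}y\|$, which requires you to identify $\Do(D^{1+\delta})$ — the domain of the $(1+\delta)$-th power of the Neumann Laplacian — inside $H^{2+2\delta,p}(\Dom;\R^3)$ and to control the Helmholtz projection on the \emph{positive} space $H^{2\delta,p}$. Both steps call on elliptic regularity above the $H^{2,p}$ level, and on a merely $C^2$ boundary this is not available: one needs $\Dom\in C^{2,\alpha}$ to have $\Do((-\Delta_{N,p}^I)^{1+\beta})=\{u\in H^{2+2\beta,p}:\partial_n u=0,\int_\Dom u\,dx=0\}$ and $\P_p:H^{s,p}\to H^{s,p}\cap L^p_\sigma$ for $s>0$. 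This is exactly what Remark~\ref{r:LCD_x_dependent} points out. You flagged ``the interplay with boundary conditions'' as the main obstacle but did not recognize that it is not merely a technicality — it forces the additional assumption $\Dom\in C^{2,\alpha}$, so your argument proves a strictly weaker statement than the proposition claims.

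The paper circumvents this by instead verifying the $(-)$-type condition, i.e.\ moving the fractional power onto the Stokes side in the \emph{negative} direction: since $A=-\P_p\Delta_p$ is invertible, $\Ran(A^\gamma)=L^p_\sigma$; by duality $(\P_p\Delta_p)^*=\P_{p'}\Delta_{p'}$ and $\Dd(A^{-\gamma})=\bigl(H^{2\gamma,p'}\cap L^{p'}_\sigma\bigr)^*$; the Helmholtz projection extends to $\P_p:H^{-2\gamma,p}\to\Dd(A^{-\gamma})$ for $\gamma<1/(2p)$ (via \cite[Proposition~9.14]{KKW} and interpolation); and $\Dd((-\Delta_{N,p}^I)^\gamma)\embed H^{2\gamma,p}$. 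Then, since $C^\alpha$-functions are pointwise multipliers on $H^{-2\beta,p}$ for $2\beta<\alpha$, one gets $\|A^{-\beta}By\|_{L^p_\sigma}\lesssim\|d\|_{C^{1,\alpha}}\|D^{1-\beta}y\|_{L^p}$ for $\beta\in(0,\alpha/2)$ small, which is the $(-)$-hypothesis in Corollary~\ref{cor:tridiagonal}\eqref{it:tridiagonal_H_calculus}. The payoff is that one only ever descends to order $\leq 2$ plus a negative correction, where the boundary condition does not bite and no elliptic regularity beyond $C^2$ is consumed. To fix your argument you should replace your $(+)$-estimate by this $(-)$-estimate; otherwise you must strengthen the hypothesis to $\Dom\in C^{2,\alpha}$, which is not what the proposition asserts.
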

\begin{remark}
For the deterministic setting in \cite{Hieberetal_2016} it has been sufficient  to prove maximal $L^q_t$-$L^p_x$-regularity for $\A_p^{\EL}(d)$
to solve the non-linear problem. 
Proposition \ref{prop:LCD}\eqref{it:LCD_H_calculus} also implies stochastic maximal regularity, and therefore the quasilinear approach to \eqref{eq:LCD} developed in \cite{Hieberetal_2016} -- with the regularity assumptions as in \cite[Remark 4.2]{Hieberetal_2016} -- can be extended to the stochastic setting using the results by Veraar and the first author in \cite{AV19_QSEE_1,AV19_QSEE_2} to solve non-linear SPDEs.
\end{remark}


\begin{proof}[Proof of Proposition \ref{prop:LCD}]
To prove Proposition \ref{prop:LCD}\eqref{it:LCD_R_sectorial}  we apply Corollary \ref{cor:tridiagonal}\eqref{it:tridiagonal_R_sectorial}. Note that, for all $u\in\Do(\Delta_{N,p})$,
\begin{align*}
\|\P_p\mathscr{B}(d) u\|_{L^p_{\sigma}(\Dom)}
\lesssim \|\mathscr{B}(d) u\|_{L^p(\Dom)^3}
&\lesssim \|d\|_{W^{1,\infty}(\Dom)^3} \Big(\sup_{i,j\in \{1,2,3\}} \| \partial_{i,j}^2 u\|_{L^p(\Dom)^3}\Big)\\
&\lesssim\|d\|_{W^{1,\infty} (\Dom)^3}  \|(\nu- \Delta_{N,p}) u\|_{L^p(\Dom)^3},
\end{align*}
where in the last inequality we have used that $\nu-\Delta_{N,p}$ is invertible for all $\nu>0$. Thus, $\nu+\A^{\EL}(d)$ is $\Rsec$-sectorial of angle $0$ by Corollary \ref{cor:tridiagonal}\eqref{it:tridiagonal_R_sectorial}.

To prove Proposition \ref{prop:LCD}\eqref{it:LCD_H_calculus}, we apply Corollary \ref{cor:tridiagonal}\eqref{it:tridiagonal_H_calculus} in the $(-)$-case. 
	Recall that $\P_p\Delta_p$ is invertible and therefore $\Ran((-\P_p\Delta_p)^{\gamma})=L^p_{\sigma}(\Dom)$ for all $\gamma\in (0,1)$. Moreover, since $\Dom$ is a $C^2$-domain, $(\P_p \Delta_p)^*=\P_{p'}\Delta_{p'}$, where $\frac{1}{p}+\frac{1}{p'}=1$, and
	\begin{equation}
	\begin{aligned}
	\label{eq:stokes_negative_domains}
	\Dd((-\P_p\Delta_p)^{-\gamma})
	&=\big(\Dd((-\P_{p'}\Delta_{p'})^{\gamma}) \big)^*\\
	&=
	\big(H^{2\gamma,p'}(\Dom)^3\cap L^{p'}_{\sigma}(\Dom)\big)^* \quad \text{ for all }\gamma\in  \Big(0,\frac{1}{2p}\Big).
	\end{aligned}
	\end{equation}
	By \cite[Proposition 9.14]{KKW} and interpolation, $\P_p$ extends uniquely to a map
	\begin{equation}
	\label{eq:Helmholtz_projection_negative_spaces}
	\P_p : H^{-2\gamma,p}(\Dom)^3\to \Dd((-\P_p\Delta_p)^{-\gamma})\ \
	\text{ for all }\gamma\in \Big(0,\frac{1}{2p}\Big)
	\end{equation}
	where we used that $\Dd((-\Delta_{D,p})^{-\gamma})=H^{-2\gamma,p}(\Dom)^3$ for all $\gamma\in (0,\frac{1}{2p})$ by \cite{Se}. Here, as in \cite[Proposition 9.14]{KKW}, $\Delta_{D,p}$ denotes the Dirichlet Laplacian.
	
	Since $\Do(\nu-\Delta_{N,p})\embed H^{2,p}(\Dom)^3$ for all $\nu>0$, 
	\begin{equation}
	\label{eq:LCD_domain_D_N}
	\Do((\nu-\Delta_{N,p})^{\gamma})\embed H^{2\gamma,p}(\Dom)^3\ \  \text{ for all } \gamma\in (0,1).
	\end{equation}
	As above, without loss of generality we assume $\alpha\in (0,\frac{1}{p})$. With this preparation, for all $\nu>0$ and $u \in \Do((\nu-\Delta_{N,p})^{1-\beta})$ with $\beta<\frac{\alpha}{2}$, we can estimate 
	\begin{equation}
	\label{eq:LCD_estimate_H_calculus_negative}
	\begin{aligned}
	&\|(\nu-\P_{p}\Delta_p)^{-\beta} [\P_p \mathscr{B}(d) u] \|_{L^p_{\sigma}(\Dom)}\\
	&\qquad\qquad\ \eqsim \|(-\P_{p}\Delta_p)^{-\beta} [\P_p \mathscr{B}(d) u] \|_{L^p_{\sigma}(\Dom)}\\
	& \qquad\qquad \ \eqsim \|\P_p \mathscr{B}(d) u\|_{\Dd((-\P_p\Delta_p)^{-\beta})}\\
	&\qquad\qquad\stackrel{\eqref{eq:Helmholtz_projection_negative_spaces}}{\lesssim} 
	\|\mathscr{B}(d) u\|_{H^{-2\beta,p}(\Dom)^3}\\
	& \qquad\qquad \stackrel{\eqref{eq:LCD_domain_D_N}}{\lesssim } 
	\|d\|_{C^{1,\alpha}(\Dom)^3} \|(\nu-\Delta_{N,p})^{1-\beta} u \|_{L^p(\Dom)^3},
	\end{aligned}
	\end{equation}
	where we also used that $C^{\alpha}$ maps are pointwise multipliers on $H^{-2\beta,p}$.
	Now, 
	Corollary~\ref{cor:tridiagonal}\eqref{it:tridiagonal_H_calculus} in the $(-)$-case ensures that $\nu+\A_p^{\EL}(d)$ has a bounded $H^{\infty}$-calculus for all $\nu>0$ with a corresponding estimate in terms of $\|d\|_{C^{1,\alpha}(\Dom)^3}$.
\end{proof}

\begin{remark}\label{r:LCD_x_dependent}
To prove Proposition \ref{prop:LCD}\eqref{it:LCD_H_calculus}
one can also employ Corollary \ref{cor:tridiagonal} in the $(+)$-case. However, 
one has then to assume that $\Dom$ is a $C^{2,\alpha}$-domain for some $\alpha>0$.
Then, by elliptic regularity, one can check that 
\begin{equation}
\label{eq:LCD_helmholtz_projection_positive}
\P_p:H^{s,p}(\Dom)^3 \to H^{s,p}(\Dom)^3 \cap L^p_{\sigma}(\Dom) \ \ \text{ for all }s\in \big[0,\tfrac{1}{p}\big).
\end{equation}
Without loss of generality, we assume $\alpha\in (0,\frac{1}{p})$.  
By Proposition \ref{prop:fractional_powers_positive}, 
for all $\beta\in (0,\alpha/2)$ and $\nu>0$,
\begin{align}
\label{eq:LCD_description_positive_domain}
\Do((\nu-\P_p \Delta_p)^{\beta})=\Do((-\P_p \Delta_p)^{\beta})
&=H^{2\beta,p}(\Dom)^3 \cap L^p_{\sigma}(\Dom),\\
\label{eq:LCD_description_positive_domain_2}
\Do((\nu-\Delta_{N,p})^{\beta})
&= H^{2\beta,p}(\Dom)^3.
\end{align} 
The former, the fact that $\Dom\in C^{2,\alpha}$ and elliptic regularity yield 
for all $\beta\in (0,\frac{\alpha}{2})$,
\begin{equation}
\label{eq:LCD_description_positive_domain_1_beta}
\Do((\nu-\Delta_{N,p})^{1+\beta})= \Big\{u\in H^{2+2\beta,p}(\Dom)^3\colon\partial_{n}u|_{\partial\Dom}=0\Big\}.
\end{equation}
Thus, for all $\beta\in (0,\frac{\alpha}{2})$ and $u\in \Do((\nu-\Delta_{N,p})^{1+\beta})$,
\begin{equation}
\begin{aligned}
\label{eq:LCD_estimate_H_calculus_positive}
\|(\nu-\P_p\Delta_p)^{\beta} [\P_p \mathscr{B}(d) u]\|_{L^p_{\sigma}(\Dom)}
&\stackrel{\eqref{eq:LCD_description_positive_domain}}{\eqsim} \| \P_p \mathscr{B}(d) u\|_{H^{2\beta,p}(\Dom)^3\cap L^p_{\sigma}(\Dom)}\\
&\stackrel{\eqref{eq:LCD_helmholtz_projection_positive}}{\lesssim} \| \mathscr{B}(d) u\|_{H^{2\beta,p}(\Dom)^3}	\\
& \ \stackrel{(i)}{\lesssim}\ \  \|d\|_{C^{1,\alpha}(\Dom)^3} \|  u\|_{H^{2+2\beta,p}(\Dom)^3}\\
&\stackrel{\eqref{eq:LCD_description_positive_domain_1_beta}}{\eqsim} 
\|d\|_{C^{1,\alpha}(\Dom)^3} \|(\nu-\Delta_{N,p})^{1+\beta} u\|_{L^p(\Dom)^3}
\end{aligned}
\end{equation}
where in $(i)$ we use as before that $C^{\alpha}$ functions are pointwise multipliers on $H^{2\beta,p}$ ones.
The estimate
\eqref{eq:LCD_estimate_H_calculus_positive} ensures that Corollary \ref{cor:tridiagonal} in the $(+)$-case can be applied.
		
The argument in the above proof can be extended to prove the boundedness of the $H^{\infty}$-calculus for $\mu+\A_p^{\EL}(d)$  for some $\mu\geq 0$ in case $-\P_p\Delta_p$ is replaced by $-\P_p\mathscr{A}_p$ where $\mathscr{A}_p(x)u=\sum_{i,j=1}^3 a^{i,j}(x) \partial_{i,j}^2 u$, $a^{i,j}\in C^{\alpha}(\Dom)^{3\times 3}$ and $a^{i,j}$ are uniformly elliptic. To see this, note that in the above proof the Stokes operator plays a role only through 
\eqref{eq:LCD_description_positive_domain}. The latter also holds if $-\P_p\Delta_p$ is replaced by $\mu-\P_p\mathscr{A}_p$ provided the latter operator has a bounded $H^{\infty}$-calculus for $\mu$ large enough. To prove the latter, recall that $\mu-\P_p\mathscr{A}_p$ is $\Rsec$-sectorial on $L^p_{\sigma}(\Dom)$ of angle $<\pi/2$ for $\mu$ large enough by \cite[Chapter 7]{pruss2016moving}. Thus it also has a bounded  $H^{\infty}$-calculus by Theorem \ref{t:transference_appendix} applied with $A=-\P_p\Delta_p$ and $B=\mu-\P_p\mathscr{A}_p$ (cf.\ \eqref{eq:LCD_description_positive_domain}, \eqref{eq:LCD_description_positive_domain_2} and \eqref{eq:stokes_negative_domains}).
\end{remark}

\subsection{The weak Keller-Segel operator}\label{subsec:Keller_Segel}
Keller-Segel equations arise in the mathematical modeling of chemotaxis, see e.g. \cite{Winkler2020, HorstmannI, HorstmannII, Hillen2009} for surveys and further literature.    
Here, we consider the classical Keller-Segel system given by
\begin{align}\label{eq:Keller_Segel}
\left\{
\begin{aligned}
\partial_t u -\Delta u +\nabla \cdot(u \nabla v)&=0,\quad &\text{in} \ \R_+\times \Dom,\\
\partial_t v-\Delta v +v -u&=0,\quad &\text{in} \ \R_+\times \Dom,\\
u(0)=u_0,\ \ \ \   v(0)&=v_0,\quad &\text{in} \ \R_+\times \Dom.
\end{aligned}\right.
\end{align}
where $u\colon \R_+\times  \Dom \rightarrow \R$ represents the density of a cell population and $v\colon \R_+\times \Dom \rightarrow \R$ the concentration of a chemoattractant.
We complement the above system with non-linear boundary conditions
\begin{equation}
\label{eq:boundary_condition_KS}
\partial_{n} u - u\, \partial_{n } v=0,\ \  \text{ and }\ \  v=0,\quad \text{in}\ \R_+\times\partial\Dom, 
\end{equation}
where $\partial_{n}$ denotes the outer normal derivative at the boundary. 

In applications to (stochastic) partial differential equations, the weak setting has two advantages. Firstly, it (typically) requires less regularity assumption on $\partial\Dom$ compared to the strong setting, and secondly, in the stochastic framework it also requires minimal compatibility conditions for the noise.
To obtain the weak formulation of \eqref{eq:Keller_Segel}-\eqref{eq:boundary_condition_KS}, we multiple the first equation in \eqref{eq:Keller_Segel} by $\varphi\in C^{\infty}(\overline{\Dom})$. Using that
$$
\int_{\Dom} [-\Delta u +\nabla \cdot(u\cdot \nabla v) ]\varphi \, \dd x \stackrel{\eqref{eq:boundary_condition_KS}}{=}
\int_{\Dom} [\nabla u\cdot\nabla \varphi - u \nabla v\cdot \nabla \varphi] \, \dd x, 
$$
one can linearize  \eqref{eq:Keller_Segel}-\eqref{eq:boundary_condition_KS} in the weak setting by writing for $U=(u,v)^T$  
\begin{align*}
U'+\A^{\KS}_p(u) U=0, \quad U(0)=U_0, \ \  \hbox{ where }\ \   \A_p^{\KS}(z)
\stackrel{{\rm def}}{=}
\begin{bmatrix}
-\Delta_{N,p}^{\weak} & \mathscr{B}_p(z) \\
-\id & \id-\Delta_{D,p}^{\weak} 
\end{bmatrix}
\end{align*}
on $X_0=  (W^{1,p'}(\Dom))^* \times (W^{1,p'}_0(\Dom))^*$ with $\Do(\A_p^{\KS}(z))
\stackrel{{\rm def}}{=}W^{1,p}(\Dom)\times W^{1,p}_0(\Dom)$.
Here, for all $(u,v),(u',v')\in W^{1,p}(\Dom)\times W^{1,p}_0(\Dom)$,
\begin{align*}
\l u', \Delta^{\weak}_{N,p} u\r 
&\stackrel{{\rm def}}{=} -\int_{\Dom}\nabla u \cdot \nabla u' \,\dd x,\quad \quad
\l v', \Delta^{\weak}_{D,p} v\r 
\stackrel{{\rm def}}{=} -\int_{\Dom}\nabla v \cdot \nabla v'\,\dd x,\\
\l u', \mathscr{B}_p(z) v\r 
&\stackrel{{\rm def}}{=} - \int_{\Dom}  z\, \nabla v\cdot \nabla u' \,\dd x \ \  \hbox{where } z\in L^{\infty}(\Dom).
\end{align*}

\begin{proposition}
For all $p\in (1,\infty)$ the following hold.
\begin{enumerate}[{\rm(1)}]
\item\label{it:A_KS_R_sectoriality}
If  $\Dom\subseteq \R^d$ is a bounded $C^1$-domain and $z\in L^{\infty}(\Dom)$, then there exists $\nu\geq 0$ such that  
 $\nu+\A_p^{\KS}(z)$ is $\Rsec$-sectorial of angle $0$.
\item\label{it:A_KS_H_calculus}
If $\Dom$ is a $C^{1,\alpha}$-domain and $z\in C^{\alpha}(\Dom)$ for some $\alpha>0$, then there exists $\nu\geq 0$ such that
 $\nu+\A_p^{\KS}(z)$ has a bounded $H^\infty$-calculus of angle $0$. 
\end{enumerate}
The constants in \eqref{it:A_KS_R_sectoriality} (resp.\ \eqref{it:A_KS_H_calculus}) depend on $z$ only through $\|z\|_{L^{\infty}(\Dom)}$ (resp.\ $\|z\|_{C^{\alpha}(\Dom)}$).
\end{proposition}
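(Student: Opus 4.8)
The statement asserts $\Rsec$-sectoriality (resp.\ boundedness of the $H^\infty$-calculus) of $\nu + \A_p^{\KS}(z)$ for the weak Keller--Segel operator, after a shift by some $\nu\geq 0$. The operator has the block form with diagonal entries $-\Delta_{N,p}^{\weak}$ and $-\Delta_{D,p}^{\weak}+1$ acting on the negative-order spaces $X_{1}=(W^{1,p'}(\Dom))^*$ and $X_2=(W^{1,p'}_0(\Dom))^*$, with off-diagonal couplings $\mathscr{B}_p(z)$ and the constant operator $-1$. The first task is to record the basic operator-theoretic properties of the diagonal blocks: the weak Neumann and Dirichlet Laplacians on $C^1$ (resp.\ $C^{1,\alpha}$) domains are sectorial, in fact $\Rsec$-sectorial of angle $0$ and — in the $H^\infty$ case — possess a bounded $H^\infty$-calculus of angle $0$; this is standard form theory / known results for weak elliptic operators, which I would cite. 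Since $\Delta_{N,p}^{\weak}$ is not injective (constants lie in its kernel once we do not add a zeroth-order term), we are in the setting of Subsection~\ref{ss:not_injective}: I would verify Assumption~\ref{ass:not_injective}, namely the topological decompositions $X_1 = \ker(\Delta_{N,p}^{\weak})\oplus\overline{\Ran(\Delta_{N,p}^{\weak})}$ (automatic by reflexivity of $X_1$ for $p\in(1,\infty)$), and the boundedness of the compressions of $\mathscr{B}_p(z)$ and of the constant operator onto the kernel — but $\mathscr{B}_p(z)$ annihilates constants in the $v$-variable (it only sees $\nabla v$), so the relevant projected couplings vanish or are bounded, hence the kernel part $\A^N$ is a finite-rank bounded operator and Proposition~\ref{prop:from_injective_to_non_injective} reduces everything to the injective operator $\A_p^{\KS}(z)^I$ on $\overline{\Ran}\times X_2$.

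\textbf{Main estimates.} For the injective operator, the coupling is genuinely of "lower order with a small factor" in the sense required by the tri-diagonal-type results. The key point is that $\mathscr{B}_p(z)\colon W^{1,p}(\Dom)\to (W^{1,p'}(\Dom))^*$ is actually bounded as a map $\Dd((-\Delta_{D,p}^{\weak}+1)^{1-\delta})\to \Dd((-\Delta_{N,p}^{\weak,I})^{-\delta})$ for small $\delta>0$, i.e.\ it fits Assumption~\ref{ass:fractions_pm}\nameref{ass:fractions_-}; concretely $\l u', \mathscr{B}_p(z) v\r = -\int z\,\nabla u'\cdot\nabla v$ gains one derivative of smoothing relative to the full order-$2$ operators, and the multiplication by $z\in L^\infty$ (resp.\ $z\in C^\alpha$) is a bounded (resp.\ bounded on a slightly negative Bessel scale) pointwise multiplier — this is exactly parallel to the estimate \eqref{eq:LCD_estimate_H_calculus_negative} in the Ericksen--Leslie proof. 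The constant operator $-1\colon X_1\to X_2$ maps into $(W^{1,p'}_0)^*$ and is likewise of lower order, so after shifting by a large $\nu$ one makes the coupling constant $c_A$ (the relative bound of the "$C$" entry) as small as desired while the "$B$" entry is controlled in the fractional-power sense. Then I would apply Corollary~\ref{cor:tridiagonal}-type reasoning in the $(-)$-case, or more precisely Corollary~\ref{cor:calculus_pert_block_operators_lower_order} / Corollary~\ref{cor:pert_block_operators} (for the $\Rsec$ statement) and Theorem~\ref{t:calculus_pert_block_operators_notype} (for the $H^\infty$ statement, since the negative-order $L^p$-based spaces have non-trivial type, one could even use Theorem~\ref{t:calculus_pert_block_operators}), to conclude $\Rsec$-sectoriality of angle $0$ and bounded $H^\infty$-calculus of angle $0$ for $\nu+\A_p^{\KS}(z)^I$, with constants depending on $z$ only through $\|z\|_{L^\infty}$ (resp.\ $\|z\|_{C^\alpha}$). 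Finally Proposition~\ref{prop:from_injective_to_non_injective} transfers this (up to a further enlargement of $\nu$) to the full operator $\nu+\A_p^{\KS}(z)$.

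\textbf{Anticipated obstacle.} The routine but delicate part is pinning down the precise mapping properties of $\mathscr{B}_p(z)$ on the fractional scales $\Dd((-\Delta_{N,p}^{\weak,I})^{-\delta})$ and $\Dd((-\Delta_{D,p}^{\weak}+1)^{1-\delta})$: one must identify these homogeneous/negative-order domains with Bessel potential spaces $H^{s,p}$ (with appropriate boundary conditions absorbed) for small $|s|$ — here $p\geq 2$ and the range $\delta\in(0,\tfrac{1}{2p})$ enters, just as in the Keller--Segel-adjacent Helmholtz estimate \eqref{eq:Helmholtz_projection_negative_spaces} — and then invoke that $C^\alpha$ functions are pointwise multipliers on $H^{-2\delta,p}$ for $2\delta<\alpha$, while $L^\infty$ suffices at the level $s=0$ needed only for $\Rsec$-sectoriality via the plain relative-boundedness (Corollary~\ref{cor:pert_block_operators}). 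Verifying the duality $(-\Delta_{N,p}^{\weak})^* = -\Delta_{N,p'}^{\weak}$ and the corresponding identification of negative-order domains on a mere $C^1$ (resp.\ $C^{1,\alpha}$) domain, so as to keep the regularity hypotheses minimal, is where most of the technical care goes; everything else is an application of the abstract machinery already developed in Sections~\ref{sec:Hinfty} and~\ref{s:special_cases_dominant}.
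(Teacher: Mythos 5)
Your proposal diverges from the paper's argument in two ways, and the more important one leaves a genuine gap.

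\textbf{What the paper actually does.} The paper never invokes the kernel-decomposition machinery (Proposition~\ref{prop:from_injective_to_non_injective}). Instead it reduces at the outset, by bounded-perturbation theory, to the upper-triangular operator $\wh{\A}_p(z)$ with invertible diagonal $1-\Delta_{N,p}^{\weak}$ and $2-\Delta_{D,p}^{\weak}$ and $C=0$. With $C=0$ the smallness hypotheses in Theorem~\ref{t:calculus_pert_block_operators} / \ref{t:calculus_pert_block_operators_notype} are vacuous, so the clean tool is Corollary~\ref{cor:tridiagonal}; part~(1) is then an immediate application of Corollary~\ref{cor:tridiagonal}\eqref{it:tridiagonal_R_sectorial}. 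For part~(2), the paper verifies the $(+)$-case fractional estimate \emph{only at $p=2$}, where $\Do((2-\Delta^{\weak}_{D,2})^{\gamma})$ and $\Do((1-\Delta^{\weak}_{N,2})^{\gamma})$ can be identified with $H^{s}$-type spaces by bare interpolation and duality without any hard elliptic regularity. This yields bounded $H^{\infty}$-calculus at $p=2$, hence a contraction-semigroup generator in an equivalent Hilbertian norm, and then Theorem~\ref{t:extrapolation_H} carries the $H^{\infty}$-calculus to every $p\in[2,\infty)$, using the already-established $\Rsec$-sectoriality from part~(1) as the second endpoint hypothesis. The delicate fractional-domain identification is thereby needed only at $p=2$.

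\textbf{The gap in your proposal.} You propose to verify Assumption~\ref{ass:fractions_pm}\nameref{ass:fractions_-} for $\mathscr{B}_p(z)$ on the $L^p$-based negative-order scales directly for all $p\geq 2$ and then conclude. You flag the identification of $\Dd((1-\Delta_{N,p}^{\weak})^{-\delta})$ and $\Do((2-\Delta_{D,p}^{\weak})^{1-\delta})$ with Bessel potential spaces on a $C^{1,\alpha}$ domain as an ``anticipated obstacle'', but you do not resolve it — and it is not routine. For a $C^{1,\alpha}$ domain and general $p$ the fractional-power domains of the weak Neumann and Dirichlet Laplacians are not given by off-the-shelf results comparable to Seeley's theorem on smooth domains; obtaining the multiplier estimate with only $z\in C^{\alpha}$ requires precisely these identifications. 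This is exactly the part the paper deliberately avoids by restricting the fractional-power verification to the Hilbert endpoint and extrapolating with Theorem~\ref{t:extrapolation_H}. As written, your argument is incomplete at this step, and filling it in would require substantially more harmonic analysis than the paper uses.

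Two minor points. First, the shift $\nu$ does not make the $B$-block small (it is of the same order as $D$); shifting only shrinks the lower-left entry $-1$, so steering toward Corollary~\ref{cor:calculus_pert_block_operators_lower_order} would be misleading — the correct observation is that after the bounded-perturbation reduction $C=0$ and Corollary~\ref{cor:tridiagonal} applies without any smallness at all. Second, the kernel-decomposition route through Proposition~\ref{prop:from_injective_to_non_injective} is not wrong, but it is unnecessary overhead here compared with the paper's bounded-perturbation shift to invertible diagonal blocks.
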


\begin{proof}
By a standard  result for bounded perturbations (see e.g.\ \cite[Corollary 3.3.15 and Proposition 4.4.3]{pruss2016moving}), it suffices to show \eqref{it:A_KS_R_sectoriality}-\eqref{it:A_KS_H_calculus} for  $\A_p^{\KS}(z)$ replaced by 
$$
\wh{\A}_p(z)
\stackrel{{\rm def}}{=}
\begin{bmatrix}
\id -\Delta_{N,p}^{\weak} & \mathscr{B}_p(z) \\
0 & 2-\Delta_{D,p}^{\weak}
\end{bmatrix}
\ \ \text{ with } \ \  
\Do(\wh{\A}_p(z))=\Do(\A_p^{\KS}(z)).
$$
Let us recall that by \cite[Theorem 11.5]{DivH} $\id-\Delta_{N,p}^{\weak}$ and $2-\Delta_{D,p}^{\weak}$ have  a bounded $H^{\infty}$-calculus of angle $0$. In particular, they are also $\Rsec$-sectorial of angle $0$.
So, \eqref{it:A_KS_R_sectoriality} follows immediately from Corollary \ref{cor:tridiagonal}\eqref{it:tridiagonal_R_sectorial}.

The claim \eqref{it:A_KS_H_calculus} follows from Theorem \ref{t:extrapolation_H} for $p\neq 2$ if  Corollary \ref{cor:tridiagonal}\eqref{it:tridiagonal_H_calculus} applies to the case $p=2$. Theorem \ref{t:extrapolation_H} is then applicable. Its assumptions are indeed satisfied: First, as $\wh{\A}_p(z)$ for $p\in (1,\infty)$ is a consistent family of operators, \eqref{eq:estimate_A_A_0_theta_alpha_beta} holds since $\wh{\diag}_p=\hbox{diag}\{\id -\Delta_{N,p}^{\weak}, 2-\Delta_{D,p}^{\weak}\}$ and $\wh{\A}_p(z)$ are boundedly invertible. Second, by the previous argument $\wh{\A}_p(z)$ is $\Rsec$-sectorial for $p\in (1,\infty)$. Third, Corollary \ref{cor:tridiagonal}\eqref{it:tridiagonal_H_calculus} and $\omega(\wh{\A}_2(z))=0$ imply by  e.g.\ \cite[Corollary 10.4.10 and Theorem 10.4.22]{Analysis2} that  $\wh{\A}_2(z)$ generates a contraction semigroup w.r.t.\ an equivalent Hilbertian norm. 

To apply Corollary \ref{cor:tridiagonal}\eqref{it:tridiagonal_H_calculus} for $p=2$, we check the condition for the $(+)$-case of \eqref{it:tridiagonal_H_calculus}, that is, 
\begin{equation}
\label{eq:KS_map_B_domains}
\mathscr{B}_2(z)\colon\Do((2-\Delta^{\weak}_{D,2})^{1+\beta})\to \Do((\id-\Delta^{\weak}_{N,2})^{\beta}) \text{ is bounded for some }\beta>0.
\end{equation}

Since $\id-\Delta_{N,2}^{\weak}$ has a bounded $H^{\infty}$-calculus and by \cite[Theorem 4.6.1 and Corollary 4.5.2]{BeLo}, we have for all $\gamma\in (0,1/2)$ that
\begin{align*}
\Do((1-\Delta^{\weak}_{N,2})^{\gamma})
&=[(H^{1}(\Dom))^*,H^{1}(\Dom)]_{\gamma}\\
&=[(H^{1}(\Dom))^*,L^2(\Dom)]_{2\gamma}\\
&=\big([H^{1}(\Dom),L^2(\Dom)]_{2\gamma}\big)^*=\big(H^{1-2\gamma}(\Dom)\big)^*.
\end{align*}
Since $2-\Delta_{D,2}^{\weak}$ has a bounded $H^{\infty}$-calculus as well,
$
\Do((2-\Delta^{\weak}_{D,2})^{\gamma})=H^{-1+2\gamma}(\Dom)$ for all $\gamma\in (0,1/2)$. By elliptic regularity and the fact that $\Dom$ is a $C^{1,\alpha}$-domain  we have, for all $2\beta\in \big(0,\alpha\wedge\tfrac{1}{2}\big)$,
\begin{align*}
\Do((2-\Delta^{\weak}_{D,2})^{1+\beta})
=\big\{v\in H^{1}_0(\Dom)\,:\,\Delta^{\weak}_{D} v\in H^{-1+2\beta}(\Dom) \big\}=  H_0^{1+2\beta}(\Dom).
\end{align*}
With this at hand, we prove \eqref{eq:KS_map_B_domains}. Fix $0<\beta<\eta<\frac{\alpha}{2}\wedge\frac{1}{4} $. 
For any $u\in H^{1-2\beta}(\Dom)$ there exists an extension $U\in H^{1-2\beta}(\R^d)$ such that $U|_{\Dom}=u$ and $\|U\|_{H^{1-2\beta}(\R^d)}\lesssim \|u\|_{H^{1-2\beta}(\Dom)}$ (with implicit constant independent of $u$). Similarly, choose $Z\in C^{\eta}(\R^d)$ such that $Z|_{\Dom}=z$ and $\|Z\|_{C^{\eta}(\R^d)}\lesssim \|z\|_{C^{\eta}(\Dom)}$. Let ${\rm E}_0$ be the extension by 0 outside $\Dom$. Then, for all $v \in C^{\infty}_0(\Dom)$ and $v$ as above, 
\begin{align*}
\l u, \mathscr{B}_2(z) v\r 
=\Big|\int_{\Dom} z \,\nabla u \cdot \nabla v \,\dd x \Big|
&=\Big|\int_{\R^d} Z\, \nabla U \cdot \nabla ({\rm E}_0 v) \,\dd x \Big|\\
&\lesssim \|Z\, \nabla ({\rm E}_0 v)\|_{H^{2\beta}(\R^d)} \|U\|_{H^{1-2\beta}(\R^d)}\\
&\lesssim \|z\|_{C^{\eta}(\Dom)}\| v\|_{H^{1+2\beta}(\Dom)} \| u\|_{H^{1-2\beta}(\Dom)}.
\end{align*}
By density of $C^{\infty}_0(\Dom)$ in $H^{1+\beta}_0(\Dom)$ and taking the supremum over all $\|u\|_{H^{1-2\beta}(\Dom)}\leq 1$, \eqref{eq:KS_map_B_domains} follows.
\end{proof}

\begin{remark}
\label{r:KS_different_boundary_conditions}
The boundary conditions considered here have been discussed recently in \cite{Winkler2021}.
A variety of zero-flux boundary conditions boundary is discussed in \cite[Section 2]{Hillen2009}, and there are also Keller-Segel models with 
pure Dirichlet or Neumann boundary conditions and combinations of the various cases. It seems that the above proof can be adapted to these situations by using different extension operators.
\end{remark}

\subsection{Artificial compressible Stokes system}\label{subsec:art_Stokes}
The artificial compressible Stokes system has been 
introduced in the context of steady state solutions to the Navier-Stokes equations, see \cite{Cho1967, Cho1968, Tem1969_1,Tem1969_2, Tem_book}.  It is formally given by
\begin{align}\label{eq:artifical_Stokes}
\A^{\AS}= \begin{bmatrix}
0 & \tfrac{1}{\epsilon^2} \div \\ \nabla & - \Delta + v_s\cdot \nabla + (\nabla v_s)^T
\end{bmatrix}, \quad \varepsilon>0,
\end{align}
with a given real valued vector field $v_s$. The spectral properties of this operator have been investigated recently in detail for the Hilbert space case in \cite{KaTe2018, Ter2018}. 
However, the $L^p$-theory for this operator has not been studied so far. Therefore, we consider  here $\A_p^{\AS}=\A^{\AS}$ in the space $X=X_1\times X_2$ with
\begin{align*}
X_1=H^{1,p}(\Dom) \quad \hbox{and} \quad X_2=L^p(\Dom)^3
\end{align*}
for $p\in (1,\infty)$ with domain
\begin{align*}
\Do(\A^{\AS}_p)=H^{1,p}(\Dom)\times \big( H^{2,p}(\Dom)^3\cap H_{0}^{1,p}(\Dom)^3\big).
\end{align*}
It turns out that  Corollary~\ref{prop:Azero} is applicable here, and it guarantees some basic operator theoretical properties by purely perturbative methods and properties of the Laplacian in $L^p$-spaces. Using the particular structure of the off-diagonal perturbation more detailed properties can be derived as in \cite{KaTe2018, Ter2018} for $p=2$.

\begin{proposition}\label{prop:artifical_Stokes}
	Let $\Dom$ be a bounded $C^2$ domain in $\R^3$. If $v_s\in H^{1,q}(\Dom)^3$ for $p,q\in (1,\infty)$ with $q>3/2$ and $q\geq p$, then  for each $\psi>0$ there exists $\mu \geq 0$ such that the shifted artificial Stokes system $\mu+\A_p^{\AS}$ has a bounded $H^\infty$-calculus of 
	angle $\leq \psi$.
\end{proposition}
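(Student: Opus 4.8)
The plan is to verify the hypotheses of Proposition~\ref{prop:Azero} (more precisely Proposition~\ref{prop:A=0}) for $\A_p^{\AS}$ and invoke part~\eqref{it:Azero_Hcalculus} of that result. Writing $\A_p^{\AS}=\diag+\B$ in the notation of \eqref{eq:artifical_Stokes} we have the upper-left block $A=0\in \calL(X_1)$, the lower-right block $D= -\Delta + v_s\cdot \nabla + (\nabla v_s)^T$ acting on $X_2=L^p_\sigma(\Dom)^3$, and the off-diagonal entries $B= \tfrac{1}{\varepsilon^2}\div\colon X_2\to X_1$ and $C=\nabla\colon X_1\to X_2$. First I would check that $D$, which is a lower-order perturbation of the Stokes operator $-\P_p\Delta_p$ on a bounded $C^2$-domain, is sectorial (indeed has a bounded $H^\infty$-calculus after a shift); this is where the hypothesis $v_s\in H^{1,q}(\Dom)$ with $q>3/2$, $q\ge p$ enters. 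Indeed, multiplication by $\nabla v_s\in L^q$ and the first-order term $v_s\cdot\nabla$ are relatively bounded (with arbitrarily small relative bound, after a shift) with respect to $-\P_p\Delta_p$: by Sobolev embedding $H^{1,q}(\Dom)\hookrightarrow L^r(\Dom)$ for suitable $r$, and since $q>3/2$ one has $r$ large enough that the lower-order terms map $\Do(-\P_p\Delta_p)=H^{2,p}\cap L^p_\sigma$ into $L^p_\sigma$ with small relative bound; standard perturbation theory for the $H^\infty$-calculus (e.g.\ \cite[Theorem 3.2]{DDHPV}) then shows $\mu_0+D$ has a bounded $H^\infty$-calculus of angle $<\pi/2$ for $\mu_0$ large. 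In particular $D$ is sectorial.

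Next I would verify Assumption~\ref{ass:standing} for $\A_p^{\AS}$: $A=0$ is bounded so the condition on $C$ is trivial once $C=\nabla\colon H^{1,p}(\Dom)\to L^p(\Dom)^3$ is bounded, which is immediate; note however that one must check $\nabla$ maps into $L^p_\sigma(\Dom)^3$, which fails in general, so actually one should define $C=\P_p\nabla$ or argue that the relevant combination $C(\lambda-A)^{-1}$ composed with $(\lambda-D)^{-1}$ lands correctly — this subtlety is worth a remark. For the condition on $B=\tfrac{1}{\varepsilon^2}\div$ one needs $\|\div\, y\|_{H^{1,p}(\Dom)}\lesssim \|D y\|_{L^p_\sigma}+\|y\|_{L^p_\sigma}$ for $y\in \Do(D)=H^{2,p}\cap H^{1,p}_0\cap L^p_\sigma$; since $\div\colon H^{2,p}\to H^{1,p}$ is bounded and $\|y\|_{H^{2,p}}\eqsim \|(\mu_0+D)y\|_{L^p}$ (elliptic regularity for the Stokes problem with lower-order terms, using $v_s\in H^{1,q}$, $q\ge p$, to control the lower-order contributions in $H^{0,p}$), this holds. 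With Assumption~\ref{ass:standing} in force and $A=0$ bounded, $D$ sectorial, Proposition~\ref{prop:A=0} gives $\mu\ge 0$ such that $\A_p^{\AS}+\mu$ is sectorial, $\Rsec$-sectorial if $D$ is (which it is, having bounded $H^\infty$-calculus after a shift), and has a bounded $H^\infty$-calculus of angle $\le\angH(D)$ provided $D$ has a bounded $H^\infty$-calculus — so after possibly enlarging $\mu$ to absorb the shift $\mu_0$ needed for $D$, we conclude $\A_p^{\AS}-\mu$ (equivalently $\mu+\A_p^{\AS}$ up to sign conventions) has a bounded $H^\infty$-calculus. The angle: since $\angH(\mu_0+D)<\pi/2$ can be taken arbitrarily close to... actually only $<\pi/2$ in general, but for $-\Delta$ on $L^p$ one has angle $0$ and the lower-order perturbation keeps the angle arbitrarily small after a large shift (the shift pushes the spectrum far right so the sector can be taken with small opening), giving $\angH(\A_p^{\AS}-\mu)=0$ as claimed.

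The main obstacle I anticipate is twofold. First, the mismatch between $X_1=H^{1,p}(\Dom)$ (scalar, or $\R$-valued — the pressure lives in $H^{1,p}$, not in a divergence-free space) and the range of $\nabla$: one must be careful that $C=\nabla$ genuinely maps $X_1$ into $X_2=L^p_\sigma(\Dom)^3$, which requires $\nabla$ of an $H^{1,p}$-function to be divergence-free — true since $\div\nabla = \Delta$ is not zero, so in fact $\nabla\colon H^{1,p}\to L^p(\Dom)^3$ and one composes with the Helmholtz projection, or more likely the intended reading is that $\A_p^{\AS}$ acts consistently because $(\lambda-D)^{-1}\nabla$ already produces the right object; I would clarify this by showing $\nabla\colon H^{1,p}(\Dom)\to L^p_\sigma(\Dom)^3$ after noting $\nabla\varphi$ is curl-free hence its Helmholtz-gradient part, or simply by interpreting $X_2$ appropriately — this needs care but is not deep. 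Second, the elliptic-regularity estimate $\|y\|_{H^{2,p}}\lesssim \|(\mu_0+D)y\|_{L^p}$ for the Stokes operator perturbed by $v_s\cdot\nabla + (\nabla v_s)^T$ on a $C^2$-domain must be established with the stated low regularity $v_s\in H^{1,q}$, $q>3/2$, $q\ge p$; the constraint $q>3/2$ is exactly what makes $\nabla v_s\in L^q$ act boundedly $H^{1,p}\cap\Do(-\P_p\Delta_p)\to L^p$ with small relative bound via the embedding $H^{1,p}\hookrightarrow L^{p^*}$ and Hölder with exponents $\tfrac1p=\tfrac1q+\tfrac1{p^*}$ requiring $p^*$ finite and large enough — this is the computational heart and I would not grind it out here, but it is routine given standard Stokes theory as in \cite[Chapter 7]{pruss2016moving}.
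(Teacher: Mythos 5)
Your high-level plan is exactly the paper's: verify Assumption~\ref{ass:standing}, check that $D$ is (after a shift) sectorial with bounded $H^\infty$-calculus of small angle, and invoke Proposition~\ref{prop:Azero}\eqref{it:Azero_Hcalculus}. The relative-bound estimate for the lower-order terms $v_s\cdot\nabla$ and $(\nabla v_s)^T$ is also in the right spirit, including the role of $q>3/2$ via H\"older and Sobolev embedding (the paper makes this precise with $d=3$, $\|v_s\cdot\nabla v\|_{L^p}\lesssim\|v_s\|_{H^{1,q}}\|v\|_{H^{2-\delta,p}}$ with $\delta=1-(3-q)/q$, and similarly for the zero-order term).

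However, you have built the argument on the wrong base operator for $D$, and your first "obstacle" is actually the symptom of that misreading rather than a technicality to patch. You treat $D=-\Delta+v_s\cdot\nabla+(\nabla v_s)^T$ as a lower-order perturbation of the \emph{Stokes} operator $-\P_p\Delta_p$ on $L^p_\sigma(\Dom)^3$ and consequently worry that $C=\nabla$ fails to map $H^{1,p}(\Dom)$ into divergence-free fields, proposing to replace $C$ by $\P_p\nabla$. But notice what the $L^p_\sigma$ interpretation would do to $B$: the off-diagonal entry $B=\tfrac{1}{\varepsilon^2}\div$ annihilates $L^p_\sigma(\Dom)^3$, so the coupling would vanish identically and the system would trivially decouple -- which cannot be the intended setup, and would make the whole $\varepsilon$-dependence vacuous. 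The point of artificial compressibility is precisely that the velocity is \emph{not} constrained to be divergence-free, so the $\sigma$ in the definition of $X_2$ is a typo; the paper's own proof confirms this by taking $D_0=-\Delta$ with $\Do(D_0)=H^{2,p}(\Dom)^3\cap H_0^{1,p}(\Dom)^3$, i.e.\ the vector-valued \emph{Dirichlet Laplacian} on $L^p(\Dom)^3$, with no Helmholtz projection anywhere. Once $X_2=L^p(\Dom)^3$, the map $C=\nabla\colon H^{1,p}(\Dom)\to L^p(\Dom)^3$ is trivially bounded, $B=\tfrac{1}{\varepsilon^2}\div\colon H^{2,p}(\Dom)^3\to H^{1,p}(\Dom)$ is of the same order as $D$, and the diagonal dominance follows at once; your elaborate workaround with $\P_p\nabla$ and the attendant questions about the Helmholtz decomposition become unnecessary. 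The rest of your sketch (small relative bound after a large shift, apply Proposition~\ref{prop:Azero}, keep the angle arbitrarily small by shifting) then goes through as in the paper.
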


\begin{remark}
	For the case $v_s\equiv 0$ one can consider the operator
	\begin{align*}
	M_2(\lambda)= \id - \tfrac{\varepsilon^2}{\lambda} \nabla\div (\lambda-\Delta)^{-1}, \quad \lambda\in \C \setminus (\sigma(-\Delta) \cup\{0\}). 
	\end{align*}
	This is related to the second Schur complement of $\A_p^{\AS}$
	\begin{align*}
	S_2(\lambda)= (\lambda-\Delta) - \tfrac{\varepsilon^2}{\lambda} \nabla\div, \quad \lambda\in \C \setminus (\sigma(-\Delta) \cup\{0\}), \quad \Do(S_2(\lambda))=\Do(D) 
	\end{align*}
	which is in fact -- up to a shift -- a Lam\'{e} operator studied for instance in \cite{Mit_2010, Dan_2010, Kun_2015}. Following the proof of \cite[Theorem 4.1]{Mit_2010}, $S_2(\lambda)$ is boundedly invertible on $L^p(\Dom)$ for $\lambda>0$. Hence $M_2(\lambda)= S_2(\lambda)(\lambda-D)^{-1}$ is a closed bijective operator and hence boundedly invertible. In particular, one can choose in this situation any $\mu>0$. The case with $v_s\equiv 0$ is comparable to the situation analyzed in \cite{Kost_2019}.
\end{remark}

\begin{proof}[Proof of Proposition~\ref{prop:artifical_Stokes}]
For $v_s\in H^{1,q}(\Dom)^3$ with $q>3/2$ and $q\geq p$ one has
\begin{align*}
\norm{v_s\cdot \nabla v}_{L^p} \leq \norm{v_s}_{L^{sp}} \norm{v}_{H^{1, rp}} 
\lesssim \norm{v_s}_{H^{1,q}} \norm{v}_{H^{2-\delta, p}}
\end{align*}
using Hölder's inequality and Sobolev embeddings with
\begin{align*}
s= \tfrac{3q}{p(3-q)}, \quad\tfrac{1}{s} + \tfrac{1}{r}=1, \quad \delta = \tfrac{3}{q},
\end{align*}
and
\begin{align*}
\norm{(\nabla v_s)^T v}_{L^p} \leq \norm{v_s}_{H^{1,sp}} \norm{v}_{L^{rp}} 
\lesssim \norm{v_s}_{H^{1,q}} \norm{v}_{H^{2-\delta, p}},
\end{align*}
here using Hölder's inequality and Sobolev embeddings with
\begin{align*}
s= \tfrac{p}{q}, \quad\tfrac{1}{s} + \tfrac{1}{r}=1, \quad \delta = 2- \tfrac{3}{q}.
\end{align*}
Since  the operator $D_0=- \Delta$ with $\Do(D_0)=H^{2,p}(\Dom)^3\cap H_{0}^{1,p}(\Dom)^3$ has a bounded $H^\infty$-calculus of angle zero, and as shown by the estimate above $v_s\cdot \nabla + (\nabla v_s)^T$ is a lower order perturbation, there exists a $\mu_0\geq 0$ such that $\mu_0+\diag$ has a bounded $H^\infty$-calculus of angle zero. 
Therefore, the operator $\A_p^{\AS}$ 
is diagonally dominant as
\begin{align*}
\norm{\nabla p}_{L^p(\Dom)^3} \leq \norm{p}_{H^{1,p}(\Dom)^3} \quad \hbox{and} \quad \norm{\tfrac{1}{\epsilon^2} \div\, v}_{H^{1,p}(\Dom)} \leq \tfrac{1}{\epsilon^2} \norm{v}_{H^{2,p}(\Dom)^3},
\end{align*}
and by 
Corollary~\ref{prop:Azero}
it follows that there is a $\mu \geq \mu_0$ such that $\mu+\A$ has a bounded $H^\infty$-calculus of angle zero. 
\end{proof}

\subsection{Second order Cauchy problems with strong damping}\label{subsec:2ndOrder}
Second order Cauchy problems of the form
\begin{align}\label{eq:second_order_CP_TS}
\partial_t^2 u + T \partial_t u + Su =f, \quad u(0)=u_0, \quad \partial_t u(0)=v_0
\end{align} 
for operators $S$ and $T$ in $Y$ can be re-written  as a first order Cauchy problem by formally setting $v=\partial_t u$ to obtain
\begin{align}\label{eq:second_order_CP}
\A^{\SD}= \begin{bmatrix}
0 & -\id \\ S & T
\end{bmatrix}.
\end{align}
Classical examples are d'Alembert's wave equation, where $T=0$ and $S=-\Delta$, and the beam equation where $T=0$ and $S=\Delta^2$.
These operators are not diagonally dominant and therefore cannot be treated by the methods presented here, and in fact these equations are not parabolic. 
In \cite{Schnau_2010} for $S$ with $\Do(S)\subseteq Y$ the case where $T= S^{1/2}$, and $X_1=\Do(S^{1/2})$ while $X_2=Y$ is considered. This is parabolic, but not diagonally dominant, and 
most of the damped wave and plate equations are rather lower dominant, that is, $\Do(\A)=\Do(C)\times \Do(D)$, compare \cite[Definition 2.2.1]{Tretter},
than diagonally dominant. 
 Only when one adds a relatively strong damping such as 
a \emph{Kelvin-Voigt-type damping}, then one has that $T$ and $S$ are of the same order. The following consequence of Corollary~\ref{prop:Azero} captures such situations. In the literature there are many works on the regularity properties  of solutions to plate or wave equations, but there seems to be little known about the $H^\infty$-calculus of such operators.
\begin{corollary} \label{cor:second_Order_CP_damping}
	Let $T$ and $S$ be operators in a Banach space $Y$. Assume that $T$ is closed and densely defined and let 
$S$ be $T$-bounded,  i.e.\ there exist $c_0,L_0>0$ such that
$$
\|S y\|_{Y} \leq c_0 \|T y\|_{Y}+ L_0\|y\|_{Y} \quad \text{ for all }y\in \Do(T).
$$
Then the operator $\A^{\SD}$ with strong damping as in \eqref{eq:second_order_CP} on the space
	\begin{align*}
	X= \Do(T) \times Y \quad\hbox{with} \quad \Do(\A^{\SD})=\Do(T) \times \Do(T)
	\end{align*}
	is diagonally dominant. Moreover, if $T$ is sectorial, 
	then for all $\psi\in (\om(T),\pi)$ there exists $\mu\geq 0$ such that $\A^{\SD}+\mu$
	 is sectorial of angle $\leq \psi$.
	 The respective statement holds for $\Rsec$-sectoriality and for the boundedness of the $H^\infty$-calculus.
\end{corollary}

\begin{example}
\label{ex:KelvinVoigt}
	An example is the Kelvin-Voigt plate-like equation  
	\begin{align*}
	\partial_t^2 u  + \varepsilon\Delta^2 \partial_t  u + \Delta^2 u=f, \quad
	u(0)=u_0, \quad\partial_tu(0)=v_0, \quad \varepsilon>0,
	\end{align*}
	discussed in \cite[Section 5.5]{LaTri_book_I}. Setting $\partial_t  u=v$, it translates to
	\begin{align*}
	\A^{\SD}= \begin{bmatrix}
	0 & -\id \\ \Delta^2 & \varepsilon\Delta^2
	\end{bmatrix}
	\end{align*}
	which following Corollary~\ref{cor:second_Order_CP_damping} can be considered in
	\begin{align*}
	H^{4,p}(\R^d)\times L^p(\R^d) \quad \hbox{with} \quad \Do(\A^{\SD})=H^{4,p}(\R^d)\times H^{4,p}(\R^d).
	\end{align*}
 Moreover, by Corollary~\ref{cor:second_Order_CP_damping}, for each $\psi>0$ there exists $\mu\geq 0$ such that $\mu+\A^{\SD}$ has a bounded $H^{\infty}$-calculus of angle $\leq \psi$.
	
	Considering a smooth bounded domain, one can still apply Corollary~\ref{cor:second_Order_CP_damping} as long as the boundary conditions assure that the bi-Laplacian $\Delta^2$ is sectorial, $\Rsec$-sectorial, or has a bounded $H^\infty$-calculus, respectively.
\end{example}

\begin{example}	
	The strongly damped wave equation given in \cite[Section 3.8]{LaTri_book_I} is
	\begin{align*}
	\partial_t^2 u - \varepsilon\Delta \partial_t  u - \Delta u =f, \quad
	u(0)=u_0, \quad\partial_tu(0)=v_0, \quad \varepsilon>0.
	\end{align*}
	This yields to
	\begin{align*}
	\A^{\SD}= \begin{bmatrix}
	0 & -\id \\ -\Delta & -\varepsilon\Delta
	\end{bmatrix}
	\end{align*}
		which by Corollary~\ref{cor:second_Order_CP_damping} can be considered in
	\begin{align*}
	H^{2,p}(\R^d)\times L^p(\R^d) \quad \hbox{with} \quad \Do(\A^{\SD})=H^{2,p}(\R^d)\times H^{2,p}(\R^d).
	\end{align*}
    Corollary~\ref{cor:second_Order_CP_damping} also ensures that -- up to a shift -- $\A^{\SD}$ has a bounded $H^{\infty}$--calculus of arbitrary small angle.
	
	As in Example \ref{ex:KelvinVoigt}, we may also consider smooth domains with suitable boundary conditions as long as the corresponding Laplacian $-\Delta$ is sectorial, $\Rsec$-sectorial or has bounded $H^{\infty}$--calculus with such boundary conditions, respectively.
\end{example}

\begin{example}
	Consider a damped thermoelastic plate equation of the type 
	\begin{align*}
	\begin{split}
	\partial_t^2u+\varepsilon \Delta^2 \partial_tu+\Delta^2 u+\Delta \theta & = f, \\
	\partial_t\theta-\Delta \theta-\Delta \partial_t u & =g, \\
	u(0)=u_0, \ \ \  \partial_tu(0)&=v_0,\\ 
	\theta(0) & =\theta_0,
	\end{split}
	\end{align*}
	where one couples to the Kelvin-Voigt plate-like  equation above the temperature similar to \cite[3.11.1]{LaTri_book_I}.
Then one obtains the first order system with
		\begin{align*}
	\A^{\SD} =
	\begin{bmatrix}
	\begin{array}{cc|c}
0 & -\id  & 0 \\
	\Delta^2 & \varepsilon  \Delta^2  & \Delta \\ 	\hline
	0 & -\Delta & -\Delta
	\end{array}
	\end{bmatrix}  = \begin{bmatrix}
	A & B \\ C & D
	\end{bmatrix}. 
	\end{align*}
	Here, one considers  the operators  $A$ and $D$ on the  diagonal
	in $X_1=H^{4,p}(\R^d)\times L^p(\R^d)$ and $X_2=L^p(\R^d)$ with $\Do(A)=H^{4,p}(\R^d)\times H^{4,p}(\R^d)$ and $\Do(D)=H^{2,p}(\R^d)$, respectively.
	Therefore, we consider the above operator $\A^{\SD}$ on the space $X=X_1\times X_2$ and $\Do(\A^{\SD})=\Do(A)\times \Do(D)$. Note that $A$ was studied in Example \ref{ex:KelvinVoigt}.
	
	The block $B$ is $D$-bounded (more precisely, we have $\|B v\|_{X_1}\lesssim \|v\|_{\Do(D)}$ for all $v\in \Do(D)$), and the block $C$ given by 
	\begin{align*}
	\Big\|\begin{bmatrix} 0 & -\Delta\end{bmatrix}\begin{bmatrix} u \\ v\end{bmatrix}  \Big\|_{L^p(\R^d)} = \norm{\Delta v}_{L^p(\R^d)}
	\end{align*}
	is a lower order term compared to $A$, and therefore by Corollary~\ref{cor:calculus_pert_block_operators_lower_order} -- up to a shift -- the operator $\A$ has a bounded $H^\infty$-calculus of arbitrarily small angle. Here again these operators on domains can also be considered along the same lines provided that the boundary conditions guarantee that the operators $\varepsilon\Delta^2$ and $-\Delta$ have a bounded $H^\infty$-calculus.
\end{example}

\subsection{Beris-Edwards $Q$-tensor model for liquid crystals}
\label{ss:Beris_Edwards}
In the Beris-Edwards model of
nematic liquid crystal, the molecular orientation is described by
a so-called Q-tensor,  a function 
\begin{align*}
Q\colon \R_+\times\Dom\rightarrow S_{0,\C}^d \stackrel{{\rm def}}{=}
\{Q\in \C^{d\times d}\,\colon \,Q=Q^T \hbox{ and }  \tr Q=0 \}, 
\end{align*}and the fluid properties by the velocity field $u\colon\R_+\times \Dom \rightarrow \R^3$.
The Beris-Edwards model has been investigated recently by Wrona in \cite{Wrona_PhD} in maximal $L^p_t$-$L_x^2$-spaces.
There, a linearization of the full quasi-linear model in the strong setting  for fixed $Q_0\colon \Dom\rightarrow S_{0,\C}^d$
in the space 
\begin{align*}
X&= L^q_{\sigma}(\Dom)\times H^{1,q}(\Dom;S_{0,\C}^d) 
\end{align*}
 is  given by the block operator matrix
\begin{align*}
\A_q^{\EB}(Q_0) = 
\begin{bmatrix}
-\mathbb{P}_q \Delta & -\mathbb{P}\div S_{\xi}(Q_0) (\id-\Delta_{N,q}) \\ -\tilde{S}_\xi(Q_0)\nabla & \id-\Delta_{N,q} 
\end{bmatrix}=
\begin{bmatrix}
A & B \\ C & D
\end{bmatrix}
\end{align*}
with domain
\begin{align*}
\Do(\A_q^{\EB}(Q_0))&= \big( H_{0}^{1,q}(\Dom)^3\cap H_{\sigma}^{2,q}(\Dom)^3\big) \times  \{Q\in H^{3,q}(\Dom;S_{0,\C}^d)\colon \partial_n Q=0\}.
\end{align*}
Here  
$\partial_n Q$ denotes the outer normal derivative of $Q$, $\mathbb{P}_q$ denotes the Helmholtz projection in $L^q(\Dom;\R^3)$, and
\begin{align*}
S_{\xi}(Q)P&= [Q,P]-\tfrac{2\xi}{d}P-\xi\{Q,P\} + 2\xi(Q+\id/d)\tr(QP), \\
\tilde{S}_{\xi}(Q)P&= [\overline{Q},P^W]-\tfrac{2\xi}{d}P^D-\xi\{\overline{Q},P^D\} + 2\xi(\overline{Q}+\id/d)\tr(\overline{Q} P^D),
\end{align*}
with $[Q,P]=QP-PQ$ and $\{Q,P\}=QP+PQ$ denoting the commutator and the anti-commutator, respectively, $\overline{Q}$ the complex conjugate of $Q$, and
\begin{align*}
P^D=\tfrac{1}{2}(P+P^T) \quad \hbox{and} \quad P^W=\tfrac{1}{2}(P-P^T) 
\end{align*}
the symmetric and anti-symmetric part of $P\in S_{0,\C}^d$ respectively. 
The parameter $\xi\in \R$ describes the ration of tumbling and aligning effects.
First, we verify that these operators fit into the theory presented here.

\begin{lemma}\label{lemma:Edward-Berris} 
	Let $q\in(1,\infty)$, $\Dom\subseteq\R^3$ be a bounded domain with $\partial\Dom\in C^3$ and $Q_0\in W^{1,\infty}(\Dom;S_{0,\C}^d)$, then $\A_q^{\EB}(Q_0)$ satisfies Assumption~\ref{ass:standing} with $L=0$, and it satisfies the estimates in Assumption~\ref{ass:fractions_pm}~\nameref{ass:fractions_-}.  

\end{lemma}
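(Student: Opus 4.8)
The plan is to verify Assumption~\ref{ass:standing} and the estimates in Assumption~\ref{ass:fractions_pm}\nameref{ass:fractions_-} for $\A_q^{\EB}(Q_0)$ by direct computation, using elliptic regularity for the Stokes and Neumann Laplace operators together with the fact that $Q_0\in W^{1,\infty}$ acts as a bounded pointwise multiplier on $L^q$-based spaces of low smoothness. First I would identify the diagonal blocks: $A=\mathbb{P}_q\Delta$ is (up to a shift) the Stokes operator on $L^q_\sigma(\Dom)$, which is sectorial with bounded $H^\infty$-calculus, $\Do(A)=H^{1,q}_{\sigma,0}(\Dom)\cap H^{2,q}(\Dom)^d$ in the chosen state space $X_1=L^q_\sigma(\Dom)$; and $D=\Delta_{N,q}-\id$ acts on $X_2=H^{1,q}(\Dom;S_{0,\C}^d)$ with domain $\{Q\in H^{3,q}:\partial_nQ=0\}$, i.e.\ $D$ is the Neumann Laplacian realized one derivative up, again with $0\in\rho(D)$ and bounded $H^\infty$-calculus by lifting. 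Since both diagonal operators are boundedly invertible, the lower-order constant $L$ can be taken to be $0$.

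Next I would check the relative boundedness conditions in Assumption~\ref{ass:standing}(2). For $C=\tilde S_\xi(Q_0)\nabla\colon X_1\to X_2$: given $x\in\Do(A)\subseteq H^{2,q}(\Dom)$, one has $\nabla x\in H^{1,q}(\Dom)$, and since $Q_0\in W^{1,\infty}$ the algebraic expression $\tilde S_\xi(Q_0)$ (a smooth polynomial in $Q_0$ with $W^{1,\infty}$ coefficients) is a bounded multiplier on $H^{1,q}$, hence $\|Cx\|_{X_2}=\|\tilde S_\xi(Q_0)\nabla x\|_{H^{1,q}}\lesssim \|Q_0\|_{W^{1,\infty}}\|x\|_{H^{2,q}}\lesssim \|Q_0\|_{W^{1,\infty}}\|Ax\|_{X_1}$, using elliptic regularity $\|x\|_{H^{2,q}}\eqsim\|\mathbb{P}_q\Delta x\|_{L^q}$ for the Stokes operator on a $C^3$ (hence $C^2$) domain. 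For $B=\mathbb{P}\div S_\xi(Q_0)(-\Delta_{N,q}+\id)\colon X_2\to X_1$: given $y\in\Do(D)\subseteq H^{3,q}$, one has $(-\Delta_{N,q}+\id)y\in H^{1,q}$, multiplication by $S_\xi(Q_0)$ keeps it in $H^{1,q}$, then $\div$ lands in $L^q$ and $\mathbb{P}$ is bounded on $L^q$, so $\|By\|_{X_1}\lesssim\|Q_0\|_{W^{1,\infty}}\|y\|_{H^{3,q}}\lesssim\|Q_0\|_{W^{1,\infty}}\|Dy\|_{X_2}$, again by elliptic regularity for $\Delta_{N,q}$. This gives Assumption~\ref{ass:standing} with $L=0$.

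For Assumption~\ref{ass:fractions_pm}\nameref{ass:fractions_-}, since $0\in\rho(A)\cap\rho(D)$ the range conditions $\Ran(C)\subseteq\Ran(D^\delta)$ and $\Ran(B)\subseteq\Ran(A^\delta)$ are automatic (the remark right after Assumption~\ref{ass:fractions_pm}), so only the two estimates must be shown for some $\delta\in(0,1)$. The key point is to compute the relevant fractional-power domains: $\Dd(A^{1-\delta})$ and $\Dd(D^{1-\delta})$ are, by the bounded $H^\infty$-calculus of $A$ and $D$, complex interpolation spaces between the base space and the domain, so (on a $C^3$ domain, with $\delta$ small enough to stay below any boundary-condition threshold as in the Keller--Segel computation) $\Dd(A^{1-\delta})\hookrightarrow H^{2-2\delta,q}_\sigma(\Dom)$ and $\Dd(D^{1-\delta})\hookrightarrow H^{3-2\delta,q}(\Dom;S_{0,\C}^d)$, while $\Dd(D^{-\delta})\cong(H^{1,q}\text{-based space})$ is one derivative below $X_2$, i.e.\ roughly $H^{1-2\delta,q}$, and $\Dd(A^{-\delta})$ is $H^{-2\delta,q}_\sigma$ (dualizing the Stokes scale, using $(\mathbb{P}_q\Delta_q)^*=\mathbb{P}_{q'}\Delta_{q'}$ on $C^2$ domains as in the proof of Proposition~\ref{prop:LCD}). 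With these identifications one estimates $\|D^{-\delta}Cx\|_{X_2}\eqsim\|\tilde S_\xi(Q_0)\nabla x\|_{H^{1-2\delta,q}}\lesssim\|Q_0\|_{W^{1,\infty}}\|\nabla x\|_{H^{1-2\delta,q}}\lesssim\|Q_0\|_{W^{1,\infty}}\|x\|_{H^{2-2\delta,q}}\eqsim\|A^{1-\delta}x\|_{X_1}$, using that a $W^{1,\infty}$ (indeed Lipschitz) function is a pointwise multiplier on $H^{s,q}$ for $|s|<1$; and symmetrically $\|A^{-\delta}By\|_{X_1}\eqsim\|\mathbb{P}\div S_\xi(Q_0)(-\Delta_{N,q}+\id)y\|_{H^{-2\delta,q}}\lesssim\|S_\xi(Q_0)(-\Delta_{N,q}+\id)y\|_{H^{1-2\delta,q}}\lesssim\|Q_0\|_{W^{1,\infty}}\|(-\Delta_{N,q}+\id)y\|_{H^{1-2\delta,q}}\lesssim\|Q_0\|_{W^{1,\infty}}\|y\|_{H^{3-2\delta,q}}\eqsim\|D^{1-\delta}y\|_{X_2}$, where $\div$ costs one derivative and $\mathbb{P}$ is bounded on the relevant negative-order space (again via duality of the Stokes scale). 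The main obstacle I expect is precisely the bookkeeping of the fractional scales near the boundary: identifying $\Dd(A^{-\delta})$, $\Dd(D^{-\delta})$ and the mapping properties of $\div$ and $\mathbb{P}_q$ on negative-order Bessel potential spaces, and choosing $\delta$ small enough that no compatibility/boundary conditions enter the interpolation spaces — this is the analogue of the $\gamma<\tfrac{1}{2p}$ restrictions appearing in the Ericksen--Leslie and Keller--Segel computations, and requires the $C^3$ regularity of $\partial\Dom$ assumed in the lemma.
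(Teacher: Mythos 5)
Your verification of Assumption~\ref{ass:standing} with $L=0$ is the same as the paper's: exploit that $A=\mathbb{P}_q\Delta$ and $D=\Delta_{N,q}-\id$ are boundedly invertible, that $S_\xi(Q_0),\tilde S_\xi(Q_0)\in W^{1,\infty}$ are multipliers on $H^{1,q}$, and count derivatives. No discrepancy there.

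The interesting point is the second claim of the lemma. The paper's written proof in fact stops after the diagonal-dominance estimates and never verifies the estimates of Assumption~\ref{ass:fractions_pm}\nameref{ass:fractions_-}; so your sketch goes strictly beyond what the paper records. Your strategy --- identify $\Dd(A^{\pm\delta})$, $\Dd(D^{\pm\delta})$ as Bessel-potential scales (shifted by the one-derivative offset of $X_2=H^{1,q}$ relative to $L^q$ and dualizing the Stokes scale à la \eqref{eq:Helmholtz_projection_negative_spaces}), then use that a $W^{1,\infty}$ function is a pointwise multiplier on $H^{s,q}$ for $|s|<1$, and that $\div$ and $\mathbb{P}_q$ act as expected on these spaces for $\delta$ small --- is sound and is exactly the analogue of the $\gamma<\tfrac{1}{2p}$ argument carried out in the Ericksen--Leslie and Keller--Segel subsections. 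Note that here only $Q_0\in W^{1,\infty}$ is required (unlike the $C^{1,\alpha}$ hypothesis in Ericksen--Leslie), because the multiplier is $Q_0$ itself rather than $\nabla Q_0$, and $W^{1,\infty}=C^{0,1}$ is already a Lipschitz multiplier on $H^{s,q}$, $|s|<1$. Two small cautions for a full write-up: you should state the boundedness of $\mathbb{P}_q$ from $H^{-2\delta,q}$ to $\Dd((-\mathbb{P}_q\Delta)^{-\delta})$ explicitly (as in \eqref{eq:Helmholtz_projection_negative_spaces}, valid only for $\delta$ below a threshold depending on $q$), and you should verify that the identification $\Dd(D^{-\delta})\eqsim H^{1-2\delta,q}$ on the $H^{1,q}$-realized Neumann Laplacian really holds with the stated norm equivalence (this is where $\partial\Dom\in C^3$ is used). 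Also worth observing: since Corollary~\ref{cor:extrapolation} does not require Assumption~\ref{ass:fractions_pm}, the $(-)$-estimates of this lemma are not actually invoked in the subsequent proposition; they matter only for the extension indicated in the remark with $Q_0\in C^{1,\alpha}$.
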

\begin{proof}
Diagonal dominance follows from the estimates
		\begin{align*}
	\norm{BQ}_{L^{q}}&\lesssim \norm{(\div S_{\xi}(Q_0)) (-\Delta + \id)Q}_{L^{q}}
	+\norm{(S_{\xi}(Q_0)) \div (-\Delta + \id)Q}_{L^{q}} \\
	 &\lesssim  \norm{S_{\xi}(Q_0)}_{W^{1,\infty}}\norm{Q}_{H^{3,q}} \lesssim \norm{Q_0}_{W^{1,\infty}}\norm{DQ}_{H^{1,q}}, \\
	\norm{Cu}_{H^{1,q}}&=\norm{\tilde{S}_\xi(Q_0)\nabla u}_{H^{1,q}} \leq \norm{\tilde{S}_\xi(Q_0)}_{W^{1,\infty}}\norm{u}_{H^{2,q}}\lesssim \norm{Q_0}_{W^{1,\infty}}\norm{Au}_{L^{q}}
	\end{align*}
	for $u\in \Do (A)$ and $Q\in \Do(D)$, where one uses that $A=-\P_q \Delta$ and $D=\id-\Delta_{N,q}$ are boundedly invertible.
\end{proof}

In \cite[Corollary 3.2.7 and Theorem 3.2.16]{Wrona_PhD} it is shown that in the Hilbert space case $q=2$ the operator $\A_2^s(Q_0)$ is $\mathcal{J}$-symmetric,  sectorial, 
and it generates a contraction semigroup. We summarize this result without proof here.
\begin{proposition}[The case $q=2$]\label{prop:wrona}
		Let $\Dom\subseteq\R^d$ be a bounded domain with $\partial\Dom\in C^3$ and $Q_0\in W^{1,\infty}(\Dom;S_{0,\C}^d)$, then $\A_2^{\EB}(Q_0)$ is an invertible $\mathcal{J}$-symmetric and sectorial of angle $<\pi/2$. 
\end{proposition}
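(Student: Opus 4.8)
\textbf{Proof plan for Proposition~\ref{prop:wrona}.} This statement is attributed to \cite[Corollary 3.2.7 and Theorem 3.2.16]{Wrona_PhD} and is quoted without proof, so the task is essentially to indicate how the three assertions --- $\mathcal{J}$-symmetry, invertibility, and sectoriality of angle $<\pi/2$ --- would be verified within the framework developed above. The plan is to reduce everything to Corollary~\ref{cor:Jsymmetry}: once we know that $\A_2^{\EB}(Q_0)$ satisfies Assumption~\ref{ass:standing}, that its diagonal blocks are self-adjoint, and that it is $\mathcal{J}$-symmetric, that corollary directly yields a bounded $H^\infty$-calculus with $\angH(\A)=\om(\A)\le \pi/2$, and in particular sectoriality of angle $\le\pi/2$; invertibility then has to be argued separately.

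First I would record that, for $q=2$, Lemma~\ref{lemma:Edward-Berris} already gives Assumption~\ref{ass:standing} with $L=0$, and that the diagonal operators are self-adjoint: $A=\mathbb{P}_2\Delta$ is the Stokes operator on $L^2_\sigma(\Dom)$ (self-adjoint and negative), and $D=\Delta_{N,2}-\id$ is the shifted Neumann Laplacian acting on $H^{1,2}(\Dom;S_{0,\C}^d)$, which is self-adjoint with respect to the $H^1$-type inner product in which it is realized; both are sectorial of angle $0$. Then the crucial structural input is the $\mathcal{J}$-symmetry, i.e.\ $C\subseteq -B^*$, equivalently the cancellation identity of Remark~\ref{eq:cancellation_H_B_C}: one must check that $\Re(B Q\mid u)_{X_1}+\Re(C u\mid Q)_{X_2}=0$ for all $(u,Q)\in\Do(\A)$ with a suitable weight $\gamma$. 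This is where the algebraic identities for $S_\xi$ and $\tilde S_\xi$ enter: integrating by parts in the terms $\mathbb{P}\div S_\xi(Q_0)(-\Delta_N+\id)Q$ against $u$ and in $\tilde S_\xi(Q_0)\nabla u$ against $Q$ (in the $H^1$ pairing), the boundary terms vanish by the conditions $u=0$ and $\partial_n Q=0$ on $\partial\Dom$, and the remaining volume integrals cancel precisely because $\tilde S_\xi(Q_0)$ is, up to the symmetrization induced by the $S^D$/$S^W$ splitting, the formal adjoint of $S_\xi(Q_0)$. This computation is carried out in \cite{Wrona_PhD}; I would cite it rather than reproduce it.

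With $\mathcal{J}$-symmetry and self-adjointness of $A,D$ in hand, Corollary~\ref{cor:Jsymmetry} gives that $\A_2^{\EB}(Q_0)$ has a bounded $H^\infty$-calculus with $\angH(\A)=\om(\A)\le\frac{\pi}{2}$; in particular it is sectorial, and the angle is $<\pi/2$ once one also knows that $0\in\rho(\A)$ (an invertible sectorial operator of angle $\le\pi/2$ which is moreover the negative generator of a contraction semigroup has spectrum bounded away from the imaginary axis near $0$, so the sectoriality angle is strictly less than $\pi/2$). For invertibility I would argue as follows: by Proposition~\ref{l:relation_resolvent_AD_calA}\eqref{it:lambda_resolvent_c} it suffices to show that one of the operators $M_j(0)$ (equivalently one of the Schur complements $S_j(0)$) is boundedly invertible. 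Since $A$ and $D$ are boundedly invertible (the Stokes operator is invertible on a bounded domain, and $\Delta_{N,2}-\id$ is invertible by the $-1$ shift), $M_1(0)=\id-BD^{-1}CA^{-1}$ and $M_2(0)=\id-CA^{-1}BD^{-1}$ are well-defined bounded operators by Remark~\ref{rem:ass_standing}, and one checks that the $\mathcal{J}$-symmetry makes $BD^{-1}CA^{-1}$ a dissipative (indeed nonpositive-type) operator in the appropriate equivalent inner product, so that $\id$ minus it stays invertible; alternatively, one notes that the contraction-semigroup property from \cite{Wrona_PhD} already forces $\{ \Re\lambda\ge 0\}\setminus\{0\}\subseteq\rho(-\A)$ and combines this with the explicit lower bound $\Re(\A h\mid h)_{H_\gamma}\ge c\|h\|^2$ coming from the coercivity of $-A$ and $-D$ together with the cancellation of the off-diagonal contribution, which gives $0\in\rho(\A)$ directly.

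The main obstacle is the $\mathcal{J}$-symmetry/cancellation identity: verifying $C\subseteq -B^*$ requires the precise form of $S_\xi$ and $\tilde S_\xi$ and a careful integration by parts that exploits both boundary conditions and the trace-free symmetric structure of the target space $S_{0,\C}^d$, and it is genuinely model-specific rather than an application of the abstract machinery. Everything else --- diagonal dominance (already done in Lemma~\ref{lemma:Edward-Berris}), self-adjointness of the diagonal, passage from cancellation to $H^\infty$-calculus via Corollary~\ref{cor:Jsymmetry}, and the coercivity estimate yielding $0\in\rho(\A)$ --- is routine given the results established earlier in the paper. Accordingly, in the write-up I would simply cite \cite[Corollary 3.2.7 and Theorem 3.2.16]{Wrona_PhD} for the detailed verification and remark that the conclusion is consistent with Corollary~\ref{cor:Jsymmetry}.
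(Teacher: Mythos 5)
The paper itself gives no proof of this proposition; it is quoted verbatim from \cite[Corollary 3.2.7 and Theorem 3.2.16]{Wrona_PhD}, and the sentence immediately preceding it says so explicitly (``We summarize this result without proof here''). You correctly recognize this and propose to cite Wrona for the model-specific cancellation identity, which is the same thing the paper does. Your supplementary sketch of how the pieces fit together is also broadly in line with how Proposition~\ref{prop:wrona} is \emph{used} later in the paper (the very next proof invokes Proposition~\ref{prop:wrona} together with Corollary~\ref{cor:Jsymmetry}), so the logical architecture you describe is consistent.

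However, there is a genuine flaw in the one place where you try to supply an argument rather than a citation: the claim that sectoriality of angle strictly less than $\pi/2$ follows from ``invertibility plus generating a contraction semigroup'' is false. Corollary~\ref{cor:Jsymmetry} only yields $\angH(\A)=\om(\A)\leq\pi/2$, and invertibility together with $m$-accretivity does not force $\om(\A)<\pi/2$: the operator $\id + iS$ with $S$ unbounded self-adjoint is boundedly invertible, $m$-accretive, and has sectoriality angle exactly $\pi/2$, since its spectrum runs along the vertical line $\Re\lambda=1$. Having the spectrum avoid a neighborhood of the origin says nothing about how close it may come to the imaginary axis at infinity, which is what governs the angle. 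To obtain the strict inequality one needs a genuine quantitative sectoriality estimate --- e.g.\ that the numerical range of $\A_2^{\EB}(Q_0)$ in the weighted inner product lies in a closed subsector of $\overline{\Sigma_{\psi}}$ with $\psi<\pi/2$ --- and that is precisely the coercive form estimate that Wrona establishes and that cannot be reconstructed from Corollary~\ref{cor:Jsymmetry} and invertibility alone. Your alternative argument, the coercivity lower bound $\Re(\A h\mid h)_{H_\gamma}\gtrsim \|h\|^2$, gives invertibility but again only controls the real part of the form and hence does not by itself bound the sectoriality angle away from $\pi/2$; you would also need an upper bound on $|\Im(\A h\mid h)_{H_\gamma}|$ in terms of $\Re(\A h\mid h)_{H_\gamma}$. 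In a final write-up it would therefore be cleaner to attribute all three conclusions --- invertibility, $\mathcal{J}$-symmetry, and strict sectoriality --- to Wrona and not attempt a partial re-derivation of the angle.
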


Corollary~\ref{cor:extrapolation} yields the boundedness of the $H^\infty$-calculus for  $q$ near $2$. 
\begin{proposition}[$\Rsec$-sectoriality and bounded $H^\infty$-calculus near $q=2$]
	Let   $\Dom\subseteq\R^d$ be a bounded domain with $\partial\Dom\in C^3$ and $Q_0\in W^{1,\infty}(\Dom;S_{0,\C}^d)$, then there exists $\delta>0$ such that for all $q\in (2-\delta, 2+ \delta)$ the operators 
$\A_q^{\EB}(Q_0)$ are invertible and have a bounded $H^\infty$-calculus of angle less than $\pi/2$. In particular $\A_q^{\EB}(Q_0)$  are $\Rsec$-sectorial of angle less than $\pi/2$.
\end{proposition}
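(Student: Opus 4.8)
The strategy is to apply the extrapolation result Corollary~\ref{cor:extrapolation} with the distinguished parameter $\thesp = 2$, running the scale over integrability exponents $q$ in a neighbourhood of $2$. First I would set up the consistent family: for $q$ in a small interval $I=[\alpha,\beta]\ni 2$, let $\A_q^{\EB}(Q_0)$ act on $X_q = L^q_\sigma(\Dom)\times H^{1,q}(\Dom;S_{0,\C}^d)$ with the domains as above, and write $\diag_q$ for its diagonal part. The two spaces $X_{1,q}=L^q_\sigma(\Dom)$ and $X_{2,q}=H^{1,q}(\Dom;S_{0,\C}^d)$ have non-trivial type for $q\in(1,\infty)$ and form complex interpolation scales in $q$ (via the usual identification of complex interpolation of $L^q$- and Bessel-potential spaces on a nice bounded domain); this gives Assumption~\ref{ass:consistent_family}\eqref{it:space_complex_scale}. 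Lemma~\ref{lemma:Edward-Berris} already yields Assumption~\ref{ass:standing} with $L=0$ for each $q$, hence Assumption~\ref{ass:consistent_family}\eqref{it:standing_theta} with $L_\theta=0$. The consistency in \eqref{it:consistency}--\eqref{it:consistency_BC} follows because $A_q=\P_q\Delta$, $D_q=\Delta_{N,q}-\id$ are the standard consistent realizations (Stokes and Neumann Laplacian on a $C^3$-domain are consistent in $q$), and $B_q,C_q$ are given by the same differential expressions on the overlap of domains; the consistency of the full family $(\A_q^{\EB}(Q_0))_q$ then follows from Lemma~\ref{l:consistency_A_theta} — either directly via \eqref{it:consistency_for_nested_family} using that on a finite-measure domain the $L^q$-scale is nested, or via the remark that bounded invertibility of $M_{j,q}(\lambda)$ makes \eqref{it:consistency_M_j} automatic once the resolvents are consistent.

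\textbf{Verifying the hypotheses of Corollary~\ref{cor:extrapolation}.} I would now check the three labelled conditions with $\psi$ slightly below $\pi/2$. Condition~(a): $\diag_q$ has a bounded $H^\infty$-calculus of angle $<\pi/2$ for all $q$ near $2$, because $A_q=\P_q\Delta$ is the Stokes operator (bounded $H^\infty$-calculus of angle $<\pi/2$ on $L^q_\sigma$ of a $C^2$-domain, see e.g.\ \cite{pruss2016moving}) and $D_q=\Delta_{N,q}-\id$ is an invertible Neumann Laplacian shift, which also has bounded $H^\infty$-calculus of angle $0$; diagonal operators inherit the maximum of the two calculi. Condition~(c): $\A_2^{\EB}(Q_0)$ is sectorial of angle $<\pi/2$ by Proposition~\ref{prop:wrona}. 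Condition~(b): $-\A_2^{\EB}(Q_0)$ generates a $C_0$-semigroup of contractions — this is exactly the statement that $\A_2^{\EB}(Q_0)$ is $\mathcal{J}$-symmetric and (after the argument behind Proposition~\ref{prop:wrona}, cf.\ Corollary~\ref{cor:Jsymmetry} and Proposition~\ref{prop:hilbert_space_positivity}) accretive with respect to an equivalent Hilbertian inner product; since Proposition~\ref{prop:wrona} asserts invertibility and $\mathcal{J}$-symmetry with $A_2=A_2^*$, $D_2=D_2^*$, one may invoke Corollary~\ref{cor:Jsymmetry} (or its proof via Remark~\ref{eq:cancellation_H_B_C}) to conclude contractivity. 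I would also record the compatibility estimate \eqref{eq:estimate_A_A_0_theta_alpha_beta}: since $0\in\rho(\A_2^{\EB}(Q_0))$ and $0\in\rho(A_q)\cap\rho(D_q)$ for all $q$, condition \eqref{eq:equivalence_norm_A_0_A_theta} holds at $\theta=2$ and by Remark~\ref{r:extrapolation_equivalence_norm_A_0_A} (Sneiberg) persists for $q$ near $2$; shrinking $\delta$ if necessary guarantees it at the two endpoints. With all hypotheses in place, Corollary~\ref{cor:extrapolation} yields $\varepsilon>0$ such that for $|q-2|<\varepsilon$ the operator $\A_q^{\EB}(Q_0)$ has a bounded $H^\infty$-calculus of angle $\le\psi<\pi/2$; boundedness of the $H^\infty$-calculus implies $\Rsec$-sectoriality of the same angle. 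Invertibility for $q$ near $2$ follows from $0\in\rho(\A_2^{\EB}(Q_0))$ together with consistency and the resolvent representation of Proposition~\ref{l:relation_resolvent_AD_calA} (the $M_{j,q}(0)$ stay invertible by a Neumann/perturbation argument, using that $A_q,D_q$ are invertible and $H,G$ depend continuously on $q$ — exactly the mechanism in Lemma~\ref{l:one_side_A_0_A_estimate}).

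\textbf{Main obstacle.} The delicate point is checking that the family genuinely satisfies the \emph{consistency} requirement and the complex-scale identification \eqref{eq:identification_complex_interpolation_varepsilon_spaces} uniformly, because the second component lives on the Bessel-potential scale $H^{1,q}$ rather than on an $L^q$-scale, so one must make sure that complex interpolation of the $H^{1,q}(\Dom;S_{0,\C}^d)$ (with the Neumann-type structure inherited from $\Delta_{N,q}$) behaves as a complex scale and that $D_q$, $B_q$, $C_q$ are mutually consistent across these spaces. I would handle this by identifying $X_{2,q}$ with $\Do((\id-\Delta_{N,q})^{1/2})$ up to equivalence of norms and using that the Neumann Laplacian has bounded $H^\infty$-calculus, so its fractional-power domains form a complex scale by \cite[Proposition 2.2]{KKW}; the consistency of $B_q,C_q$ is then immediate since they are fixed differential operators with $W^{1,\infty}$ coefficients. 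A secondary technical nuisance is ensuring that condition~(b) — contractivity of $-\A_2^{\EB}(Q_0)$ on an equivalent Hilbert norm — is actually available; if Proposition~\ref{prop:wrona} as cited gives only sectoriality and $\mathcal{J}$-symmetry, one upgrades to contractivity via Corollary~\ref{cor:Jsymmetry}, whose hypotheses ($A_2=A_2^*$, $D_2=D_2^*$, $\mathcal{J}$-symmetry) are met here because $\P_2\Delta$ and $\Delta_{N,2}-\id$ are self-adjoint and $C\subseteq -B^*$ for the $\mathcal{J}$-symmetric block structure.
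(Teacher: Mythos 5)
Your proposal is correct and follows essentially the same route as the paper's own (much terser) proof: establish consistency via Lemma~\ref{l:consistency_A_theta}\eqref{it:consistency_for_nested_family} and the validity of \eqref{eq:equivalence_norm_A_0_A_theta} near $q=2$ via Remark~\ref{r:extrapolation_equivalence_norm_A_0_A}, supply the Hilbert-space endpoint via Proposition~\ref{prop:wrona} and Corollary~\ref{cor:Jsymmetry}, invoke Lemma~\ref{lemma:Edward-Berris} for Assumption~\ref{ass:standing} with $L=0$, and then apply Corollary~\ref{cor:extrapolation}. Your elaboration of how contractivity at $q=2$ is extracted (through Proposition~\ref{prop:hilbert_space_positivity} behind Corollary~\ref{cor:Jsymmetry}) and of the interpolation-scale structure of the $H^{1,q}$ component via fractional-power domains of $\id-\Delta_{N,q}$ fills in details the paper leaves implicit, but the core argument is the same.
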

\begin{proof}
The operators
	$\A_q^{\EB}(Q_0)$ are a consistent family of operators for $q\in (1,\infty)$ by Lemma \ref{l:consistency_A_theta}\eqref{it:consistency_for_nested_family} (note that \eqref{eq:equivalence_norm_A_0_A_theta} holds for $q$ close to $2$ by Remark \ref{r:extrapolation_equivalence_norm_A_0_A}). For $q=2$ the statement follows by Corollary~\ref{cor:Jsymmetry} and Proposition~\ref{prop:wrona}.
	Hence, using Lemma~\ref{lemma:Edward-Berris}, one can now apply Corollary~\ref{cor:extrapolation}, and  the statement follows. 
\end{proof}	
	\begin{remark}
	One can extend the above argument to prove the boundedness of the $H^{\infty}$-calculus for the Beris-Edwards operator on 
		\begin{align*}
	H^{s,q}_{\sigma}(\Dom)\times H^{1+s,q}(\Dom;S_{0,\C}^d) \ 
	\text{ for all }q\in (2-\delta, 2+ \delta)\text{ and }s\in (-\delta, \delta),
		\end{align*}
		for some $\delta>0$ assuming $Q_0\in C^{1,\alpha}(\Dom)$ with $\alpha>0$.
	\end{remark}

\subsection{A differential operator of Beris-Edwards type}\label{subsec:BerisEdwardsType}
In this subsection we study the following differential operator on $X= H^{-1,p}(\R^d)\times L^{p}(\R^d)^d$
\begin{equation}
\label{eq:A_delta_full_space}
\A_{p}^{\Delta}
\stackrel{{\rm def}}{=}
\begin{bmatrix}
\id -\Delta & \div (\id-\Delta) \\
 \nabla & \id-\Delta
\end{bmatrix} \ \
\text{ with } \ \
\Do(\A_{p}^{\Delta})= H^{1,p}(\R^d)\times H^{2,p}(\R^d)^d
\end{equation}
where $p\in (1,\infty)$. This differential operator has a structure similar to the one studied in Subsection~\ref{ss:Beris_Edwards} which arises in the study of Beris-Edwards model for liquid crystals. In contrast to the latter, $\A^{\Delta}_{p}$ allows us to give more direct computations and therefore is more suited for illustrative purposes.

Let us denote by $\Delta_{p}^{\strong}$ and $\Delta_{p}^{\weak}$ the realization of the Laplace operator on $L^{p}(\R^d)$ and $H^{-1,p}(\R^d)$, respectively. Thus  $\A_{p}^{\Delta}$ is a block operator matrix with the choice
$$
A=\id -\Delta_{p}^{\weak},  \qquad B = \div(\id- \Delta_{p}^{\strong} ), \qquad C= \nabla , \qquad D=\id  -\Delta_{p}^{\strong},
$$
where $\Delta_{p}^{\strong}$ in the definition of $B$ and $D$ acts component-wise on $H^{2,p}(\R^d)^d$.

\begin{proposition}
Let $p\in (1,\infty)$. Then $\A_{p}^{\Delta}$ has a bounded $H^{\infty}$-calculus of angle $0$. Moreover, for all $\beta\in \big(-\big((1-\frac{1}{p})\vee \frac{1}{p}\big),1\big]$,
\begin{equation}
\label{eq:description_A_Delta_homogeneous_fractional_power}
\Dd((\A_{p}^{\Delta})^{\beta})=H^{-1+2\beta,p}(\R^d) \times  H^{ 2\beta,p}(\R^d)^d .
\end{equation}
\end{proposition}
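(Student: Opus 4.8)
The plan is to exploit the explicit structure of $\A_p^{\Delta}$, which factors through the Laplacian. First I would compute the Schur-type operator from \eqref{eq:def_M_1_M_2}: taking $A=\Delta_p^{\weak}$, $B=\div\Delta_p^{\strong}$, $C=-\nabla$, $D=\Delta_p^{\strong}$, one gets for $\lambda\in\rho(\Delta^{\weak})\cap\rho(\Delta^{\strong})$
\begin{equation*}
M_1(\lambda)=\id - B(\lambda-D)^{-1}C(\lambda-A)^{-1}
=\id + \div\Delta(\lambda-\Delta)^{-1}\nabla(\lambda-\Delta)^{-1}
=\id + \Delta^2(\lambda-\Delta)^{-2},
\end{equation*}
using $\div\nabla=\Delta$ and the fact that all the operators involved are Fourier multipliers and hence commute. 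Via the Fourier transform $M_1(\lambda)$ is the multiplier $m_\lambda(\xi)=1+|\xi|^4/(\lambda+|\xi|^2)^2=\big((\lambda+|\xi|^2)^2+|\xi|^4\big)/(\lambda+|\xi|^2)^2$, which for $\lambda$ in a sector $\complement\overline{\Sigma_{\phi}}$ with $\phi$ close to $0$ is uniformly bounded away from $0$ and from $\infty$ (the numerator vanishes only when $\lambda+|\xi|^2=\pm i|\xi|^2$, i.e. for $\lambda$ on two rays close to the negative axis). Hence $M_1(\lambda)^{-1}$ is a Fourier multiplier with symbol bounded, together with all its $\xi$-derivatives, uniformly in $\lambda\in\complement\overline{\Sigma_\phi}$; by the Mikhlin multiplier theorem (or Hörmann–Mikhlin, valid on $L^p(\R^d)$, and transferred to $H^{-1,p}$ by commutation with $(1-\Delta)^{-1/2}$) the family $\{M_1(\lambda)^{-1}\}$ is $\Rsec$-bounded on $X_1=H^{-1,p}(\R^d)$.

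With this in hand I would invoke the general machinery: $\Delta_p^{\weak}$ and $\Delta_p^{\strong}$ are $\Rsec$-sectorial (in fact have a bounded $H^\infty$-calculus of angle $0$) on $H^{-1,p}(\R^d)$ and $L^p(\R^d)$; Assumption~\ref{ass:standing} holds with $L=0$ since $B(\lambda-D)^{-1}=\div\Delta(\lambda-\Delta)^{-1}$ and $C(\lambda-A)^{-1}=-\nabla(\lambda-\Delta)^{-1}$ are bounded (indeed $c_A=c_D$ can be taken as small as we like because $B,C$ are of the same order as $D,A$ but $\nabla,\div\Delta$ are "one direction off"); condition \eqref{eq:equivalence_norm_A_0_A}, $\|\diag x\|_X\lesssim\|\A x\|_X$, follows from Proposition~\ref{l:one_side_A_0_A_estimate} because the operators $H=\overline{BD^{-1}}$ and $G=\overline{CA^{-1}}$ are again explicit Fourier multipliers ($HG$ has symbol $|\xi|^4\cdot(-|\xi|^{-2})=-|\xi|^2$-type? — more precisely $H$ has symbol $i\xi|\xi|^2/|\xi|^2 = i\xi$ acting $L^p\to H^{-1,p}$ and $G$ symbol $-i\xi/|\xi|^2$ acting $H^{-1,p}\to L^p$, so $HG$ has symbol $\xi\otimes\xi/|\xi|^2$ with norm $<1$ away from the diagonal — one checks $\id-HG$ is invertible as its symbol is $\id-\xi\otimes\xi/|\xi|^2$, a projection, which is \emph{not} invertible; so instead one uses the $\sup_{t}\|B(t+D)^{-1}C(t+A)^{-1}\|<1$ criterion directly, since that symbol is $|\xi|^4/(t+|\xi|^2)^2<1$ for $t>0$). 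Thus Theorem~\ref{t:rsec_necessary_sufficient_condition_II}(b) (with the reflexivity clause (c) removing the range condition, as $X$ is reflexive) gives that $\A_p^{\Delta}$ is $\Rsec$-sectorial of angle $0$. For the $H^\infty$-calculus I would apply Theorem~\ref{t:calculus_bounded_no_smallness_condition}: $X_1,X_2$ have non-trivial type, $A,D$ have bounded $H^\infty$-calculus of angle $0$, and one must verify Assumptions~\ref{ass:fractions_pm}\nameref{ass:fractions_+} and \nameref{ass:fractions_-}. Since $\Do(A^\gamma)=\dot H^{-1+2\gamma,p}$, $\Do(D^\gamma)=\dot H^{2\gamma,p}$ (homogeneous scales, as $0\notin\rho$), and $C=-\nabla$ has order $1$, $B=\div\Delta$ has order $3$, both sides of the required inclusions/estimates reduce to identities between homogeneous Bessel-potential norms that hold for \emph{every} $\delta\in(0,1)$ — e.g. $\|D^\delta C x\|_{X_2}=\|(-\Delta)^\delta\nabla x\|_{L^p}\eqsim\|x\|_{\dot H^{1+2\delta,p}}$ while $\|A^{1+\delta}x\|_{X_1}\eqsim\|x\|_{\dot H^{1+2\delta,p}}$. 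Hence $\angH(\A_p^{\Delta})=\angR(\A_p^{\Delta})=0$.

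Alternatively, and perhaps more cleanly for the angle-$0$ claim, I would diagonalize: the multiplier matrix of $\A_p^{\Delta}$ on the Fourier side is $\begin{bmatrix}|\xi|^2 & -i\xi|\xi|^2\\ -i\xi & |\xi|^2\end{bmatrix}$ acting on $\C\times\C^d$, whose eigenvalues one computes to be $|\xi|^2$ (with multiplicity $d-1$, on the subspace $\xi^\perp$ in the second component) and the two roots of $(\mu-|\xi|^2)^2 = |\xi|^4$, i.e. $\mu = |\xi|^2(1\pm i)$ — wait, these have argument $\pm\pi/4$, not $0$; let me recompute: on the one-dimensional $\xi$-invariant block the matrix is $|\xi|^2\begin{bmatrix}1 & -i|\xi|^2\cdot|\xi|^{-2}?\end{bmatrix}$ — this needs care, so in the writeup I would instead stick to the $M_1$-symbol computation above, whose sectoriality I can read off directly from $m_\lambda(\xi)$, and avoid the explicit eigenvalue computation. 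Finally, \eqref{eq:description_A_Delta_homogeneous_fractional_power}: by Proposition~\ref{prop:fractional_powers_positive} (using that both $\A_p^{\Delta}$ and $\diag$ have bounded $H^\infty$-calculus and \eqref{eq:equivalence_norm_A_0_A} holds) one gets $\Dd((\A_p^{\Delta})^\beta)=\Dd(\diag^\beta)=\Dd(A^\beta)\times\Dd(D^\beta)=\dot H^{-1+2\beta,p}\times\dot H^{2\beta,p}$ for $\beta\in(0,1]$; for negative $\beta$ down to $-((1-1/p)\vee 1/p)$ I would use Proposition~\ref{prop:fractions_Hilbert}'s Banach-space analogue is not available, so instead invoke Theorem~\ref{t:extrapolation_H}\eqref{it:description_negative_fractional_powers} or, more directly, the duality/consistency argument: $\A_p^{\Delta}$ is a consistent family in $p$, at $p=2$ it is (up to sign) $\mathcal J$-symmetric so self-adjoint-like and Proposition~\ref{prop:fractions_Hilbert} applies, giving the negative-power description for $\gamma\in(-1/2,0]$ in the Hilbert case; extrapolating via Sneiberg as in Theorem~\ref{t:extrapolation_H} then yields the stated range $\beta>-((1-1/p)\vee 1/p)$, the restriction reflecting the range of $p$ for which the relevant embeddings of homogeneous spaces remain valid. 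The main obstacle I anticipate is the bookkeeping for the negative fractional powers — pinning down exactly why the threshold is $(1-1/p)\vee 1/p$ and ensuring the homogeneous-space identifications $\Dd((\Delta_p^{\weak})^\beta)=\dot H^{-1+2\beta,p}$ hold in that range — whereas the sectoriality and $H^\infty$-calculus of angle $0$ are essentially immediate from the explicit symbol $m_\lambda$ and the general theorems already proved.
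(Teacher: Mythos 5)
Your overall route matches the paper's: compute the Fourier symbol of $M_1(\lambda)^{-1}$, invoke Theorem~\ref{t:rsec_necessary_sufficient_condition_II} for $\Rsec$-sectoriality via the Lizorkin criterion, Theorem~\ref{t:calculus_bounded_no_smallness_condition} for the $H^\infty$-calculus after identifying $\Dd((-\Delta_p^{\strong})^\gamma)=\dot H^{2\gamma,p}$ and $\Dd((-\Delta_p^{\weak})^\gamma)=\dot H^{-1+2\gamma,p}$, then Proposition~\ref{prop:fractional_powers_positive} for $\beta\in(0,1]$ and Theorem~\ref{t:extrapolation_H}\eqref{it:description_negative_fractional_powers} (via the $\mathcal J$-symmetric Hilbert case $p=2$ and Sneiberg extrapolation in $p$, letting $r\to\infty$ or $r\to 1$) for negative $\beta$. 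This is exactly the paper's structure, and your abandoned diagonalization detour is harmless.

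There is, however, a sign error in your verification of \eqref{eq:equivalence_norm_A_0_A} that sends you down a dead end. You claim the symbol of $HG$ (more precisely of $GH$) is $\xi\otimes\xi/|\xi|^2$, so that $\id-HG$ has symbol $\id-\xi\otimes\xi/|\xi|^2$, a projection, hence not invertible. But $G=\overline{CA^{-1}}$ and $H=\overline{BD^{-1}}$ give $HG=BD^{-1}CA^{-1}=(\div\Delta\cdot\Delta^{-1})(-\nabla\cdot\Delta^{-1})=-\div\nabla\Delta^{-1}=-\id$ on $X_1$ (the sign is independent of whether one writes $A,D$ as $\pm\Delta$, since the factors cancel), so $\id-HG=2\id$ is trivially invertible; similarly $GH$ has symbol $-\xi\otimes\xi/|\xi|^2$ (you dropped the minus sign) and $\id-GH=\id+\xi\otimes\xi/|\xi|^2$ is invertible. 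Thus Proposition~\ref{l:one_side_A_0_A_estimate} applies directly. Your fallback, the criterion $\sup_{t\in(0,\varepsilon)}\|B(t+D)^{-1}C(t+A)^{-1}\|<1$, cannot rescue the argument: the symbol has modulus $|\xi|^4/(t+|\xi|^2)^2$, whose supremum over $\xi$ is exactly $1$ for every $t>0$ (let $|\xi|\to\infty$), consistent with $\|HG\|_{\calL(X_1)}=1$; pointwise $<1$ does not give operator norm $<1$. A related slip: the assertion that $c_A,c_D$ "can be taken as small as we like" is false — $\nabla$ and $\div\Delta$ have the \emph{same} homogeneous order as the diagonal entries, so these relative bounds are $\eqsim 1$. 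This is harmless here since Theorem~\ref{t:rsec_necessary_sufficient_condition_II} is a characterization with no smallness hypothesis, but it would matter if you tried to fall back on Proposition~\ref{prop:pert_block_operators}.
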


\begin{proof}
It is easy to check that $\A_{p}^{\Delta}$ is diagonally dominant, and it does not fit into the special cases analyzed in Subsection \ref{s:special_cases_dominant}.
For the reader's convenience we split the proof into several steps.

\emph{Step 1: $\A_{p}^{\Delta}$ is $\Rsec$-sectorial of angle $0$}. Fix $\psi\in (0,\pi)$. 
By Theorem \ref{t:rsec_necessary_sufficient_condition_II} and the fact that $(1-\Delta)^{s/2}: H^{\beta+s,p}(\R^d)\to H^{\beta,p}(\R^d)$ is an isomorphism for all $s,\beta\in \R$, it is enough to show that  
\begin{equation*}
\begin{aligned}
M_{1}(\lambda)
&\stackrel{{\rm def}}{=}
\id - \div(\id- \Delta) ((\lambda-1)+\Delta)^{-1} \nabla ((\lambda-1) +\Delta)^{-1}\\
&\ =\id-\Delta (\id-\Delta) ((\lambda-1)+\Delta)^{-2}
\in \calL(L^{p}(\R^d)),\\
\end{aligned}
\end{equation*}
is invertible for all $\lambda\in \complement\overline{\Sigma_{\psi}}$ and 
\begin{equation}
\label{eq:R_sectoriality_M_delta}
\Rsec(M_{1}(\lambda)^{-1}\colon\lambda \in \complement\overline{\Sigma_{\psi}})<\infty.
\end{equation}
To check \eqref{eq:R_sectoriality_M_delta}, it is enough to check that the symbol of $(M_{1}(\lambda))^{-1}$ satisfies the Lizorkin condition (see e.g.\ \cite[Theorem 4.3.9]{pruss2016moving}). 
 Note that $(M_{1}(\lambda))^{-1}$ has symbol
\begin{align*}
\widetilde{m}_{\lambda}(\xi)&\stackrel{{\rm def}}{=}
\Big(1 +\frac{|\xi|^2(1+|\xi|^2)}{(\lambda-1-|\xi|^2)^2}\Big)^{-1}\\
&\ =
 \frac{(\lambda-1-|\xi|^2)^2}{ (\lambda-1-|\xi|^2)^2+ |\xi|^2(1+|\xi|^2)} \qquad \text{ for all } \xi\in \R^d.
\end{align*}
By standard computations, one can check that there exists $C>0$ independent of $\lambda,\xi$ such that 
$$
\sup\big\{
|\xi^{\alpha} \partial_{\xi}^{\alpha} 
\widetilde{m}_{\lambda}(\xi)|\,:\,\alpha=(\alpha_1,\dots,\alpha_d)\text{ such that }\alpha_k\in \{0,1\}
\big\}
\leq C.
$$ 
Thus \eqref{eq:R_sectoriality_M_delta} follows from \cite[Theorem 4.3.9]{pruss2016moving}. 

\emph{Step 2: Boundedness of the $H^{\infty}$-calculus}. 
The claim of this step can be proven also by employing Theorem \ref{t:extrapolation_H} and Proposition \ref{prop:hilbert_space_positivity}. Indeed, by Theorem \ref{t:extrapolation_H} and Step 1, it is enough to show that $\A^{\Delta}_{2}$ has a bounded $H^{\infty}$-calculus. Note that the consistency follows from Lemma \ref{l:consistency_A_theta} and Step 1.
 Since $\Delta_{2}^{\weak}$ and $\Delta_{2}^{\strong}$ generate $C_0$--semigroup of contractions, by Theorem \ref{t:extrapolation_H} and Proposition \ref{prop:hilbert_space_positivity}, it is enough to check 
 skew-symmetry as
 	in Remark \ref{eq:cancellation_H_B_C} with $\gamma=(4\pi^2)^{-1}$, that is for all $f\in H^{2}(\R^d)^d$ and $g\in H^1(\R^d)$,
\begin{equation}
\label{eq:J_simmetry_cond_Beris_Edward_type_op}
\gamma \Re(\div(\id-\Delta) f |  g)_{H^{-1}(\R^d)}+ \Re(\nabla g | f)_{L^2(\R^d;\R^d)}=0 .  
\end{equation}

To prove \eqref{eq:J_simmetry_cond_Beris_Edward_type_op}, we equip the space $H^{-1}(\R^d)$ with the inner product
\begin{equation}
(f|g)_{H^{-1}(\R^d)}= \int_{\R^d}(1+|\xi|^2)^{-1} \wh{f}(\xi)\cdot \overline{\wh{g}(\xi)}\, \dd\xi,
\end{equation}
 where $\wh{h}(\xi)\stackrel{{\rm def}}{=} \int_{\R^d} h(x)e^{-2\pi i x\cdot \xi} \, \dd x$  denotes the Fourier transform of $h$ at $\xi\in \R^d$.
Since the Fourier transform of $\div (\id-\Delta) f$ is given by $ -4 \pi^2 (1+|\xi|^2) ( 2\pi i  \xi\cdot \wh{f})$, we have 
\begin{equation*}
\begin{aligned}
\big(\div (\id-\Delta) f| g\big)_{H^{-1}(\R^d)}
&=-4 \pi^2\int_{\R^d}  (2 \pi i\xi\cdot \wh{f}(\xi))
 \overline{\wh{g}(\xi)}\, \dd\xi\\
 &=- 4 \pi^2 \int_{\R^d}  \wh{f}(\xi)\cdot
 \overline{\wh{\nabla g}(\xi)} \, \dd\xi \\
 &=-4 \pi^2(f | \nabla g)_{L^2(\R^d)^d}=-4 \pi^2 \overline{(\nabla g|f)_{L^2(\R^d)^d}}.
\end{aligned}
\end{equation*}
This yields \eqref{eq:J_simmetry_cond_Beris_Edward_type_op} and therefore $\A_2^{\Delta}$ has a bounded $H^{\infty}$-calculus by Proposition \ref{prop:hilbert_space_positivity} and Remark \ref{eq:cancellation_H_B_C}.

\emph{Step 3: Proof of \eqref{eq:description_A_Delta_homogeneous_fractional_power}}. 
The case $\beta\in [0,1]$ follows from Proposition \ref{prop:fractional_powers_positive}  recalling that, for all $\beta\in \R$, 
\begin{equation}
\label{eq:fractional_powers_laplacian_Hsp}
\Dd((\id-\Delta_{p}^{\strong})^{\beta})= H^{2\beta,p}(\R^d)\quad \text{ and }\quad
\Dd((\id -\Delta_{p}^{\weak})^{\beta})= \Dd((-\Delta_{p}^{\strong})^{\beta-1}).
\end{equation}
The case $p=2$ follows from Step 2, Proposition \ref{prop:fractions_Hilbert} and \eqref{eq:fractional_powers_laplacian_Hsp}.
It remains to study the case $\beta<0$ and $p\neq 2$. To this end,  we apply Theorem \ref{t:extrapolation_H}\eqref{it:description_negative_fractional_powers}. 
 
\emph{Case $p\in (2,\infty)$}: Let $r\in (p,\infty)$ be arbitrary. For all $\theta\in [0,1]$ we set  $\A_{\theta}=\A^{\Delta}_q$ where $\frac{1}{q}=\frac{1-\theta}{2}+\frac{\theta}{r}$. By Step 1 and Lemma \ref{l:consistency_A_theta}, $(\A_{\theta})_{\theta\in [0,1]}$ is a consistent family of sectorial operators. By Step 2, \eqref{eq:fractional_powers_laplacian_Hsp} and Theorem \ref{t:extrapolation_H}\eqref{it:description_negative_fractional_powers},
\begin{equation}
\label{eq:fractional_case_1_negative}
\Dd((\A_p^{\Delta})^{-\beta})= \Dd((\Delta_{p}^{\weak})^{-\beta})\times \Dd((\Delta_{p}^{\strong})^{-\beta})= \dot{H}^{-1+2\beta,p}(\R^d) \times  \dot{H}^{ 2\beta,p}(\R^d)^d 
\end{equation}
for all $\beta\in (0,\frac{1}{2}(1-\varphi))$ where $\varphi\in (0,1)$ satisfy $\frac{1}{q}=\frac{1-\varphi}{2}+\frac{\varphi}{r}$. Since $1-\varphi= \frac{2}{q}\frac{r-q}{r-2}$, \eqref{eq:fractional_case_1_negative} holds for all $\beta\in (0,\frac{1}{q}\frac{r-q}{r-2})$. Thus \eqref{eq:description_A_Delta_homogeneous_fractional_power} in this case follows by letting $r\to \infty$.

\emph{Case $p\in (1,2)$}: The argument is similar to the previous case considering $\A_{\theta}=\A^{\Delta}_q$ where $\frac{1}{q}=\frac{1-\theta}{r}+\frac{\theta}{2}$ where $r\in (1,p)$ is arbitrary and letting $r\to 1$.
\end{proof}

\appendix
\section{A transference result for the $H^{\infty}$-calculus}\label{sec:appendix}
Let $X$ be a Banach space and $p\in [1,2]$, then the space $X$ has \emph{type $p$} if there exists a constant $C_p\geq 0$ such that for all finite sequences $x_1, \ldots, x_N$ in $X$ and $\varepsilon_1, \ldots, \varepsilon_N$ being Rademacher sequences on a probability space $(\O,\mathscr{A},\P)$ one has
\begin{align*}
\E \Big\|\sum_{n=1}^N\varepsilon_n x_n\Big\|_X^p  \leq C_p\sum_{n=1}^N \|x\|_X^p,
\end{align*}
and $X$ has \emph{non-trivial type} if it has type $p$ for some $p\in (1,2]$, compare e.g. \cite[Definition 7.1.1 f.]{Analysis2}.
\begin{theorem}\label{t:transference_appendix}
Assume that $X$ reflexive Banach space with non-trivial type. 
Let $T$ be a linear operator on $X$ with a bounded $H^\infty$-calculus 
and let $S$ be an $\Rsec$-sectorial operator on $X$ such that
\begin{equation}
\label{eq:ass_ST_equal_app}
\Do(T)=\Do(S)\ \ \text{ and } \ \ \|T x\|_{X}\eqsim \|S x\|_{X} \ \text{ for all }x\in \Do(T).
\end{equation}
Suppose that for some $\delta\in (0,1)$ one has
\begin{equation}
\label{eq:mapping_property_AB}
\begin{aligned}
S(\Do(T^{1+\delta}))&\subseteq \Do(T^{\delta})\text{ and} &
\|T^{\delta}S x\|_{X}\lesssim \| T^{1+\delta} x\|_{X}& \hbox{ for all } x\in \Do(T^{1+\delta}),
\\
\Ran(S)&\subseteq \Ran(T^{\delta})\text{ and} &
\|T^{-\delta}S x\|_{X}\lesssim \| T^{1-\delta} x\|_{X} & \hbox{ for all } x\in \Do(T).
\end{aligned}
\end{equation}
Then $S$ has a bounded $H^{\infty}$-calculus and
$
\om_{\Rsec}(S)=\angH(S).
$
\end{theorem}

The above result is folklore known to experts. For the reader's convenience we provide the proof extending the arguments in \cite[Section 9]{KKW}.
It seems that the geometric assumptions on $X$ can be removed by using in the proof below the results in \cite[Subsection 5.3]{KLW19}, instead of \cite[Corollary 7.8]{KKW}.

\begin{proof}
Let us begin by collecting some useful facts.  By \cite[Corollary 10.4.10]{Analysis2}, it is enough to show that $S$ has a bounded $H^{\infty}$-calculus.
 Recall that 
 $(\Dd(T^\beta))_{\beta\in \R}$ is a complex interpolation scale w.r.t.\ $\beta$, cf. \cite[Proposition 2.2]{KKW}, that is
\begin{equation}
\label{eq:XA_complex_scale} 
[\Dd(T^{\beta_1}),\Dd(T^{\beta_2})]_{\theta}=
\Dd(T^{\beta_1(1-\theta)+\beta_2\theta}), \quad  \text{ for all }\theta\in (0,1),
\end{equation}
 since $T$ has a bounded $H^{\infty}$-calculus and hence bounded imaginary powers. 
Recall that, by the definition of  $ \Dd(T^{\beta})$ in Subsection \ref{ss:fractional_powers},
\begin{equation}
\label{eq:density_X_beta_delta}
\Do(T^{\alpha})\hookrightarrow \Dd(T^{\beta})\ \text{ is dense for all } 0\leq \beta\leq \alpha<\infty.
\end{equation}
Moreover, the operator $T$ induces operators
\begin{equation}
\label{eq:fractional_scale_A}
\dot{T}_{\theta}\colon \Dd(T^{1+\theta})\cap\Dd(T^{\theta})\subseteq \Dd(T^{\theta})\to \Dd(T^{\theta}) \quad \text{ for all } \theta\in \R,
\end{equation}
 on the $\Dd(T^{\theta})$-scale of spaces, 
compare \cite[Proposition 2.1]{KKW} or Subsection \ref{ss:fractional_powers}.

By \eqref{eq:mapping_property_AB}, \eqref{eq:density_X_beta_delta}, $\delta<1$ and the definition of the homogeneous scale  \eqref{eq:def_homogeneous_scale}, the operator $S$ extends uniquely to bounded linear operators $\widetilde{S}_{\pm \delta}\in \calL(\Dd(T^{1\pm \delta}),\Dd(T^{\pm \delta}))$ satisfying 
$\widetilde{S}_{\pm \delta}x= S x$ for all $x\in \Dd(T^{1\pm \delta})\cap \Do(T)$.  By restriction, we obtain the following unbounded linear operator
\begin{align*}
S_{\pm \delta}\colon \Dd(T^{1\pm \delta})\cap \Dd(T^{\pm \delta})\subseteq \Dd(T^{\pm \delta})\to \Dd(T^{\pm \delta}), &\\\text{ satisfying }\ \ 
S_{\pm \delta} x= S x \   \text{ for all }x\in \Dd(T^{1\pm \delta})\cap \Do(T).&
\end{align*}
Similarly, by restriction, interpolation and \eqref{eq:XA_complex_scale},
$S$ induced uniquely linear operators
\begin{equation}
\label{eq:extension_B}
S_{\theta}\colon \Dd(T^{\theta+1})\cap \Dd(T^{\theta})\subseteq \Dd(T^{\theta})\to \Dd(T^{\theta}) \quad \text{ for all } \theta\in [-\delta,\delta].
\end{equation}

Next we will need the following lemma which is proven below.

\begin{lemma}\label{lemma:B_minusalpha}
Under the assumptions of Theorem~\ref{t:transference_appendix} there exists a $\delta_0\in (0,1)$ such that $S_{-\delta_0}$ is  $\Rsec$-sectorial on $\Dd(T^{-\delta_0})$ with angle $\leq \angR(S)\vee \angH(T)$, and moreover $\Dd(S_{-\delta_0})= \Dd(T^{1-\delta_0})$.
\end{lemma}

To verify that \cite[Corollary 7.8]{KKW} is applicable to $S_{-\delta_0}$ one needs that by Lemma~\ref{lemma:B_minusalpha} $S_{-\delta_0}$ is $\Rsec$-sectorial on $\Dd(T^{-\delta_0})$  and that the spaces $\Dd(T^{\theta})$ have non-trivial type, which is equivalent to $B$-convexity, see \cite[Proposition 7.6.8]{Analysis2}. This follows since $\Dd(T^{\theta})$ are isomorphic to $X$, compare \cite[Section 2]{KKW}. 
Next, let $\langle \cdot, \cdot\rangle_{\theta}$ be the Rademacher interpolation functor introduced in \cite[Section 7]{KKW} (see also \cite{KLW19,LL21}). By \cite[Corollary 7.8]{KKW}, for all $\theta\in (0,1)$, the operator $
S_{\theta-\delta_0}|_{
\langle \Dd((S_{-\delta_0})^0),\Dd(S_{-\delta_0} )\rangle_{\theta} }$ induced by $S_{-\delta_0}$ on $
\langle \Dd((S_{-\delta_0})^0),\Dd(S_{-\delta_0} )\rangle_{\theta}$ has a bounded $H^{\infty}$-calculus (cf.\  \cite[Proposition 15.24]{KuWe} and \cite[Proposition 5.3.5]{LL21} for similar situations). Note that, by using the last assertion of Lemma \ref{lemma:B_minusalpha},
\begin{align*}
\langle \Dd((S_{-\delta_0})^0),\Dd(S_{-\delta_0} )\rangle_{\theta} 
&= \langle \Dd(T^{-\delta_0}),\Dd(T^{1-\delta_0}) \rangle_{\theta} \\
&\stackrel{(i)}{=} [ \Dd(T^{-\delta_0}),\Dd(T^{1-\delta_0})]_{\theta} = \Dd(T^{\theta-\delta_0}),
\end{align*}
where in $(i)$ we applied \cite[Theorem 7.4]{KKW} to $T$ which  guarantees that here the complex  and the Rademacher interpolation scale agree for $T$. 
Collecting the previous facts, we have
$$
S_{\theta-\delta_0}|_{\Dd(T^{\theta-\delta_0})} 
\text{ has a bounded $H^{\infty}$-calculus for all $\theta\in (0,1)$ on $\Dd(T^{\theta-\delta_0})$}.
$$
Choosing $\theta=\delta_0<1$ and using that $\Dd(T^{0})=X$, we obtain that $S_{0}=S$ has a bounded $H^{\infty}$-calculus on $X$. This completes the proof of Theorem \ref{t:transference_appendix}.
\end{proof}

\begin{proof}[Proof of Lemma~\ref{lemma:B_minusalpha}]
We split the proof into two steps.

\emph{Step 1}: There exists $\delta_1\in (0,\delta)$ such that  $\Dd(S_{\theta})= \Dd(T^{1+\theta})$ for all $\theta\in (-\delta_1,\delta_1)$. Firstly, let us note that the sectoriality of $S$ and \eqref{eq:ass_ST_equal_app} imply that $S$ extends to an isomorphism $\widetilde{S}$ between $\Dd(T)$ and $X$.
Secondly, recall that $S_{\theta}$ are defined as explained before \eqref{eq:extension_B} via complex interpolation and restriction. Hence, by \eqref{eq:XA_complex_scale} and the Sneiberg lemma (see e.g.\ \cite[Theorem 2.3 and 3.6]{TV88}) there exists a $\delta_1\in (0,\delta)$ such that $\widetilde{S}_{\theta}$ is an isomorphism for all $\theta\in [-\delta_1,\delta_1]$. In particular
$$
\|\widetilde{S}_{\theta}x \|_{\Dd(T^{\theta})}\eqsim \|x\|_{\Dd(T^{1+\theta})}\ \  \text{ for all }\theta\in [-\delta_1,\delta_1].
$$
Now the conclusion follows from the definition of $S_{\theta}$ as the restriction of $\widetilde{S}_{\theta}$ on $\Dd(T^{\theta})$ and a density argument (see e.g.\ \cite[Proposition 5.3.1]{KLW19}).

\emph{Step 2}: There exists $\delta_0\in (0,1)$ such that $S_{-\delta_0}$ is $\Rsec$-sectorial on $\Dd(T^{-\delta_0})$ with angle $\leq \angR(S)\vee \angH(T)$. Let $\delta_1\in (0,\delta)$ be as in Step 1. We begin by introducing suitable spaces of sequences. To this end, let $(\lambda_j)_{j\geq 1}\subseteq \complement\Sigma_{\phi}$ be a dense subset with $\phi\in (\om_{\Rsec}(S)\vee \angH(T),\pi)$, and let $(\varepsilon_j)_{j\geq 1}$ be a Rademacher sequence over a probability space $(\O,\mathscr{A},\P)$. 
Then, for $\theta\in [-\delta_1,\delta_1]$, we set
\begin{equation}
\begin{aligned}
\label{eq:definition_sequences_space}
\mathcal{X}_{\theta}
&\stackrel{{\rm def}}{=}
\Big\{(x_j)_{j\geq 1}\subseteq \Dd(T^{\theta})\colon \E\Big\| \sum_{j\geq 1}  \varepsilon_j x_j \Big\|_{\Dd(T^{\theta})}<\infty \Big\},\\
\mathcal{Y}_{\theta}
&\stackrel{{\rm def}}{=}
\Big\{(x_j)_{j\geq 1}\subseteq \Dd(T^{\theta+1})\cap \Dd(T^{\theta})\colon \E\Big\| \sum_{j\geq 1} (\lambda_j-\dot{T}_{\theta})\varepsilon_j x_j \Big\|_{\Dd(T^{\theta})} <\infty \Big\},
\end{aligned}
\end{equation}
endowed with the natural norms.

Since $X$ has non-trivial type and $\Dd(T^{\theta})$ is isomorphic to $X$, compare \cite[Section 2]{KKW}, $\Dd(T^{\theta})$ has non-trivial type as well and therefore, it is $K$-convex due to \cite[Theorem 7.4.23]{Analysis2}. In particular, $(\mathcal{X}_{\theta})_{\theta\in [-\delta_1,\delta_1}]$ is a complex interpolation scale by \cite[Theorem 7.4.16(1)]{Analysis2} and \eqref{eq:XA_complex_scale}. 
By $\Rsec$-sectoriality of $T$, the assignment $(x_j)_{j\geq 1}\mapsto \big((\lambda_j-\dot{T}_{\theta})^{-1}x_j\big)_{j\geq 1}$ induces an isomorphism between $\mathcal{X}_{\theta}$ and $\mathcal{Y}_{\theta}$, cf.\ \eqref{eq:definition_Tdot}.  By compatibility \eqref{eq:consistency_T_T_gamma} and \cite[Proposition 2.1]{KKW}, $(\mathcal{Y}_{\theta})_{\theta\in [-\delta_1,\delta_1]}$ is also a complex scale.

For any sequence $\x=(x_j)_{j\geq 1}\in \mathcal{Y}_{\theta}$ we set 
$$
\mathcal{T}_{\theta}(\x)=((\lambda_j -S_{\theta}) x_j)_{j\geq 1}.
$$
By \eqref{eq:fractional_scale_A}--\eqref{eq:extension_B} and $\phi>\angR(S)\vee \angH(T)$, we get that
$$
\mathcal{T}_\theta\colon \mathcal{Y}_{\theta}\to \mathcal{X}_{\theta}\text{ is bounded for }\theta\in [-\delta_1,\delta_1].
$$
Next we prove that $\mathcal{T}_0 \in \calL(\mathcal{Y}_0,\mathcal{X}_0)$ is an isomorphism with inverse
$$
\mathcal{S}_0(\x)= ((\lambda_j -S )^{-1} x_j)_{j \geq 1},
$$ 
where $S=S_0$.
Clearly, $\mathcal{T}_0\mathcal{S}_0$ and $\mathcal{S}_0\mathcal{T}_0$ are equal to the identity on the subset of finite sequences in $\mathcal{Y}_0$ and $\mathcal{X}_0$, respectively. By density it remains to prove that $\mathcal{S}_0\in \calL(\mathcal{X}_0,\mathcal{Y}_0)$. 
To this end, recall that $\Do(T)=\Do(S)$ and thus $\|T x\|_{X} \eqsim \|S x\|_{X}$ for all $x\in \Do(T)$. The latter implies
\begin{equation}
\begin{aligned}
\label{eq:estimate_R_sectoriality}
&\|\mathcal{S}_0(\x)\|_{\mathcal{Y}_0}\\
&\leq\E\Big\|\sum_{j\geq 1} \varepsilon_j \lambda_j (\lambda_j- S)^{-1}x_j \Big\|_X
+\E\Big\|T \sum_{j\geq 1}  \varepsilon_j   (\lambda_j- S)^{-1}x_j \Big\|_X\\
&\eqsim 
\E\Big\|\sum_{j\geq 1}  \varepsilon_j \lambda_j (\lambda_j- S)^{-1}x_j \Big\|_X
+\E\Big\|\sum_{j\geq 1}  \varepsilon_j S (\lambda_j- S)^{-1}x_j \Big\|_X\\
&\stackrel{(i)}{\leq } \sup_{J\geq 1}\Big(
\E\Big\|\sum_{ 1\leq j \leq J} \varepsilon_j  \lambda_j (\lambda_j- S)^{-1}x_j \Big\|_X
+\E\Big\|\sum_{1\leq j \leq J} \varepsilon_j S (\lambda_j- S)^{-1}x_j \Big\|_X
\Big)\\
&\stackrel{(ii)}{\lesssim}  \Rsec(S(\lambda - S)^{-1}:\lambda\in  \Sigma_{\phi})\, \E \Big\|\sum_{j\geq 1} \varepsilon_j x_j \Big\|_{X}\lesssim \|\x\|_{\mathcal{X}_0},
\end{aligned}
\end{equation}
where in $(i)$ we used  Fatou's lemma and in $(ii)$ the $\Rsec$-sectoriality of $S$, $(\lambda_j)_{j\geq 1}\subseteq\complement\overline{\Sigma_{\phi}}$ and $\phi>\angR(S)\vee \angH(T)$. Note that the constants 
\eqref{eq:estimate_R_sectoriality} are independent of the choice of the sequence $(\lambda_j)_{j\geq 1}\subseteq \complement\Sigma_{\phi}$.
 
Similar to Step 1, Sneiberg's lemma \cite[Theorem 2.3 and 3.6]{TV88} is applicable to $\mathcal{T}_\theta$. Thus  there exists $\delta_0\in (0,\delta_1)$, independent of $(\lambda_j)_{j\geq 1}\subseteq \complement\overline{\Sigma_{\phi}}$, such that 
\begin{equation}
\mathcal{T}_\theta:\mathcal{Y}_{\theta}\to \mathcal{X}_{\theta}  \text{ is boundedly invertible for all }\theta\in [-\delta_0,\delta_0]
\end{equation}
and 
$$
 \|\mathcal{T}_\theta^{-1}\|_{\calL(\mathcal{X}_{\theta},\mathcal{Y}_{\theta})}\leq C_\theta \Rsec(S(\lambda - S)^{-1}:\lambda\in  \Sigma_{\phi})
 \stackrel{{\rm def}}{=}
 C_{S,\theta},
$$
where $C_\theta>0$ is independent of the sequence $(\lambda_j)_{j\geq 1}$.

Since $\delta_0<\delta<1$, by \cite[Proposition 5.3.1]{KLW19} the embedding $\Do(T)\cap \Ran(T)\embed \Dd(T^{\theta})$ is dense for all $\theta\in [-\delta_0,\delta_0]$.
Hence, one can check that
$\mathcal{T}_\theta^{-1}:\mathcal{X}_{\theta}\to \mathcal{Y}_{\theta}$ for all $\theta\in [-\delta_0,\delta_0]$ is given by
$$
\mathcal{T}_\theta^{-1}(\x)\stackrel{{\rm def}}{=}((\lambda_j-S_{\theta})^{-1}x_j)_{j\geq 1}.
$$ 
Since $\mathcal{T}_{\delta_0}^{-1}\in \calL(\mathcal{X}_{-\delta_0},\mathcal{Y}_{-\delta_0})$, for all finite set $J\subseteq \Z$ and finite sequence $(x_j)_{j=1}^{\# J}\subseteq \Dd(T^{-\delta_0})$ we have
\begin{equation}
\label{eq:inequality_R_boundedness_B}
\E \Big\|\sum_{j\in J}\varepsilon_j  \lambda_j  (\lambda_j-S_{-\delta_0})^{-1}x_j \Big\|_{\Dd(T^{-\delta_0})}
\leq C_{S,\theta}\,\E\Big\|\sum_{j\in J} \varepsilon_j x_j\Big\|_{\Dd(T^{-\delta_0})}.
\end{equation}

Recall that $(\lambda_j)_{j\geq 1}\supseteq \complement\overline{\Sigma_{\phi}}$ is dense. Equation \eqref{eq:inequality_R_boundedness_B} and the continuity of the resolvent map $\lambda \mapsto (\lambda-S_{-\delta_0})^{-1}$ ensure that
$
\rho(S_{-\delta_0})\supseteq \complement\overline{\Sigma_{\phi}}
$
and \eqref{eq:inequality_R_boundedness_B} holds for all finite set $(\lambda_j)_{j\in J}\subseteq\C\setminus \Sigma_{\phi}$. In particular, $S_{-\delta_0}$ is  $\Rsec$-sectorial with angle $<\phi$.  The arbitrariness of $\phi$ yields $\angR(S_{-\delta_0})\leq \angR(S)\vee \angH(T)$.
\end{proof}

\subsection*{Acknowledgement} We would like to thank Tim Binz, Emiel Lorist and Mark Veraar for valuable discussions. We also thank the anonymous referees for their helpful comments and suggestions, and for the very accurate reading of the manuscript.

\bibliographystyle{plain}
\bibliography{literature}

\end{document}